\documentclass[12pt,a4paper,reqno]{amsart}
\usepackage{amsmath}
\usepackage{geometry}
\usepackage{amssymb, latexsym}
\usepackage{verbatim}
\usepackage{enumerate}



\usepackage{mathtools}
\usepackage[tableposition=top]{caption}
\usepackage{booktabs,dcolumn}

\newtheorem{theorem}{Theorem}[section]
\newtheorem{proposition}[theorem]{Proposition}

\newtheorem{lemma}[theorem]{Lemma}
\newtheorem{corollary}[theorem]{Corollary}
\newtheorem{conjecture}[theorem]{Conjecture}

\newtheorem{remark}[theorem]{Remark}

\newcommand\E{\mathbb{E}}
\newcommand\R{\mathbb{R}}
\newcommand\Z{\mathbb{Z}}
\newcommand\N{\mathbb{N}}
\newcommand\C{\mathbb{C}}

\newcommand\T{\mathbb{T}}

\newcommand\eps{\varepsilon}

\newcommand\record{\sigma}  
\newcommand\recordexplicit{{\frac{8}{33}=0.2424\dots}}



\renewcommand{\phi}{\varphi}



\begin{document}


\title[Correlations of von Mangoldt and divisor functions]{Correlations of the von Mangoldt and higher divisor functions I.  Long shift ranges}

\author{Kaisa Matom\"aki}
\address{Department of Mathematics and Statistics \\
University of Turku, 20014 Turku\\
Finland}
\email{ksmato@utu.fi}

\author{Maksym Radziwi{\l}{\l}}
\address{ Department of Mathematics \\
  McGill University \\
  Burnside Hall \\ Room 1005 \\
  805 Sherbrooke Street West \\
  Montreal \\ Quebec \\
  Canada\\
  H3A 0B9 }
\email{maksym.radziwill@gmail.com}

\author{Terence Tao}
\address{Department of Mathematics, UCLA\\
405 Hilgard Ave\\
Los Angeles CA 90095\\
USA}
\email{tao@math.ucla.edu}
\begin{abstract}  We study asymptotics of sums of the form $\sum_{X < n \leq 2X} \Lambda(n) \Lambda(n+h)$, $\sum_{X < n \leq 2X} d_k(n) d_l(n+h)$, $\sum_{X < n \leq 2X} \Lambda(n) d_k(n+h)$, and $\sum_n \Lambda(n) \Lambda(N-n)$, where $\Lambda$ is the von Mangoldt function, $d_k$ is the $k^{\operatorname{th}}$ divisor function, and $N,X$ are large.  Our main result is that the expected asymptotic for the first three sums holds for almost all $h \in [-H,H]$, provided that $X^{\record+\eps} \leq H \leq X^{1-\eps}$ for some $\eps>0$, where $\record \coloneqq \recordexplicit$, with an error term saving on average an arbitrary power of the logarithm over the trivial bound.  This improves upon results of Mikawa, Perelli-Pintz, and Baier-Browning-Marasingha-Zhao, who obtained statements of this form with $\record$ replaced by $\frac{1}{3}$.  We obtain an analogous result for the fourth sum for most $N$ in an interval of the form $[X, X + H]$ with $X^{\record+\eps} \leq H \leq X^{1-\eps}$.  

Our method starts with a variant of an argument from a paper of Zhan, using the circle method and some oscillatory integral estimates to reduce matters to establishing some mean-value estimates for certain Dirichlet polynomials associated to ``Type $d_3$'' and ``Type $d_4$'' sums (as well as some other sums that are easier to treat).  After applying H\"older's inequality to the Type $d_3$ sum, one is left with two expressions, one of which we can control using a short interval mean value theorem of Jutila, and the other we can control using exponential sum estimates of Robert and Sargos.  The Type $d_4$ sum is treated similarly using the classical $L^2$ mean value theorem and the classical van der Corput exponential sum estimates.  

In a sequel to this paper we will obtain related results for the correlations involving $d_k(n)$ for much smaller values of $H$ but with weaker bounds. 
\end{abstract}
\maketitle


\section{Introduction}
\label{se:intro}
This paper (as well as the sequel \cite{mrt-corr2}) will be concerned with the asymptotic estimation of correlations of the form
\begin{equation}\label{co}
 \sum_{X < n \leq 2X} f(n) \overline{g(n+h)} 
\end{equation}
for various functions $f,g \colon \Z \to \C$ and large $X$, and for ``most'' integers $h$ in the range $|h| \leq H$ for some $H = H(X)$ growing in $X$ at a moderate rate; in this paper we will mostly be concerned with the regime where $H = X^\theta$ for some fixed $0 < \theta < 1$.  We will focus our attention on the particularly well studied correlations
\begin{align}
\sum_{X < n \leq 2X} \Lambda(n) &\Lambda(n+h) \label{lambda} \\
\sum_{X < n \leq 2X} d_k(n) &d_l(n+h) \label{d3} \\
\sum_{X < n \leq 2X} \Lambda(n) &d_k(n+h) \label{titchmarsh} \\
\sum_n \Lambda(n) &\Lambda(X-n) \label{goldbach} 
\end{align}
for fixed $k,l \geq 2$, where $\Lambda$ is the von Mangoldt function and 
$$d_k(n) \coloneqq  \sum_{n_1 \dotsm n_k = n} 1$$
is the $k^{\operatorname{th}}$ divisor function, adopting the convention that $\Lambda(n) = d_k(n)= 0$ for $n \leq 0$.  Of course, to interpret \eqref{goldbach} properly one needs to take $X$ to be an integer, and then one can split this expression by symmetry into what is essentially twice a sum of the form \eqref{co} with $X$ replaced by $X/2$, $f(n) \coloneqq  \Lambda(n)$, $g(n) \coloneqq  \Lambda(-n)$, and $h \coloneqq  -X$.  One can also work with the range $1 \leq n \leq X$ rather than $X < n \leq 2X$ for \eqref{lambda}, \eqref{d3}, \eqref{titchmarsh} with only minor changes to the arguments below.  As is well known, the von Mangoldt function $\Lambda$ behaves similarly in many ways to the divisor functions $d_k$ for $k$ moderately large, with identities such as the Linnik identity \cite{linnik} and the Heath-Brown identity \cite{hb-ident} providing an explicit connection between the two functions.  Because of this, we will be able to treat both $\Lambda$ and $d_k$ in a largely unified fashion.

In the regime when $h$ is fixed and non-zero, and $X$ goes to infinity, we have well established conjectures for the asymptotic values of each of the above expressions:

\begin{conjecture}\label{bigconj}  Let $h$ be a fixed non-zero integer, and let $k,l \geq 2$ be fixed natural numbers.
\begin{itemize}
\item[(i)] (Hardy-Littlewood prime tuples conjecture \cite{hl})  We have\footnote{See Section \ref{notation-sec} for the asymptotic notation used in this paper.}
\begin{equation}\label{hl-conj}
\sum_{X < n \leq 2X} \Lambda(n) \Lambda(n+h) = {\mathfrak S}(h) X + O(X^{1/2+o(1)})
\end{equation}
as $X \to \infty$, where the \emph{singular series} ${\mathfrak S}(h)$ vanishes if $h$ is odd, and is equal to
\begin{equation}\label{sing-def}
 {\mathfrak S}(h) \coloneqq  2 \Pi_2 \prod_{p|h: p>2} \frac{p-1}{p-2}
\end{equation}
when $h$ is even, where $\Pi_2 \coloneqq  \prod_{p>2} (1 - \frac{1}{(p-1)^2})$ is the twin prime constant.  
\item[(ii)]  (Divisor correlation conjecture \cite{vinogradov}, \cite{ivic}, \cite[Conjecture 3]{conrey})  We have\footnote{In \cite{vinogradov} it is conjectured (in the $k=l$ case) that the error term is only bounded by $O( x^{1-1/k+o(1)} )$, and in \cite{ivic} it is in fact conjectured that the error term is not better than this; see also \cite{iw} for further discussion.  Interestingly, in the function field case (replacing $\Z$ by $F_q[t]$) the error term was bounded by $O(q^{-1/2})$ times the main term in the large $q$ limit in \cite{abr}, but this only gives square root cancellation in the degree $1$ case $n=1$ and so does not seem to give strong guidance as to the size of the error term in the large $n$ limit.}
\begin{equation}\label{conrey-gonek}
\sum_{X < n \leq 2X} d_k(n) d_l(n+h) = P_{k,l,h}(\log X) X + O(X^{1/2+o(1)})
\end{equation}
as $X \to \infty$, for some polynomial $P_{k,l,h}$ of degree $k+l-2$.
\item[(iii)]  (Higher order Titchmarsh divisor problem)  We have
\begin{equation}\label{titch}
\sum_{X < n \leq 2X} \Lambda(n) d_k(n+h) = Q_{k,h}(\log X) X + O(X^{1/2+o(1)})
\end{equation}
as $X \to \infty$, for some polynomial $Q_{k,h}$ of degree $k-1$. 
\item[(iv)]  (Quantitative Goldbach conjecture, see e.g. \cite[Ch. 19]{ik})  We have
\begin{equation}\label{goldbach-conj}
\sum_{n} \Lambda(n) \Lambda(X-n) = {\mathfrak S}(X) X + O(X^{1/2+o(1)})
\end{equation}
as $X \to \infty$, where ${\mathfrak S}(X)$ was defined in \eqref{sing-def} and $X$ is restricted to be integer.
\end{itemize}
\end{conjecture}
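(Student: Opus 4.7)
The statement of Conjecture \ref{bigconj} comprises four problems of historic difficulty---the Hardy--Littlewood prime tuples conjecture, the binary (and $k$-ary) additive divisor problem in its general form, the higher Titchmarsh divisor problem, and the quantitative Goldbach conjecture---none of which is currently known in the pointwise form stated, even conditionally on the Generalized Riemann Hypothesis. Any plan must therefore be understood heuristically: I would derive the asymptotics formally from the Hardy--Littlewood circle method, and then flag the (presently insurmountable) minor-arc obstacle.

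The plan is to treat all four parts in parallel via Fourier inversion on $\Z$. For (iv) I would write
\begin{equation*}
\sum_{n} \Lambda(n)\Lambda(X-n) = \int_0^1 S(\alpha)^2 e(-X\alpha)\, d\alpha, \qquad S(\alpha) := \sum_{n \le X} \Lambda(n) e(n\alpha),
\end{equation*}
and partition the circle into major arcs $\mathfrak{M} = \bigcup_{q \le Q}\bigcup_{(a,q)=1} \{\alpha : |\alpha - a/q| \le Q/X\}$ with $Q$ a small power of $\log X$, and its complement $\mathfrak{m}$. For (i) and (iii), one replaces $S(\alpha)^2 e(-X\alpha)$ by $S_1(\alpha)\overline{S_2(\alpha)} e(h\alpha)$, where $S_1$ is the $\Lambda$- or $d_k$-twisted exponential sum over $(X,2X]$ and $S_2$ is its shifted analogue; the $d_k$-sums are handled via Perron's formula applied to $\zeta(s)^k$. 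On $\mathfrak{M}$, orthogonality of Dirichlet characters modulo $q$ decomposes $S(a/q + \beta)$ character by character; Siegel--Walfisz (or, in the divisor case, a residue computation at $s=1$) then assembles the singular series $\mathfrak{S}(h)$ and $\mathfrak{S}(X)$ in (i) and (iv), and the polynomials $P_{k,l,h}$ and $Q_{k,h}$ in (ii) and (iii).

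The entire difficulty is the minor-arc contribution. To obtain the claimed square-root error $O(X^{1/2+o(1)})$ one needs essentially square-root cancellation in $\int_{\mathfrak{m}} |S(\alpha)|^2\, d\alpha$, far stronger than the bound $\int_0^1 |S(\alpha)|^2\, d\alpha \asymp X \log X$ supplied by Parseval. This is well beyond current methods: for (i), even the qualitative statement is the twin prime conjecture, the Fourier bound $|S(\alpha)| \ll X^{1/2+o(1)}$ off the major arcs is strictly stronger than GRH, and the parity obstruction rules out a purely sieve-theoretic attack. For (ii), the only case admitting a power-saving error is $k=l=2$, due to Ingham and Estermann with improvements by Deshouillers--Iwaniec, Motohashi, and Heath-Brown; the case $k=2,l=3$ is open with square-root saving, and for higher $k,l$ no asymptotic is known for fixed $h$ at all. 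For (iii), $k=2$ is Linnik's classical dispersion-method Titchmarsh divisor theorem with error $O(X \log^{-A} X)$, and power-savings, or any asymptotic for $k \ge 3$ and fixed $h$, remain unknown.

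Consequently, the realistic route---the one pursued in this paper---is to relax ``for every fixed $h$'' to ``for almost all $h$ in a window $|h|\le H$'', at which cost the minor-arc problem is transformed from a pointwise estimate into an $L^2$ estimate in $\alpha$, amenable to Dirichlet polynomial mean-value theorems of Type $d_3$ and $d_4$. A genuine attack on Conjecture \ref{bigconj} itself would require a breakthrough on the parity problem for (i) and (iv), and new input into the shifted convolution/additive divisor problem for (ii) and (iii), substantially beyond the techniques currently available; the main obstacle, and the reason the statement is a conjecture rather than a theorem, is precisely this minor-arc barrier.
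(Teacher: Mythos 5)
You have correctly recognized that the statement labelled Conjecture \ref{bigconj} is precisely that --- a conjecture --- and the paper offers no proof of it; it is stated only as motivation, and the discussion following it explicitly records that even the weaker form \eqref{hl-weak} is unknown for any single $h$, that the $k,l\geq 3$ divisor correlations are open, and so on. Your heuristic circle-method derivation of the main terms, your identification of the minor-arc/square-root-cancellation barrier (and the parity obstruction for (i) and (iv)), and your observation that the paper's actual contribution is the averaged-in-$h$ version (Theorem \ref{unav-corr}) all match the paper's own framing, so there is nothing further to compare.
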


\begin{remark}
The polynomials $P_{k,l,h}$ are in principle computable (see \cite{conrey} for an explicit formula), but they become quite messy in their lower order terms.  For instance, a classical result of Ingham \cite{ingham} shows that the leading term in the quadratic polynomial $P_{2,2,h}(t)$ is $(\frac{6}{\pi^2} \sum_{d|h} \frac{1}{d}) t^2$, but the lower order terms of this polynomial, computed in \cite{estermann} (with the sum $\sum_{X < n \leq 2X}$ replaced with the closely related sum $\sum_{n \leq X}$), are significantly more complicated.  A similar situation occurs for $Q_{k,h}$; see for instance \cite{fiorilli} for an explicit formula for $Q_{2,h}$.  The top degree terms of $P_{k,l,h}, Q_{k,h}$ are however easy to predict from standard probablistic heuristics: one should have
\begin{equation}\label{pkl}
 P_{k,l,h}(t) = \frac{t^{k-1}}{(k-1)!} \frac{t^{l-1}}{(l-1)!} \left(\prod_p {\mathfrak S}_{k,l,p}(h)\right) + O_{k,l,h}(t^{k+l-3})
\end{equation}
and
$$ Q_{k,h}(t) = \frac{t^{k-1}}{(k-1)!} \left(\prod_p {\mathfrak S}_{k,p}(h)\right) + O_{k,h}(t^{k-2})$$
where the local factors ${\mathfrak S}_{k,l,p}(h), {\mathfrak S}_{k,p}(h)$ are defined by the formulae\footnote{One can simplify these formulae slightly by observing that $\E d_{k,p}({\mathbf n}) = (1-\frac{1}{p})^{1-k}$ and $\E \Lambda_p({\bf n}) = 1$.}
$$ {\mathfrak S}_{k,l,p}(h) \coloneqq  \frac{ \E d_{k,p}({\mathbf n}) d_{l,p}({\mathbf n}+h) }{ \E d_{k,p}({\mathbf n}) \E d_{l,p}({\mathbf n}) }$$
and
$$ {\mathfrak S}_{k,p}(h) \coloneqq  \frac{ \E d_{k,p}({\mathbf n}) \Lambda_{p}({\mathbf n}+h) }{ \E d_{k,p}({\mathbf n}) \E \Lambda_{p}({\mathbf n}) }$$
where ${\mathbf n}$ is a random variable drawn from the profinite integers $\hat \Z$ with uniform Haar probability measure, $d_{k,p}({\mathbf n}) \coloneqq  \binom{v_p({\mathbf n})+k-1}{k-1}$ is the local component of $d_k$ at $p$ (with the $p$-valuation $v_p({\mathbf n})$ being the supremum of all $j$ such that $p^j$ divides ${\mathbf n}$), and $\Lambda_p({\mathbf n}) \coloneqq  \frac{p}{p-1} 1_{p \nmid {\mathbf n}}$ is the local component of $\Lambda$.  See \cite{nt} for an explanation of these heuristics and a verification of the asymptotic \eqref{pkl}, as well as an explicit formula for the local factor ${\mathfrak S}_{k,l,p}(h)$.  For comparison, it is easy to see that
$$ {\mathfrak S}(h) = \prod_p \frac{\E \Lambda_p({\mathbf n}) \Lambda_p({\mathbf n}+h)}{\E \Lambda_p({\mathbf n}) \E \Lambda_p({\mathbf n})}$$
for all non-zero integers $h$, and similarly
$$ {\mathfrak S}(X) = \prod_p \frac{\E \Lambda_p({\mathbf n}) \Lambda_p(X-{\mathbf n})}{\E \Lambda_p({\mathbf n}) \E \Lambda_p({\mathbf n})}$$
for all non-zero integers $X$.
\end{remark}

Conjecture \ref{bigconj} is considered to be quite difficult, particularly when $k$ and $l$ are large, even if one allows the error term to be larger than $X^{1/2+o(1)}$ (but still smaller than the main term). For instance it is a notorious open problem to obtain an asymptotic for the divisor correlations in the case $k = l = 3$.  The objective of this paper is to obtain a weaker version of Conjecture \ref{bigconj} in which one has less control on the error terms, and one is content with obtaining the asymptotics for \emph{most} $h$ in a given range $[h_0-H,h_0+H]$, rather than for \emph{all} $h$.  This is in analogy with our recent work on Chowla and Elliott type conjectures for bounded multiplicative functions \cite{mrt}, although our methods here are different\footnote{In particular, the arguments in \cite{mrt} rely heavily on multiplicativity in small primes, which is absent in the case of the von Mangoldt function, and in the case of the divisor functions $d_k$ would not be strong enough to give error terms of size $O_A(\log^{-A} x)$ times the main term.  In any event, the arguments in this paper certainly cannot work for $H$ slower than $\log X$ even if one assumes conjectures such as the Generalized Lindel\"of Hypothesis, the Generalized Riemann Hypothesis or the Elliott-Halberstam conjecture, as the $h=0$ term would dominate all of the averages considered here.}.  Our ranges of $h$ will be shorter than those in previous literature on Conjecture \ref{bigconj}, although they cannot be made arbitrarily slowly growing with $X$ as was the case for bounded multiplicative functions in \cite{mrt}.  In particular, the methods in this paper will certainly be unable to unconditionally handle intervals of length $X^{1/6-\eps}$ or shorter for any $\eps>0$, since it is not even known\footnote{See \cite{zacc} for the best known result in this direction.} currently if the prime number theorem is valid in most intervals of the form $[X,X+X^{1/6-\eps}]$, and such a result would easily follow from an averaging argument (using a well-known calculation of Gallagher \cite{gal-hl}) if we knew the prime tuples conjecture \eqref{hl-conj} for most $h = O(X^{1/6-\eps})$.  However, one can do much better than this if one assumes powerful conjectures such as the Generalized Lindel\"of Hypothesis (GLH), the Generalized Riemann Hypothesis (GRH), or the Elliott-Halberstam conjecture (EH). We plan to discuss some of these conditional results in more detail on another occasion. 

In the case of the divisor correlation conjecture \eqref{conrey-gonek} and the higher order Titchmarsh divisor problem~(\ref{titch}), we can obtain much smaller values of $H$ (but with a much weaker error term) by a different method related to \cite{mr} and \cite{mrt}. We will address this question in the sequel \cite{mrt-corr2} to this paper.

\subsection{Prior results}

We now discuss some partial progress on each of the four parts to Conjecture \ref{bigconj}, starting with the prime tuples conjecture \eqref{hl-conj}.
The conjecture \eqref{hl-conj} is trivial for odd $h$, so we now restrict attention to even $h$.  In this case, even the weaker estimate
\begin{equation}\label{hl-weak}
\sum_{X < n \leq 2X} \Lambda(n) \Lambda(n+h) = {\mathfrak S}(h) X + o(X)
\end{equation}
is not known to hold for any single choice of $h$; for instance, the case $h=2$ would imply the twin prime conjecture, which remains open.  One can of course still use sieve theoretic methods (see e.g. \cite[Corollary 3.14]{mv}) to obtain the upper bound
\begin{equation*}
\sum_{X < n \leq 2X} \Lambda(n) \Lambda(n+h) \ll {\mathfrak S}(h) X 
\end{equation*}
uniformly for $|h| \leq X$ (say). 

There are a number of results \cite{vdc}, \cite{lavrik}, \cite{balog}, \cite{wolke}, \cite{mikawa}, \cite{pp}, \cite{kawada} that show that \eqref{hl-conj} holds for ``most'' $h$ with $|h| \leq H$, as long as $H$ grows moderately quickly with $X$.  The best known result in the literature (with respect to the range of $H$) is by Mikawa \cite{mikawa} and Perelli-Pintz \cite{pp}, who showed (in our notation) that if $X^{1/3+\eps} \leq H \leq X^{1-\eps}$ for some\footnote{One can also handle the range $X^{1-\eps} \leq H \leq X$ by the same methods; see \cite{mikawa} or \cite{pp}. However, we restrict $H$ to be slightly smaller than $X$ here in order to avoid some minor technicalities arising from the fact that $n+h$ might have a slightly different magnitude than $n$. This becomes relevant when dealing with the $d_k$ functions, whose average value depends on the magnitude of the argument.} fixed $\eps>0$, then the estimate \eqref{hl-weak} holds for all but $O_{A,\eps}( H \log^{-A} X )$ values of $h$ with $|h| \leq H$, for any fixed $A$; in fact the $o(X)$ error term in \eqref{hl-weak} can also be taken to be of the form $O_{A,\eps}( X \log^{-A} X )$. 

Now we turn to the divisor correlation conjecture \eqref{conrey-gonek}.  These correlations have been studied by many authors \cite{ingham-0}, \cite{ingham}, \cite{estermann}, \cite{linnik}, \cite{hb}, \cite{moto0}, \cite{moto}, \cite{moto2}, \cite{moto3}, \cite{kuz}, \cite{desh}, \cite{di}, \cite{top}, \cite{ft}, \cite{ivic}, \cite{ivic2}, \cite{conrey}, \cite{iw}, \cite{meurman}, \cite{bv}, \cite{drappeau}, \cite{nt}.  When $k=2$, the conjecture is known to be true with a somewhat worse error term. For instance in the case $k = l = 2$ the current record is 
\begin{align*}
\sum_{X < n \leq 2X} d_2(n) d_2(n+h) &= P_{2,2,h}(\log X) X + O(X^{2/3+o(1)}) \\
\end{align*}
as $X \to \infty$. This result is due to Deshouillers-Iwaniec \cite{di}. 
In the cases $l \geq 3$, a power savings
$$ \sum_{X < n \leq 2X} d_2(n) d_l(n+h) = P_{2,l,h}(\log X) X + O(X^{1-\delta_{l}+o(1)})$$
for exponents $\delta_l > 0$, is known \cite{bv}, \cite{drappeau}, \cite{top2}.  See \cite{drappeau}, \cite{nt} for further references and surveys of the problem.  Finally, we remark that a function field analogue of \eqref{conrey-gonek} has been established in \cite{abr}, but with an error term that is only bounded by $O_{k,l}(q^{-1/2})$ times the main term (so the result pertains to the ``large $q$ limit'' rather than the ``large $n$ limit'').

When $k,l \geq 3$, no unconditional proof of even the weaker asymptotic
$$
\sum_{X < n \leq 2X} d_k(n) d_l(n+h) = P_{k,l,h}(\log X) X + o(X \log^{k+l-2} X)
$$ 
is known.
However, upper and lower bounds of the correct order of magnitude are available; see for example, \cite{henriot, henriot2, matt, matt2, Nair, nt}. 

In the case $k=l=3$, the analogue of Mikawa's and Perelli-Pintz's results (now with a power savings in error terms) were recently established by
Baier, Browning, Marasingha, and Zhao \cite{bbmz}, who were able to obtain the asymptotic
$$ \sum_{X < n \leq 2X} d_3(n) d_3(n+h) = P_{3,3,h}(\log X) X + O( X^{1-\delta} )$$
for all but $O_\eps(H X^{-\delta})$ choices of $h$ with $|h| \leq H$, provided that $X^{1/3+\eps} \leq H \leq X^{1-\eps}$ for some fixed $\eps>0$, and $\delta>0$ is a small exponent depending only on $\eps$.

Next, we turn to the (higher order) Titchmarsh divisor problem \eqref{titch}.  This problem is often expressed in terms of computing an asymptotic for $\sum_{p \leq X} d_k(p+h)$ rather than $\sum_{X < n \leq 2X} \Lambda(n) d_k(n+h)$, but the two sums can be related to each other via summation by parts up to negligible error terms, so it is fairly easy to translate results about one sum to the other.  The $k=2$ case of \eqref{titch} with qualitative error term was established by Linnik \cite{linnik}.  This result was improved by Fouvry \cite{fouvry} and Bombieri-Friedlander-Iwaniec \cite{bfi}, who in our notation showed that
$$\sum_{X < n \leq 2X} \Lambda(n) d_2(n+h) = Q_{2,h}(\log X) X + O_A( X \log^{-A} X )$$
for any $A>0$.  Recently, Drappeau \cite{drappeau} showed that the error term could be improved to $O(X \exp( - c \sqrt{\log X} ) )$ for some $c>0$ provided that one added a correction term in the case of a Siegel zero; under the assumption of GRH, the error term could be improved further to  $O(X^{1-\delta})$ for some absolute constant $\delta>0$.  Fiorilli \cite{fiorilli} also established some uniformity of the error term in the parameter $h$.  A function field analog of \eqref{titch} was proven (for arbitrary $k$) in \cite{abr}, but with an error term that is $O_k(q^{-1/2})$ times the main term.

When $k \geq 3$ even the weaker estimate
$$\sum_{X < n \leq 2X} \Lambda(n) d_k(n+h) = Q_{k,h}(\log X) X + o( X \log^{k-1} X )$$
remains open; sieve theoretic methods would only give this asymptotic assuming a level of distribution of $\Lambda$ that is greater than $1-1/k$, which would follow from EH but is not known unconditionally for any $k \geq 3$, even after the recent breakthrough of Zhang \cite{zhang} (see also \cite{polymath8a}).

In analogy with the results of Baier, Browning, Marasingha, and Zhao \cite{bbmz}, it is likely that the method of Mikawa \cite{mikawa} or Perelli-Pintz \cite{pp} can be extended to give an asymptotic of the form
$$\sum_{X < n \leq 2X} \Lambda(n) d_3(n+h) = Q_{3,h}(\log X) X + O_{A,\eps}( X \log^{-A} X )$$
for all but $O_{A,\eps}( H \log^{-A} X )$ values of $h$ with $|h| \leq H$, for any fixed $A$, if $X^{1/3+\eps} \leq H \leq X^{1-\eps}$ for some fixed $\eps>0$; however to our knowledge this result has not been explicitly proven in the literature.

Finally, we discuss some known results on the Goldbach conjecture \eqref{goldbach-conj}.  As with the prime tuples conjecture, standard sieve methods (e.g. \cite[Theorem 3.13]{mv}) will give the upper bound
$$ \sum_{n} \Lambda(n) \Lambda(X-n) \ll {\mathfrak S}(X) X$$
uniformly in $X$.  There are a number of results \cite{tch}, \cite{vdc}, \cite{est}, \cite{mv-gold}, \cite{chen-pan}, \cite{li}, \cite{lu} establishing that the left-hand side of \eqref{goldbach-conj} is positive for ``most'' large even integers $X$; for instance, in \cite{lu} it was shown that this was the case for all but $O(X_0^{0.879})$ of even integers $X \leq X_0$, for any large $X_0$.  There are analogous results in shorter intervals \cite{peneva}, \cite{ly}, \cite{yao}, \cite{jia}, \cite{harman}, \cite{matomaki}; for instance in \cite{matomaki} it was shown that for any $1/5 < \theta \leq 1$ the left-hand side of \eqref{goldbach-conj} is positive for all but $O( X_0^{\theta - \delta} )$ even integers $X \in [X_0, X_0 + X_0^\theta]$, for some $\delta>0$ depending on $\theta$, while in \cite[Chapter 10]{harman} it is shown that for $\frac{11}{180} \leq \theta \leq 1$ and $A>0$, the left-hand side of \eqref{goldbach-conj} is positive for all but $O_A( X_0 \log^{-A} X_0)$ even integers $X \in [X_0, X_0 + X_0^\delta]$.  On the other hand, if one wants the left-hand side of \eqref{goldbach-conj} to not just be positive, but be close to the main term ${\mathfrak S}(X) X$ on the right-hand side, the state of the art requires larger intervals.  For instance, in \cite[Proposition 19.5]{ik} it is shown that \eqref{goldbach-conj} holds (with $O_A( X_0 \log^{-A} X_0)$ error term) for all but $O_A(X_0 \log^{-A} X_0)$ even integers $X$ in $[1,X_0]$.  In \cite{pp}, Perelli and Pintz obtained a similar result for the intervals $[X_0,X_0 + X_0^{\frac{1}{3}+\eps}]$ for any $\eps>0$.  In \cite{halupczok}, Halupczok obtains variants of the result of Perelli-Pintz with the additional requirement that one of the prime in $n = p_1 + p_2$ is constrained to a short interval or an arithmetic progression with large moduli. 

\subsection{New results}

Our main result is as follows: for all four correlations (i)-(iv) in Conjecture \ref{bigconj}, we can improve upon the results of Mikawa, Perelli-Pintz, and Baier-Browning-Marasingha-Zhao by improving the exponent $\frac{1}{3}$ to the quantity
\begin{equation}\label{sigmadef}
\record \coloneqq \recordexplicit;
\end{equation}
for future reference we observe that $\record$ lies in the range
\begin{equation}\label{sigma-range}
\frac{1}{5} < \frac{11}{48} <  \frac{7}{30} < \record < \frac{1}{4}.
\end{equation}
(The significance of the other fractions in \eqref{sigma-range} will become more apparent later in the paper.)  More precisely, we have

\begin{theorem}[Averaged correlations]\label{unav-corr}  Let $A>0$, $0 < \eps < 1/2$ and $k,l \geq 2$ be fixed, and suppose that $X^{\record+\eps} \leq H \leq X^{1-\eps}$ for some $X \geq 2$, where $\record$ is defined by \eqref{sigmadef}.  Let $0 \leq h_0 \leq X^{1-\eps}$.  
\begin{itemize}
\item[(i)]  (Averaged Hardy-Littlewood conjecture) One has
$$ \sum_{X < n \leq 2X} \Lambda(n) \Lambda(n+h) = {\mathfrak S}(h) X + O_{A,\eps}( X \log^{-A} X )$$
for all but $O_{A,\eps}( H \log^{-A} X )$ values of $h$ with $|h-h_0| \leq H$.
\item[(ii)]  (Averaged divisor correlation conjecture) One has
$$ \sum_{X < n \leq 2X} d_k(n) d_l(n+h) = P_{k,l,h}(\log X) X + O_{A, \eps,k,l}( X \log^{-A} X )$$
for all but $O_{A,\eps,k,l}( H \log^{-A} X )$ values of $h$ with $|h-h_0| \leq H$.
\item[(iii)]  (Averaged higher order Titchmarsh divisor problem)  One has
$$ \sum_{X < n \leq 2X} \Lambda(n) d_k(n+h) = Q_{k,h}(\log X) X + O_{A,\eps,k}( X \log^{-A} X )$$
for all but $O_{A,\eps,k}( H \log^{-A} X )$ values of $h$ with $|h-h_0| \leq H$.
\item[(iv)]  (Averaged Goldbach conjecture)  One has
$$ \sum_n \Lambda(n) \Lambda(N-n) = {\mathfrak S}(N) N + O_{A,\eps}(X \log^{-A} X )$$
for all but $O_{A,\eps}(H \log^{-A} X)$ integers $N$ in the interval $[X,X+H]$.
\end{itemize}
\end{theorem}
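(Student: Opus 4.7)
My plan is to prove all four parts by a second-moment (variance) approach combined with the circle method, following the Zhan-type framework alluded to in the abstract. For concreteness let us focus on part (i); write $S(h) \coloneqq \sum_{X < n \leq 2X} \Lambda(n) \Lambda(n+h)$ and let $M(h) \coloneqq {\mathfrak S}(h) X$ be the conjectured main term. It suffices to prove the variance bound
\[
\sum_{|h-h_0| \leq H} |S(h) - M(h)|^2 \ll_{A,\eps} H X^2 \log^{-2A} X,
\]
since by Markov's inequality this gives an exceptional set of size $\ll_{A,\eps} H \log^{-A} X$ on which the pointwise estimate can fail.

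To handle the variance I would introduce a smooth majorant of the indicator $1_{|h-h_0| \leq H}$ and apply Plancherel on $\mathbb{T}$, converting the sum into an integral of the form $\int_{\mathbb{T}} |F(\alpha)|^2 |G(\alpha)|^2 w(\alpha)\, d\alpha$, where $F,G$ are the exponential sums of $\Lambda \cdot 1_{(X,2X]}$ and $w$ is a Fej\'er-type kernel concentrated in $\sim 1/H$-neighbourhoods of rationals. A standard major-arc analysis, via the Siegel--Walfisz theorem combined with the oscillatory-integral estimates mentioned in the abstract to pass between the smooth and the sharp cutoffs, extracts precisely $M(h)$. The real work is the minor-arc integral, where pointwise bounds on $F, G$ are inadequate but mean values at scale $1/H$ are not.

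On the minor arcs I would apply the Heath-Brown identity to decompose $\Lambda \cdot 1_{(X,2X]}$ into Dirichlet convolutions of length profiles $(N_1,\dots,N_j)$; after dyadic dissection, the genuinely difficult profiles reduce to a Type $d_3$ triple convolution $\alpha \ast \beta \ast \gamma$ with each factor of length roughly $X^{1/3}$ and a Type $d_4$ quadruple convolution of analogous shape. All other configurations have either a long smooth factor or a favourable Type II split and can be handled by classical Dirichlet polynomial mean-value techniques. For the Type $d_3$ piece I would apply H\"older's inequality in one of the three variables to factor the minor-arc integrand into (a) a fourth-moment-type Dirichlet polynomial factor, estimated via Jutila's short-interval mean value theorem, and (b) a residual exponential-sum factor that reduces to counting integer quadruples $(n_1,n_2,n_3,n_4)$ with $n_1 n_2$ very close to $n_3 n_4$, handled by the Robert--Sargos bounds.

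The Type $d_4$ piece admits the same H\"older--mean-value--exponential-sum pattern, but now with the classical $L^2$ mean value theorem in place of Jutila and a van der Corput $k$th-derivative estimate in place of Robert--Sargos. Balancing the resulting H\"older exponents against the range of $H$ in both types gives precisely $\record = \recordexplicit$ as the threshold (this is where the intermediate fractions $\frac{11}{48}, \frac{25}{108}, \frac{7}{30}$ appearing in \eqref{sigma-range} will enter). Parts (ii)--(iv) then follow by running the same machinery after replacing the Heath-Brown decomposition of $\Lambda$ by the trivial convolution identity $d_k = 1 \ast \cdots \ast 1$ for the divisor functions, and in the Goldbach case (iv) by reinterpreting the average over $h$ as an average over $N$ through a linear change of variables. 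The main obstacle I anticipate is the tight simultaneous balancing of the two types: the Jutila/Robert--Sargos pair for Type $d_3$ and the $L^2$/van der Corput pair for Type $d_4$ are both rather rigid, and any inefficiency in either step would push $\record$ past $\frac{1}{4}$, erasing the improvement over the earlier $\frac{1}{3}$ threshold.
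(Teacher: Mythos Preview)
Your proposal is essentially correct and follows the same route as the paper: a variance bound via the circle method, major arcs by Siegel--Walfisz (respectively direct Dirichlet-series computations for $d_k$), Heath--Brown decomposition on the minor arcs, and then Jutila plus Robert--Sargos for Type $d_3$ and the classical $L^2$ mean value theorem plus the van der Corput $(1/14,2/7)$ pair for Type $d_4$. The one step you pass over somewhat quickly is the passage from the short exponential-sum integral $\int_{\beta-1/H}^{\beta+1/H}|S_f(\alpha)|^2\,d\alpha$ to a Dirichlet-polynomial mean value of the shape $\int_{|t|\asymp \lambda X}\bigl(\int_{t-\lambda H}^{t+\lambda H}|\mathcal{D}[f](\tfrac12+it')|\,dt'\bigr)^2\,dt$: this is the Zhan-type stationary-phase/Gallagher step, and the paper spends some effort making it precise (including the Littlewood--Paley truncation that isolates the frequency range $|t|\asymp|\beta|X$ and the expansion into Dirichlet characters to handle the rational shift $a/q$). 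Since you explicitly invoke the Zhan framework and Jutila's Dirichlet $L$-function estimate, you clearly have this in mind; just be aware that it is where a fair amount of the technical work lies.
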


In the case of correlations of the divisor functions, our method can be modified to obtain power-savings in the error terms. However, since we cannot obtain power-savings in the case of correlations of the von Mangoldt function, in order to keep our choice of parameters uniform accross the four cases stated in Theorem \ref{unav-corr} we have decided to state the result for the divisor function with weaker error terms (see Remark \ref{power} below for more details).

As mentioned previously, the cases $H \geq X^{\frac{1}{3}+\eps}$ of the above theorem are essentially in the literature, either being contained in the papers of Mikawa \cite{mikawa} Perelli-Pintz \cite{pp} and Baier et al. \cite{bbmz}, or following from a modification of their methods.  We give a slightly different proof on these cases in this paper. Still another, but related, proof of the $H \geq X^{\frac{1}{3}+\eps}$ cases could be obtained by adapting arguments used previously for studying the Goldbach problem in short intervals, see e.g. \cite[Chapter 10]{harman} for such arguments. In the range $H > X^{\frac{1}{3} + \varepsilon}$ our argument relies only on standard mean-value theorems and on a simple bound for the fourth moment of Dirichlet $L$-functions (which follows from the mean-value theorem and Poisson summation formula). In contrast, the approaches in \cite{mikawa, bbmz, pp} depend in the range $H > X^{1/3 + \varepsilon}$ on some non-trivial input, such as either bounds for the sixth moment of the Riemann zeta-function off the half-line (in \cite{bbmz}), zero-density estimates (in \cite{pp}) or estimates for Kloosterman sums (in \cite{mikawa}). In fact our approach is entirely independent of results on Kloosterman sums, even for smaller $H$ (see Remark \ref{kloosterman} below for more details). 

Before we embark on a discussion of the proof, we note that our results do not appear to have new consequences for moments of the Riemann-zeta function. For instance for the problem of estimating the sixth moment of the Riemann zeta-function one needs an estimate for
\begin{equation} \label{correlationD}
\sum_{h \leq H} \sum_{n \leq X} d_3(n) d_3(n + h)
\end{equation}
in the range $H = X^{1/3}$. To obtain an improvement over the best-known estimate
$$
\int_{T}^{2T} |\zeta(\tfrac 12 + it)|^6 dt \ll T^{5/4+\eps}
$$
one would need to show that in the range $H = X^{1/3}$ the error term in \eqref{correlationD} is $\ll H X^{5/6 - \varepsilon}$. A naive application of our result, gives a bound of 
$\ll_{A} H X (\log X)^{-A}$ for the error term. As we pointed out earlier in the case of the divisor function, it is possible to improve the $(\log X)^{-A}$ to $X^{-\delta}$ for some $\delta > 0$, however since our method is optimized for dealing with smaller $H$, rather than with $H = X^{1/3}$ we doubt that there will be new results in this range. 



We now briefly summarize the arguments used to prove Theorem \ref{unav-corr}.  To follow the many changes of variable of summation (or integration) in the argument, it is convenient to refer to the following diagram:
$$
\left.
\begin{array}{ccc}
\text{Additive frequency } \alpha & & \text{Multiplicative frequency } t \\
\Updownarrow & & \Updownarrow \\
\text{Position } n & \Leftrightarrow & \text{Logarithmic position } u
\end{array}
\right.
$$
Initially, the correlations studied in Theorem \ref{unav-corr} are expressed in terms of the position variable $n$ (an integer comparable to $X$), which we have placed in the bottom left of the above diagram.  The first step in analyzing these correlations, which is standard, is to apply the Hardy-Littlewood circle method (i.e., the Fourier transform), which expresses correlations  such as \eqref{co} as an integral
$$ \int_{\T} S_f(\alpha) \overline{S_g(\alpha)} e(\alpha h)\ d\alpha$$
over the unit circle $\T \coloneqq \R/\Z$, where $S_f, S_g$ are the exponential sums
\begin{align*}
S_f(\alpha) &\coloneqq  \sum_{X < n \leq 2X} f(n) e(n \alpha) \\
S_g(\alpha) &\coloneqq  \sum_{X < n \leq 2X} g(n) e(n\alpha).
\end{align*}
The additive frequency $\alpha$, which is the Fourier-analytic dual to the position variable $n$, is depicted on the top left of the above diagram.  In our applications, $f$ will be of the form $\Lambda 1_{(X,2X]}$ or $d_k 1_{(X,2X]}$, and similarly for $g$. We then divide $\T$ into the \emph{major arcs}, in which $|\alpha - \frac{a}{q}| \leq \frac{\log^{B'} X}{X}$ for some $q \leq \log^B X$, and the \emph{minor arcs}, which consist of all other $\alpha$.  Here $B' > B > 0$ are suitable large constants (depending on the parameters $A,k,l$).  

The major arcs contribute the main terms ${\mathfrak S}(h) X$, $P_{k,l,h}(\log X) X$, $Q_{k,h}(\log X) X$, ${\mathfrak S}(N) N$ to Theorem \ref{unav-corr}, and the estimation of their contribution is standard; we do this in Section \ref{major-sec}.   The main novelty in our arguments lies in the treatment of the minor arc contribution, which we wish to show is negligible on the average.  After an application of the Cauchy-Schwarz inequality, the main task becomes that of estimating the integral
\begin{equation}\label{dol}
 \int_{\beta - 1/H}^{\beta + 1/H} |S_f(\alpha)|^2\ d\alpha
\end{equation}
for various ``minor arc'' $\beta$.  To do this, we follow a strategy from a paper of Zhan \cite{zhan} and estimate this type of integral in terms of the Dirichlet series
$$ {\mathcal D}[f](\frac{1}{2}+it) \coloneqq \sum_n \frac{f(n)}{n^{\frac{1}{2}+it}}$$
for various ``multiplicative frequencies'' $t$.  Actually for technical reasons we will have to twist these Dirichlet series by a Dirichlet character $\chi$ of small conductor, but we ignore this complication for this informal discussion.  The variable $t$ is depicted on the top right of the above diagram, and so we will have to return to the position variable $n$ and then go through the logarithmic position variable $u$, which we will introduce shortly.

Applying the Fourier transform (as was done by Gallagher in \cite{gallagher}), we can control the expression \eqref{dol} in terms of an expression of the form
$$ \int_\R |\sum_{x \leq n \leq x+H} f(n) e(\beta n)|^2\ dx.$$
Actually, it is convenient to smooth the summation appearing here, but we ignore this technicality for this informal discussion.  This returns one to the bottom left of the above diagram.  Next, one makes the logarithmic change of variables $u = \log n - \log X$, or equivalently $n = X e^u$.  This transforms the main variable of interest to a bounded real number $u=O(1)$, and the phase $e(\beta n)$ that appears in the above expression now takes the form $e(\beta X e^u)$.  We are now at the bottom right of the diagram.

Finally, one takes the Fourier transform to convert the expression involving $u$ to an expression involving $t$, which (up to a harmless factor of $2\pi$, as well as a phase modulation) is the Fourier dual of $u$.  Because the $u$ derivative of the phase $\beta X e^u$ is comparable in magnitude to $|\beta| X$, one would expect the main contributions in the integration over $t$ to come from the region where $t$ is comparable to $|\beta| X$.  This intuition can be made rigorous using Fourier-analytic tools such as Littlewood-Paley projections and the method of stationary phase.

At this point, after all the harmonic analytic transformations, we come to the arithmetic heart of the problem. A precise statement of the estimates needed can be found in Proposition \ref{mve}; a model problem is to obtain an upper bound on the quantity
$$ \int_{|t| \asymp \lambda X} \left(\int_{t-\lambda H}^{t+\lambda H} \left|{\mathcal D}[f](\frac{1}{2}+it')\right|\ dt'\right)^2\ dt$$
for $\frac{1}{H} \ll \lambda \ll \log^{-B} X$ that improves (by a large power of $\log X$) upon the trivial bound of $O_k(\lambda^2 H^2 X \log^{O_k(1)} X)$ that one can obtain from the Cauchy-Schwarz inequality
$$ \left(\int_{t-\lambda H}^{t+\lambda H} |{\mathcal D}[f](\frac{1}{2}+it')|\ dt'\right)^2 \ll \lambda H 
\int_{t-\lambda H}^{t+\lambda H} |{\mathcal D}[f](\frac{1}{2}+it')|^2\ dt' $$
Fubini's theorem, and the standard $L^2$ mean value theorem for Dirichlet polynomials.
The most difficult case occurs when $\lambda$ is large (e.g. $\lambda = \log^{-B} X$); indeed, the case $\lambda \leq X^{-\frac{1}{6}-\eps}$ of small $\lambda$ is analogous to the prime number theorem in most short intervals of the form $[X, X+X^{\frac{1}{6}+\eps}]$, and (following \cite{harman}) can be treated by such methods as the Huxley large values estimate and mean value theorems for Dirichlet polynomials.  This is done in Appendix \ref{harman-sec}.  (In the case $f = d_3 1_{(X,2X]}$, these bounds are essentially contained (in somewhat disguised form) in \cite[Theorem 1.1]{bbmz}.)

For sake of argument let us focus now on the case $f = \Lambda 1_{(X,2X]}$.  We proceed via the usual technique of decomposing $\Lambda$ using the Heath-Brown identity \cite{hb-ident} and further dyadic decompositions.  Because $\sigma$ lies in the range \eqref{sigma-range}, this leaves us with ``Type II'' sums where $f$ is replaced by a Dirichlet convolution $\alpha \ast \beta$ with $\alpha$ supported on $[X^{\eps^2}, X^{-\eps^2} H]$, as well as ``Type $d_1$'', ``Type $d_2$'', ``Type $d_3$'', and ``Type $d_4$'' sums where (roughly speaking) $f$ is replaced by a Dirichlet convolution that resembles one of the first four divisor functions $d_1,d_2,d_3,d_4$ respectively.  (See Proposition \ref{types} for a precise statement of the estimates needed.) 

The contribution of the Type II sums can be easily handled by an application of the Cauchy-Schwarz inequality and $L^2$ mean value theorems for Dirichlet polynomials.  The Type $d_1$ and Type $d_2$ sums can be treated by $L^4$ moment theorems \cite{ramachandra}, \cite{bhp} for the Riemann zeta function and Dirichlet $L$-functions. These arguments are already enough to recover the results in \cite{mikawa}, \cite{pp}, \cite{bbmz}, which treated the case $H \geq X^{1/3+\eps}$; our methods are slightly different from those in \cite{mikawa}, \cite{pp}, \cite{bbmz} due to our heavier reliance on Dirichlet polynomials. To break the $X^{1/3}$ barrier we need to control Type $d_3$ sums, and to go below $X^{1/4}$ one must also consider Type $d_4$ sums.  The standard unconditional moment estimates on the Riemann zeta function and Dirichlet $L$-functions are inadequate for treating the $d_3$ sums.   Instead, after applying the Cauchy-Schwarz inequality and subdividing the range $\{ t: t \asymp \lambda X\}$ into intervals of length $\sqrt{\lambda X}$, the problem reduces to obtaining two bounds on Dirichlet polynomials in ``typical'' short or medium intervals.   A model for these problems would be to establish the bounds
\begin{equation}\label{t1}
 \int_{t_j - \sqrt{\lambda X}}^{t_j+\sqrt{\lambda X}} \left|{\mathcal D}[1_{(X^{1/3},2X^{1/3}]}]\left(\frac{1}{2}+it\right)\right|^4\ dt \ll_\eps X^{\eps^2} \sqrt{\lambda X}
\end{equation}
and
\begin{equation}\label{t2}
 \int_{t_j - H}^{t_j+H} \left|{\mathcal D}[1_{(X^{1/3},2X^{1/3}]}]\left(\frac{1}{2}+it\right)\right|^2\ dt \ll_\eps X^{\eps^2} H
\end{equation}
for ``typical'' $j=1,\dotsc,r$, where $t_1,\dotsc,t_r$ is a maximal $\sqrt{\lambda X}$-separated subset of $[\lambda X, 2\lambda X]$.  (These are oversimplifications; see Proposition \ref{qwe} and Proposition \ref{pq} for more precise statements of the bounds needed.)

The first estimate \eqref{t1} turns out to follow readily from a fourth moment estimate of Jutila \cite{jutila} for Dirichlet $L$-functions in medium-sized intervals on average.  As for \eqref{t2}, one can use the Fourier transform to bound the left-hand side by something that is roughly of the form
\begin{equation}\label{hx3}
 \frac{H}{X^{1/3}} \sum_{\ell = O( X^{1/3} / H )} \left|\sum_{m \asymp X^{1/3}} e\left( \frac{t_j}{2\pi} \log \frac{m+\ell}{m-\ell} \right)\right|.
\end{equation}
The diagonal term $\ell=0$ is easy to treat, so we focus on the non-zero values of $\ell$.
By Taylor expansion, the phase $\frac{t_j}{2\pi} \log \frac{m+\ell}{m-\ell}$ is approximately equal to the monomial $\frac{t_j}{\pi} \frac{\ell}{m}$.  If one were to actually replace $e( \frac{t_j}{2\pi} \log \frac{m+\ell}{m-\ell} )$ by $e( \frac{t_j}{\pi} \frac{\ell}{m})$, then it turns out that one can obtain a very favorable estimate by using the fourth moment bounds of Robert and Sargos \cite{rs} for exponential sums with monomial phases.  Unfortunately, the Taylor expansion does contain an additional lower order term of $\frac{t_j}{3\pi} \frac{\ell^3}{m^3}$ which complicates the analysis, but it turns out that (at the cost of some inefficiency) one can still apply the bounds of Robert and Sargos to obtain a satisfactory estimate for the indicated value \eqref{sigmadef} of $\sigma$.

In the range \eqref{sigma-range} one must also treat the Type $d_4$ sums.  Here we use a cruder version of the Type $d_3$ analysis.  The analogue of Jutila's estimate (which would now require control of sixth moments) is not known unconditionally, so we use the classical $L^2$ mean value theorem in its place.  The estimates of Robert and Sargos are now unfavorable, so we instead estimate the analogue of \eqref{hx3} using the classical van der Corput exponent pair $(1/14,2/7)$, which turns out to work even for $\sigma$ as small as $7/30$ (see \eqref{sigma-range}). Hence $d_4$ sums turn out to be easier than $d_3$ in our range of $H$. However we are not able to estimate $d_4$ sums in the full range $X^{1/5 + \varepsilon} < H < X^{1/4 - \varepsilon}$. Therefore there is no advantage in considering $d_5$ sums which would appear if we wanted to take $H$ below $X^{1/5 - \varepsilon}$ (we note that we can cover a tiny region of the $d_5$ sums by proceeding in the same manner as we do with $d_4$ sums). 
 

\begin{remark} \label{kloosterman} It is interesting to note that our work does not depend at all on estimates for Kloosterman sums. While the work of Mikawa for $H > X^{1/3 + \varepsilon}$ depends on the Weil bound for Kloosterman sums, our result in the same range only uses a bound for the fourth moment of Dirichlet $L$-functions The latter follows from the approximate functional equation and a mean-value theorem. In the smaller ranges of $H$ we use in addition estimates for short moments of Dirichlet $L$-functions (due to Jutila, see Proposition \ref{jutila-prop} below and also Corollary \ref{jutila-cor}) that are of the same strength as those that one obtains from using Kloosterman sums (due to Iwaniec, see \cite{iwaniec}) and yet whose proof is independent of input from algebraic geometry or spectral theory.   On the other hand we note that the arguments of Perelli-Pintz \cite{pp} for $H > X^{1/3 + \varepsilon}$ do not depend on Kloosterman sums but instead of zero-density estimates. 
  \end{remark}

\begin{remark}\label{power}  As usual, the results involving $\Lambda$ will have the implied constant depend in an ineffective fashion on the parameter $A$, due to our reliance on Siegel's theorem.  It may be possible to eliminate this ineffectivity (possibly after excluding some ``bad'' scales $X \asymp X_0$) by introducing a separate argument (in the spirit of \cite{hb-twins}) to handle the case of a Siegel zero, but we do not pursue this matter here.  In the proof of Theorem \ref{unav-corr}(ii), we do not need to invoke Siegel's theorem, and it is likely that (as in \cite{bbmz}) we can improve the logarithmic savings $\log^{-A} X$ to a power savings $X^{-\frac{c\eps}{k+l}}$ for some absolute constant $c>0$ (and with effective constants) by a refinement of the argument.  However, we do not do this here in order to be able to treat all four estimates in a unified fashion.
\end{remark}

\subsection{Acknowledgments}

KM was supported by Academy of Finland grant no. 285894. MR was supported by a NSERC Discovery Grant, the CRC program and a Sloan fellowship. TT was supported by a Simons Investigator grant, the James and Carol Collins Chair, the Mathematical Analysis \&
Application Research Fund Endowment, and by NSF grant DMS-1266164.  We are indebted to Yuta Suzuki for a reference and for pointing out a gap in the proof of Proposition \ref{spe} in an earlier version of the paper. We also thank Sary Drappeau and Karin Halupczok for comments on the introduction and the referee for a careful reading of the paper. 

Part of this paper was written while the authors were in residence at MSRI in Spring 2017, which is supported by NSF grant DMS-1440140.

\section{Notation and preliminaries}\label{notation-sec}

All sums and products will be over integers unless otherwise specified, with the exception of sums and products over the variable $p$ (or $p_1$, $p_2$, $p'$, etc.) which will be over primes.  To accommodate this convention, we adopt the further convention that all functions on the natural numbers are automatically extended by zero to the rest of the integers, e.g. $\Lambda(n) = 0$ for $n \leq 0$.

We use $A = O(B)$, $A \ll B$, or $B \gg A$ to denote the bound $|A| \leq C B$ for some constant $C$.  If we permit $C$ to depend on additional parameters then we will indicate this by subscripts, thus for instance $A = O_{k,\eps}(B)$ or $A \ll_{k,\eps} B$ denotes the bound $|A| \leq C_{k,\eps} B$ for some $C_{k,\eps}$ depending on $k,\eps$.  If $A,B$ both depend on some large parameter $X$, we say that $A = o(B)$ as $X \to \infty$ if one has $|A| \leq c(X) B$ for some function $c(X)$ of $X$ (as well as further ``fixed'' parameters not depending on $X$), which goes to zero as $X \to \infty$ (holding all ``fixed'' parameters constant). We also write $A \asymp B$ for $A \ll B \ll A$, with the same subscripting conventions as before.

We use $ \T \coloneqq  \R/\Z$ to denote the unit circle, and $e: \T \to \C$ to denote the fundamental character
$$ e(x) \coloneqq  e^{2\pi i x}.$$

We use $1_E$ to denote the indicator of a set $E$, thus $1_E(n) = 1$ when $n \in E$ and $1_E(n) = 0$ otherwise. Similarly, if $S$ is a statement, we let $1_S$ denote the number $1$ when $S$ is true and $0$ when $S$ is false, thus for instance $1_E(n) = 1_{n \in E}$.  If $E$ is a finite set, we use $\# E$ to denote its cardinality.

We use $(a,b)$ and $[a,b]$ for the greatest common divisor and least common multiple of natural numbers $a,b$ respectively, and write $a|b$ if $a$ divides $b$.  We also write $a = b\ (q)$ if $a$ and $b$ have the same residue modulo $q$.

Given a sequence $f: X \to \C$ on a set $X$, we define the $\ell^p$ norm $\|f\|_{\ell^p}$ of $f$ for any $1 \leq p < \infty$ as
$$ \|f\|_{\ell^p} \coloneqq  \left(\sum_{n \in X} |f(n)|^p\right)^{1/p}$$
and similarly define the $\ell^\infty$ norm 
$$ \|f\|_{\ell^\infty} \coloneqq  \sup_{n \in X} |f(n)|.$$

Given two arithmetic functions $f, g: \N \to \C$, the Dirichlet convolution $f \ast g$ is defined by
$$ f \ast g(n) \coloneqq  \sum_{d|n} f(d) g\left(\frac{n}{d}\right).$$

\subsection{Summation by parts and exponential sums}

If one has an asymptotic of the form $\sum_{X \leq n \leq X''} g(n) \approx \int_X^{X''} h(x)\ dx$ for all $X \leq X'' \leq X'$, then one can use summation by parts to then obtain approximations of the form $\sum_{X \leq n \leq X'} f(n) g(n) \approx \int_X^{X'} f(x) h(x)\ dx$ for sufficiently ``slowly varying'' amplitude functions $f: [X,X'] \to \C$.  The following lemma formalizes this intuition:

\begin{lemma}[Summation by parts]\label{sbp}  Let $X \leq X'$, and let $f: [X,X'] \to \C$ be a smooth function.  Then for any function $g: \N \to \C$ and absolutely integrable $h: [X,X'] \to \C$, we have
$$ 
\sum_{X \leq n \leq X'} f(n) g(n) - \int_X^{X'} f(x) h(x)\ dx \leq |f(X')| E(X') + \int_{X}^{X'} |f'(X'')| E(X'')\ dX''$$
where $f'$ is the derivative of $f$ and $E(X'')$ is the quantity
$$ E(X'') \coloneqq  \left|\sum_{X \leq n \leq X''} g(n) - \int_X^{X''} h(x)\ dx\right|.$$
\end{lemma}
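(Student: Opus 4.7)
The strategy is to reduce the left-hand side to a Riemann--Stieltjes integral against the running error and then apply integration by parts. Concretely, I would introduce
$$S(y) := \sum_{X \leq n \leq y} g(n) - \int_X^y h(x)\, dx,$$
so that $|S(y)| = E(y)$ and $S(X^-) = 0$ under the standing conventions that empty sums and integrals vanish. The function $S$ is right-continuous and of bounded variation on $[X,X']$, being the difference of a step function with jumps $g(n)$ at each integer $n \in [X,X']$ and the continuous function $y \mapsto \int_X^y h$. By the very definition of the Stieltjes integral against the combined measure $\sum_n g(n)\delta_n - h(x)\,dx$, the quantity on the left-hand side of the claimed inequality (read as a signed quantity, before absolute values) is exactly $\int_{[X,X']} f(y)\, dS(y)$.

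Next I would apply Riemann--Stieltjes integration by parts. Using $S(X^-) = 0$ together with the smoothness of $f$, this yields
$$\int_{[X,X']} f(y)\, dS(y) = f(X') S(X') - \int_X^{X'} f'(y) S(y)\, dy.$$
The lemma then follows on taking absolute values and applying the triangle inequality, together with $|S(y)| = E(y)$. If one prefers to avoid Stieltjes machinery, the same identity can be proved in an entirely elementary fashion: apply classical Abel summation to $\sum_{X \leq n \leq X'} f(n) g(n)$, expanding $f(n) = f(X') - \int_n^{X'} f'(y)\,dy$ and swapping the order of summation to pair $f'$ against the partial sums $\sum_{X \leq m \leq y} g(m)$, then perform the analogous integration by parts on $\int_X^{X'} f(x) h(x)\,dx$ pairing $f'$ against $\int_X^y h$, and subtract the two expressions. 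In either route the only mild subtlety is the boundary behavior at $X$, which is cleanly handled by the identity $S(X^-) = 0$; beyond that the argument is pure bookkeeping and presents no real obstacle.
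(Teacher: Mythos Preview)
Your proposal is correct and essentially matches the paper's proof. The paper carries out precisely the ``elementary'' route you sketch as an alternative: it applies the fundamental theorem of calculus separately to $\sum f(n)g(n)$ and to $\int f(x)h(x)\,dx$ to express each as a boundary term minus an integral against $f'$, subtracts the two identities, and then takes absolute values; your Riemann--Stieltjes formulation is just a repackaging of the same integration-by-parts identity with $S$ introduced up front rather than after subtraction.
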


\begin{proof}  From the fundamental theorem of calculus we have
\begin{equation}\label{sb-ident}
 \sum_{X \leq n \leq X'} f(n) g(n) = f(X') \sum_{X \leq n \leq X'} g(n) - \int_X^{X'} \left(\sum_{X \leq n \leq X''} g(n)\right) f'(X'')\ dX''
\end{equation}
and similarly
$$ \int_X^{X'} f(x) h(x)\ dx = f(X') \int_X^{X'} h(x)\ dx - \int_X^{X'} \left(\int_X^{X''} h(x)\ dx\right) f'(X'')\ dX''.$$
Subtracting the two identities and applying the triangle inequality and Minkowski's integral inequality, we obtain the claim.
\end{proof}

The following variant of Lemma \ref{sbp} will also be useful.  Following Robert and Sargos \cite{rs}, define the maximal sum $|\sum_{X \leq n \leq X'} g(n)|^*$ to be the expression
\begin{equation}\label{maxexp}
 \left|\sum_{X \leq n \leq X'} g(n)\right|^* \coloneqq \sup_{X \leq X_1 \leq X_2 \leq X'} \left|\sum_{X_1 \leq n \leq X_2} g(n)\right|.
\end{equation}

\begin{lemma}[Summation by parts, II]\label{sbp-2}  Let $X \leq X'$, let $f: [X,X'] \to \C$ be smooth, and let $g: \N \to \C$ be a sequence.  Then
$$ \left|\sum_{X \leq n \leq X'} f(n) g(n)\right|^* \leq \left|\sum_{X \leq n \leq X'} g(n)\right|^* \left( \sup_{X \leq x \leq X'} |f(x)| + (X'-X) \sup_{X \leq x \leq X'} |f'(x)|\right).$$
\end{lemma}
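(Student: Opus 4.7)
The plan is to reduce to the Abel summation identity \eqref{sb-ident} used in Lemma \ref{sbp}, applied not on the full range $[X, X']$ but on an arbitrary subrange $[X_1, X_2] \subseteq [X, X']$, and then take a supremum. More precisely, I would fix $X \leq X_1 \leq X_2 \leq X'$ and apply the analogue of \eqref{sb-ident} with $X, X'$ replaced by $X_1, X_2$, giving
\[
 \sum_{X_1 \leq n \leq X_2} f(n) g(n) = f(X_2) \sum_{X_1 \leq n \leq X_2} g(n) - \int_{X_1}^{X_2} \Big(\sum_{X_1 \leq n \leq X''} g(n)\Big) f'(X'')\ dX''.
\]

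The key observation is that both of the partial sums appearing on the right-hand side, namely $\sum_{X_1 \leq n \leq X_2} g(n)$ and $\sum_{X_1 \leq n \leq X''} g(n)$ for $X_1 \leq X'' \leq X_2$, are subsums of $\sum_{X \leq n \leq X'} g(n)$ over subintervals of $[X, X']$. By the definition \eqref{maxexp} of the maximal sum, each is bounded in absolute value by $G^* \coloneqq \left|\sum_{X \leq n \leq X'} g(n)\right|^*$. Applying the triangle inequality and Minkowski's integral inequality then yields
\[
\left|\sum_{X_1 \leq n \leq X_2} f(n) g(n)\right| \leq G^* \left( |f(X_2)| + (X_2 - X_1) \sup_{X \leq x \leq X'} |f'(x)|\right).
\]
Since $|f(X_2)| \leq \sup_{X \leq x \leq X'} |f(x)|$ and $X_2 - X_1 \leq X' - X$, the right-hand side is bounded by
\[
G^* \left( \sup_{X \leq x \leq X'} |f(x)| + (X' - X) \sup_{X \leq x \leq X'} |f'(x)|\right).
\]
Finally, taking the supremum over all $X \leq X_1 \leq X_2 \leq X'$ on the left gives the claimed inequality.

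There is no real obstacle here; the only minor point is to recognize that the inner partial sums $\sum_{X_1 \leq n \leq X''} g(n)$ appearing in the Abel summation identity are themselves subinterval sums of the original range $[X, X']$, so they are controlled by the same maximal quantity $G^*$. All remaining steps are routine applications of the triangle inequality, and the ``$+ (X'-X)|f'|$'' term arises directly from bounding the length of the integration interval.
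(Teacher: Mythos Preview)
Your proof is correct and follows essentially the same approach as the paper: reduce to showing the bound for an arbitrary subinterval $[X_1,X_2]\subseteq[X,X']$, apply the Abel summation identity \eqref{sb-ident} on $[X_1,X_2]$, and use the triangle inequality together with Minkowski's integral inequality. You have simply written out in slightly more detail the observation that each partial sum arising is a subinterval sum controlled by $G^*$.
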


\begin{proof}  Our task is to show that
$$ \left|\sum_{X_1 \leq n \leq X_2} f(n) g(n)\right| \leq \left|\sum_{X \leq n \leq X'} g(n)\right|^* \left( \sup_{X \leq x \leq X'} |f(x)| + (X'-X) \sup_{X \leq x \leq X'} |f'(x)|\right).$$
for all $X \leq X_1 \leq X_2 \leq X'$.  The claim then follows from \eqref{sb-ident} (replacing $X,X'$ by $X_1,X_2$) and the triangle inequality and Minkowski's integral inequality.
\end{proof}

To estimate maximal exponential sums, we will use the following estimates, contained in the work of Robert and Sargos \cite{rs}:

\begin{lemma}\label{rs1} Let $M \geq 2$ be a natural number, and let $X \geq 2$ be a real number.
\begin{itemize}
\item[(i)]  Let $\phi(1),\dotsc,\phi(M)$ be real numbers, let $a_1,\dotsc,a_M$ be complex numbers of modulus at most one, and let $2 \leq Y \leq X$.  Then
$$ \int_0^X \left(\left|\sum_{m=1}^M a_m e( t \phi(m) )\right|^*\right)^4\ dt \ll \frac{X \log^4 X}{Y}  \int_0^Y \left(\left|\sum_{m=1}^M e( t \phi(m) )\right|\right)^4\ dt.$$
\item[(ii)] Let $\theta \neq 0,1$ be a real number, let $\eps>0$, and let $a_M,\dotsc,a_{2M}$ be complex numbers of modulus at most one.  Then
$$ \int_0^X \left(\left|\sum_{m=M}^{2M} a_m e\left( t \left(\frac{m}{M}\right)^\theta \right)\right|^*\right)^4\ dt \ll_{\theta,\eps} (X+M)^\eps (M^4 + M^2 X).$$
\item[(iii)] Suppose that $M \ll X \ll M^2$.  Let $\phi: \R \to \R$ be a smooth function obeying the derivative estimates $|\phi^{(j)}(x)| \asymp X / M^j$ for $j=1,2,3,4$ and $x \asymp M$.  Then
$$ \left| \sum_{m=M}^{2M} e( \phi(m) )\right|^* \ll \frac{M}{X^{1/2}} \left| \sum_{\epsilon \ell \asymp L} e(\phi^*(\ell))\right|^* + M^{1/2}$$
for some $L \asymp \frac{X}{M}$, where $\phi^*(t) \coloneqq \phi(u(t)) - t u(t)$ is the (negative) Legendre transform of $\phi$, $u$ is the inverse of the function $\phi'$, and $\epsilon = \pm 1$ denotes the sign of $\phi'(x)$ in the range $x \asymp M$.
\end{itemize}
\end{lemma}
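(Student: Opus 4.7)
The plan is to prove the three parts in turn, using standard techniques from the theory of exponential sums in the spirit of Robert and Sargos.

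For part (i), the plan is to reduce the maximal sum to an ordinary exponential sum using a completion trick, and then use a Fourier-analytic localization to transfer the $t$-integral from the full range $[0,X]$ to the shorter range $[0,Y]$. First I would represent the indicator $1_{[X_1,X_2]}(m)$ appearing implicitly in the maximal sum via its Fourier expansion or a Perron-type contour; an Abel summation argument in $m$ absorbs the supremum over $X_1, X_2$ at the cost of a logarithmic factor $\log M$ (hence $\log^4 M$ inside the fourth power), reducing matters to bounding
$$\int_0^X \left|\sum_{m=1}^M b_m e(t\phi(m))\right|^4 dt$$
for bounded coefficients $b_m$. To transfer from $[0,X]$ to $[0,Y]$ I would expand the fourth power combinatorially as
$$\sum_{m_1,m_2,m_3,m_4} b_{m_1}\overline{b_{m_2}} b_{m_3} \overline{b_{m_4}} \int_0^X e(t\Phi)\, dt,$$
with $\Phi \coloneqq \phi(m_1)-\phi(m_2)+\phi(m_3)-\phi(m_4)$, and use the elementary kernel comparison $\min(X,1/|\Phi|) \ll (X/Y)\min(Y,1/|\Phi|)$ (together with a short treatment of the diagonal $\Phi=0$). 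Reassembling yields (i) up to the stated $\log^4 X$ loss.

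For part (ii), the plan is to apply (i) with a carefully chosen $Y$ and then invoke the monomial fourth moment estimate for $\phi(m) = (m/M)^\theta$. After (i) reduces the question to the non-maximal fourth moment over $[0,Y]$, expanding the fourth power produces the counting problem for
$$\#\{(m_1,m_2,m_3,m_4) \in [M,2M]^4 : |(m_1/M)^\theta - (m_2/M)^\theta + (m_3/M)^\theta - (m_4/M)^\theta| \le 1/Y\}.$$
The spacing lemma of Robert and Sargos for differences of monomials bounds this count by $\ll_{\theta,\eps} M^\eps(M^2 + M^4/Y)$. Substituting and optimizing the choice of $Y$ (essentially $Y \asymp \min(X,M^2)$) produces the claimed bound $\ll_{\theta,\eps}(X+M)^\eps(M^4 + M^2 X)$, which balances the diagonal contribution $X M^2$ against the trivial $M^4$.

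For part (iii), the plan is to execute the classical van der Corput B-process via Poisson summation, together with a stationary phase expansion, and then propagate the maximal modification. Writing the Poisson decomposition
$$\sum_{M\le m\le 2M} e(\phi(m)) = \sum_{\ell \in \Z} \int_M^{2M} e(\phi(x) - \ell x)\, dx,$$
the stationary point equation $\phi'(x) = \ell$ has a solution $x=u(\ell)$ precisely when $\epsilon \ell \asymp L \coloneqq X/M$, with $\epsilon$ the sign of $\phi'$; outside this range, repeated integration by parts (using $|\phi^{(j)}| \asymp X/M^j$) makes the contribution negligible. The stationary phase formula gives
$$\int_M^{2M} e(\phi(x) - \ell x)\, dx = \frac{c\, e(\phi^*(\ell))}{\sqrt{\phi''(u(\ell))}} + O(\text{lower order}),$$
and since $\phi''(u(\ell)) \asymp X/M^2$, the prefactor is $\asymp M/X^{1/2}$, yielding the main term $\frac{M}{X^{1/2}}|\sum e(\phi^*(\ell))|^*$. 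The stationary-phase errors combine with the tail contributions to $O(M^{1/2})$ under $M \ll X \ll M^2$. To obtain the maximal version $|\cdot|^*$ on the left-hand side I would, as in part (i), decompose the indicator $1_{[X_1,X_2]}(m)$ and absorb the supremum at the cost of transferring it onto the $\ell$-sum on the right.

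The main obstacle I expect is the careful bookkeeping in part (iii): propagating the maximal decoration $|\cdot|^*$ through Poisson summation while keeping the error term at $M^{1/2}$ requires delicate truncation of the indicator $1_{[X_1,X_2]}$ by smooth cutoffs and a precise treatment of the stationary phase remainders uniformly in the endpoints $X_1,X_2$. In parts (i) and (ii), the main subtleties are boundary effects near $|\Phi| \sim 1/X$ and the need for a sharp form of the monomial spacing estimate.
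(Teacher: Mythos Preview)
Your proposal is correct and follows essentially the same route as the paper, which simply cites the literature: part (i) is \cite[Lemma 3]{rs} (the $p=2$ case), part (ii) is \cite[Lemma 7]{rs} together with part (i) to cover the range $X>M^2$, and part (iii) is the van der Corput $B$-process as in \cite[Lemma 3.6]{graham} or \cite[Theorem 8.16]{ik}, combined with Lemma \ref{sbp-2} to handle the maximal decoration. Your sketches accurately describe the content of those references; the one step you pass over quickly is the ``reassembling'' in (i), where one must recover $\int_0^Y |\sum_m e(t\phi(m))|^4\,dt$ from the quantity $\sum_{m_1,\dots,m_4}\min(Y,1/|\Phi|)$ --- this requires a positivity or Fej\'er-kernel argument rather than a direct reversal of the kernel comparison, but it is exactly what Robert and Sargos carry out.
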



\begin{proof}  Part (i) follows from the $p=2$ case of \cite[Lemma 3]{rs}.  Part (ii) follows from \cite[Lemma 7]{rs} when $X \leq M^2$, and the remaining case $X > M^2$ then follows from part (i). Finally, part (iii) follows from applying the van der Corput $B$-process (and Lemma \ref{sbp-2}), see e.g. \cite[Lemma 3.6]{graham} or \cite[Theorem 8.16]{ik}, replacing $\phi$ with $-\phi$ if necessary to normalize the second derivative $\phi''$ to be positive.
\end{proof}

\subsection{Divisor-bounded arithmetic functions}\label{div-bounded-functions-sec}

Let us call an arithmetic function $\alpha: \N \to \C$ \emph{$k$-divisor-bounded} for some $k \geq 0$ if one has the pointwise bound
$$ \alpha(n) \ll_k d_2^k(n) \log^k(2+n)$$
for all $n$.  From the elementary mean value estimate
\begin{equation}\label{divisor-crude}
 \sum_{1 \leq n \leq x} d_l(n)^k \ll_{k,l} x \log^{l^k-1}(2+x),
\end{equation}
valid for any $k \geq 0$, $l \geq 2$, and $x \geq 1$ (see e.g., \cite[formula (1.80)]{ik}), we see that a $k$-divisor-bounded function obeys the $\ell^2$ bounds
\begin{equation}\label{alpha-2}
\sum_{n \leq x} \alpha(n)^2 \ll_k x \log^{O_k(1)}(2+x)
\end{equation}
for any $x \geq 1$.  Applying \eqref{alpha-2} with $\alpha$ replaced by a large power of $\alpha$, we conclude in particular the $\ell^\infty$ bound
\begin{equation}\label{alpha-infty}
\sup_{n \leq x} \alpha(n) \ll_{k,\eps} x^\eps
\end{equation}
for any $\eps > 0$.

\subsection{Dirichlet polynomials}\label{dp-sec}

Given any function $f: \N \to \C$ supported on a finite set, we may form the Dirichlet polynomial
\begin{equation}\label{dir}
 {\mathcal D}[f](s) \coloneqq  \sum_n \frac{f(n)}{n^s}
\end{equation}
for any complex $s$; if $f$ has infinite support but is bounded, we can still define ${\mathcal D}[f]$ in the region $\mathrm{Re} s > 1$.  We will use a normalization in which we mostly evaluate Dirichlet polynomials on the critical line $\{\frac{1}{2}+it: t \in \R\}$, but one could easily run the argument using other normalizations, for instance by evaluating all Dirichlet polynomials on the line $\{1+it: t \in \R\}$ instead.

We have the following standard estimate:

\begin{lemma}[Truncated Perron formula]\label{tpf-lem}  Let $f: \N \to \C$, let $T, X \geq 2$, and let $1 \leq x \leq X$.
\begin{itemize}
\item[(i)]  If $f$ is $k$-divisor-bounded for some $k\geq 0$, and $T \leq X^{1-\eps}$, then for any $0 \leq \sigma < 1 - 2\eps$, one has
$$ \sum_{n \leq x} \frac{f(n)}{n^\sigma} - \frac{1}{2\pi} \int_{-T}^T {\mathcal D}[f](1 + \frac{1}{\log X} +it) \frac{x^{1-\sigma+\frac{1}{\log X}+it}}{1-\sigma+\frac{1}{\log X}+it}\ dt + O_{k,\sigma,\eps}\left( \frac{X^{1-\sigma} \log^{O_k(1)}(TX)}{T} \right).$$
\item[(ii)]  If $f: \N \to \C$ is supported on $[X/C, CX]$ for some $C>1$, then
\begin{equation}\label{tpf}
 \sum_{n \leq x} f(n) = \frac{1}{2\pi} \int_{-T}^T {\mathcal D}[f](\frac{1}{2}+it) \frac{x^{\frac{1}{2}+it}}{\frac{1}{2}+it}\ dt + O_C\left( \sum_n |f(n)| \min\left( 1, \frac{X}{T|x-n|}\right) \right).
\end{equation}
In particular, if we estimate $f(n)$ pointwise by $\|f\|_{\ell^\infty}$, we have
\begin{equation}\label{tpf-2}
 \sum_{n \leq x} f(n) = \frac{1}{2\pi} \int_{-T}^T {\mathcal D}[f](\frac{1}{2}+it) \frac{x^{\frac{1}{2}+it}}{\frac{1}{2}+it}\ dt + O_C\left( \|f\|_{\ell^\infty} \frac{X \log(2 + T)}{T}  \right).
\end{equation}
\end{itemize}
\end{lemma}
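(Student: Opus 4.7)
The plan is to reduce both parts to the classical truncated Perron identity
\[
\frac{1}{2\pi i} \int_{c-iT}^{c+iT} \frac{y^s}{s}\, ds = 1_{y > 1} + \tfrac{1}{2} 1_{y=1} + O\!\left( \min\!\left(1, \frac{y^c}{T|\log y|}\right) \right),
\]
valid for $y > 0$, $c > 0$ and $T \geq 1$, which follows from a standard contour shift and residue computation.

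For part (i), I would apply this identity with $c = 1-\sigma+1/\log X$ and $y = x/n$, multiply by $f(n)/n^\sigma$, and sum in $n$; the interchange of sum and integral is justified by absolute convergence, since $\mathcal{D}[f(\cdot)/(\cdot)^\sigma](c+it) = \mathcal{D}[f](1+1/\log X + it)$ is absolutely convergent. After substituting $s = c+it$, the main term becomes the integral in the claimed formula, and the error takes the form
\[
\ll \sum_n \frac{|f(n)|}{n^\sigma}\, \min\!\left(1, \frac{(x/n)^c}{T|\log(x/n)|}\right).
\]
The plan is then to split this into an off-diagonal range $|\log(x/n)|\geq\tfrac{1}{2}$ and a near-diagonal range $n\asymp x$. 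In the off-diagonal range, the bound reduces to $(x^{1-\sigma}/T)\sum_n |f(n)|/n^{1+1/\log X}$, and the $k$-divisor-bounded hypothesis combined with \eqref{divisor-crude} (applied via partial summation) yields $\sum_n|f(n)|/n^{1+1/\log X}\ll_k\log^{O_k(1)} X$, giving the acceptable bound $X^{1-\sigma}\log^{O_k(1)}(X)/T$. In the near-diagonal range I would decompose dyadically by $|x-n|\asymp D$ for $1\leq D\leq x/2$, use $|\log(x/n)|\asymp D/x$ when $n\asymp x$, and apply Cauchy--Schwarz together with the $\ell^2$-bound \eqref{alpha-2} on each dyadic shell to control $\sum_{|x-n|\asymp D}|f(n)|$; the geometric sum in $D$ yields the same bound, while the $O(1)$ terms with $|x-n|\leq 1$ contribute at most $X^{\eps-\sigma} \ll X^{1-\sigma}/T$ thanks to $T \leq X^{1-\eps}$ and $\sigma < 1-2\eps$.

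For part (ii) the same identity is used with $c=1/2$ and $y=x/n$; since $f$ is supported on $[X/C,CX]$, the Dirichlet polynomial $\mathcal{D}[f]$ is entire and all interchanges of sum and integral are unproblematic, giving the integral in \eqref{tpf} plus the error
\[
\ll \sum_n |f(n)|\,\min\!\left(1, \frac{(x/n)^{1/2}}{T|\log(x/n)|}\right).
\]
Using $n\asymp_C X$ replaces $(x/n)^{1/2}$ by $O_C(1)$ and $|\log(x/n)|$ by $\gg_C |x-n|/X$, yielding \eqref{tpf}. The bound \eqref{tpf-2} then follows by estimating $|f(n)|\leq\|f\|_{\ell^\infty}$ and summing $\sum_{n\asymp X}\min(1, X/(T|x-n|))$ via dyadic decomposition in $|x-n|$: the range $|x-n|\leq X/T$ contributes $O(X/T)$ and the range $X/T\leq|x-n|\leq X$ contributes $O(X\log T/T)$, together giving $O_C(\|f\|_{\ell^\infty}X\log(2+T)/T)$.

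The main technical obstacle I anticipate is the near-diagonal analysis in (i): replacing the pointwise divisor bound \eqref{alpha-infty} by the mean-square bound \eqref{alpha-2} via Cauchy--Schwarz is essential to avoid an $X^\eps$ loss and keep only $\log^{O_k(1)}$ factors in the error term.
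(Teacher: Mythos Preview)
Your reduction of both parts to the classical truncated Perron identity is exactly what the paper does (it cites \cite[Corollary 5.3]{mv}), and your treatment of part (ii) is correct.

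For part (i), however, your near-diagonal analysis has a quantitative gap. On a shell $|x-n|\asymp D$, Cauchy--Schwarz together with the \emph{global} bound \eqref{alpha-2} only gives
\[
\sum_{|x-n|\asymp D}|f(n)|\;\le\;(2D)^{1/2}\Bigl(\sum_{n\le 2x}|f(n)|^2\Bigr)^{1/2}\ll D^{1/2}x^{1/2}\log^{O_k(1)}x,
\]
since \eqref{alpha-2} says nothing about short intervals. The contribution of the shell is then
\[
x^{-\sigma}\min\!\Bigl(1,\tfrac{x}{TD}\Bigr)\,D^{1/2}x^{1/2}\log^{O_k(1)}x,
\]
and summing dyadically in $D$ both the ranges $D\le x/T$ and $D>x/T$ are dominated by $D\asymp x/T$, yielding $x^{1-\sigma}T^{-1/2}\log^{O_k(1)}x$ rather than the claimed $X^{1-\sigma}T^{-1}\log^{O_k(1)}X$. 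So the ``geometric sum in $D$'' does converge, but to the wrong power of $T$.

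What is actually needed is a short-interval $\ell^1$ bound of Shiu type,
\[
\sum_{|x-n|\le D} d_2(n)^{O_k(1)}\ll_k D\,\log^{O_k(1)}X\qquad (D\ge X^{\eps}),
\]
so that each shell contributes $\ll x^{-\sigma}\min(1,x/(TD))\cdot D\log^{O_k(1)}X$, and the dyadic sum gives the desired $x^{1-\sigma}T^{-1}\log^{O_k(1)}X$. This is precisely where the hypothesis $T\le X^{1-\eps}$ enters: it guarantees (for $x\asymp X$) that $x/T\ge X^{\eps}$, so every relevant interval is long enough for the Shiu bound to apply. Your handling of the $|x-n|\le 1$ terms via \eqref{alpha-infty} and the constraint $\sigma<1-2\eps$ is fine, but for the bulk of the near-diagonal range Cauchy--Schwarz with \eqref{alpha-2} is not sharp enough.
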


\begin{proof}  For (i), apply \cite[Corollary 5.3]{mv} with $a_n \coloneqq \frac{f(n)}{n^\sigma}$ and $\sigma_0 \coloneqq 1 - \sigma + \frac{1}{\log X}$, as well as \eqref{divisor-crude}.  For (ii), apply \cite[Corollary 5.3]{mv} instead with $a_n \coloneqq f(n)$ and $\sigma_0 \coloneqq \frac{1}{2}$.
\end{proof}

As one technical consequence of this lemma, we can estimate the effect of truncating an arithmetic function $f$ on its Dirichlet series:

\begin{corollary}[Truncating a Dirichlet series]\label{trunc-dir}  Suppose that $f: \N \to \C$ is supported on $[X/C, CX]$ for some $X \geq 1$ and $C>1$.  Let $T \geq 1$.  Then for any interval $[X_1,X_2]$ and any $t \in \R$, we have the pointwise bound
$$ {\mathcal D}[f 1_{[X_1,X_2]}](\frac{1}{2}+it) \ll_C \int_{-T}^T |{\mathcal D}[f](\frac{1}{2}+it+iu)| \frac{du}{1+|u|} + \|f\|_{\ell^\infty} \frac{X^{1/2} \log(2+T)}{T}.$$
\end{corollary}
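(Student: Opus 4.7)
The plan is to twist out the character $n^{-it}$ and reduce the target quantity to partial sums, then feed in the crude Perron formula \eqref{tpf-2}. Concretely, set $g(n) \coloneqq f(n) n^{-it}$, which is still supported on $[X/C, CX]$ with $\|g\|_{\ell^\infty} = \|f\|_{\ell^\infty}$ and whose Dirichlet polynomial is the shift ${\mathcal D}[g](s) = {\mathcal D}[f](s+it)$. Letting $F(x) \coloneqq \sum_{n \leq x} g(n)$, summation by parts against the amplitude $x^{-1/2}$ writes
\begin{equation*}
{\mathcal D}[f 1_{[X_1,X_2]}](\tfrac{1}{2}+it) = \frac{F(X_2)}{X_2^{1/2}} - \frac{F(X_1^{-})}{X_1^{1/2}} + \frac{1}{2}\int_{X_1}^{X_2} \frac{F(x)}{x^{3/2}}\, dx.
\end{equation*}
I would then apply \eqref{tpf-2} to $F(x)$ at each of the three evaluation points, expressing $F(x)$ as a contour integral of ${\mathcal D}[f](\frac{1}{2}+it+iu)$ against $x^{1/2+iu}/(\frac{1}{2}+iu)$ over $|u| \leq T$, up to an error $O_C(\|f\|_{\ell^\infty} X \log(2+T)/T)$.

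Interchanging the order of integration in the main term collapses everything into a single integral of the form $\frac{1}{2\pi}\int_{-T}^{T} {\mathcal D}[f](\frac{1}{2}+it+iu) M(u)\, du$, where a direct computation of the $x$-integral gives the explicit kernel
\begin{equation*}
M(u) = \frac{X_2^{iu}-X_1^{iu}}{\frac{1}{2}+iu}\left(1 + \frac{1}{2iu}\right).
\end{equation*}
The Perron error terms plug in via $\int_{X_1}^{X_2} x^{-3/2}\, dx \ll_C X^{-1/2}$ and $X_1,X_2 \asymp_C X$, producing the stated remainder $O_C(\|f\|_{\ell^\infty} X^{1/2}\log(2+T)/T)$.

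The one non-routine step will be the pointwise kernel bound $|M(u)| \ll_C (1+|u|)^{-1}$. For $|u| \geq 1$ this is immediate from the trivial estimate $|X_2^{iu} - X_1^{iu}| \leq 2$, giving $|M(u)| \ll |u|^{-1}$. The delicate region is near $u = 0$, where the factor $1 + \frac{1}{2iu}$ appears singular; however, rewriting $(X_2^{iu}-X_1^{iu})/(iu) = \int_{X_1}^{X_2} x^{iu-1}\, dx$ bounds this quotient by $\log(X_2/X_1) \leq 2\log C = O_C(1)$, which combined with the trivial $|X_2^{iu}-X_1^{iu}| = O(|u|)$ gives $|M(u)| = O_C(1)$ throughout $|u|\leq 1$. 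In other words, the apparent singularity is exactly cancelled by the boundary-integral arithmetic of the summation by parts. Feeding this uniform bound into the main-term integral and invoking the triangle inequality then produces the claimed estimate.
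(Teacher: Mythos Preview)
Your proof is correct and uses the same two ingredients as the paper (the truncated Perron formula \eqref{tpf-2} applied to the twist $g(n)=f(n)n^{-it}$, together with summation by parts against $x^{-1/2}$), but you apply them in the opposite order. The paper first invokes Perron to bound $\bigl|\sum_{n\le x} f(n)n^{-it}\bigr|$ by $x^{1/2}\int_{-T}^{T}|{\mathcal D}[f](\tfrac12+it+iu)|\,\frac{du}{1+|u|}$ plus the error term, and only then feeds this uniform partial-sum bound into Lemma~\ref{sbp}; taking absolute values before summing by parts means there is no kernel $M(u)$ to compute and no cancellation near $u=0$ to check. Your route of summing by parts first and then collapsing the three Perron integrals into a single kernel is equally valid and in fact yields a slightly sharper identity, at the cost of the extra work of verifying $|M(u)|\ll_C (1+|u|)^{-1}$. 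Two minor points worth making explicit: the bound $\log(X_2/X_1)\le 2\log C$ you invoke tacitly assumes $X/C\le X_1\le X_2\le CX$, which is a harmless reduction since $f$ is supported there; and the $F(X_1^-)$ versus $F(X_1)$ discrepancy is a single term of size $O(\|f\|_{\ell^\infty})$, which after division by $X_1^{1/2}$ is absorbed once one shifts $X_1$ off the integers.
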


Because the weight $\frac{1}{1+|u|}$ integrates to $O(\log(2+T))$ on $[-T,T]$, this corollary is morally asserting that the Dirichlet polynomial of $f 1_{[X_1,X_2]}$ is controlled by that of $f$ up to logarithmic factors.  As such factors will be harmless in our applications, this corollary effectively allows one to dispose of truncations such as $1_{[X_1,X_2]}$ appearing in a Dirichlet polynomial whenever desired.

\begin{proof}  Applying Lemma \ref{tpf-lem}(ii) with $f$ replaced by $n \mapsto f(n) / n^{it}$, we have for any $x$ that
$$  \sum_{n \leq x} \frac{f(n)}{n^{it}} = \frac{1}{2\pi} \int_{-T}^T {\mathcal D}[f](\frac{1}{2}+it+iu) \frac{x^{\frac{1}{2}+iu}}{\frac{1}{2}+iu}\ du + O_C\left( \|f\|_{\ell^\infty} \frac{X\log(2 + T) }{T} \right)$$
and hence by the triangle inequality
$$ \sum_{n \leq x} \frac{f(n)}{n^{it}} \ll_C x^{1/2} \int_{-T}^T |{\mathcal D}[f](\frac{1}{2}+it+iu)| \frac{du}{1+|u|}+ \|f\|_{\ell^\infty} \frac{X \log(2 + T)}{T} .$$
The claim now follows from Lemma \ref{sbp} (with $h=0$, $g(n)$ replaced by $f(n)/n^{it}$, and $f(x)$ replaced by $x^{-1/2}$).
\end{proof}

\subsection{Arithmetic functions with good cancellation}\label{sec:good-cancel}
Let $\alpha \colon \mathbb{N} \to \mathbb{C}$ be a $k$-divisor-bounded function. From \eqref{alpha-2} and Cauchy-Schwarz, we see that
\begin{equation}\label{dirt}
 \sum_{n \leq x: n = a\ (q)} \frac{\alpha(n)}{n^{\frac{1}{2}+it}} \ll_{k}  x^{1/2} \log^{O_k(1)} x
\end{equation}
for any $t \in \R$, $q \geq 1$, and $a \in \Z$.  We will say that a $k$-divisor-bounded function $\alpha$ has \emph{good cancellation} if one has the improved bound
\begin{equation}\label{alb}
 \sum_{n \leq x: n = a\ (q)} \frac{\alpha(n)}{n^{\frac{1}{2}+it}} \ll_{k,A,B,B'}  x^{1/2} \log^{-A} x
\end{equation}
for any $A,B,B'>0$, $x \geq 2$, $q \leq \log^B x$, $a \in \Z$, and $t \in \R$ with $\log^{B'} x \leq |t| \leq x^{B'}$, provided that $B'$ is sufficiently large depending on $A,B,k$.  

It is clear that if $\alpha$ is a $k$-divisor-bounded function with good cancellation, then so is its restriction $\alpha 1_{[X_1,X_2]}$ to any interval $[X_1,X_2]$.  The property of being $k$-divisor-bounded with good cancellation is also basically preserved under Dirichlet convolution:

\begin{lemma}\label{good-cancel}  Let $\alpha, \beta$ be $k$-divisor-bounded functions.  Then $\alpha \ast \beta$ is a $(2k+1)$-divisor-bounded function.  Furthemore, if $\alpha$ and $\beta$ both have good cancellation, then so does $\alpha \ast \beta$.

If, in addition, there is an $N$ for which $\alpha$ is supported on $[N^2,+\infty]$ and $\beta$ is supported on $[1,N]$, then one can omit the hypothesis that $\beta$ has good cancellation in the previous claim.
\end{lemma}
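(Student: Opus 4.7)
The plan is to handle the $(2k+1)$-divisor-boundedness and good-cancellation assertions separately. For the former, the pointwise estimate
\[
|(\alpha \ast \beta)(n)| \le \sum_{d \mid n} |\alpha(d)| |\beta(n/d)| \ll_k \log^{2k}(2+n) \sum_{d \mid n} d_2(d)^k d_2(n/d)^k
\]
is immediate from the $k$-divisor-boundedness of $\alpha$ and $\beta$. Since $d_2^k \ast d_2^k$ is multiplicative (as $d_2^k$ is), the inequality $(d_2^k \ast d_2^k)(n) \le d_2(n)^{2k+1}$ reduces to the prime-power bound $\sum_{j=0}^{e}(j+1)^k (e-j+1)^k \le (e+1)^{2k+1}$, which follows from $(j+1)(e-j+1) \le (e+1)^2$ applied to each of the $e+1$ summands. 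Combining these bounds yields $|(\alpha \ast \beta)(n)| \ll_k d_2(n)^{2k+1} \log^{2k+1}(2+n)$, as required.

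For the symmetric good-cancellation claim, I would write
\[
\sum_{\substack{n \le x \\ n \equiv a\,(q)}} \frac{(\alpha \ast \beta)(n)}{n^{1/2+it}} = \sum_{\substack{d,m \\ dm \le x \\ dm \equiv a\,(q)}} \frac{\alpha(d)\beta(m)}{(dm)^{1/2+it}}
\]
and split the outer $d$-range at $\sqrt{x}$. For $d \le \sqrt{x}$, fixing $d$, the congruence $dm \equiv a\,(q)$ either fails (when $\gcd(d,q) \nmid a$) or restricts $m$ to a single residue class modulo $q/\gcd(d,q) \le \log^B x$. Applying $\beta$'s good cancellation (with parameter $A_1$ chosen large) to the inner sum of length $x/d \ge \sqrt{x}$ yields a bound $O((x/d)^{1/2} \log^{-A_1} x)$, where $\log^{-A_1}(x/d)$ is absorbed into $\log^{-A_1} x$ since $\log(x/d) \asymp \log x$. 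Summing over $d$ using $\sum_{d \le \sqrt{x}} |\alpha(d)|/d \ll_k \log^{O_k(1)} x$ (a standard consequence of $k$-divisor-boundedness and Abel summation) then gives $\ll_k x^{1/2} \log^{-A_1 + O_k(1)} x$, which is the desired bound upon choosing $A_1$ large. The range $d > \sqrt{x}$ (so $m < \sqrt{x}$) is handled symmetrically by fixing $m$ and applying $\alpha$'s good cancellation to the $d$-sum over $(\sqrt{x}, x/m]$. The Type II variant is simpler: the support conditions force $N \le \sqrt{x}$, and one proceeds as in the second case: fix $m \le N$, apply $\alpha$'s good cancellation directly to the $d$-sum (already supported on $[N^2, x/m]$), and sum over $m$ using the divisor bound on $\beta$; no cancellation of $\beta$ is used.

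The principal technical subtlety is tracking the admissible range of $|t|$ in each inner application of good cancellation. At inner length $y = x/d \in [\sqrt{x}, x]$, $\beta$'s good cancellation at parameter $B'$ covers $|t| \in [\log^{B'} y, y^{B'}]$; uniformly over the dyadic range of $d$ this forces only the narrower window $|t| \in [\log^{B'} x, x^{B'/2}]$. To cover the full target range $[\log^{B''} x, x^{B''}]$ for $\alpha \ast \beta$, I would partition $|t|$ into finitely many dyadic subranges and apply good cancellation at a value of $B'$ tailored to each subrange. Since the hypothesis on $\alpha$ and $\beta$ guarantees the estimate for \emph{all} sufficiently large $B'$, this is permissible, and only a bounded number of such applications is required to conclude.
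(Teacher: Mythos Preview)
Your argument is correct and follows essentially the same route as the paper's proof: both establish the $(2k+1)$-divisor bound via the elementary inequality $d_2^{k}\ast d_2^{k}\le d_2^{2k+1}$ (you supply the prime-power verification, the paper simply cites it), and both handle the cancellation claim by a hyperbola-type split at $\sqrt{x}$, applying the good-cancellation hypothesis to whichever factor sits in the long variable. The paper first passes to a dyadic range $(x,2x]$ and splits $\alpha,\beta$ into $1_{[1,\sqrt{x}]}$ and $1_{(\sqrt{x},\infty)}$ pieces, invoking symmetry; you instead split the double sum according to $d\le\sqrt{x}$ versus $d>\sqrt{x}$ and subtract two partial sums for the latter. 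These are equivalent bookkeeping choices. Your treatment of the asymmetric support case likewise matches the paper's: the support hypotheses force $N\le\sqrt{x}$ whenever the sum is nonzero, so $\alpha$ automatically plays the long role.

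One point worth noting: you explicitly flag the issue of matching the $|t|$-window $[\log^{B'}y,\,y^{B'}]$ at the inner length $y\in[\sqrt{x},x]$ to the target window at length $x$, and propose covering by finitely many subranges with a $B'$ adapted to each. The paper does not discuss this at all, simply asserting that ``$B'$ sufficiently large'' suffices. Your instinct here is sound; the cleanest way to close it is to observe that for any $|t|$ in the target range and any $y\in[\sqrt{x},x]$, the admissible $B'_1$ for the inner application can always be chosen in a fixed bounded interval (roughly $[B'_0,\,2B''+O(1)]$), so only a bounded collection of implied constants is ever invoked. Your ``dyadic subranges of $|t|$'' formulation is a little loose on this point (a literal dyadic decomposition of $[\log^{B''}x,\,x^{B''}]$ produces $\asymp\log x$ pieces), but the underlying idea is right.
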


\begin{proof}  Using the elementary inequality $d_2^{k_1} \ast d_2^{k_2} \leq d_2^{k_1+k_2+1}$, we see $\alpha \ast \beta$ is $2k+1$-divisor-bounded.  Next, suppose that $\alpha$ and $\beta$ have good cancellation, and let $A,B,B' > 0$, $x \geq 2$, $q \leq \log^B X$, $a \in \Z$, and $t \in \R$ with $\log^{B'} x \leq |t| \leq x^{B'}$, with $B'$ is sufficiently large depending on $A,B,k$.  To show that $\alpha \ast \beta$ has good cancellation, it suffices by dyadic decomposition to show that
\begin{equation}\label{dance}
 \sum_{x < n \leq 2x: n = a\ (q)} \frac{\alpha \ast \beta(n)}{n^{\frac{1}{2}+it}} \ll_{k,A,B,B'}  x^{1/2} \log^{-A} x.
\end{equation}
By decomposing $\alpha$ into $\alpha 1_{[1, \sqrt{x}]}$ and $\alpha 1_{(\sqrt{x},+\infty)}$, and similarly for $\beta$, we may assume from the triangle inequality that at least one of $\alpha,\beta$ is supported on $(\sqrt{x},+\infty)$; by symmetry we may assume that $\alpha$ is so supported.  The left-hand side of \eqref{dance} may thus be written as
$$
\sum_{a = b c\  (q)} \sum_{m \ll \sqrt{x}: m = c\ (q)} \frac{\beta(m)}{m^{\frac{1}{2}+it}} \sum_{x/m < n \leq 2x/m: n = b\ (q)} \frac{\alpha(n)}{n^{\frac{1}{2}+it}}.$$
Let $A'>0$ be a quantity depending on $A,B,k$ to be chosen later.
As $\alpha$ has good cancellation, we may bound this (for $B'$ sufficiently large depending on $k,A',B$) using the triangle inequality by
$$ \ll_{k,A',B,B'} \sum_{a = bc\  (q)} \sum_{m \ll \sqrt{x}: m = c\ (q)} \frac{|\beta(m)|}{m^{\frac{1}{2}}} (x/m)^{1/2} \log^{-A'} (x/m);$$
as $\beta$ is $k$-divisor-bounded and $q \leq \log^B x$, we may apply \eqref{divisor-crude} and bound this by
$$ \ll_{k,A',B,B'} x^{\frac 12} \log^{-A' + B + O_k(1)} x.$$
Choosing $A'$ sufficiently large depending on $A,B,k$, we obtain \eqref{dance}.  The final claim of the lemma is proven similarly, noting that the left-hand side of \eqref{dance} vanishes unless $x \gg N^2$, and hence from the support of $\beta$ we may already restrict $\alpha$ to the region $(c\sqrt{x},+\infty)$ for some absolute constant $c>0$ without invoking symmetry.
\end{proof} 

We have three basic examples of functions with good cancellation:

\begin{lemma}\label{point}  The constant function $1$, the logarithm function $L: n \mapsto \log n$ and the M\"obius function $\mu$ are $1$-divisor-bounded with good cancellation.
\end{lemma}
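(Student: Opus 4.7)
The $1$-divisor-boundedness is immediate for all three functions, since $|\alpha(n)|\leq\log(2+n)\leq d_2(n)\log(2+n)$ for $\alpha\in\{1,\,n\mapsto\log n,\,\mu\}$. The content of the lemma is therefore the good cancellation bound \eqref{alb}.

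My first step would be to peel off the greatest common divisor and reduce to Dirichlet characters. Writing $d=(a,q)$, $q'=q/d$, $a'=a/d$ with $(a',q')=1$, the condition $n\equiv a\pmod q$ is equivalent to $d\mid n$ and $n/d\equiv a'\pmod{q'}$. After substituting $n=dm$ and invoking orthogonality of the Dirichlet characters modulo $q'\leq q\leq\log^B x$, the task reduces to establishing, for each character $\chi$ of modulus $q'$, a bound of the shape
$$\sum_{m\leq y}\alpha(dm)\chi(m)m^{-1/2-it}\ll_{A,B,B'} x^{1/2}\log^{-A} x$$
uniformly for $y\leq x/d$ and $|t|\in[\log^{B'} x,x^{B'}]$, with $B'$ to be chosen sufficiently large in terms of $A$ and $B$.

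For $\alpha=1$ the relevant generating series is the Dirichlet $L$-function $L(s,\chi)$. The partial sum can be extracted from $L(\tfrac12+it,\chi)$ via the approximate functional equation, and the convexity bound $L(\tfrac12+it,\chi)\ll(q'(1+|t|))^{1/4+o(1)}$ handles the bulk of the range of $|t|$. For the upper end of the range, where $|t|$ may be comparable to a nontrivial power of $x$, I would instead apply Poisson summation to a smoothed form of the sum and use the first and second derivative tests to show that the resulting oscillatory sum is dominated by $O(|t|^{1/2})$ from stationary contributions; the ultra-large regime $|t|\gg x$ is handled even more directly by the Kuzmin--Landau inequality, since the phase derivative then exceeds $1$ throughout the sum. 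The case $\alpha=L$ is immediate by differentiation: since $\log(m)m^{-1/2-it}=-\partial_s m^{-s}|_{s=1/2+it}$, the same argument applies with $-L'(s,\chi)$ in place of $L(s,\chi)$, losing only a harmless factor of $\log(q'|t|)$.

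The Möbius case is the main obstacle, because the generating series $1/L(s,\chi)$ is not pointwise bounded on the critical line due to potential zeros of $L$. The natural approach is to apply a smoothed Perron formula, shift the contour into the Korobov--Vinogradov zero-free region, and use the standard bound $|1/L(s,\chi)|\ll(\log q'(|\tau|+2))^C$ along its edge, extracting a savings of the form $\exp(-c(\log x)^{3/5-o(1)})$ that beats any power of $\log x$ throughout $[\log^{B'} x,x^{B'}]$. An alternative elementary route would be to decompose $\mu$ via the Heath--Brown identity and bound each resulting Dirichlet convolution by Cauchy--Schwarz together with the good cancellation already proved for $1$ and $L$. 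In either approach the crucial point is that a direct partial summation from the Mertens-style bound $\sum_{n\leq x}\mu(n)\ll x\exp(-c\sqrt{\log x})$ is insufficient, as it introduces a spurious $|t|$-factor that destroys the cancellation once $|t|$ is polynomial in $x$, so the argument genuinely needs to exploit the Dirichlet-series structure. Once good cancellation is verified for $1$, $L$, and $\mu$, Lemma~\ref{good-cancel} automatically propagates the property through arbitrary Dirichlet convolutions of these generators with other divisor-bounded functions exhibiting good cancellation.
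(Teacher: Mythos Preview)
Your strategy is sound and the route you sketch for $\mu$ is legitimate, but the argument for $1$ and $L$ has a gap in the ultra-large-$|t|$ regime, and in both places the paper proceeds differently.

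For $1$ and $L$, the paper does not pass through Dirichlet characters at all: it bounds the maximal exponential sum $\bigl|\sum_{m}e\bigl(-\tfrac{t}{2\pi}\log(qm+a)\bigr)\bigr|^{*}$ directly by van der Corput's method and then removes the weight $n^{-1/2}$ (or $n^{-1/2}\log n$) via Lemma~\ref{sbp-2}. Your $L$-function detour can be made to work, but the appeal to Kuzmin--Landau when $|t|\gg x$ is incorrect as stated: that inequality requires the \emph{fractional part} of $\phi'$ to be bounded away from~$0$, and a derivative that merely exceeds~$1$ still passes through integers. In either approach, once $|t|$ is larger than roughly $x^{2}$ the second-derivative bound $O(|t|^{1/2})$ no longer yields $\ll x^{1/2}\log^{-A}x$; one must instead invoke the $k$-th derivative estimate with $k$ chosen in terms of~$B'$ (this is presumably what the paper means by ``standard van der Corput'').

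For $\mu$, your contour shift using bounds on $1/L(s,\chi)$ along the edge of the zero-free region is exactly the alternative the paper flags but sets aside, remarking that the authors ``could not find the exact upper bound we need from the literature''. The paper instead first establishes $\sum_{n\le y}\Lambda(n)\chi(n)n^{-it}\ll_{A'} y\log^{-A'}x$ via Perron's formula and the Vinogradov--Korobov bound on $L'/L$, and then bootstraps from $\Lambda$ to $\mu$ using the identity $\mu(n)\log n=-(\mu*\Lambda)(n)$ together with the Dirichlet hyperbola method: starting from the trivial bound $\sum_{n\le y}\mu(n)\chi(n)n^{-it}\ll y$, each iteration followed by partial summation gains a factor of $\log^{-1/4}x$, so $O(A)$ iterations reach the required saving. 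Both routes are valid; the paper's avoids having to source a reference for the $1/L$ bound, while yours is more direct once that bound is in hand.
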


From this lemma and Lemma \ref{good-cancel}, we also see that $\Lambda$ and $d_k$ have good cancellation for any fixed $k$.

\begin{proof}  For the functions $1,L$ this follows from standard van der Corput exponential sum estimates for $|\sum_{n \leq x} e(-\frac{t}{2\pi} \log (qn+a))|^*$ (e.g. \cite[Lemma 8.10]{ik}) and Lemma \ref{sbp-2}, normalizing $a$ to be in the range $0 \leq a < q$.  Now we consider the function $\mu$.  By using multiplicativity (and increasing $A$ as necessary) we may assume that $a$ is coprime to $q$.  By decomposition into Dirichlet characters (and again increasing $A$ as necessary) it suffices to show that
\begin{equation}\label{much1}
 \sum_{n \leq x} \frac{\mu(n) \chi(n)}{n^{\frac{1}{2}+it}} \ll_{k,A,B,B'}  x^{1/2} \log^{-A} x
\end{equation}
for any Dirichlet character $\chi$ of period $q$.  

This estimate is certainly known to the experts, but as we did not find it in this form in the literature, we prove it here. The Vinogradov-Korobov zero-free region \cite[\S 9.5]{mont} implies that $L(s,\chi)$ has no zeroes in the region
$$ \left\{ \sigma+it': 0 < |t'| \ll |t| + x^2; \sigma \geq 1 - \frac{c_{B,B'}}{\log^{2/3} |x| (\log\log |x|)^{1/3}} \right\}$$
for some $c_{B,B'}>0$ depending only on $B,B'$.  Applying the estimates in \cite[\S 16]{davenport}, and shrinking $c_{B,B'}$ if necessary, we obtain the crude upper bounds
$$ \frac{L'(s,\chi)}{L(s,\chi)} \ll_{B,B'} \log^2 |x|$$
in this region.
Applying Perron's formula as in \cite[Lemma 2]{mr-p} (see also \cite[Lemma 1.5]{harman}), one then has the bound
$$
 \sum_{n \leq y} \Lambda(n) \chi(n) n^{-it} \ll_{A',B,B'}  y \log^{-A'} x
$$
for any $A'>0$ and any $y$ with $\exp(\log^{3/4} x) \leq y \leq x^2$ (in fact one may replace $3/4$ here by any constant larger than $2/3$). 

To pass from $\Lambda$ to $\mu$ we use a variant of the arguments used to prove Lemma \ref{good-cancel} (one could also work more directly, using upper bound for $1/L(s, \chi)$ but we could not find the exact upper bound we need from the literature).  We begin with the trivial bound
\begin{equation}\label{mu-triv}
 \sum_{n \leq y} \mu(n) \chi(n) n^{-it} \ll y
\end{equation}
for any $y > 0$.   Writing $\mu(n) \log(n) \chi(n) n^{-it}$ as the Dirichlet convolution of $\Lambda(n) \chi(n) n^{-it}$ and $-\mu(n) \chi(n) n^{it}$ and using the Dirichlet hyperbola method, we conclude that
$$
 \sum_{n \leq y} \mu(n) \log n \chi(n) n^{-it} \ll_{B,B'} y \log^{3/4} x 
$$
for any $y = x^{1+o(1)}$, which by Lemma \ref{sbp-2} implies that
$$
 \sum_{n \leq y} \mu(n) \chi(n) n^{-it} \ll_{B,B'} y \log^{-1/4} x 
$$
for $y = x^{1+o(1)}$.  Applying the Dirichlet hyperbola method again (using the above bound to replace the trivial bound \eqref{mu-triv} for $y = x^{1+o(1)}$) we conclude that
$$
 \sum_{n \leq y} \mu(n) \chi(n) n^{-it} \ll_{B,B'} y \log^{-2/4} x 
$$
for $y = x^{1+o(1)}$.  Iterating this argument $O(A)$ times, we eventually conclude that
$$
 \sum_{n \leq y} \mu(n) \chi(n) n^{-it} \ll_{A,B,B'} y \log^{-A} x 
$$
for $y=x^{1+o(1)}$, and the claim \eqref{much1} then follows from Lemma \ref{sbp-2}.
\end{proof}

\subsection{Mean value theorems}

In view of Lemma \ref{tpf-lem}, it becomes natural to seek upper bounds on the quantity $|{\mathcal D}[f](\frac{1}{2}+it)|$ for various functions $f$ supported on $[X/C,CX]$.  We will primarily be interested in functions $f$ which are $k$-divisor-bounded for some bounded $k$.  In such a case, we see from \eqref{alpha-2} that
$$ \|f\|_{\ell^2}^2 \ll_k X \log^{O_k(1)} X.$$
The heuristic of \emph{square root cancellation} then suggests that the quantity $|{\mathcal D}[f](\frac{1}{2}+it)|$ should be of size $O( X^{o(1)})$ for all values of $t$ that are of interest (except possibly for the case $t=O(1)$ in which there might not be sufficient oscillation).  Such square root cancellation is not obtainable unconditionally with current techniques; for instance, square root cancellation for $f = 1_{(X,2X]}$ is equivalent to the Lindel\"of hypothesis, while square root cancellation for $f = \Lambda 1_{(X,2X]}$ is equivalent to the Riemann hypothesis.  However, we will be able to use a number of results that obtain something resembling square root cancellation on the average.  The most basic instance of these results is the classical $L^2$ mean value theorem:

\begin{lemma}[Mean value theorem]\label{mvt-lem}  Suppose that $f \colon \N \to \C$ is supported on $[X/2,4X]$ for some $X \geq 2$.  Then one has
$$ \int_{T_0}^{T_0+T} \left|{\mathcal D}[f](\frac{1}{2}+it)\right|^2\ dt \ll \frac{T + X}{X} \|f\|_{\ell^2}^2$$
for all $T > 0$ and $T_0 \in \R$.  In particular (from \eqref{alpha-2}), if $f$ is $k$-divisor-bounded, then
$$ \int_{T_0}^{T_0+T} \left|{\mathcal D}[f](\frac{1}{2}+it)\right|^2\ dt \ll_k (T + X) \log^{O_k(1)} X.$$
\end{lemma}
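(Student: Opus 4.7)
The plan is to expand the square pointwise in $t$ and integrate. Writing
\[ \left|\mathcal{D}[f](\tfrac{1}{2}+it)\right|^2 = \sum_{m,n} \frac{f(m)\overline{f(n)}}{\sqrt{mn}} \left(\frac{n}{m}\right)^{it}, \]
one observes that the integral $\int_{T_0}^{T_0+T} (n/m)^{it}\,dt$ equals $T$ when $m=n$ and is bounded in absolute value by $2/|\log(n/m)|$ when $m \neq n$. The diagonal contribution is therefore
\[ T \sum_n \frac{|f(n)|^2}{n} \ll \frac{T}{X} \|f\|_{\ell^2}^2, \]
using that $f$ is supported on $n \asymp X$.

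The main obstacle is the off-diagonal term, where $1/|\log(n/m)|$ can be large for nearby $m$ and $n$. A crude estimate $|\log(n/m)| \asymp |n-m|/X$ together with a dyadic decomposition in $|n-m|$ would cost a $\log X$ factor, which is inadmissible for the stated bound (though harmless for the second assertion). To avoid this loss I would invoke the Montgomery--Vaughan sharpening of Hilbert's inequality: for any distinct real numbers $\lambda_1,\dotsc,\lambda_N$ and complex $a_1,\dotsc,a_N$,
\[ \Bigl|\sum_{m \neq n} \frac{a_m \overline{a_n}}{\lambda_m - \lambda_n}\Bigr| \ll \sum_n \frac{|a_n|^2}{\delta_n}, \qquad \delta_n \coloneqq \min_{m \neq n} |\lambda_m - \lambda_n|. \]
Applied with $\lambda_n = \log n$ and $a_n = f(n)/\sqrt{n}$, the minimum gap satisfies $\delta_n \asymp 1/n$ for $n \asymp X$, and the off-diagonal contribution is therefore bounded by
\[ \ll \sum_n \frac{|f(n)|^2}{n} \cdot n \asymp \|f\|_{\ell^2}^2. \]

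Adding the diagonal and off-diagonal bounds yields the claimed $\frac{T+X}{X}\|f\|_{\ell^2}^2$ estimate. The second assertion of the lemma is then immediate from the divisor-sum bound \eqref{alpha-2}.
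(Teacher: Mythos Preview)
Your argument is correct and is precisely the standard proof underlying the reference the paper cites (\cite[Theorem 9.1]{ik}): expand the square, separate diagonal from off-diagonal, and control the latter via the Montgomery--Vaughan generalized Hilbert inequality with $\lambda_n=\log n$. One minor point you glossed over is that the off-diagonal integral produces $\frac{e^{i(T_0+T)\log(n/m)}-e^{iT_0\log(n/m)}}{i\log(n/m)}$ rather than a bare $\frac{1}{\log n-\log m}$, but the two exponential factors can each be absorbed into the coefficients $a_n$ before applying the Hilbert inequality, so the bound goes through unchanged.
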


\begin{proof}  See \cite[Theorem 9.1]{ik}.
\end{proof}

We will need to twist Dirichlet series by Dirichlet characters.  With $f$ as above, and any Dirichlet character $\chi \colon \Z \to \C$, we can define
$$ {\mathcal D}[f](s,\chi) \coloneqq  \sum_n \frac{f(n)}{n^s} \chi(n)$$
and more generally
$$ {\mathcal D}[f](s,\chi,q_0) \coloneqq  \sum_n \frac{f(q_0 n)}{n^s} \chi(n)$$
for any complex $s$ and any natural number $q_0$.  These Dirichlet series naturally appear when estimating Dirichlet series with a Fourier weight $e\left(\frac{an}{q}\right)$, as the following simple lemma shows:

\begin{lemma}[Expansion into Dirichlet characters]\label{edc}  Let $f \colon \N \to \C$ be a function supported on a finite set, let $q$ be a natural number, and let $a$ be coprime to $q$.  Let $e\left(\frac{a \cdot}{q}\right)$ denote the function $n \mapsto e\left(\frac{an}{q}\right)$.  Then we have the pointwise bound
\begin{equation}\label{c-split}
 \left|{\mathcal D}\left[f e\left(\frac{a \cdot}{q}\right)\right](s)\right| \leq \frac{d_2(q)}{\sqrt{q}} \sum_{q = q_0 q_1} \sum_{\chi\ (q_1)} |{\mathcal D}[f](s,\chi,q_0)|
\end{equation}
for all complex numbers $s$ with $\mathrm{Re}(s) = \frac{1}{2}$,
where in this paper the sum $\sum_{\chi\ (q_1)}$ denotes a summation over all characters $\chi$ (including the principal character) of period $q_1$, and $q_0,q_1$ are understood to be natural numbers.  
\end{lemma}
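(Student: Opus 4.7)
The plan is to decompose the additive phase $e(an/q)$ into a combination of multiplicative characters modulo the relevant divisor of $q$, after first stripping off the contribution to $n$ coming from common factors with $q$.

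First I would partition the sum defining ${\mathcal D}[fe(\tfrac{a\cdot}{q})](s)$ according to $q_0 \coloneqq (n,q)$. Writing $n = q_0 m$ with $q_1 \coloneqq q/q_0$, one has $(m,q_1) = 1$ and the phase simplifies to $e(an/q) = e(am/q_1)$; note that since $(a,q) = 1$, also $(a,q_1) = 1$. Next, Fourier inversion on $(\Z/q_1\Z)^\times$ yields the standard identity
\begin{equation*}
e\left(\frac{am}{q_1}\right) = \frac{1}{\phi(q_1)} \sum_{\chi\ (q_1)} \chi(a)\, \tau(\bar\chi)\, \chi(m) \qquad \text{for } (m,q_1)=1,
\end{equation*}
where $\tau(\bar\chi) \coloneqq \sum_{b\ (q_1)} \bar\chi(b) e(b/q_1)$ is the usual Gauss sum (the factor $\chi(a)\tau(\bar\chi)$ arises after the change of variables $b = am$ in the Gauss sum). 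Because $\chi(m)$ already vanishes when $(m,q_1) > 1$, the coprimality constraint on $m$ can be dropped after inserting this expansion, so the inner sum over $m$ collapses exactly into ${\mathcal D}[f](s,\chi,q_0) = \sum_m \frac{f(q_0 m) \chi(m)}{m^s}$.

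Substituting back and pulling out the factor $q_0^{-s}$ gives the closed identity
\begin{equation*}
{\mathcal D}\left[fe\left(\tfrac{a\cdot}{q}\right)\right](s) \;=\; \sum_{q_0 q_1 = q} \frac{1}{q_0^{s}\,\phi(q_1)} \sum_{\chi\ (q_1)} \chi(a)\, \tau(\bar\chi)\, {\mathcal D}[f](s,\chi,q_0).
\end{equation*}
On the critical line $\mathrm{Re}(s) = \tfrac12$ we have $|q_0^{-s}| = q_0^{-1/2}$, and the standard Gauss sum estimate $|\tau(\bar\chi)| \leq \sqrt{q_1}$ combines with this to produce the $q^{-1/2}$ factor claimed in the bound. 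This Gauss sum bound is valid for arbitrary, not merely primitive, characters, via the factorization $\tau(\chi) = \mu(q_1/q_1^\ast)\chi^\ast(q_1/q_1^\ast)\tau(\chi^\ast)$ through the inducing primitive character $\chi^\ast$ of conductor $q_1^\ast \mid q_1$ (and $\tau(\chi) = 0$ when $(q_1/q_1^\ast, q_1^\ast) > 1$), yielding $|\tau(\chi)| \leq \sqrt{q_1^\ast} \leq \sqrt{q_1}$ in all cases.

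The only remaining arithmetic factor is the $q_1/\phi(q_1)$ produced by the expansion, which I would absorb using the elementary estimate
\begin{equation*}
\frac{q_1}{\phi(q_1)} = \prod_{p \mid q_1} \frac{p}{p-1} \leq 2^{\omega(q_1)} \leq d_2(q_1) \leq d_2(q),
\end{equation*}
allowing $d_2(q)$ to be pulled uniformly outside the sum over divisor pairs $q_0 q_1 = q$. No step presents a genuine obstacle; the one point deserving momentary care is the uniform Gauss sum estimate for imprimitive $\chi$, handled as above.
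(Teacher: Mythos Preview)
Your proof is correct and follows essentially the same approach as the paper: decompose by $q_0=(n,q)$, expand the additive character into multiplicative characters via Gauss sums, and bound $q_1/\phi(q_1) \leq d_2(q_1) \leq d_2(q)$. The only cosmetic difference is that the paper justifies $|\tau(\bar\chi)| \leq \sqrt{q_1}$ by a Parseval argument (substituting $l \mapsto al$ and averaging over $a$) rather than via the factorization through the inducing primitive character, but both are standard.
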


\begin{proof}  Let $s$ be such that $\mathrm{Re}(s) = \frac{1}{2}$.  By definition we have
$$ {\mathcal D}\left[f e\left(\frac{a \cdot}{q}\right)\right](s) = \sum_n \frac{f(n)}{n^s} e\left(\frac{an}{q}\right).$$
We now decompose the $n$ summation in terms of the greatest common divisor $q_0 \coloneqq (n,q)$ of $n$ and $q$, obtaining (after writing $q = q_0 q_1$ and $n = q_0 n_1$)
$$ {\mathcal D}\left[f e\left(\frac{a \cdot}{q}\right)\right](s) = \sum_{q = q_0 q_1} \frac{1}{q_0^s} \sum_{n_1: (n_1,q_1)=1} \frac{f(q_0 n_1)}{n_1^s} e\left(\frac{an_1}{q_1}\right)$$
and thus by the triangle inequality
$$ \left|{\mathcal D}\left[f e\left(\frac{a \cdot}{q}\right)\right](s)\right| \leq \sum_{q = q_0 q_1} \frac{1}{\sqrt{q_0}} \left|\sum_{n_1: (n_1,q_1)=1} \frac{f(q_0 n_1)}{n_1^s} e\left(\frac{an_1}{q_1}\right)\right|.$$
Next, we perform the usual Dirichlet expansion
\begin{equation}\label{usual}
 e\left(\frac{an_1}{q_1}\right) 1_{(n_1,q_1)=1} = \frac{1}{\phi(q_1)} \sum_{\chi\ (q_1)} \chi(a) \chi(n_1) \tau(\overline{\chi})
\end{equation}
where $\tau(\overline{\chi})$ is the Gauss sum
\begin{equation}\label{taug}
 \tau(\overline{\chi}) \coloneqq \sum_{l=1}^{q_1} e\left(\frac{l}{q_1}\right) \overline{\chi(l)}
\end{equation}
As is well known, we have
$$ |\tau(\overline{\chi})| \leq \sqrt{q_1}$$
(as can be seen for instance from making the substitution $l \mapsto al$ to \eqref{taug} for $(a,q)=1$ and then applying the Parseval identity in $a$).  Using the crude bound
$$  \frac{1}{\sqrt{q_0}} \frac{1}{\phi(q_1)} \sqrt{q_1} \ll  \frac{1}{\sqrt{q_0}} \frac{d_2(q_1)}{q_1} \sqrt{q_1} \leq \frac{d_2(q)}{\sqrt{q}}$$
and the triangle inequality, we obtain \eqref{c-split}.  
\end{proof}

It thus becomes of interest to have upper bounds, on average at least, on the quantity $|{\mathcal D}[f](\frac{1}{2}+it,\chi,q_0)|$.  
We first recall a variant of Lemma \ref{mvt-lem}, which can save a factor of $q_1$ or so compared to that lemma when summing over characters $\chi$:

\begin{lemma}[Mean value theorem with characters]\label{mvt-lem-ch} Suppose that $f \colon \N \to \C$ is supported on $[X/2,4X]$ for some $X \geq 2$.  Then one has
$$ \sum_{\chi\ (q_1)} \int_{T_0}^{T_0+T} \left|{\mathcal D}[f]\left(\frac{1}{2}+it, \chi \right)\right|^2 \ll \frac{q_1 T + X}{X} \|f\|_{\ell^2}^2 \log^3(q_1 T X)$$
for all $T \geq 2$, $T_0 \in \R$, and natural numbers $q_1$, where $\chi$ is summed over all Dirichlet characters of period $q_1$.  In particular, if $f$ is $k$-divisor-bounded, then (from \eqref{alpha-2}) we have
$$ \sum_{\chi\ (q_1)} \int_{T_0}^{T_0+T} \left|{\mathcal D}[f]\left(\frac{1}{2}+it, \chi \right)\right|^2 \ll_k (q_1 T + X) \log^{O_k(1)}(q_1 T X)$$
\end{lemma}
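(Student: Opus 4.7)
The plan is to expand the $L^2$ integral into a double sum, apply character orthogonality to collapse the $\chi$-sum, and then control the resulting diagonal and off-diagonal contributions by elementary calculation. The argument is essentially the classical large-sieve-style mean value theorem specialized to the character setting.

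Expanding the square and interchanging the order of summation and integration, the left-hand side of the claimed estimate becomes
\begin{equation*}
\sum_{m,n}\frac{f(m)\overline{f(n)}}{(mn)^{1/2}}\,I(m,n)\sum_{\chi\ (q_1)}\chi(m)\overline{\chi(n)},
\end{equation*}
where $I(m,n) \coloneqq \int_{T_0}^{T_0+T}(n/m)^{it}\,dt$ satisfies $|I(m,n)| \ll \min(T, 1/|\log(m/n)|)$ for $m \neq n$, and equals $T$ for $m=n$. The orthogonality relation $\sum_{\chi\ (q_1)}\chi(m)\overline{\chi(n)} = \phi(q_1) \cdot 1_{m\equiv n\ (q_1),\ (mn,q_1)=1}$ collapses the inner sum, and the resulting expression splits naturally into a diagonal ($m=n$) and an off-diagonal ($m \neq n$) contribution.

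The diagonal is $\phi(q_1) T \sum_n |f(n)|^2/n \ll q_1 T \|f\|_{\ell^2}^2/X$, using the support hypothesis $n \asymp X$. For the off-diagonal, apply the bound $|f(m)\overline{f(n)}| \leq \tfrac12(|f(m)|^2+|f(n)|^2)$ and exploit the symmetry $m \leftrightarrow n$ to reduce matters to estimating, for each $n \asymp X$, the inner sum
$$S(n) \coloneqq \sum_{\substack{m \neq n,\ m \asymp X\\ m \equiv n\ (q_1)}} \min\!\left(T,\,\tfrac{1}{|\log(m/n)|}\right).$$
Writing $m = n + q_1 k$ with nonzero integer $k$, one has $|\log(m/n)| \asymp q_1|k|/X$ in the regime $q_1|k| \ll X$ and $|\log(m/n)| \gg 1$ otherwise, so a straightforward split of the $k$-sum at $|k| \asymp X/(q_1T)$ yields $S(n) \ll (X/q_1)\log(q_1TX)$. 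Substituting this back, the off-diagonal contributes $\ll \|f\|_{\ell^2}^2 \log(q_1TX)$.

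Combining the two contributions produces
$$\sum_{\chi\ (q_1)} \int_{T_0}^{T_0+T} \left|{\mathcal D}[f,\chi]\!\left(\tfrac{1}{2}+it\right)\right|^2 dt \ll \frac{q_1 T + X}{X}\|f\|_{\ell^2}^2 \log(q_1TX),$$
which is in fact slightly stronger than the claimed bound; the extra factors of $\log$ in the statement simply provide comfortable slack. No substantive obstacle arises beyond the elementary bookkeeping of $S(n)$, which is a standard exercise.
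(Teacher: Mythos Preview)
Your argument is correct and is essentially the standard direct proof of this classical hybrid mean value theorem; the paper itself does not give a proof but simply cites \cite[Theorem 9.12]{ik}. Your version even yields a single $\log(q_1TX)$ factor, sharper than the $\log^3$ stated, so the approach is entirely adequate.
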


\begin{proof}  This is a special case of \cite[Theorem 9.12]{ik}.  
\end{proof}

In the case when $f$ is an indicator function $f = 1_{[1,X]}$ we have a fourth moment estimate:

\begin{lemma}[Fourth moment estimate]\label{fourth}  Let $X \geq 2$, $q_1 \geq 1$, and $T \geq 1$.  Let ${\mathcal S}$ be a finite set of pairs $(\chi,t)$ with $\chi$ a character of period $q$, and $t \in [-T,T]$.  Suppose that ${\mathcal S}$ is $1$-separated in the sense that for any two distinct pairs $(\chi,t), (\chi',t') \in {\mathcal S}$, one either has $\chi \neq \chi'$ or $|t-t'| \geq 1$.  Then one has
\begin{align*}
\sum_{(\chi,t) \in {\mathcal S}} \left|{\mathcal D}[1_{[1,X]}]\left(\frac{1}{2}+it,\chi\right)\right|^4 &\ll q_1 T \log^{O(1)} X + |{\mathcal S}| \log^4 X \left( \frac{q_1^2}{T^2} + \frac{X^2}{T^4} \right)\\
&\quad  + X^2 \sum_{(\chi,t) \in {\mathcal S}} \delta_\chi (1+|t|)^{-4}
\end{align*}
where $\delta_\chi$ is equal to $1$ when $\chi$ is principal, and equal to zero otherwise.
\end{lemma}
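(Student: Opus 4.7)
The plan is to apply the approximate functional equation to split the truncated Dirichlet polynomial into a full $L$-function, a residue term, a dual polynomial, and lower-order errors, and then bound each piece using fourth-moment estimates and mean-value theorems.

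First I would apply Perron's formula (as in Lemma \ref{tpf-lem}) to express $\sum_{n\le X}\chi(n)n^{-1/2-it}$ as a truncated contour integral of $L(\tfrac{1}{2}+it+s,\chi)X^s/s$ along $\mathrm{Re}(s)=1/\log X$ of height $T$, with truncation error $O(X^{1/2}\log T/T)$. Shifting the contour to $\mathrm{Re}(s)=-\tfrac{1}{2}$ picks up the residue $L(\tfrac{1}{2}+it,\chi)$ at $s=0$ and, when $\chi$ is principal, a further residue $\delta_\chi\prod_{p\mid q_1}(1-p^{-1})X^{1/2-it}/(\tfrac{1}{2}-it)$ of size $\ll X^{1/2}/(1+|t|)$ at $s=\tfrac{1}{2}-it$. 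On the shifted line I would apply the functional equation $L(w,\chi)=\varepsilon_{\chi^*}(q_1^*/\pi)^{1/2-w}\gamma(w)L(1-w,\bar\chi^*)\prod_{p\mid q_1,\,p\nmid q_1^*}(1-\chi^*(p)p^{-w})$, where $\chi^*$ is the primitive character inducing $\chi$, expand $L(1-w,\bar\chi^*)$ as a Dirichlet series, and evaluate the resulting oscillatory integral by stationary phase. The net effect converts the shifted integral into a dual Dirichlet polynomial of length $\asymp q_1|t|/X$ in $\bar\chi^*$ at $\tfrac{1}{2}-it$, plus a further saddle-point error of order $q_1^{1/2}/T^{1/2}$ per pair.

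Next, raising this decomposition to the fourth power, applying the triangle inequality, and summing over $\mathcal{S}$, I would bound the four resulting contributions as follows. The $L$-function piece $\sum_\mathcal{S}|L(\tfrac{1}{2}+it,\chi)|^4$ is controlled by Heath-Brown's classical fourth-moment estimate for Dirichlet $L$-functions—after decomposing imprimitive $\chi$ into their primitive inducing characters and using the $1$-separation of $\mathcal{S}$—giving $\ll q_1 T\log^{O(1)}(q_1 T)$ and matching the first term of the claim. The residue piece contributes exactly $X^2\sum_\mathcal{S}\delta_\chi(1+|t|)^{-4}$ up to a bounded constant, matching the third term. The dual piece is bounded using the trivial estimate $|\mathrm{dual}|\ll(q_1|t|/X)^{1/2}$ combined with Lemma \ref{mvt-lem-ch} and a dyadic decomposition of $\mathcal{S}$ by $|t|$, giving $\ll q_1 T\log^{O(1)}$, absorbed into the first term. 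Finally, the Perron truncation error $O(X^{1/2}\log T/T)$ and the saddle-point error $O(q_1^{1/2}/T^{1/2})$, raised to the fourth power and summed over $\mathcal{S}$, give $\ll|\mathcal{S}|\log^4 X(q_1^2/T^2+X^2/T^4)$, matching the middle term.

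The hardest step will be the stationary-phase evaluation on the shifted line: one must track the Gamma-ratio $\gamma(w)$ uniformly via Stirling and properly account for the extra Euler factors produced by imprimitive $\chi$, which introduce divisor-function losses that must be absorbed into the $\log^{O(1)}$ factor. An alternative route that sidesteps this is to open the fourth power, apply character orthogonality modulo $q_1$ to reduce to a sum over $(m_1m_2,n_1n_2)$ with $m_1m_2\equiv n_1n_2\pmod{q_1}$ and $m_i,n_j\le X$, split into a diagonal part (yielding the $q_1 T\log^{O(1)}$ main term via $\sum_{N\le X^2}d_2(N)^2/N\ll\log^4 X$) and an off-diagonal part treated by Poisson summation (equivalently, the functional equation for the Riemann zeta function); this alternative produces the same three groups of terms but requires comparable bookkeeping on the off-diagonal.
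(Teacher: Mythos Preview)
The paper does not give a proof of this lemma; it simply cites \cite[Lemma 9]{bhp} and remarks that the argument there rests on Ramachandra's fourth-moment estimates \cite{ramachandra} for Dirichlet $L$-functions. Your outline---Perron plus contour shift to extract $L(\tfrac12+it,\chi)$, the principal-character residue, and a dual polynomial via the functional equation, then bounding each piece with the discrete fourth moment---is precisely the structure of that cited argument, so your approach is correct and aligned with the source the paper invokes.
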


\begin{proof}  See \cite[Lemma 9]{bhp}.  We remark that this estimate is proven using fourth moment estimates \cite{ramachandra} for Dirichlet $L$-functions.
\end{proof}

It will be more convenient to use a (slightly weaker) integral form of this estimate:

\begin{corollary}[Fourth moment estimate, integral form]\label{fourth-integral}  Let $X \geq 2$, $q_1 \geq 1$, and $T \geq 1$.  Then
\begin{equation}\label{iant}
\sum_{\chi\ (q_1)} \int_{T/2 \leq |t| \leq T}  \left|{\mathcal D}[1_{[1,X]}](\frac{1}{2}+it,\chi)\right|^4\ dt \ll q_1 T \left(1 + \frac{q_1^2}{T^2} + \frac{X^2}{T^4} \right) \log^{O(1)} X.
\end{equation}
A similar bound holds with $1_{[1,X]}$ replaced by $L 1_{[1,X]}$.
\end{corollary}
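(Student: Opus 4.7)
\emph{Proof plan.} The estimate follows by discretizing the integral in $t$ and applying Lemma \ref{fourth}.

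First I would partition the range $\{t : T/2 \le |t| \le T\}$ into $O(T)$ unit intervals $I_j$ of the form $[n, n+1]$ with $n \in \mathbb{Z}$. For each Dirichlet character $\chi$ modulo $q_1$ and each interval $I_j$, let $t^*_{\chi,j} \in I_j$ be a point at which $|{\mathcal D}[1_{[1,X]}](\tfrac12+it, \chi)|$ attains its supremum on $I_j$ (this maximum exists, since the integrand is continuous on the compact interval). Then
\[
\int_{I_j} \left|{\mathcal D}[1_{[1,X]}](\tfrac12 + it, \chi)\right|^4 dt \;\le\; \left|{\mathcal D}[1_{[1,X]}](\tfrac12 + i t^*_{\chi,j}, \chi)\right|^4,
\]
and summing in $j$ and $\chi$ bounds the left-hand side of \eqref{iant} by the discrete sum appearing in Lemma \ref{fourth}. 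However, the raw collection $\{(\chi, t^*_{\chi,j})\}$ may fail to be $1$-separated, since maximizers in adjacent intervals $I_j, I_{j+1}$ for the same character can be arbitrarily close. To remedy this I would split the intervals $I_j$ into the two subcollections of even-indexed and odd-indexed intervals; within either subcollection, two consecutive intervals are separated by a gap of length $1$, so the corresponding sets ${\mathcal S}^{\mathrm{even}}, {\mathcal S}^{\mathrm{odd}}$ are $1$-separated, and each has cardinality $\ll q_1 T$.

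Applying Lemma \ref{fourth} to each of these two sets and adding the resulting bounds, one obtains
\[
\sum_{\chi\ (q_1)} \int_{T/2 \le |t| \le T} \left|{\mathcal D}[1_{[1,X]}](\tfrac12+it, \chi)\right|^4 dt \ll q_1 T \log^{O(1)} X + q_1 T \log^4 X \left( \frac{q_1^2}{T^2} + \frac{X^2}{T^4} \right) + X^2 \Sigma,
\]
where $\Sigma$ is the contribution to the principal-character tail, namely $\Sigma = \sum_j (1+|t^*_{\chi_0,j}|)^{-4}$. Since $|t^*_{\chi_0,j}| \asymp T$ for every $j$ and there are $O(T)$ terms, we have $\Sigma \ll T \cdot T^{-4} = T^{-3}$, so that $X^2 \Sigma \ll X^2/T^3 \le q_1 X^2 / T^3$ (using $q_1 \ge 1$). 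Collecting everything yields the desired bound for $1_{[1,X]}$.

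For the case where $1_{[1,X]}$ is replaced by $L 1_{[1,X]}$, I would invoke summation by parts to write
\[
{\mathcal D}[L 1_{[1,X]}](s, \chi) = (\log X)\, {\mathcal D}[1_{[1,X]}](s, \chi) - \int_1^X {\mathcal D}[1_{[1,u]}](s, \chi) \, \frac{du}{u}.
\]
By H\"older's inequality the fourth power of the integral term is bounded by $(\log X)^3 \int_1^X |{\mathcal D}[1_{[1,u]}](s,\chi)|^4 \, \frac{du}{u}$. Summing in $\chi$, integrating in $t$, and inserting the already-proved bound with $X$ replaced by $u$, one gets the claimed estimate up to a factor of $\log^{O(1)} X$ (note $\int_1^X u^2 \tfrac{du}{u} \ll X^2$, so the $X^2/T^4$ term survives). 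The main (and only) subtlety in the whole argument is the parity splitting needed to secure the $1$-separation hypothesis of Lemma \ref{fourth}; once this is arranged, the rest is entirely routine.
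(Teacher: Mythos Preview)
Your argument is correct and follows the same route as the paper: discretize into unit intervals, pick maximizers, split into $O(1)$ $1$-separated families (your even/odd parity split is exactly the device the paper alludes to), apply Lemma~\ref{fourth}, and bound the principal-character tail by $O(X^2/T^3) \le q_1 X^2/T^3$. Your treatment of the $L 1_{[1,X]}$ case via the summation-by-parts identity and H\"older is likewise the same as the paper's (which uses the identity \eqref{lix} and the triangle inequality).
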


\begin{proof}  For each $\chi$, we cover the region $T/2 \leq |t| \leq T$ by unit intervals $I$, and for each such $I$ we find a point $t \in I$ that maximizes $|{\mathcal D}[1_{[1,X]}](\frac{1}{2}+it,\chi)|$, then add $(\chi,t)$ to ${\mathcal S}$.  Then $|{\mathcal S}| \ll q T$; it is not necessarily $1$-separated, but one can easily separate it into $O(1)$ $1$-separated sets.  Applying Lemma \ref{fourth} (bounding $\delta_\chi (1+|t|)^{-4}$ by $O(1/T^4)$), we obtain \eqref{iant}.  Finally, to handle $L 1_{[1,X]}$, one can use the integration by parts identity
\begin{equation}\label{lix}
 L 1_{[1,X]} (y) = \log X 1_{[1,X]}(y) - \int_1^X 1_{[1,X']}(y) \frac{dX'}{X'}
\end{equation}
and the triangle inequality (cf. Lemmas \ref{sbp}, \ref{sbp-2}).
\end{proof}

We will also need the following variant of the fourth moment estimate due to Jutila \cite{jutila}.

\begin{proposition}[Jutila]\label{jutila-prop} Let $q, T \geq 1$ and $\eps > 0$.  Let $T^{1/2 + \varepsilon} \ll T_0 \ll T^{2/3}$ and $T < t_1 < \ldots < t_r < 2T$ with $t_{i + 1} - t_{i} > T_0$. Then we have
$$
\sum_{\chi \ (q)} \sum_{i = 1}^{r} \int_{t_i}^{t_i + T_0} |L(\tfrac 12 + it, \chi)|^4 dt \ll_\eps q (r T_0 + (r T)^{2/3}) (q T)^{\varepsilon}.
$$
\end{proposition}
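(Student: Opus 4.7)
My plan is to reduce to a mean square of a short Dirichlet polynomial via the approximate functional equation, and then exploit the spacing condition $t_{i+1}-t_i > T_0$ to extract cancellation in the off-diagonal terms. First I would apply the approximate functional equation to $L(\tfrac{1}{2}+it,\chi)^2$, whose Dirichlet coefficients are $d_2(n)\chi(n)$. This expresses, uniformly for $t\in[T,2T]$,
$$
L(\tfrac{1}{2}+it,\chi)^2 = M(t,\chi) + \varepsilon(t,\chi)\,\overline{M^{\ast}(t,\chi)} + O_{\eps}\bigl((qT)^{-10}\bigr),
$$
where $M$ and $M^{\ast}$ are smooth Dirichlet sums supported on $n\leq N \ll (qT)^{1/2}$, with coefficients bounded by $d_2(n)$, and $|\varepsilon(t,\chi)|=1$. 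By the triangle inequality and symmetry, the task reduces to bounding
$$
\mathcal{S} \coloneqq \sum_{\chi\ (q)}\sum_{i=1}^{r}\int_{t_i}^{t_i+T_0}\!\!|M(t,\chi)|^2\,dt.
$$

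Opening the square and using orthogonality of characters modulo $q$, $\mathcal{S}$ reduces to a sum over pairs $m,n\asymp N'$ (for some dyadic $N'\leq N$) with $m\equiv n\pmod{q}$ and $(mn,q)=1$, weighted by an integral of $(n/m)^{it}$ on $[t_i,t_i+T_0]$. The diagonal $m=n$ contributes at most
$$
\phi(q)\,rT_0\sum_{n\leq N}\frac{d_2(n)^2}{n} \ll qrT_0\,(\log qT)^{O(1)},
$$
which matches the first term of the target bound.

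For the off-diagonal, I would write $n=m+kq$ with $k\neq 0$ and use $\log(1+kq/m)\asymp kq/m$, so that the inner integral has modulus $\ll \min\bigl(T_0,\,m/|k|q\bigr)$. This reduces the off-diagonal contribution to an expression of the shape
$$
\phi(q)\sum_{k\neq 0}\sum_{m\asymp N'}\frac{d_2(m)\,d_2(m+kq)}{(m(m+kq))^{1/2}}\,\biggl|\sum_{i=1}^{r}\int_{t_i}^{t_i+T_0}\!\!\bigl(1+\tfrac{kq}{m}\bigr)^{it}\,dt\biggr|.
$$
I would then split according to whether $m/|k|q \geq T_0$ (in which case the integral has essentially full length $T_0$ but the phases $(1+kq/m)^{it_i}$ oscillate nontrivially across the well-spaced sample points) or $m/|k|q<T_0$ (in which case the integral itself is already small and one can bound $\sum_i$ trivially by $r$). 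In the first regime, cancellation in $\sum_{i=1}^{r}(1+kq/m)^{it_i}$ is extracted via a large-sieve inequality for well-spaced frequencies, where the hypothesis $t_{i+1}-t_i>T_0$ converts into orthogonality up to admissible loss. Combined with standard shifted-divisor-type estimates on the sums over $(m,k)$, a careful balance of the two regimes should yield the desired saving $q(rT)^{2/3}(qT)^{\eps}$.

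The hardest step is the off-diagonal estimate: one must simultaneously exploit the spacing of the $t_i$'s and the size of the oscillation frequency $|k|q/m$, and the range $T^{1/2+\eps}\ll T_0\ll T^{2/3}$ is precisely the window in which the Weyl/large-sieve input produces the exponent $2/3$ (outside this range, either the diagonal dominates or the naive bound $r(qT_0+qT)$ already suffices). In practice I would follow Jutila's original argument in \cite{jutila}, which carries out these optimizations by means of the large sieve combined with a shifted-convolution analysis of the divisor function.
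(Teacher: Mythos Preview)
The paper's own treatment of this proposition is simply a citation: ``See \cite[Theorem 3]{jutila}.'' No proof is given in the paper, so your final sentence (deferring to Jutila's original argument) is in fact exactly what the paper does, and in that sense your proposal is adequate.

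That said, the outline you give before deferring to \cite{jutila} does not accurately describe Jutila's method and, taken at face value, has a gap. First, a minor technical slip: the approximate functional equation for $L(\tfrac12+it,\chi)^2$ (with coefficients $d_2(n)\chi(n)$) has effective length $\asymp qT$, not $(qT)^{1/2}$; the latter is the length for $L$ itself. More importantly, the off-diagonal step you sketch---extracting cancellation in $\sum_{i}(1+kq/m)^{it_i}$ via a large-sieve inequality for well-spaced points, combined with ``standard shifted-divisor-type estimates''---does not by itself produce the exponent $2/3$. A direct mean-value bound on the Dirichlet polynomial only gives $O\bigl(q(rT_0+qT)(qT)^{\eps}\bigr)$, and since $(rT)^{2/3}<T$ throughout the range $T^{1/2+\eps}\ll T_0\ll T^{2/3}$, this is strictly weaker than the claim. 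Jutila's actual argument is rather different in character: it proceeds through a transformation formula for the underlying exponential sums (a Voronoi-type step) together with van der Corput differencing, and it is precisely this mechanism that yields the $(rT)^{2/3}$ term without invoking Kloosterman sum bounds or spectral theory (cf.\ Remark~\ref{kloosterman}). So your sketch would need to be replaced by that method, which is what your closing sentence and the paper both do.
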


\begin{proof}
See \cite[Theorem 3]{jutila}. This estimate is a variant of Iwaniec's result \cite{iwaniec} on the fourth moment of $\zeta$ in short intervals; it is however proven using a completely different and more elementary method. 
\end{proof}

Using a variant of Corollary \ref{trunc-dir} we may truncate the Dirichlet $L$-function to conclude

\begin{corollary}\label{jutila-cor}  Let the hypotheses be as in Proposition \ref{jutila-prop}.  Then for any $1 \leq X \ll T^2$ and any Dirichlet character $\chi$ of period $q$, one has
$$
\sum_{i = 1}^{r} \int_{t_i}^{t_i + T_0} |{\mathcal D}[1_{[1,X]}](\tfrac 12 + it, \chi)|^4 dt \ll_\eps q^{O(1)} (r T_0 + (r T)^{2/3}) T^{\varepsilon}.
$$
Similarly with $1_{[1,X]}$ replaced by $L 1_{[1,X]}$.
\end{corollary}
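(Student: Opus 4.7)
The plan is to parallel the proof of Corollary \ref{fourth-integral}, with Proposition \ref{jutila-prop} (Jutila's bound) replacing the Heath-Brown--Ramachandra fourth moment that underlies Lemma \ref{fourth}. First, I would establish a variant of Corollary \ref{trunc-dir} in which $L(\tfrac{1}{2}+it, \chi)$ itself plays the role of the ``full'' Dirichlet series: for any Dirichlet character $\chi$ of period $q$, any $t \in \R$, and any $X, T' \geq 2$,
\begin{equation*}
|{\mathcal D}[1_{[1,X]}](\tfrac{1}{2}+it, \chi)| \ll \int_{-T'}^{T'} \bigl|L(\tfrac{1}{2}+i(t+u), \chi)\bigr| \frac{du}{1+|u|} + q^{O(1)} \frac{X^{1/2}\log(2+T')}{T'} + \delta_\chi \frac{X^{1/2}}{1+|t|},
\end{equation*}
where $\delta_\chi = 1$ if $\chi$ is principal and $0$ otherwise. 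This follows by applying the truncated Perron formula at height $T'$ to $\sum_{n\leq X}\chi(n)n^{-1/2-it}$ on the absolutely-convergent line $\mathrm{Re}(w) = 1 + 1/\log X$ and shifting the contour to $\mathrm{Re}(w) = 0$: the pole at $w=0$ is absorbed into the integral, the pole at $w = \tfrac{1}{2}-it$ (present only for principal $\chi$) yields the $\delta_\chi$ term, and the horizontal edges are handled by the convexity bound for $L$ inside the critical strip.

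With $T'= T$ I would raise this pointwise bound to the fourth power, using H\"older's inequality with the weight $(1+|u|)^{-1}$ inside the integral, to get
\begin{equation*}
|{\mathcal D}[1_{[1,X]}](\tfrac{1}{2}+it, \chi)|^4 \ll (\log T)^3 \int_{-T}^{T} \bigl|L(\tfrac{1}{2}+i(t+u), \chi)\bigr|^4 \frac{du}{1+|u|} + q^{O(1)} \frac{X^2}{T^4} + \delta_\chi \frac{X^2}{(1+|t|)^4}.
\end{equation*}
Integrating this over $t \in [t_i, t_i+T_0]$, summing over $i=1,\dots,r$, and exchanging the order of integration via Fubini produces the main contribution
\begin{equation*}
(\log T)^3 \int_{-T}^{T} \frac{du}{1+|u|} \sum_{i=1}^{r} \int_{t_i+u}^{t_i+T_0+u} \bigl|L(\tfrac{1}{2}+is, \chi)\bigr|^4\, ds,
\end{equation*}
while the error summands integrate and sum to $\ll q^{O(1)} rT_0 X^2/T^4 \ll q^{O(1)} rT_0$ (using $X \ll T^2$), with an analogous admissible contribution from the $\delta_\chi$ term.

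For each fixed $u \in [-T,T]$, the shifted points $\{t_i+u\}$ remain $T_0$-separated and lie in $[0, 3T]$. At most $O(1)$ of them can lie below $T^{1/2+\eps}$ (since $T_0 \gg T^{1/2+\eps}$), and their contribution is bounded crudely via the convexity estimate $|L(\tfrac{1}{2}+is,\chi)|^4 \ll (q(1+|s|))^{1+\eps}$, giving $\ll q^{O(1)} T_0 T^{O(\eps)}$. The remaining points lie in a sub-interval of $[T^{1/2+\eps}, 3T]$, on which Proposition \ref{jutila-prop} (after a harmless dyadic split of this range) provides the bound $\ll_\eps q(rT_0+(rT)^{2/3})(qT)^\eps$ once the single-character contribution is majorized by the full sum over characters modulo $q$. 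Integrating the $(1+|u|)^{-1}$ weight over $u$ costs only an extra $O(\log T)$, so the main term is indeed $\ll_\eps q^{O(1)} (rT_0+(rT)^{2/3}) T^\eps$. The $L1_{[1,X]}$ case then follows by the decomposition \eqref{lix} together with Minkowski's integral inequality applied inside the fourth-moment quantity on the left (the integral $\int_1^X dX'/X'$ only costs an additional $(\log X)^{O(1)}$ factor, which is absorbed into $T^\eps$). The main technical subtlety is controlling the shifted Jutila points near zero, which is addressed by the split into ``good'' and ``bad'' ranges with the crude convexity bound on the latter as described above.
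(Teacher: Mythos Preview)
Your overall strategy---truncated Perron, contour shift to the critical line, H\"older to pass to $|L|^4$, then Jutila with a shift variable and Fubini---is exactly the route the paper takes. The one genuine slip is in your treatment of the ``bad'' shifted points: the pointwise convexity bound $|L(\tfrac12+is,\chi)|^4 \ll (q(1+|s|))^{1+\eps}$ does \emph{not} give $\ll q^{O(1)} T_0 T^{O(\eps)}$ on an interval of length $T_0$. Even if $t_i+u \le T^{1/2+\eps}$, the variable $s$ ranges up to $t_i+u+T_0$, and the convexity bound integrated over that interval yields something of order $q^{O(1)} T_0^{2+\eps}$ (or at best $q^{O(1)} T_0 \cdot T_0^{1+\eps}$ since $T_0 \gg T^{1/2+\eps}$). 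When $r$ is small---say $r=1$---the target $rT_0 + (rT)^{2/3} \asymp T^{2/3}$ is much smaller than $T_0^2$, which can be as large as $T^{4/3}$. So this step fails as written. Your proposed dyadic split of $[T^{1/2+\eps},3T]$ runs into a related problem: on dyadic ranges $[T',2T']$ with $T'$ significantly below $T$, the hypothesis $T_0 \ll (T')^{2/3}$ of Proposition~\ref{jutila-prop} need not hold.

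The paper sidesteps this entirely by taking the truncation height to be $T/2$ rather than $T$ (after first running Perron out to height $T^3$ and using the crude $L^2$ mean value $\int_{-S}^{S}|L(\tfrac12+is,\chi)|^2\,ds \ll q^{O(1)} S \log^{O(1)} S$ together with Cauchy--Schwarz and a dyadic decomposition to absorb the tails $T/2 \le |t'| \le T^3$ into the ``$+1$'' term). With $|t'|\le T/2$ and $t \in [T,3T]$, the shifted argument $t+t'$ stays in $[T/2, 7T/2]$, so Proposition~\ref{jutila-prop} applies uniformly in $t'$ with no bad region to excise. If you simply replace your choice $T'=T$ by $T'=T/2$ and insert this $L^2$ tail estimate, the rest of your argument goes through unchanged.
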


One can be more efficient here with respect to the dependence of the right-hand side on $q$, but we will not need to do so in our application, as we will only use Corollary \ref{jutila-cor} for quite small values of $q$.

\begin{proof}  In view of \eqref{lix} and the triangle inequality, followed by dyadic decomposition, it suffices to show that
$$
\sum_{i = 1}^{r} \int_{t_i}^{t_i + T_0} |{\mathcal D}[1_{[X,2X]}](\tfrac 12 + it, \chi)|^4 dt \ll_\eps q^{O(1)} (r T_0 + (r T)^{2/3}) T^{\varepsilon}.
$$
From the fundamental theorem of calculus we have
$$ {\mathcal D}[1_{[X,2X]}](\tfrac 12 + it, \chi) = \frac{1}{(2X)^{1/2}} {\mathcal D}[1_{[X,2X]}](it, \chi) + \int_X^{2X}
{\mathcal D}[1_{[X,X']}](it, \chi) \frac{dX'}{2(X')^{3/2}} $$
so by the triangle inequality again, it suffices to show that
\begin{equation}\label{spak}
\sum_{i = 1}^{r} \int_{t_i}^{t_i + T_0} |{\mathcal D}[1_{[1,X]}](it, \chi)|^4 dt \ll_\eps q^{O(1)} X^2 (r T_0 + (r T)^{2/3}) T^{\varepsilon}.
\end{equation}
Let $t$ lie in the range $[T, 3T]$.  From Lemma \ref{tpf-lem}(i) with $f(n), \sigma, T$ replaced by $\frac{\chi(n)}{n^{it}}$, $0$, $T^2$ respectively, we see that
$$
{\mathcal D}[1_{[1,X]}](it, \chi) \ll \left|\int_{-T^2}^{T^2} L\left( 1 + \frac{1}{\log X} + i(t+t'), \chi\right) \frac{X^{1+\frac{1}{\log X} + it'}}{1+\frac{1}{\log X}+it'}\ dt'\right| + \frac{X \log^{O(1)} T}{T^2}$$
where $L(s,\chi)$ is the Dirichlet $L$-function. Note that $X / T^2 \ll 1$ by assumption.
Shifting the contour and using the crude convexity bound $L(\sigma+it,\chi) \ll q^{O(1)} (1+t)^{1/2}$ for $1/2 \leq \sigma \leq 2$, all $\varepsilon > 0$ and $|\sigma+it-1| \gg 1$, and also noting that the residue of $L(s,\chi)$ at $1$ (if it exists) is $O(1)$, we obtain the estimate
$$
{\mathcal D}[1_{[1,X]}](it, \chi) \ll q^{O(1)} \left( \left|\int_{-T^{2}}^{T^{2}} L\left( \frac{1}{2} + i(t+t'), \chi\right) \frac{X^{\frac{1}{2}+it'}}{\frac{1}{2}+it'}\ dt'\right| + \frac{X}{T} + \log^{O(1)} T \right)
$$
(say).  Since $X \ll T^2$, we can bound the error term inside brackets by $X^{1/2} \cdot \log^{O(1)} T$.
We have the crude $L^2$ mean value estimate
$$ \int_{-T'}^{T'} \left|L\left(\frac{1}{2}+i( t+ t'), \chi\right)\right|^2\ dt' \ll q^{O(1)} T' (\log (T' + 2))^{O(1)}$$
for any $T' > T/10$ (which can be established for instance from Lemma \ref{mvt-lem} and the approximate functional equation).
From this, Cauchy-Schwarz, and dyadic decomposition,
we see that
$$
\left|\int_{-T^{2}}^{T^{2}} L\left( \frac{1}{2} + i(t+t'), \chi\right) \frac{X^{\frac{1}{2}+it'}}{\frac{1}{2}+it'}\ dt'\right| \ll X^{1/2} \Big ( \int_{-T/2}^{T/2} \frac{ | L ( \frac{1}{2} + i(t + t'), \chi ) |}{1 + |t'|} d t' + (\log (T + 2))^{O(1)} \Big ) 
$$
We conclude that,
$$
{\mathcal D}[1_{[1,X]}](it, \chi) \ll q^{O(1)} X^{1/2} (\log(T+2))^{O(1)} \left( 1 +  \int_{-T/2}^{T/2} \frac{\left|L( \frac{1}{2} + i(t+t'), \chi)\right|}{1+|t'|}\ dt' \right).
$$
By H\"older's inequality, we then have
$$
|{\mathcal D}[1_{[1,X]}](it, \chi)|^4 \ll q^{O(1)} X^2 (\log(T+2))^{O(1)} \left( 1 + \int_{-T/2}^{T/2} \frac{\left|L\left( \frac{1}{2} + i(t+t'), \chi\right)\right|^4}{1+|t'|}\ dt' \right).
$$
From shifting the $t_j$ by $t'$, we see from Proposition \ref{jutila-prop} that
$$
\sum_{i = 1}^{r} \int_{t_i}^{t_i + T_0} |L(\tfrac 12 + i(t+t'), \chi)|^4 dt \ll_\eps q (r T_0 + (r T)^{2/3}) (q T)^{\varepsilon}.
$$
whenever $-T/2 \leq t' \leq T/2$.  The claim \eqref{spak} now follows from Fubini's theorem.
\end{proof}

\subsection{Combinatorial decompositions}

We will treat the functions $\Lambda 1_{(X,2X]}$ and $d_k 1_{(X,2X]}$ in a unified fashion, decomposing both of these functions as certain (truncated) Dirichlet convolutions of various types, which we will call ``Type $d_j$ sums'' for some small $j=1,2,\dots$ and ``Type II sums'' respectively.  More precisely, we have

\begin{lemma}[Combinatorial decomposition]\label{comb-decomp}  Let $k, m \geq 1$ and $0 < \eps < \frac{1}{m}$ be fixed.  Let $X \geq 2$, and let $H_0$ be such that $X^{\frac{1}{m} + \eps} \leq H_0 \leq X$.  Let $f \colon \N \to \C$ be either the function $f \coloneqq  \Lambda 1_{(X,2X]}$ or $f \coloneqq  d_k 1_{(X,2X]}$.  Then one can decompose $f$ as the sum of $O_{k,m,\eps}( \log^{O_{k,m,\eps}(1)} X )$ components $\tilde f$, each of which is of one of the following types:
\begin{itemize}
\item[(Type $d_j$)]  A function of the form
\begin{equation}\label{fst}
 \tilde f = (\alpha \ast \beta_1 \ast \dotsb \ast \beta_j) 1_{(X,2X]}
\end{equation}
for some arithmetic functions $\alpha,\beta_1,\dotsc,\beta_j \colon \N \to \C$, where $1 \leq j < m$, $\alpha$ is $O_{k,m,\eps}(1)$-divisor-bounded and supported on $[N,2N]$, and each $\beta_i$, $i=1,\dotsc,j$ is either equal to $1_{(M_i,2M_i]}$ or $L 1_{(M_i,2M_i]}$ for some $N, M_1,\dotsc,M_j$ obeying the bounds
\begin{equation}\label{n-small}
1 \ll N \ll_{k,m,\eps} X^\eps,
\end{equation}
\begin{equation}\label{nmx}
N M_1 \dots M_j \asymp_{k,m,\eps} X
\end{equation}
and 
$$ H_0 \ll M_1 \ll \dotsb \ll M_j \ll X.$$
\item[(Type II sum)]  A function of the form
$$ f = (\alpha \ast \beta) 1_{(X,2X]}$$
for some $O_{k,m,\eps}(1)$-divisor-bounded arithmetic functions $\alpha,\beta \colon \N \to \C$ of good cancellation supported on $[N,2N]$ and $[M,2M]$ respectively, for some $N,M$ obeying the bounds
\begin{equation}\label{n-medium}
X^\eps \ll_{k,m,\eps} N \ll_{k,m,\eps} H_0
\end{equation}
and
\begin{equation}\label{nmx-2}
NM \asymp_{k,m,\eps} X.
\end{equation}
\end{itemize}
\end{lemma}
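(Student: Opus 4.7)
\emph{Proof plan.}  The plan is to expand $f$ as a short sum of Dirichlet convolutions of $\mu$-, $1$-, and $L$-type factors of dyadic support, and then group these factors according to their sizes into either a Type II or a Type $d_j$ structure.

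\textbf{Step 1 (combinatorial expansion).}  For $f = d_k 1_{(X,2X]}$ I use the trivial identity $d_k = 1 \ast \cdots \ast 1$ ($k$ copies). For $f = \Lambda 1_{(X,2X]}$ I invoke the Heath-Brown identity with parameter $K = \lceil 1/\eps \rceil$ and threshold $z = X^\eps$, which expresses $\Lambda(n) 1_{n \leq 2X}$ as a linear combination of $O_\eps(1)$ convolutions of the form $L \ast 1^{\ast(j-1)} \ast \mu_{\leq z}^{\ast j}$ for $1 \leq j \leq K$, where $\mu_{\leq z}(n) \coloneqq \mu(n) 1_{n \leq z}$. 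In either case I then dyadically decompose every factor, producing $O(\log^{O_{k,m,\eps}(1)} X)$ pieces, each of the form
\[ \tilde f = (\gamma_1 \ast \cdots \ast \gamma_r) 1_{(X,2X]}, \]
where $r = O_{k,m,\eps}(1)$, each $\gamma_l$ is supported on a dyadic interval $[N_l,2N_l]$ and equals the restriction of $\mu$, $1$, or $L$ to that interval (with $N_l \leq X^\eps$ whenever $\gamma_l$ is a $\mu$-factor), and $\prod_l N_l \asymp X$.

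\textbf{Step 2 (sorting and case split).}  I order $N_1 \leq \cdots \leq N_r$ and ask whether some subset $S \subseteq \{1,\dots,r\}$ produces a product $\prod_{l \in S} N_l$ lying in the window $[X^\eps, H_0]$.

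\textbf{Case A (Type II).}  If such $S$ exists, let $\alpha$ be the Dirichlet convolution of $\{\gamma_l\}_{l \in S}$ and $\beta$ the convolution of $\{\gamma_l\}_{l \notin S}$. Each $\gamma_l$ is $1$-divisor-bounded with good cancellation by Lemma~\ref{point}, and restriction to a dyadic interval preserves both properties, so iterated application of Lemma~\ref{good-cancel} shows $\alpha$ and $\beta$ are $O_{k,m,\eps}(1)$-divisor-bounded with good cancellation. Their natural supports live in intervals of width $O(N)$ and $O(M)$ respectively with $N \in [X^\eps, H_0]$ and $NM \asymp X$; a final dyadic splitting into $O(1)$ pieces yields the required Type II structure.

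\textbf{Case B (Type $d_j$).}  If no such subset exists, I claim that (a) the product $N \coloneqq \prod_{l : N_l < X^\eps} N_l$ of the ``small'' factor sizes is itself strictly less than $X^\eps$, and (b) each remaining factor has $N_l \geq H_0$ and is of type $1$ or $L$.  For (a), if instead $N \geq X^\eps$, I take the small factors one at a time (in any order) until the running product first reaches $X^\eps$; the previous product is $< X^\eps$ and the factor just added is $< X^\eps$, so the running product lies in $[X^\eps, X^{2\eps}]$. The hypothesis $\eps < 1/m$ together with $H_0 \geq X^{1/m+\eps}$ gives $X^{2\eps} \leq H_0$, placing this subset's product in the forbidden window $[X^\eps, H_0]$, a contradiction. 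For (b), singletons force each non-small $N_l \notin [X^\eps, H_0]$, hence $N_l \geq H_0$; a $\mu$-factor has $N_l \leq X^\eps < H_0$ and cannot be large. Taking $\alpha$ to be the convolution of the small factors (divisor-bounded by Lemma~\ref{good-cancel}, with support of length $O(X^\eps)$ that splits into $O(1)$ dyadic pieces) and $\beta_1, \ldots, \beta_j$ to be the large factors in increasing order of dyadic size, each $\beta_i$ is automatically of the required form $1_{(M_i, 2M_i]}$ or $L 1_{(M_i, 2M_i]}$. Finally, $j \geq 1$ (else $\prod_l N_l < X^\eps$ contradicts $\prod_l N_l \asymp X$), and $H_0^j \leq \prod_{l \text{ large}} N_l \ll X$ combined with $H_0 \geq X^{1/m+\eps}$ forces $j(1/m+\eps) \leq 1$, i.e.\ $j < m$.

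The main obstacle is the ``no-gap'' argument in Case B: the hypothesis $\eps < 1/m$ is used precisely to guarantee $X^{2\eps} \leq H_0$, so that greedy accumulation of small factors cannot jump over the interval $[X^\eps, H_0]$, legitimizing the clean small/large dichotomy and hence the structural form of the Type $d_j$ piece.
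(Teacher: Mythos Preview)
Your proof is correct and follows essentially the same approach as the paper: expand $f$ via the Heath-Brown identity (or the trivial $d_k = 1^{\ast k}$), dyadically decompose each factor, and then sort the resulting pieces by size into either a Type II or a Type $d_j$ configuration. The only cosmetic difference is in the case split: the paper orders $N_1 \leq \dotsb \leq N_r$, takes the largest $s$ with $N_1 \dotsm N_s \leq X^\eps$, and checks whether $N_1 \dotsm N_{s+1} \leq 2H_0$, whereas you phrase it as the existence of \emph{any} subset with product in $[X^\eps, H_0]$; your singleton argument in Case~B then gives $M_i > H_0$ directly, which is marginally cleaner than the paper's route through $N_{s+1} > 2X^{1/m}$.
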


As the name suggests, Type $d_j$ sums behave similarly to the $j^{\operatorname{th}}$ divisor function $d_j$ (but with all factors in the Dirichlet convolution constrained to be supported on moderately large natural numbers).  In our applications we will take $m$ to be at most $5$, so that the only sums that appear are Type $d_1$, Type $d_2$, Type $d_3$, Type $d_4$, and Type II sums, and the dependence in the above asymptotic notation on $m$ can be ignored.  The contributions of Type $d_1$, Type $d_2$, and Type II sums were essentially treated by previous literature; our main innovations lie in our estimation of the contributions of the Type $d_3$ and Type $d_4$ sums.  

\begin{proof}  We first claim a preliminary decomposition: $f$ can be expressed as a linear combination (with coefficients of size $O_{k,m,\eps}(1)$) of $O_{k,m,\eps}(\log^{O_{k,m,\eps}(1)} X)$ terms $\tilde f$ that are each of the form
\begin{equation}\label{pre}
 \tilde f = (\gamma_1 \ast \dotsb \ast \gamma_r) 1_{(X,2X]}
\end{equation}
for some $r = O_{k,m,\eps}(1)$, where each $\gamma_i \colon \N \to \C$ is supported on $[N_i,2N_i]$ for some $N_1,\dotsc,N_r \gg 1$ and are $1$-divisor-bounded with good cancellation.  Furthermore, for each $i$, one either has $\gamma_i = 1_{(N_i,2N_i]}$, $\gamma_i = L 1_{(N_i,2N_i]}$, or $N_i \ll X^\eps$.

We first perform this decomposition in the case $f = d_k 1_{(X,2X]}$.  On the interval $(X,2X]$, we clearly have
$$ d_k = 1_{[1,2X]} \ast \dotsb \ast 1_{[1,2X]}$$
where the term $1_{[1,2X]}$ appears $k$ times.  We can dyadically decompose $1_{[1,2X]}$ as the sum of $O( \log X )$ terms, each of which is of the form $1_{(N,2N]}$ for some $1 \ll N \ll X$.  This decomposes $d_k 1_{(X,2X]}$ as the sum of $O_k( \log^k X )$ terms of the form
$$ (1_{(N_1,2N_1]} \ast \dotsb \ast 1_{(N_k,2N_k]}) 1_{(X,2X]}$$
and this is clearly of the required form \eqref{pre} thanks to Lemma \ref{point}.

Now suppose that $f = \Lambda 1_{(X,2X]}$.  Here we use the well-known Heath-Brown identity \cite[Lemma 1]{hb-ident}.  Let $K$ be the first natural number such that $K \geq \frac{1}{\eps} \geq m$, thus $K = O_{m,\eps}(1)$.  The Heath-Brown identity then gives
\begin{equation}\label{lao}
 \Lambda = \sum_{j=1}^K (-1)^{j+1} \binom{K}{j} L \ast 1^{\ast (j-1)} * (\mu 1_{[1,(2X)^{1/K}]})^{\ast j}
\end{equation}
on the interval $(X,2X]$, where $f^{\ast j}$ denotes the Dirichlet convolution of $j$ copies of $f$.  Clearly we may replace $L$ and $1$ by $L 1_{[1,2X]}$ and $1_{[1,2X]}$ respectively without affecting this identity on $(X,2X]$.  As before, we can decompose $1_{[1,2X]}$ into $O(\log X)$ terms of the form $1_{(N,2N]}$ for some $1 \ll N \ll X$; one similarly decomposes $L 1_{[1,2X]}$ into $O(\log X)$ terms of the form $1_{(N,2N]}$ for $1 \ll N \ll X$, and $\mu 1_{[1,(2X)^{1/K}]}$ into $O(\log X)$ terms of the form $\mu 1_{(N,2N]}$ for $1 \ll N \leq (2X)^{1/K} \ll X^\eps$.  Inserting all these decompositions into \eqref{lao} and using Lemma \ref{point}, we obtain the desired expansion of $f$ into $O_{k,m,\eps}(\log^{O_{k,m,\eps}(1)} X)$ terms of the form \eqref{pre}.

In view of the above decomposition, it suffices to show that each individual term of the form \eqref{pre} can be expressed as the sum of $O_{k,m,\eps}(1)$ terms, each of which are either a Type $d_j$ sum for some $1 \leq j < m$ or as a Type II sum (note that the coefficients of the linear combination can be absorbed into the $\alpha$ factor for both the Type $d_j$ and the Type II sums).  First note that we may assume that
\begin{equation}\label{nrx}
 N_1 \dotsm N_r \asymp_{k,m,\eps} X
\end{equation}
otherwise the expression in \eqref{pre} vanishes.  By symmetry we may also assume that $N_1 \leq \dotsb \leq N_r$.  We may also assume that $X$ is sufficiently large depending on $k,m,\eps$ as the claim is trivial otherwise (every arithmetic function of interest would be a Type II sum, for instance, setting $\alpha$ to be the Kronecker delta function at one).

Let $0 \leq s \leq r$ denote the largest integer for which
\begin{equation}\label{nas}
 N_1 \dotsm N_s \leq X^\eps.
\end{equation}
From \eqref{nrx} we have $s < r$ (if $X$ is large enough).  We divide into two cases, depending on whether $N_1 \dotsm N_{s+1} \leq 2H_0$ or not.  First suppose that $N_1 \dotsm N_{s+1} \leq 2H_0$, then by construction we have
$$ X^\eps \leq N_1 \dotsm N_{s+1} \leq 2H_0.$$
One can then almost express \eqref{pre} as a Type II sum by setting 
$$\alpha \coloneqq  \gamma_1 \ast \dotsb \ast \gamma_{s+1} $$
and
$$\beta \coloneqq  \gamma_{s+2} \ast \dotsb \ast \gamma_r$$
and using Lemma \ref{good-cancel}.  The only difficulty is that $\alpha$ is not quite supported on an interval of the form $[N,2N]$, instead being supported on $[N_1 \dotsm N_{s+1}, 2^{s+1} N_1 \dotsm N_{s+1}]$, and similarly for $\beta$; but this is easily rectified by decomposing both $\alpha$ and $\beta$ dyadically into $O_{k,m,\eps}(1)$ pieces, each of which are supported in an interval of the form $[N,2N]$.

Finally we consider the case when $N_1 \dotsm N_{s+1} > 2H_0$.  Since $H_0 \geq X^{\frac{1}{m}+\eps}$, we conclude from \eqref{nas} that 
$$ N_r \geq \dotsb \geq N_{s+2} \geq N_{s+1} > 2 X^{\frac{1}{m}} \geq (2X)^{\frac{1}{m}}.$$
In particular, if $r-s \geq m$, then $N_{s+1} \dotsm N_r > 2X$ and \eqref{pre} vanishes.  Thus we may assume that $s = r-j$ for some $1 \leq j < m$.  Also, as $N_r,\dotsc,N_{s+1}$ are significantly larger than $X^\eps$, the $\gamma_j$ for $j=s+1,\dotsc,r$ must be of the form $1_{(N_j,2N_j]}$ or $L 1_{(N_j,2N_j]}$.  One can then almost express \eqref{pre} as a Type $d_j$ sum by setting
$$ \alpha \coloneqq  \gamma_1 \ast \dotsb \ast \gamma_s$$
and
$$ \beta_i \coloneqq  \gamma_{s+i}$$
for $i=1,\dotsc,j$ and using Lemma \ref{good-cancel}.  The support of $\alpha$ is again slightly too large, but this can be rectified as before by a dyadic decomposition.
\end{proof}

For technical reasons (arising from the terms in Lemma \ref{edc} when $q_0>1$), we will need a more complicated variant of this proposition, in which one decomposes the function $n \mapsto f(q_0 n)$ rather than $f$ itself.   This introduces some additional ``small'' sums which are not Dirichlet convolutions, but which are quite small in $\ell^2$ norm and so can be easily managed using crude estimates such as Lemma \ref{mvt-lem}.

\begin{lemma}[Combinatorial decomposition, II]\label{comb-decomp-2}  Let $k, m, B \geq 1$ and $0 < \eps < \frac{1}{m}$ be fixed.  Let $X \geq 2$, and let $H_0$ be such that $X^{\frac{1}{m} + \eps} \leq H_0 \leq X$.  Let $q_0$ be a natural number with $q_0 \leq \log^B X$.  Let $f \colon \N \to \C$ be either the function $f \coloneqq  \Lambda 1_{(X,2X]}$ or $f \coloneqq  d_k 1_{(X,2X]}$.  Then one can decompose the function $f(q_0 \cdot): n \mapsto f(q_0 n)$ as a linear combination (with coefficients of size $O_k (d_2(q_0)^{O_k(1)})$) of $O_{k,m,\eps}( \log^{O_{k,m,\eps}(1)} X )$ components $\tilde f$, each of which is of one of the following types:
\begin{itemize}
\item[(Type $d_j$ sum)]  A function of the form
\begin{equation}\label{fst2}
 \tilde f = (\alpha \ast \beta_1 \ast \dotsb \ast \beta_j) 1_{(X/q_0,2X/q_0]}
\end{equation}
for some arithmetic functions $\alpha,\beta_1,\dotsc,\beta_j \colon \N \to \C$, where $1 \leq j < m$, $\alpha$ is $O_{k,m,\eps}(1)$-divisor-bounded  and supported on $[N,2N]$, and each $\beta_i$, $i=1,\dotsc,j$ is either of the form $\beta_i = 1_{(M_i,2M_i]}$ or $\beta_i = L 1_{(M_i,2M_i]}$ for some $N, M_1,\dotsc,M_j$ obeying the bounds \eqref{n-small},
\begin{equation}\label{nmx-q}
N M_1 \dotsm M_j \asymp_{k,m,\eps} X/q_0
\end{equation}
and 
$$ H_0 \ll M_1 \ll \dotsb \ll M_j \ll X/q_0.$$ 
\item[(Type II sum)]  A function of the form
$$ \tilde f = (\alpha \ast \beta) 1_{(X/q_0,2X/q_0]}$$
for some $O_{k,m,\eps}(1)$-divisor-bounded arithmetic functions $\alpha,\beta \colon \N \to \C$ with good cancellation supported on $[N,2N]$ and $[M,2M]$ respectively, for some $N,M$ obeying the bounds \eqref{n-medium} and
\begin{equation}\label{nmx-2q}
NM \asymp_{k,m,\eps} X / q_0.
\end{equation}
The good cancellation bounds \eqref{alb} are permitted to depend on the parameter $B$ in this lemma (in particular, $B'$ can be assumed to be large depending on this parameter).
\item[(Small sum)]  A function $\tilde f$ supported on $(X/q_0,2X/q_0]$ obeying the bound
$$ \|\tilde f \|_{\ell^2}^2 \ll_{k,m,\eps,B} X^{1-\eps/8}.$$
\end{itemize}
\end{lemma}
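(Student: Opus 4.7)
The plan is to reduce Lemma~\ref{comb-decomp-2} to Lemma~\ref{comb-decomp} by analyzing how each piece of the decomposition of $f$ transforms under the substitution $n \mapsto q_0 n$. I would first apply Lemma~\ref{comb-decomp} to $f$ with the same parameters $k,m,\eps,H_0$, obtaining a decomposition $f = \sum \tilde f$ into $O_{k,m,\eps}(\log^{O_{k,m,\eps}(1)} X)$ components, each of which is a Type $d_j$ sum $(\alpha \ast \beta_1 \ast \cdots \ast \beta_j) 1_{(X,2X]}$ or a Type II sum $(\alpha \ast \beta) 1_{(X,2X]}$. It then suffices to show that for each component $\tilde f = (\gamma_1 \ast \cdots \ast \gamma_r) 1_{(X,2X]}$, the dilated function $n \mapsto \tilde f(q_0 n)$ can be expressed as a linear combination, with coefficients of size $O_k(d_2(q_0)^{O_k(1)})$, of Type $d_j$, Type II, and small sums as described in the present lemma (with the role of $X$ played by $X/q_0$).

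The main tool is the identity: for each ordered factorization $q_0 = m_1 \cdots m_r$ of $q_0$ into positive integer factors, the shifted function $\gamma_i(m_i \cdot)$ is supported on $[N_i/m_i, 2N_i/m_i]$ and inherits divisor-boundedness (and, for the Type II case, good cancellation) from $\gamma_i$ up to constants involving $d_2(m_i)^{O(1)}$; these features are preserved under Dirichlet convolution via Lemma~\ref{good-cancel}. For $n$ coprime to $q_0$, any factorization $d_1 \cdots d_r = q_0 n$ splits uniquely as $d_i = m_i e_i$ with $m_i \mid q_0$, $e_i \mid n$, $\prod m_i = q_0$, $\prod e_i = n$, giving
\[
  (\gamma_1 \ast \cdots \ast \gamma_r)(q_0 n) \;=\; \sum_{q_0 = m_1 \cdots m_r} \bigl(\gamma_1(m_1 \cdot) \ast \cdots \ast \gamma_r(m_r \cdot)\bigr)(n).
\]
For general $n$, write $n = n_1 n_2$ with $n_1 = (n, q_0^{\infty})$ and $(n_2, q_0) = 1$, and treat the $n_1 = 1$ contribution as the main term (which obeys the above identity with $n$ replaced by $n_2$) and the $n_1 > 1$ contributions as remainders.

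For each ordered factorization $q_0 = m_1 \cdots m_r$ associated to the main term, the shifted convolution is (up to a harmless dyadic refinement of the supports by a bounded factor) a legitimate Type $d_j$ or Type II sum for $f(q_0\cdot)$: the sizes $N_i/m_i$ satisfy $N_i/m_i \asymp N_i$ when $m_i = 1$, and the product $\prod (N_i/m_i) \asymp X/q_0$, matching \eqref{nmx-q} and \eqref{nmx-2q}; since $q_0 \leq \log^B X$, the range conditions \eqref{n-small} and \eqref{n-medium} (which are powers of $X$) are preserved with room to spare. The remaining "bad" pieces, namely those with $n_1 > 1$ or with some $m_i$ large enough to push $N_i/m_i$ below the allowed threshold, are confined to $n$ lying in a sparse set (those divisible by a large divisor of $q_0^{\infty}$, or with unusually small outer support), and their $\ell^2$ norm squared is bounded by $X/q_0 \cdot q_0^{-c} \cdot \log^{O(1)} X$ for some $c > 0$ depending on the shortage factor, which is comfortably $\ll X^{1 - \eps/8}$ once combined with Lemma~\ref{mvt-lem}'s-style crude estimates and \eqref{divisor-crude}; these pieces are placed in the small-sum category.

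The main obstacle is bookkeeping: carefully tracking how the $d_2(q_0)^{O(1)}$ losses from Lemma~\ref{good-cancel} and the shifted support manipulations combine to give the claimed coefficient bound, while verifying that the required separation between $X^{\eps}$-type thresholds and $H_0$-type thresholds in \eqref{n-small} and \eqref{n-medium} survives the division by $q_0 \leq \log^B X$. The slack afforded by $q_0$ being only polylogarithmically large is essential, both for absorbing the $d_2(q_0)^{O_k(1)}$ factors into the allowable constants and for ensuring that the "bad" pieces have the required $X^{-\eps/8}$ power saving over the trivial $\ell^2$ bound.
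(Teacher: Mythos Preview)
Your approach differs substantially from the paper's, and it contains a genuine gap in the treatment of the case $(n,q_0)>1$.

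You correctly observe that the identity
\[
(\gamma_1 \ast \cdots \ast \gamma_r)(q_0 n) \;=\; \sum_{q_0 = m_1 \cdots m_r} \bigl(\gamma_1(m_1\cdot) \ast \cdots \ast \gamma_r(m_r\cdot)\bigr)(n)
\]
holds when $(n,q_0)=1$, and you propose to place the contribution of $n$ with $n_1 = (n,q_0^\infty) > 1$ into the small-sum category.  But the set $\{n : (n,q_0)>1\}$ has \emph{positive density} (for instance, density $1/2$ whenever $2\mid q_0$), so its contribution to $\|\tilde f(q_0\cdot)\|_{\ell^2}^2$ is of order $X \log^{O(1)} X$, not $X^{1-\eps/8}$.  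Concretely, take $r=2$, $\gamma_1=\gamma_2=1$, $q_0=2$: then the discrepancy $d_2(2n)-2d_2(n)$ equals $-a\,d_2(m)$ for $n=2^a m$ with $m$ odd, which is not small in $\ell^2$.  Your claimed bound $X/q_0 \cdot q_0^{-c}$ would require the bad $n$ to have density $O(q_0^{-c})$, which is false.

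The paper sidesteps this entirely by working with $f$ itself rather than its combinatorial pieces.  For $f=\Lambda 1_{(X,2X]}$ with $q_0>1$, it simply observes that $\Lambda(q_0 n)\neq 0$ forces $q_0 n$ to be a prime power, so $f(q_0\cdot)$ is already a small sum.  For $f=d_k 1_{(X,2X]}$, it uses the multiplicativity of $n\mapsto d_k(q_0 n)/d_k(q_0)$ to write $d_k(q_0\cdot)=d_k(q_0)\,d_k\ast g$ via M\"obius inversion, where $g$ is supported on the multiplicative semigroup generated by the primes dividing $q_0$.  Splitting $g=g_1+g_2$ at the threshold $X^{\eps/2}$, the $g_1$ piece is an extra short factor absorbed into the preliminary decomposition of Lemma~\ref{comb-decomp}, while the $g_2$ piece is genuinely small because integers in that semigroup exceeding $X^{\eps/2}$ are polynomially sparse (this is where the Euler-product bound $\sum_m g(m)d_2(m)^{O_k(1)}m^{-1/4}\ll d_2(q_0)^{O_k(1)}$ enters).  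Your approach could be repaired by a similar truncation on $n_1$, but then you are essentially reinventing the paper's argument with more moving parts and without the clean multiplicative identity that makes the bookkeeping tractable.
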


\begin{proof}  If $q_0=1$ then the claim follows from Lemma \ref{comb-decomp}, so suppose $q_0 > 1$.  We first dispose of the case when $f = \Lambda 1_{(X,2X]}$.  From the support of the von Mangoldt function we see that the function $f(q_0 \cdot): n \mapsto f(q_0 n)$ vanishes unless $q_0$ is a prime power $p_0^{k_0}$ and in this case is supported on powers of $p_0$. Thus
\begin{align*}
\|f(q_0 \cdot)\|_{\ell^2}^2 &\leq \sum_{k \geq 0} 1_{(X,2X]}(p_0^{k_0 + k}) \Lambda(p_0^{k_0 + k})^2 \ll (\log X)^3.
\end{align*}
Thus $f(q_0 \cdot)$ is already a small sum, and we are done in this case.

It remains to consider the case when $f = d_k 1_{(X,2X]}$.  Let $d_k(q_0 \cdot)$ denote the function $n \mapsto d_k(q_0 n)$.  The function $d_k(q_0 \cdot)/d_k(q_0)$ is multiplicative, hence by M\"obius inversion we may factor
$$ d_k(q_0 \cdot) = d_k(q_0) d_k \ast g$$
where $g$ is the multiplicative function
$$ g \coloneqq  \frac{1}{d_k(q_0)} d_k(q_0 \cdot) \ast \mu^{\ast k}$$ 
and $\mu^{\ast k}$ is the Dirichlet convolution of $k$ copies of $\mu$.  From multiplicativity we see that $g$ is $O_k(1)$-divisor-bounded, non-negative and supported on the multiplicative semigroup ${\mathcal G}$ generated by the primes dividing $q_0$.  We split $g = g_1 + g_2$, where $g_1(n) \coloneqq  g(n) 1_{n \leq X^{\eps/2}}$ and $g_2(n) \coloneqq  g(n) 1_{n > X^{\eps/2}}$, thus
$$ f(q_0 \cdot) = d_k(q_0) (d_k \ast g_1) 1_{(X/q_0,2X/q_0]} + d_k(q_0) (d_k \ast g_2) 1_{(X/q_0,2X/q_0]}.$$
The term $(d_k \ast g_1) 1_{(X/q_0,2X/q_0]}$ can be decomposed into terms of the form \eqref{pre} (but with $X$ replaced by $X/q_0$), exactly as in the proof of Lemma \ref{comb-decomp} (with the additional factor $g_1$ simply being an additional term $\gamma_i$), so by repeating the previous arguments (with $X$ replaced by $X/q_0$ as appropriate), we obtain the required decomposition of this term as a linear combination (with coefficients of size $O( d_k(q_0) ) = O(d_2(q_0)^{O_k(1)})$) of Type $d_j$ and Type II sums, using the second part of Lemma \ref{good-cancel} to ensure that convolution by $g_1$ does not destroy the good cancellation property.

It remains to handle the $(d_k \ast g_2) 1_{(X/q_0,2X/q_0]}$ term, which we will show to be small.  Indeed, we expand
$$ \|(d_k \ast g_2) 1_{(X/q_0,2X/q_0]}\|_{\ell^2}^2 = \sum_{n: X < q_0 n \leq 2X} |d_k \ast g_2(q_0 n)|^2.$$
We can expand out the square and bound this by
\begin{equation}\label{curd}
 \ll \sum_{m_1,m_2} g_2(m_1) g_2(m_2) \sum_{X < n \leq 2X: m_1,m_2,q_0|n} d_k\left(\frac{n}{m_1}\right) d_k\left(\frac{n}{m_2}\right),
\end{equation}
where $m_1,m_2$ range over the natural numbers.
We crudely drop the constraint $q_0|n$.  From \eqref{divisor-crude} (factoring $n = [m_1,m_2] n'$ and noting that $d_k\left(\frac{n}{m_1}\right) \leq d_2(m_2)^{O_k(1)} d_2(n')^{O_k(1)}$, and similarly for $d_k\left(\frac{n}{m_2}\right)$) we have
$$ \sum_{X < n \leq 2X: m_1,m_2|n} d_k\left(\frac{n}{m_1}\right) d_k\left(\frac{n}{m_2}\right) \ll_k \frac{X d_2(m_1)^{O_k(1)} d_2(m_2)^{O_k(1)} \log^{O_k(1)} X}{[m_1,m_2]} $$
and also crudely bounding $\frac{1}{[m_1,m_2]} \leq \frac{1}{m_1^{1/2} m_2^{1/2}}$,
we can bound \eqref{curd} by
$$
\ll_k \left( \sum_{m} \frac{g_2(m) d_2(m)^{O_k(1)}}{m^{1/2}}\right)^2 X \log^{O_k(1)} X
$$
which on bounding $g_2(m) \leq X^{-\eps/8} g(m) m^{1/4}$ becomes
$$
\ll_{k,\eps} X^{-\eps/8}  X \left(\sum_{m} \frac{g(m) d_2(m)^{O_k(1)}}{m^{1/4}}\right)^2 \log^{O_k(1)} X.
$$
From Euler products and the support and bounds on $g$ we have
$$ \sum_{m} \frac{g(m) d_2(m)^{O_k(1)}}{m^{1/4}} \ll_k d_2(q_0)^{O_k(1)}$$
and so by using \eqref{alpha-infty}, we conclude that $(d_k \ast g_2) 1_{(X/q_0,2X/q_0]}$ is small as required.
\end{proof}

\section{Applying the circle method}

Let $f,g \colon \Z \to \C$ be functions supported on a finite set, and let $h$ be an integer.  Following the Hardy-Littlewood circle method, we can express the correlation \eqref{co} as an integral
$$ \sum_n f(n) \overline{g}(n+h) = \int_\T S_f(\alpha) \overline{S_g(\alpha)} e(\alpha h)\ d\alpha
$$
where $S_f, S_g \colon \T \to \C$ are the exponential sums
\begin{align*}
S_f(\alpha) &\coloneqq  \sum_n f(n) e(\alpha n) \\
S_g(\alpha) &\coloneqq  \sum_n g(n) e(\alpha n).
\end{align*}
If we then designate some (measurable) portion ${\mathfrak M}$ of the unit circle $\T$ to be the ``major arcs'', we thus have
\begin{equation}\label{sam}
\sum_n f(n) \overline{g}(n+h) -  \operatorname{MT}_{{\mathfrak M},h} = \int_{\mathfrak m} S_f(\alpha) \overline{S_g(\alpha)} e(\alpha h)\ d\alpha
\end{equation}
where $ \operatorname{MT}_{{\mathfrak M},h} $ is the \emph{main term}
\begin{equation}\label{mt-def}
 \operatorname{MT}_{{\mathfrak M},h} \coloneqq  \int_{\mathfrak M} S_f(\alpha) \overline{S_g(\alpha)} e(\alpha h)\ d\alpha
\end{equation}
and ${\mathfrak m} \coloneqq  \T \backslash {\mathfrak M}$ denotes the complementary \emph{minor arcs}.

We will choose the major arcs so that the main term can be computed for any given $h$ by classical techniques (basically, the Siegel-Walfisz theorem, together with the analogous asymptotics for the divisor functions $d_k$). To control the minor arcs, we take advantage of the ability to average in $h$ to control this contribution by certain short $L^2$ integrals of the exponential sum $S_f(\alpha)$ (the factor $S_g(\alpha)$ will be treated by a trivial bound). 

\begin{proposition}[Circle method]\label{circle-method}  Let $H \geq 1$, and let $f, g, {\mathfrak M}, {\mathfrak m}, S_f, S_g, \operatorname{MT}_{{\mathfrak M},h}$ be as above.  
Then for any integer $h_0$, we have
\begin{equation}\label{bound}
\begin{split}
&\sum_{|h-h_0| \leq H} \left|\sum_n f(n) \overline{g(n+h)} -  \operatorname{MT}_{{\mathfrak M},h}\right|^2 \\
&\quad\ll H
 \int_{\mathfrak m} |S_f(\alpha)| |S_g(\alpha)|
\int_{\mathfrak m \cap [\alpha -1/2H, \alpha+1/2H]} |S_f(\beta)| |S_g(\beta)|\ d\beta d\alpha
\end{split}\end{equation}
\end{proposition}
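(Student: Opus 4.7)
The starting point is the identity \eqref{sam}: setting
$$F(h) \coloneqq \sum_n f(n)\overline{g(n+h)} - \operatorname{MT}_{{\mathfrak M},h} = \int_{\mathfrak m} S_f(\alpha)\overline{S_g(\alpha)} e(\alpha h)\, d\alpha,$$
the claim reduces to bounding $\sum_{|h-h_0|\leq H} |F(h)|^2$ by the right-hand side of \eqref{bound}. The plan is to expand the square, swap the $h$-summation to the inside, and exploit Fourier cancellation in $h$ to localize the $\alpha,\beta$ integration to $|\alpha-\beta|\leq 1/(2H)$.

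To localize the $h$-sum, I would introduce a non-negative majorant $\psi\colon\Z\to\R_{\geq 0}$ with $\psi(h)\geq 1_{|h-h_0|\leq H}(h)$ whose Fourier series $\hat\psi(\gamma) \coloneqq \sum_h \psi(h) e(-\gamma h)$ is supported in $\{\gamma\in\T:\|\gamma\|\leq 1/(2H)\}$ and satisfies $\|\hat\psi\|_{\ell^\infty(\T)} \ll H$, where $\|\gamma\|$ denotes the distance from $\gamma$ to the nearest integer. Such a $\psi$ is furnished by a standard Fejer-type construction: take $\hat\psi$ to be a suitable constant multiple of the triangle function with base $[-1/(2H),1/(2H)]$, which on the physical side yields (after normalization) a squared-sinc kernel centered at $h_0$, bounded below by $1$ on $[h_0-H,h_0+H]\cap\Z$.

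The main computation is then
\begin{align*}
\sum_{|h-h_0|\leq H}|F(h)|^2 &\leq \sum_h \psi(h)|F(h)|^2\\
&= \int_{\mathfrak m}\int_{\mathfrak m} S_f(\alpha)\overline{S_g(\alpha)}\,\overline{S_f(\beta)}S_g(\beta)\,\hat\psi(\beta-\alpha)\,d\alpha\,d\beta,
\end{align*}
where the interchange of the $h$-summation with the $\alpha,\beta$ integrations is justified since $S_f,S_g$ are finite sums and $\psi$ decays rapidly. Applying the triangle inequality together with the support condition and the uniform $O(H)$ bound on $\hat\psi$ yields
$$\sum_{|h-h_0|\leq H}|F(h)|^2 \ll H\int_{\mathfrak m}|S_f(\alpha)||S_g(\alpha)|\int_{\mathfrak m\cap[\alpha-1/2H,\alpha+1/2H]}|S_f(\beta)||S_g(\beta)|\,d\beta\,d\alpha,$$
which is exactly \eqref{bound}.

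The only nontrivial ingredient is producing the majorant $\psi$ with the stated Fourier-support and sup-norm properties; this is a classical Fejer/Selberg-Beurling construction and presents no real obstacle. Once $\psi$ is in hand, the rest of the argument is essentially formal Fourier duality, so this proposition is really a clean Plancherel-type packaging of the circle method that isolates the short $L^2$ mean value $\int_{[\alpha-1/2H,\alpha+1/2H]} |S_f S_g|$ as the quantity one must control on the minor arcs.
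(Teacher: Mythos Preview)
Your proof is correct and follows essentially the same approach as the paper. The paper introduces a Schwartz majorant $\Phi\colon\R\to\R^+$ with $\Phi\geq 1$ on $[-1,1]$ and $\hat\Phi$ supported in $[-1/2,1/2]$, bounds the sum by $\sum_h |F(h)|^2\,\Phi((h-h_0)/H)$, expands the square, and then evaluates $\sum_h e((\alpha-\beta)h)\Phi((h-h_0)/H)$ via the Poisson summation formula to obtain the same $O(H)$ bound and the same support restriction $\|\alpha-\beta\|\leq 1/(2H)$. Your Fejer-kernel construction works directly on $\Z$ and sidesteps Poisson summation, but the two arguments are otherwise identical in structure; the only cosmetic point is that the squared-sinc kernel decays like $h^{-2}$ rather than ``rapidly,'' though this is of course more than enough to justify the interchange of sum and integral.
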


\begin{proof}   
From \eqref{sam}, the left-hand side of \eqref{bound} may be written as
$$
\sum_{|h-h_0| \leq H} \left|\int_{\mathfrak m} S_f(\alpha) \overline{S_g(\alpha)} e(\alpha h)\ d\alpha\right|^2.$$
Next, we introduce an even non-negative Schwartz function $\Phi \colon \R \to \R^+$ with $\Phi(x) \geq 1$ for all $x \in [-1,1]$, such that the Fourier transform $\hat\Phi(\xi) \coloneqq  \int_\R \Phi(x) e(-x \xi)\ dx$ is supported in $[-1/2,1/2]$.  (Such a function may be constructed by starting with the inverse Fourier transform of an even test function supported on a small neighbourhood of the origin, and then squaring.)  Then we may bound the preceding expression by
$$ \sum_h \left|\int_{\mathfrak m} S_f(\alpha) \overline{S_g(\alpha)} e(\alpha h)\ d\alpha\right|^2 \Phi\left(\frac{h-h_0}{H}\right).$$
Expanding out the square, rearranging, and using the triangle inequality, we may bound this expression by
$$
 \int_{\mathfrak m} |S_f(\alpha)| |S_g(\alpha)|
\int_{\mathfrak m} |S_f(\beta)| |S_g(\beta)| \left|\sum_h e((\alpha-\beta) h) \Phi\left(\frac{h-h_0}{H}\right)\right|\ d\beta d\alpha.
$$
From the Poisson summation formula we have
$$ \sum_h e((\alpha-\beta) h) \Phi\left(\frac{h}{H}\right) = H \sum_k \hat \Phi( H (\tilde \alpha - \tilde \beta + k) )$$
where $\tilde \alpha, \tilde \beta$ are any lifts of $\alpha,\beta$ from $\T$ to $\R$.  In particular, this expression is of size $O(H)$, and vanishes unless $\beta$ lies in the interval $[\alpha-1/2H, \alpha+1/2H]$.  Shifting $h$ by $h_0$, the claim follows.
\end{proof}

From the Plancherel identities
\begin{align}
\int_\T |S_f(\alpha)|^2 d\alpha &= \|f\|_{\ell^2}^2 \label{planch-1}\\
\int_\T |S_g(\alpha)|^2 d\alpha &= \|g\|_{\ell^2}^2 \label{planch-2}
\end{align}
and Cauchy-Schwarz, we have
$$ \int_{\mathfrak m} |S_f(\alpha)| |S_g(\alpha)|\ d\alpha \leq \|f\|_{\ell^2} \|g\|_{\ell^2}$$
so we can bound the right-hand side of \eqref{bound} by
$$ \|f\|_{\ell^2} \|g\|_{\ell^2} \sup_{\alpha \in {\mathfrak m}} \int_{\mathfrak m \cap [\alpha -1/2H, \alpha+1/2H]} |S_f(\beta)| |S_g(\beta)|\ d\beta.$$
By \eqref{planch-2} and Cauchy-Schwarz, we may bound this expression in turn by
\begin{equation}\label{bound-2}
 \|f\|_{\ell^2} \|g\|_{\ell^2}^2 \sup_{\alpha \in {\mathfrak m}} \left(\int_{\mathfrak m \cap [\alpha -1/2H, \alpha+1/2H]} |S_f(\beta)|^2\ d\beta\right)^{1/2}.
\end{equation}

Note that from \eqref{planch-1} we have the trivial upper bound 
\begin{equation}\label{triv-planch}
\int_{{\mathfrak m} \cap [\alpha-1/2H, \alpha+1/2H]} |S_f(\alpha)|^2\ d\alpha \leq \|f\|_{\ell^2}^2
\end{equation}
and so the right-hand side of \eqref{bound} may be crudely upper bounded by $H \|f\|_{\ell^2}^2 \|g\|_{\ell^2}^2$, which is essentially the trivial bound on \eqref{bound} that one obtains from the Cauchy-Schwarz inequality.  Thus, any significant improvement (e.g. by a large power of $\log X$) over \eqref{triv-planch} for minor arc $\alpha$ will lead to an approximation of the form
$$ \sum_n f(n) \overline{g(n+h)} \approx  \operatorname{MT}_{{\mathcal M},h}$$
for most $h \in [h_0-H,h_0+H]$.  We formalize this argument as follows:

\begin{corollary}\label{chebyshev}  Let $H \geq 1$ and $\eta, F, G, X > 0$.  Let $f,g \colon \Z \to \C$ be functions supported on a finite set, let ${\mathfrak M}$ be a  measurable subset of $\T$, and let ${\mathfrak m} \coloneqq  \T \backslash {\mathfrak M}$.  For each $h$, let $\operatorname{MT}_h$ be a complex number.  Let $h_0$ be an integer. Assume the following axioms:
\begin{itemize}
\item[(i)] (Size bounds)  One has $\|f\|_{\ell^2}^2 \ll F^2 X$ and $\|g\|_{\ell^2}^2 \ll G^2 X$.
\item[(ii)]  (Major arc estimate)  For all but $O(\eta H)$ integers $h$ with $|h-h_0| \leq H$, one has
$$ \int_{\mathfrak M} S_f(\alpha) \overline{S_g(\alpha)} e(\alpha h)\ d\alpha = \operatorname{MT}_{h} + O( \eta F G X ).$$
\item[(iii)]   (Minor arc estimate)  For each $\alpha \in {\mathfrak m}$, one has
\begin{equation}\label{fF}
 \int_{{\mathfrak m} \cap [\alpha-1/2H, \alpha+1/2H]} |S_f(\alpha)|^2\ d\alpha \ll \eta^6 F^2 X.
\end{equation}
\end{itemize}
Then for all but $O(\eta H)$ integers $h$ with $|h-h_0| \leq H$, one has
\begin{equation}\label{fg}
 \sum_n f(n) \overline{g(n+h)} = \operatorname{MT}_{h} + O( \eta FG X ).
\end{equation}
\end{corollary}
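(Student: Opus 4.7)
The plan is to combine the circle method identity \eqref{sam} with Proposition \ref{circle-method} and a Chebyshev-type (first moment to variance) argument, splitting the error into a major-arc contribution controlled by hypothesis (ii) and a minor-arc contribution controlled on average in $h$ via hypothesis (iii).

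First I would recall from \eqref{sam} that
\[
\sum_n f(n)\overline{g(n+h)} - \operatorname{MT}_{\mathfrak M,h} = \int_{\mathfrak m} S_f(\alpha)\overline{S_g(\alpha)} e(\alpha h)\, d\alpha,
\]
so writing the desired error as
\[
\sum_n f(n)\overline{g(n+h)} - \operatorname{MT}_h = (\operatorname{MT}_{\mathfrak M,h} - \operatorname{MT}_h) + \int_{\mathfrak m} S_f(\alpha)\overline{S_g(\alpha)} e(\alpha h)\, d\alpha
\]
reduces matters to showing that each of the two terms on the right is $O(\eta FGX)$ for all but $O(\eta H)$ values of $h$ in $[h_0-H,h_0+H]$. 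The first term is already handled by hypothesis (ii), so it remains to treat the minor arc integral.

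For the minor arc integral, I would apply Proposition \ref{circle-method} directly, obtaining
\[
\sum_{|h-h_0|\leq H}\Big|\sum_n f(n)\overline{g(n+h)} - \operatorname{MT}_{\mathfrak m,h}\Big|^2 \ll H \int_{\mathfrak m}|S_f(\alpha)||S_g(\alpha)| \int_{\mathfrak m\cap[\alpha-1/2H,\alpha+1/2H]}|S_f(\beta)||S_g(\beta)|\,d\beta\,d\alpha.
\]
Exactly as in the bound \eqref{bound-2} derived just after the proposition, two applications of Cauchy--Schwarz together with Plancherel's identities \eqref{planch-1}, \eqref{planch-2} reduce this to
\[
\ll H\,\|f\|_{\ell^2}\,\|g\|_{\ell^2}^2\,\sup_{\alpha\in\mathfrak m}\left(\int_{\mathfrak m\cap[\alpha-1/2H,\alpha+1/2H]} |S_f(\beta)|^2\,d\beta\right)^{1/2}.
\]
Plugging in the $\ell^2$ bounds from (i), namely $\|f\|_{\ell^2}\ll FX^{1/2}$ and $\|g\|_{\ell^2}^2\ll G^2 X$, together with the minor-arc hypothesis (iii), which gives $(\eta^6 F^2 X)^{1/2} = \eta^3 F X^{1/2}$, we conclude
\[
\sum_{|h-h_0|\leq H}\Big|\sum_n f(n)\overline{g(n+h)} - \operatorname{MT}_{\mathfrak M,h}\Big|^2 \ll \eta^3 F^2 G^2 X^2 H.
\]

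Finally, by Chebyshev's inequality the number of integers $h$ with $|h-h_0|\leq H$ for which $|\sum_n f(n)\overline{g(n+h)} - \operatorname{MT}_{\mathfrak M,h}| > \eta FGX$ is $O(\eta H)$; combining this exceptional set with the $O(\eta H)$ exceptional set from hypothesis (ii) yields \eqref{fg} off of an exceptional set of size $O(\eta H)$. There is no real obstacle here: every ingredient (the circle-method identity, Plancherel, Cauchy--Schwarz, and Chebyshev's inequality) is standard, and the computation is designed so that the factor $\eta^6$ in (iii) is exactly what is needed, after taking a square root and dividing by $(\eta FGX)^2$, to recover the loss of $\eta$ in the bound on the exceptional set. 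The only mild bookkeeping point to watch is making sure the two sources of exceptional $h$ are merged correctly, and that the implied constants depend only on those in (i)--(iii).
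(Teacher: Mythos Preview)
Your proof is correct and follows essentially the same approach as the paper: combine Proposition \ref{circle-method} with the bound \eqref{bound-2} and axioms (i), (iii) to obtain the variance estimate $\ll \eta^3 F^2 G^2 H X^2$, apply Chebyshev, and then merge the resulting exceptional set with that from axiom (ii). The only cosmetic issue is a typo where you write $\operatorname{MT}_{\mathfrak m,h}$ in one display when you mean $\operatorname{MT}_{\mathfrak M,h}$.
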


In our applications, $F$ and $G$ will behave like a fixed power of $\log X$, and $\eta$ will be set $\log^{-A} X$ for some large $A$.  By symmetry one can replace $f,F$ in \eqref{fF} with $g,G$ if desired, but note that we only need a minor arc estimate for one of the two functions $f,g$.

\begin{proof}  From Proposition \ref{circle-method}, the upper bound \eqref{bound-2}, and axioms (i), (iii) we have
$$ \sum_{|h-h_0| \leq H} \left|\sum_n f(n) \overline{g(n+h)} -  \operatorname{MT}_{{\mathfrak M},h}\right|^2 \ll \eta^3 F^2 G^2 H X^2$$
and hence by Chebyshev's inequality we have
$$ \sum_n f(n) \overline{g(n+h)} -  \operatorname{MT}_{{\mathfrak M},h} = O( \eta F G X ) $$
for all but $O(\eta H)$ integers $h$ with $|h-h_0| \leq H$.  Applying axiom (ii), \eqref{mt-def} and the triangle inequality, we obtain the claim.
\end{proof}

In view of the above corollary, Theorem \ref{unav-corr} will be an easy consequence of major and minor arc estimates which we will soon present.  Given parameters $Q \geq 1$ and $\delta > 0$, define the major arcs
$$ {\mathfrak M}_{Q,\delta} \coloneqq  \bigcup_{1 \leq q \leq Q} \bigcup_{a: (a,q) = 1} \left[\frac{a}{q} - \delta, \frac{a}{q} + \delta\right],$$
where we identify intervals such as $[\frac{a}{q}-\delta, \frac{a}{q}+\delta]$ with subsets of the unit circle $\T$ in the usual fashion.
We will take $Q \coloneqq \log^B X$ and $\delta \coloneqq X^{-1} \log^{B'} X$ for some large $B' > B > 1$.  To handle the major arcs, we use the following estimate:

\begin{proposition}[Major arc estimate]\label{major}
Let $A >0$, $0 < \eps < 1/2$ and $k,l \geq 2$ be fixed, and suppose that $X \geq 2$, $B \geq 2A$ and $B' \geq 2B+A$.  Let $h$ be an integer with $0 < |h| \leq X^{1-\eps}$. Let $P_{k, l, h}, Q_{k, h}$ and $\mathfrak{S}(h)$ be as in Section \ref{se:intro}.
\begin{itemize}
\item[(i)]  (Major arcs for Hardy-Littlewood conjecture) We have
\begin{equation}\label{dust}
\begin{split}
\int_{{\mathfrak M}_{\log^B X,X^{-1} \log^{B'} X}} |S_{\Lambda 1_{(X,2X]}}(\alpha)|^2 e(\alpha h)\ d\alpha &= {\mathfrak G}(h) X \\
&\quad  + O_{\eps,A,B, B'}( d_2(h)^{O(1)} X \log^{-A} X ).
\end{split}
\end{equation}
\item[(ii)]  (Major arcs for divisor correlation conjecture) We have
\begin{align*}
\int_{{\mathfrak M}_{\log^B X,X^{-1} \log^{B'} X}} S_{d_k 1_{(X,2X]}}(\alpha) \overline{S_{d_l 1_{(X,2X]}}(\alpha)} e(\alpha h)\ d\alpha &= P_{k,l,h}(\log X) X \\
+ O_{\eps,k,l,A,B,B'}&( d_2(h)^{O_{k,l}(1)} X \log^{k+l-2-A} X ).
\end{align*}
\item[(iii)]  (Major arcs for higher order Titchmarsh problem)  We have
\begin{align*}
\int_{{\mathfrak M}_{\log^B X,X^{-1} \log^{B'} X}} S_{\Lambda 1_{(X,2X]}}(\alpha) \overline{S_{d_k 1_{(X,2X]}}(\alpha)} e(\alpha h)\ d\alpha &= Q_{k,h}(\log X) X \\
+ O_{\eps,k,A,B,B'}&( d_2(h)^{O_k(1)} X \log^{k-1-A} X ).
\end{align*}
\item[(iv)]  (Major arcs for Goldbach conjecture)  If $X$ is an integer, then
\begin{align*}
\int_{{\mathfrak M}_{\log^B X,X^{-1} \log^{B'} X}} S_{\Lambda 1_{(X,2X]}}(\alpha) S_{\Lambda 1_{[1,X)}}(\alpha) e(-\alpha X)\ d\alpha &= {\mathfrak G}(X) X \\
+ O_{\eps,A,B,B'}&( d_2(X)^{O(1)} X \log^{-A} X ).
\end{align*}
\end{itemize}
\end{proposition}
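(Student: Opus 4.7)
The strategy is the classical Hardy--Littlewood dissection.  Writing each $\alpha \in {\mathfrak M}_{Q,\delta}$ uniquely as $\alpha = a/q + \beta$ with $q \leq Q \coloneqq \log^B X$, $(a,q)=1$, and $|\beta| \leq \delta \coloneqq X^{-1}\log^{B'} X$, the integrals in (i)--(iv) take the form
\begin{equation*}
\sum_{q \leq Q} \sum_{(a,q)=1} e(ah/q) \int_{-\delta}^{\delta} S_f(a/q + \beta)\,\overline{S_g(a/q + \beta)}\,e(\beta h)\, d\beta.
\end{equation*}
The central step is to obtain, for each relevant $f \in \{\Lambda 1_{(X,2X]},\, d_k 1_{(X,2X]},\,\Lambda 1_{[1,X)}\}$, a uniform asymptotic
\begin{equation*}
S_f(a/q + \beta) \;=\; C_f(a,q)\, T_f(\beta) \;+\; O_{A',B,B'}(X \log^{-A'} X),
\end{equation*}
where $C_f(a,q)$ is a multiplicative-in-$q$ coefficient built from Ramanujan-like sums, and $T_f(\beta) = \int_I R_f(t) e(\beta t)\, dt$ for the relevant interval $I$ and density $R_f$ (here $R_\Lambda \equiv 1$, while $R_{d_k}(t)$ is an explicit polynomial of degree $k-1$ in $\log t$).

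For $f = \Lambda 1_{(X,2X]}$, I would partition by residue $b \bmod q$, losing $O(\omega(q) \log^2 X)$ from the classes with $(b,q)>1$.  For $(b,q)=1$, the Siegel--Walfisz theorem gives $\sum_{X < n \leq y,\, n \equiv b\, (q)} \Lambda(n) = (y-X)/\phi(q) + O_{A',B}(X \log^{-A'} X)$ uniformly in $q \leq \log^B X$; inserting the amplitude $e(\beta n)$ via Lemma~\ref{sbp}, noting that its total variation is $O(\delta X) = O(\log^{B'} X)$, yields the partial Fourier sum with error $O(X \log^{B'-A'} X)$.  Summing over $b$ and using $\sum_{(b,q)=1} e(ab/q) = c_q(a) = \mu(q)$ for $(a,q)=1$ gives $C_\Lambda(a,q) = \mu(q)/\phi(q)$.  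For $f = d_k 1_{(X,2X]}$, the analogue of Siegel--Walfisz for $d_k$ in arithmetic progressions follows from orthogonality over Dirichlet characters mod $q$ together with a contour shift for $L(s,\chi)^k$ in the Vinogradov--Korobov zero-free region (as in the proof of Lemma~\ref{point}); the coefficient $C_{d_k}(a,q)$ turns out to be a multiplicative function in $q$ whose Euler product matches the local factors appearing in the predicted polynomials after pairing with the second sum.

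Plugging the approximations into the displayed integral, expanding, and bounding cross terms via Plancherel ($\int_{\T} |S_f|^2 \ll X \log^{O(1)} X$) combined with the $O(X\log^{-A'} X)$ error on the other factor, the main contribution becomes
\begin{equation*}
\Bigl(\sum_{q \leq Q} C_f(\cdot,q)\,\overline{C_g(\cdot,q)}\, c_q(h) \Bigr)\, \int_{-\delta}^{\delta} T_f(\beta)\,\overline{T_g(\beta)}\, e(\beta h)\, d\beta.
\end{equation*}
Using $|T_f(\beta)| \ll \min(X, 1/|\beta|)\log^{O(1)}X$, the $\beta$-integral may be extended to all of $\R$ with relative error $O(\log^{-B'+O(1)} X)$, and Plancherel evaluates it explicitly: to $X - |h|$ in (i) and (iv), and to $X$ times a polynomial in $\log X$ of the predicted degree in (ii) and (iii) (after propagating the polynomial factors of $R_{d_k}$).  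Extending the $q$-sum to all positive integers using $|c_q(h)| \leq (q,h)$ costs $O(d_2(h)^{O(1)} Q^{-1/2 + o(1)})$, assembling the full singular series ${\mathfrak S}(h)$ and the polynomials $P_{k,l,h}$, $Q_{k,h}$ of Section~\ref{se:intro}.

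The principal technical work lies in (a) tracking the $d_2(h)^{O(1)}$ dependence through each instance of the Ramanujan sum $c_q(h)$ and through the $q$-tail, and (b) for parts (ii)--(iii), verifying that the lower-order polynomial terms produced by the divisor-function contour shift---together with polynomial factors of $\beta$ picked up via summation by parts---combine over the $q$-sum to reproduce precisely the polynomials $P_{k,l,h}$ and $Q_{k,h}$.  Part (iv) is parallel to (i) once one observes that $S_{\Lambda 1_{[1,X)}}$ admits the same major-arc approximation as $S_{\Lambda 1_{(X,2X]}}$, with $T_\Lambda$ replaced by $\int_0^X e(\beta t)\,dt$, so the same manipulations yield the asserted main term.
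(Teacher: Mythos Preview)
Your proposal is correct and follows essentially the same classical approach as the paper: both establish major-arc approximations $S_\Lambda(a/q+\beta) \approx \frac{\mu(q)}{\phi(q)}\int e(\beta x)\,dx$ (via Siegel--Walfisz) and $S_{d_k}(a/q+\beta) \approx \int p_{k,q}(x) e(\beta x)\,dx$ (via character expansion and a contour shift for the principal character), insert these, extend the $\beta$-integral using $|T_f(\beta)| \ll 1/|\beta|$, and complete the $q$-sum using multiplicativity of $c_q(h)$ with the $d_2(h)^{O(1)}$ loss coming from Euler products. One small note: the paper simply \emph{defines} $P_{k,l,h}$ and $Q_{k,h}$ to be whatever polynomials emerge from this computation, so your concern (b) about verifying agreement with Section~\ref{se:intro} is not actually an issue.
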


These bounds are quite standard and will be established in Section \ref{major-sec}.  It is likely that one can remove the factors of $d_2(h), d_2(X)$ from the error terms with a little more effort, but we will not need to do so here, as these factors will usually be dominated by the $\log^{-A} X$ savings.  In case (ii), it is also likely that we can improve the error term to a power saving in $X$ if one enlarges the major arcs accordingly, but we will again not do so here.

To handle the minor arcs, we use the following exponential sum estimate:

\begin{proposition}[Minor arc estimate]\label{minor}  Let $\eps>0$ be a sufficiently small absolute constant, and let $A,B,B'>0$.  Let $k \geq 2$ be fixed, let $X \geq 2$, and set $Q \coloneqq  \log^B X$.  Assume that $B$ is sufficiently large depending on $A,k$, and that $B'$ is sufficiently large depending on $A, B, k$.

Let $1 \leq q \leq Q$, let $a$ be coprime to $q$. Let $f \colon \N \to \C$ be either the function $f(n) \coloneqq  \Lambda(n) 1_{(X,2X]}$ or $f(n) \coloneqq  d_k(n) 1_{(X,2X]}$.  
\begin{itemize}
\item[(i)]  One has
\begin{equation}\label{ma0}
 \int_{X^{-1} \log^{B'} X \ll |\theta| \ll X^{-1/6-\eps}} \left|S_f\left(\frac{a}{q}+\theta\right)\right|^2\ d\theta \ll_{k,\eps,A,B,B'} X \log^{-A} X 
\end{equation}
\item[(ii)]  One has, for $\sigma \geq 8/33$, the bound
\begin{equation}\label{mae}
 \int_{|\theta - \beta| \ll X^{-\record-\eps}} \left|S_f\left(\frac{a}{q}+\theta\right)\right|^2\ d\theta \ll_{k,\eps,A,B} X \log^{-A} X
\end{equation}
for any real number $\beta$ with $X^{-1/6-\eps} \ll |\beta| \leq \frac{1}{qQ}$.
\end{itemize}
\end{proposition}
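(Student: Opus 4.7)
The plan is to follow the Zhan-style strategy outlined in the introduction, reducing the $L^2$ integral of $S_f$ on a short arc around $a/q + \beta$ to mean-value estimates for Dirichlet polynomials ${\mathcal D}[f(q_0 \cdot)](\tfrac{1}{2}+it,\chi)$ at multiplicative frequencies $t \asymp \lambda X$, where $\lambda \coloneqq |\beta|$ (in part (ii)) or a dyadic parameter with $\lambda \ll X^{-1/6-\eps}$ (in part (i)). First, I would apply Lemma \ref{edc} to separate the additive character $e(an/q)$ into a sum over Dirichlet characters $\chi$ of some conductor $q_1 \mid q$ (with $q = q_0 q_1$). Then, smoothing the indicator $1_{|\theta-\beta|\ll X^{-\record-\eps}}$ by a Schwartz bump and applying Gallagher's Fourier-analytic trick, I convert the integral of $|S_f|^2$ into an $L^2$ integral of short sums $\sum_{x \leq n \leq x+1/\lambda} f(q_0 n)\chi(n) n^{-it}$ in the position variable $x$. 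Passing to the logarithmic variable $u = \log(n/X)$ and taking the Mellin transform localizes the Dirichlet polynomial ${\mathcal D}[f(q_0\cdot)](\tfrac{1}{2}+it,\chi)$ to frequencies $t \asymp \lambda X$, with the short-interval averaging in $\theta$ becoming a windowed average of length $\asymp \lambda H$ in $t$. Thus, up to $\log^{O(1)} X$ losses that are absorbed by the $\log^{-A}X$ savings we are trying to prove, it suffices to bound (via a model estimate of the form mentioned in the introduction)
\begin{equation*}
\sum_{\chi\ (q_1)} \int_{|t| \asymp \lambda X} \Bigl( \int_{t-\lambda H}^{t+\lambda H} \bigl|{\mathcal D}[f(q_0\cdot)](\tfrac{1}{2}+it',\chi)\bigr|\, dt' \Bigr)^{2} dt
\end{equation*}
by a quantity that improves on the trivial Cauchy-Schwarz bound $\ll \lambda H \cdot \lambda H \cdot X \log^{O(1)} X$ by an arbitrary power of $\log X$.

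To handle this, apply Lemma \ref{comb-decomp-2} with $m = 5$ and $H_0$ slightly smaller than $H$ (so $H_0 \geq X^{\record + \eps/2}$), decomposing $f(q_0 \cdot)$ into $O(\log^{O(1)} X)$ Type $d_j$ sums for $1\le j \le 4$, Type II sums, and small sums. The small sums are negligible by Lemma \ref{mvt-lem-ch} and the $\ell^2$ bound in the definition. For part (i), where $\lambda \ll X^{-1/6-\eps}$, I would invoke the Huxley-style large values estimates together with standard mean value theorems for Dirichlet polynomials, as collected in Appendix \ref{harman-sec}; this is the range analogous to the prime number theorem in intervals of length $X^{1/6+\eps}$. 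For part (ii), where $\lambda$ may be as large as $\log^{-B}X$, the Type II sums are treated by factoring the Dirichlet polynomial ${\mathcal D}[\alpha]\cdot{\mathcal D}[\beta]$, applying Cauchy-Schwarz in one factor, and then using the $L^2$ mean value theorem of Lemma \ref{mvt-lem-ch}; the range \eqref{n-medium} of the inner variable ensures enough room for this to succeed. The Type $d_1$ and Type $d_2$ sums reduce to controlling a fourth moment of a single (twisted) Dirichlet polynomial of length $\gtrsim H_0$, which is handled by Corollary \ref{fourth-integral}. Together these already suffice to recover the $\record = 1/3$ results of Mikawa and Baier--Browning--Marasingha--Zhao.

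The new ingredients break the $1/3$ barrier by treating Type $d_3$ and Type $d_4$ sums. For a Type $d_3$ sum $\alpha \ast \beta_1 \ast \beta_2 \ast \beta_3$, subdivide the range $t \asymp \lambda X$ into $\sqrt{\lambda X}$-length intervals centered at points $t_j$ and apply H\"older's inequality to the three $\beta_i$-factors: one $\beta_i$ enters through a fourth moment, which via Corollary \ref{jutila-cor} (Jutila's $L^4$ bound for Dirichlet polynomials in short intervals) gives the required saving provided the relevant length is about $X^{1/3}$; the remaining factor enters through a second moment on a short interval of length $H$, which after Fourier expansion reduces to controlling a fourth moment of exponential sums of the shape $\sum_{m\asymp X^{1/3}} e\bigl(\tfrac{t_j}{2\pi}\log\tfrac{m+\ell}{m-\ell}\bigr)$. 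The leading term of the Taylor expansion of the phase is the monomial $\tfrac{t_j}{\pi}\cdot\tfrac{\ell}{m}$, which is precisely the setting of Lemma \ref{rs1}(ii) (Robert--Sargos); the cubic correction $\tfrac{t_j}{3\pi}\cdot \tfrac{\ell^3}{m^3}$ can be absorbed by an additional dyadic decomposition in $\ell$ at the cost of a controlled loss, and optimizing the resulting estimates yields the specific exponent $\record = 8/33$. For Type $d_4$ sums (necessary since \eqref{sigma-range} gives $\record < 1/4$) I replace the unavailable sixth moment for Dirichlet $L$-functions by the classical $L^2$ mean value theorem of Lemma \ref{mvt-lem-ch}, and bound the accompanying maximal exponential sum by the van der Corput $B$-process of Lemma \ref{rs1}(iii), applied with the exponent pair $(1/14,2/7)$; since $\record > 7/30$, the resulting estimate is of the required strength.

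The principal obstacle is the Type $d_3$ analysis, specifically extracting a useful bound from Robert--Sargos in the presence of the non-monomial cubic correction to the log-phase; balancing the loss from handling this correction against the gain from Jutila's $L^4$ bound is exactly what pins down the value $\record = 8/33$. The Type $d_4$ treatment is in some sense easier only because the threshold value of $\record$ it dictates, namely $7/30$, is smaller than $8/33$; beyond it one would need either stronger short-interval moment estimates or better exponent pairs, neither of which we use here.
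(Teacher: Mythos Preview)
Your outline matches the paper's approach almost exactly: the reduction via Gallagher/stationary phase and Lemma \ref{edc} to Dirichlet mean values (this is Proposition \ref{spe} and Corollary \ref{spem}), the combinatorial decomposition via Lemma \ref{comb-decomp-2} with $m=5$, the treatment of part (i) via the Appendix \ref{harman-sec} machinery, and the handling of Type II, $d_1$, $d_2$ sums are all as in the paper.

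One correction on the Type $d_3$/$d_4$ exponential sum step, where you have the tools partially swapped. For Type $d_4$ (Lemma \ref{pt}) the paper does \emph{not} invoke the $B$-process of Lemma \ref{rs1}(iii); it applies the classical exponent pair $(1/14,2/7)$ directly to the phase $m \mapsto \tfrac{t}{2\pi}\log\tfrac{m+\ell}{m-\ell}$, and this already gives the threshold $7/30$. Conversely, for Type $d_3$ the cubic correction $\tfrac{t_j}{3\pi}\ell^3/m^3$ is \emph{not} absorbed by a dyadic decomposition in $\ell$; it is kept as a genuine second phase parameter $\beta$ (see \eqref{fab}), and after Lemma \ref{rs1}(i),(ii) handle the terms $W_1,W_2$, the remaining term $W_3$ requires first applying the $B$-process (Lemma \ref{rs1}(iii)) to shorten the sum and only then invoking Lemma \ref{rs1}(ii). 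It is precisely this two-step treatment of $W_3$ that produces the exponent $8/33$.
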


Note from \eqref{alpha-2} and \eqref{planch-1} that one already has the bound
$$  \int_\T \left|S_f\left(\frac{a}{q}+\theta\right)\right|^2\ d\theta  \ll_{k,\eps} X \log^{O_k(1)} X,$$
so the bounds \eqref{ma0}, \eqref{mae} only gain a logarithmic savings over the trivial bound.  We also remark that the $\sigma \geq 1/3$ case of Proposition \ref{minor}(ii) can be established from the estimates in \cite{mikawa} (in the case $f = \Lambda 1_{(X,2X]}$) or \cite{bbmz} (in the case $f = d_3 1_{(X,2X]}$).

Proposition \ref{minor} will be proven in Sections \ref{dirichlet-sec}-\ref{vdc-sec}.  Assuming it for now, let us see why it (and Proposition \ref{major}) imply Theorem \ref{unav-corr}.  The four cases are very similar and we will only describe the argument in detail for Theorem \ref{unav-corr}(i).  By subdividing the interval $[h_0-H,h_0+H]$ if necessary, we may assume that $H = X^{\sigma+\eps}$ with $\eps$ small.  Let $A>0$, let $B>0$ be sufficiently large depending on $A$, and let $B'>0$ be sufficiently large depending on $A,B$.  We apply Corollary \ref{chebyshev} with $f=g=\Lambda$, $\eta = \log^{-A-2} X$, and ${\mathfrak M} := {\mathfrak M}_{\log^B X, X^{-1} \log^{B'} X}$.  From crude bounds we can verify the hypothesis in Corollary \ref{chebyshev}(i) with $F,G = \log X$.  From the estimates in \cite{landreau}, we know that
$$ \sum_{h: |h-h_0| \leq H} d_2(h) \ll H \log^{O(1)} X$$
and hence by the Markov inequality we have $d_2(h) \ll \log^{O_A(1)} X$ for all but $O(\eta H)$ values of $h \in [h_0-H,h_0+H]$.  This fact and Proposition \ref{major}(i) then give the hypothesis in Corollary \ref{chebyshev}(ii).  It remains to verify the hypothesis in Corollary \ref{chebyshev}(iii) for any $\alpha \not \in {\mathfrak M}_{\log^B X, X^{-1} \log^{B'} X}$.  By the Dirichlet approximation theorem, we can write $\alpha = a/q + \beta$ for some $1 \leq q \leq \log^B X$, $(a,q)=1$, and $|\beta| \leq \frac{1}{qQ}$.  Since $\alpha \not \in {\mathfrak M}_{\log^B X, X^{-1} \log^{B'} X}$, we also have $|\beta| \geq X^{-1} \log^{B'} X$.  If $\beta \leq X^{-\frac{1}{6}-\eps}$, the claim then follows from Proposition \ref{minor}(ii), while for $\beta > X^{-\frac{1}{6}-\eps}$ the claim follows from Proposition \ref{minor}(i). Theorem~\ref{unav-corr}(ii)-(iv) follow similarly (with slightly larger choices of $F,G$).

\begin{remark} The bound \eqref{mae} is being used here to establish Theorem \ref{unav-corr}.  In the converse direction, it is possible to use Theorem \ref{unav-corr} to establish \eqref{mae}; we sketch the argument as follows.  The left-hand side of \eqref{mae} may be bounded by
$$
 \int_\R \left|S_f\left(\theta\right)\right|^2 \eta\left( X^{\record +\eps} \left(\theta - \frac{a}{q} - \beta\right) \right)\ d\theta$$
for some rapidly decreasing $\eta$ with compactly supported Fourier transform,and this can be rewritten as
$$
X^{-\record-\eps} \sum_h e\left(-h\left(\frac{a}{q}+\beta\right)\right) \hat \eta\left( \frac{h}{X^{\record+\eps}} \right) \sum_n f(n) \overline{f(n+h)}.$$
The inner sum can be controlled for most $h$ using Theorem \ref{unav-corr}, and the contribution of the exceptional values of $h$ can be controlled by upper bound sieves.  We leave the details to the interested reader.
\end{remark}

\section{Major arc estimates}\label{major-sec}

In this section we prove Proposition \ref{major}.  To do this we need some estimates on $S_{\Lambda 1_{(X,2X]}}(\alpha)$ and $S_{d_k 1_{(X,2X]}}(\alpha)$ for major arc $\alpha$.  The former is standard:

\begin{proposition}\label{kog}  Let $A, B,B' > 0$, $X \geq 2$, and let $\alpha = \frac{a}{q} + \beta$ for some $1 \leq q \leq \log^B X$, $(a,q)=1$, and $|\beta| \leq \frac{\log^{B'} X}{X}$.  Then we have
$$ S_{\Lambda 1_{[1,X]}}(\alpha) = \frac{\mu(q)}{\phi(q)} \int_1^{X} e(\beta x)\ dx + O_{A,B,B'}( X \log^{-A} X )$$
and hence also
$$ S_{\Lambda 1_{(X,2X]}}(\alpha) = \frac{\mu(q)}{\phi(q)} \int_X^{2X} e(\beta x)\ dx + O_{A,B,B'}( X \log^{-A} X )$$
\end{proposition}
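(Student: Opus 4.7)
The proof follows the standard major-arc template, so the plan is mostly bookkeeping. First I would write
$$ S_{\Lambda 1_{[1,X]}}(\alpha) = \sum_{n \leq X} \Lambda(n) e\left(\frac{an}{q}\right) e(\beta n) = \sum_{b=1}^{q} e\left(\frac{ab}{q}\right) \sum_{\substack{n \leq X \\ n \equiv b\ (q)}} \Lambda(n) e(\beta n) $$
and split the outer sum according to whether $(b,q) = 1$ or $(b,q) > 1$. The terms with $(b,q) > 1$ contribute only when $n = p^k$ with $p \mid q$, and the total mass of such prime powers up to $X$ is $O(\omega(q) \log^{2} X) = O(\log^{B+2} X)$, which is negligible.

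For the terms with $(b,q) = 1$, I would invoke the Siegel–Walfisz theorem in the form
$$ \sum_{\substack{n \leq X'' \\ n \equiv b\ (q)}} \Lambda(n) = \frac{X''}{\phi(q)} + O_{A'}\left(X'' \log^{-A'} X''\right) $$
uniformly for $1 \leq q \leq \log^B X$ and $1 \leq X'' \leq X$, with $A'$ to be chosen later (depending on $A, B, B'$). Feeding this into Lemma~\ref{sbp} with $f(x) \coloneqq e(\beta x)$, $g \coloneqq \Lambda 1_{\cdot \equiv b\ (q)}$, and $h(x) \coloneqq \frac{1}{\phi(q)} 1_{[1,X]}(x)$, and using $|f| \leq 1$ together with $|f'(x)| \ll |\beta| \leq \log^{B'} X / X$, yields
$$ \sum_{\substack{n \leq X \\ n \equiv b\ (q)}} \Lambda(n) e(\beta n) = \frac{1}{\phi(q)} \int_1^X e(\beta x)\, dx + O_{A'}\left(X \log^{-A' + B'} X\right). $$

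Summing over $b \in (\Z/q\Z)^\times$ produces the Ramanujan sum $\sum_{(b,q)=1} e(ab/q) = c_q(a)$, which equals $\mu(q)$ since $(a,q) = 1$, giving the stated main term $\frac{\mu(q)}{\phi(q)} \int_1^X e(\beta x)\, dx$. The aggregate error from all $\phi(q) \leq \log^B X$ residue classes is $O_{A'}\left(X \log^{-A' + B + B'} X\right)$, so choosing $A' \coloneqq A + B + B'$ gives the desired error $O_{A,B,B'}(X \log^{-A} X)$; note that the $A'$-dependence is ineffective, as is standard for Siegel–Walfisz. Finally, the statement for $S_{\Lambda 1_{(X,2X]}}$ follows by subtracting the estimate for $[1,X]$ from the estimate for $[1,2X]$.

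The only mildly delicate point is ensuring that the summation-by-parts loss of a factor of $|\beta|(X' - X) \leq \log^{B'} X$ (from the $f'$ term in Lemma~\ref{sbp}) is absorbed by taking the Siegel–Walfisz exponent $A'$ large enough; this is the main bookkeeping step but presents no genuine obstacle.
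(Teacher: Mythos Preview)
Your argument is correct and follows exactly the standard major-arc computation (split into residue classes, discard non-coprime classes trivially, apply Siegel--Walfisz, then partial summation, then collect the Ramanujan sum). The paper itself does not give a proof but simply cites \cite[Lemma 8.3]{nathanson}, which carries out essentially the same steps; so your approach matches what the paper defers to the literature.
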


\begin{proof}  See \cite[Lemma 8.3]{nathanson}.  We remark that this estimate requires Siegel's theorem and so the bounds are ineffective.
\end{proof}

For the $d_k$ exponential sum, we have

\begin{proposition}\label{klog} Let $A, B,B' > 0$, $k \geq 2$, $X \geq 2$, and let $\alpha = \frac{a}{q} + \beta$ for some $1 \leq q \leq \log^B X$, $(a,q)=1$, and $|\beta| \leq \frac{\log^{B'} X}{X}$.  Then we have
$$ S_{d_k 1_{(X,2X]}}(\alpha) = \int_X^{2X} p_{k,q}(x) e(\beta x)\ dx + O_{k,A,B,B'}( X \log^{-A} X )$$
where
$$ p_{k,q}(x) \coloneqq \sum_{q=q_0q_1} \frac{\mu(q_1)}{\phi(q_1) q_0} p_{k,q_0,q_1}\left(\frac{x}{q_0}\right)$$
$$ p_{k,q_0,q_1}(x) \coloneqq \frac{d}{dx} \mathrm{Res}_{s=1} \frac{x^s F_{k,q_0,q_1}(s)}{s} $$
$$ F_{k,q_0,q_1}(s) \coloneqq \sum_{n \geq 1: (n,q_1) = 1} \frac{d_k(q_0 n)}{n^s}.$$
\end{proposition}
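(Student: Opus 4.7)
The plan is the classical Siegel--Walfisz style analysis for divisor sums, adapted to the weighted exponential sum $S_{d_k 1_{(X, 2X]}}(\alpha)$. Writing $\alpha = a/q + \beta$ with $(a,q)=1$, I would first split the range of summation according to the value of $q_0 := (n, q)$; setting $q = q_0 q_1$ and $n = q_0 n'$ with $(n', q_1) = 1$, the exponential $e(an/q)$ reduces to $e(a n'/q_1)$. Expanding the latter via the Dirichlet character identity \eqref{usual} of Lemma \ref{edc} produces
\begin{equation*}
S_{d_k 1_{(X, 2X]}}(\alpha) = \sum_{q_0 q_1 = q} \frac{1}{\phi(q_1)} \sum_{\chi \pmod{q_1}} \chi(a) \tau(\overline{\chi})\, T(q_0, \chi, \beta),
\end{equation*}
where $T(q_0, \chi, \beta) := \sum_{X/q_0 < n' \leq 2X/q_0} \chi(n') d_k(q_0 n') e(\beta q_0 n')$. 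A direct Euler product computation gives the factorisation
$\sum_n \chi(n) d_k(q_0 n) n^{-s} = L(s, \chi)^k H_{q_0, \chi}(s)$,
where $H_{q_0, \chi}$ is a finite Euler product over primes dividing $q_0 q_1$, holomorphic and polynomially bounded in $q$ on the half-plane $\Re s > 1/2$.

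For the principal character $\chi = \chi_0$ one has $\chi_0(a) = 1$ and $\tau(\overline{\chi_0}) = \mu(q_1)$, and the Dirichlet series has a pole of order $k$ at $s = 1$ whose residue $\mathrm{Res}_{s=1} \frac{y^s F_{k,q_0,q_1}(s)}{s}$ is precisely the antiderivative of the polynomial $p_{k, q_0, q_1}$ of the statement. Applying the truncated Perron formula (Lemma \ref{tpf-lem}(i)) and shifting the contour into the classical zero-free region for $\zeta$ yields
\begin{equation*}
\sum_{\substack{n' \leq N \\ (n', q_1) = 1}} d_k(q_0 n') = N_{k, q_0, q_1}(N) + O_{k, A}(N \log^{-A} N),
\end{equation*}
where $N_{k, q_0, q_1}' = p_{k, q_0, q_1}$. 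Summation by parts (Lemma \ref{sbp}) against the phase $e(\beta q_0 n')$, whose total variation on $[X/q_0, 2X/q_0]$ is $O(\log^{B'} X)$, followed by the change of variables $x = q_0 y$, converts the principal-character contribution to the claimed integral main term $\int_X^{2X} p_{k,q}(x) e(\beta x)\,dx$, with an acceptable error after enlarging $A$ to absorb $B'$.

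For the non-principal characters, the series $L(s, \chi)^k H_{q_0, \chi}(s)$ is regular at $s = 1$. Shifting the Perron contour into the Vinogradov--Korobov zero-free region for $L(s, \chi)$, and invoking Siegel's theorem to rule out Siegel zeros for $q_1 \leq \log^B X$ (the source of ineffectivity), gives
$\sum_{n' \leq N} \chi(n') d_k(q_0 n') \ll_{k, A, B} N \log^{-A} N$.
Partial summation bounds $T(q_0, \chi, \beta)$ by the same quantity up to a $\log^{B'} X$ factor; summing over all characters and divisors of $q$, using the Gauss sum bound $|\tau(\overline{\chi})| \leq \sqrt{q_1}$ and the trivial estimate $\phi(q_1) \gg q_1^{1-o(1)}$, produces a total non-principal contribution of size $\log^{O_k(B)} X \cdot X \log^{-A} X$, which is within the allowed error after further enlarging $A$.

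The main obstacle is establishing this Siegel--Walfisz bound for the twisted divisor sum uniformly in $q_0 \leq \log^B X$: one must track the polynomial-in-$q_0$ growth of the local correction factor $H_{q_0, \chi}(s)$ arising from primes dividing $q_0$ (which contribute divisor-function-type local terms, rather than pure $L$-function behaviour), and check that the savings from the zero-free region of $L(s, \chi)$ dominate these factors as well as the $d_2(q)^{O_k(1)}$ losses accumulated from the splitting $q = q_0 q_1$. This verification is essentially routine once the factorisation $F_{k, q_0, \chi}(s) = L(s, \chi)^k H_{q_0, \chi}(s)$ is in hand, and closely parallels the standard proof of the Siegel--Walfisz theorem for $d_k$ in arithmetic progressions.
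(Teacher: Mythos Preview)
Your approach is correct and follows the same overall structure as the paper (split by $q_0=(n,q)$, expand into Dirichlet characters, separate principal from non-principal), but your treatment of the non-principal characters is unnecessarily heavy. The paper handles these by a routine application of the Dirichlet hyperbola method: writing $d_k(q_0 n_1)$ as a $k$-fold convolution and using that $\sum_{m\le M}\chi(m)\ll q_1$ for non-principal $\chi$, one obtains a \emph{power} saving of order $X^{-1/k}\log^{O_{k,A,B}(1)}X$, with completely elementary and effective constants. Your route through the Vinogradov--Korobov zero-free region and Siegel's theorem works, but yields only logarithmic savings and renders the implied constants ineffective --- which matters, since the paper explicitly notes (Remark~\ref{power}) that Theorem~\ref{unav-corr}(ii) does not require Siegel's theorem and could in principle be made effective with power savings. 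Similarly, for the principal character the paper shifts only to $\Re s = 1-\eps$ using the convexity bound for $\zeta$, whereas your use of a zero-free region for $\zeta$ is again more than needed. In short: your argument is valid, but the paper's hyperbola-method treatment of the non-principal characters is both simpler and quantitatively stronger.
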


Using Euler products we see that $F_{k,q_0,q_1}$ has a pole of order $k$ at $s=1$, and so $p_{k,q_0,q_1}$ (and hence $p_{k,q}$) will be a polynomial of degree at most $k-1$ in $\log x$.  One could improve the error term $X \log^{-A} X$ here to a power savings $X^{1-c/k}$ for some absolute constant $c>0$, and also allow $q$ and $X |\beta|$ to similarly be as large as $X^{c/k}$, but we will not exploit such improved estimates here.

\begin{proof} This is a variant of the computations in \cite[\S 6]{bbmz}.  Using Lemma \ref{sbp} (and increasing $A$ as necessary), it suffices to show that
$$ \sum_{n \leq X'} d_k(n) e(an/q) = \int_0^{X'} p_{k,q}(x)\ dx + O_{k,A,B}( X \log^{-A} X )$$
for all $X' \asymp X$. Writing $n = q_0 n_1$ where $q_0 = (q, n)$, we can expand the left-hand side as
$$ \sum_{q=q_0q_1} \sum_{n_1 \leq X'/q_0: (n_1,q_1)=1} d_k(q_0 n_1) e(a n_1 / q_1)$$
so (again by enlarging $A$ as necessary) it will suffice to show that
\begin{equation}\label{dope}
 \sum_{n_1 \leq X'/q_0: (n_1,q_1)=1} d_k(q_0 n_1) e(a n_1 / q_1) =
\frac{\mu(q_1)}{\phi(q_1)} \int_0^{X'/q_0} p_{k,q_0,q_1}(x)\ dx + O_{k,A,B}( X \log^{-A} X )
\end{equation}
for each factorization $q = q_0 q_1$.  By \eqref{usual}, the left-hand side of \eqref{dope} expands as
$$ \frac{1}{\phi(q_1)} \sum_{\chi\ (q_1)} \chi(a) \tau(\overline{\chi}) \sum_{n_1 \leq X'/q_0} \chi(n_1) d_k(q_0 n_1)$$
where the Gauss sum $\tau(\overline{\chi})$ is defined by \eqref{taug}.
For non-principal $\chi$, a routine application of the Dirichlet hyperbola method shows that
$$ \sum_{n_1 \leq X'/q_0: (n_1,q_1)=1} \chi(n_1) d_k(q_0 n_1) \ll_{k,A,B} X \log^{-A} X$$
for any $A>0$ (in fact one can easily extract a power savings of order $\ll_{\varepsilon} X^{-1/k + \varepsilon}$ from this argument).  Thus it suffices to handle the contribution of the principal character.  Here, the Gauss sum \eqref{taug} is just $\mu(q_1)$, so we reduce to showing that
\begin{equation} \label{eq:divsumdk}
\sum_{n_1 \leq X'/q_0: (n_1,q_1)=1} d_k(q_0 n_1) = \int_0^{X'/q_0} p_{k,q_0,q_1}(x)\ dx + O_{k,A,B}( X \log^{-A} X ).
\end{equation}
By the fundamental theorem of calculus one has
$$ \int_0^{X'/q_0} p_{k,q_0,q_1}(x)\ dx  = \mathrm{Res}_{s=1} \frac{(X'/q_0)^s F_{k,q_0,q_1}(s)}{s}.$$
Meanwhile, by Lemma \ref{tpf-lem}(i), we can write the left-hand side of \eqref{eq:divsumdk} as 
$$ \frac{1}{2\pi i} \int_{\sigma-iX^{\eps}}^{\sigma+iX^\eps} \frac{(X'/q_0)^s F_{k,q_0,q_1}(s)}{s}\ ds + O_{k,A,B,\eps}( X \log^{-A} X)$$
where $\sigma \coloneqq 1 + \frac{1}{\log X}$ and $\eps>0$ is arbitrary.  On the other hand, by modifying the arguments in \cite[Lemma 4.3]{bbmz} (and using the standard convexity bound for the $\zeta$ function) we have for all sufficiently small $\varepsilon > 0$, the bounds
$$ F_{k,q_0,q_1}(s) \ll_{k,B,\eps} X^{O_k(\eps^2)}$$
when $\mathrm{Re}(s) \geq 1-\eps$, $|\mathrm{Im}(s)| \leq X^\eps$, and $|s-1| \geq \eps$.  Shifting the contour to the rectangular path connecting $\sigma-iX^{\eps}$, $(1-\eps)-iX^{\eps}$, $(1-\eps)+iX^\eps$, and $\sigma+iX^\eps$ and using the residue theorem, we obtain the claim.
\end{proof}

Now we establish Proposition \ref{major}(i).  From Proposition \ref{kog} and the trivial bound $| \int_{X}^{2X} e(\beta x) dx | \leq X$, we have
$$ |S_{\Lambda 1_{(X,2X]}}(\alpha)|^2 = \frac{\mu^2(q)}{\phi^2(q)} \left|\int_X^{2X} e(\beta x)\ dx\right|^2 + O_{A',B,B'}( X^2 \log^{-A'} X )$$
for any $A'>0$ and major arc $\alpha$ as in that proposition.  On the other hand, the set ${\mathfrak M}_{\log^B X,X^{-1} \log^{B'} X}$ has measure $O( X^{-1} \log^{2B+B'} X )$.
Thus (on increasing $A'$ as necessary) to prove \eqref{dust}, it suffices to show that
\begin{align*}
& \sum_{q \leq \log^B X} \sum_{(a,q)=1} \int_{|\beta| \leq X^{-1} \log^{B'} X} \frac{\mu^2(q)}{\phi^2(q)} \left|\int_X^{2X} e(\beta x)\ dx\right|^2 e\left(\left(\frac{a}{q}+\beta\right) h\right)\ d\beta \\
&\quad = {\mathfrak G}(h) X + O_{\eps,A,B, B'}( d_2(h)^{O(1)} X \log^{-A} X ).
\end{align*}
By the Fourier inversion formula we have
\begin{align*}
\int_\R \left|\int_X^{2X} e(\beta x)\ dx\right|^2 e(\beta h)\ d\beta 
&= \int_\R 1_{[X,2X]}(x) 1_{[X,2X]}(x+h)\ dx \\
&= (1 + O(X^{-\eps})) X
\end{align*}
so from the elementary bound $\int_X^{2X} e(\beta x)\ dx \ll 1/|\beta|$ one has
\begin{equation}\label{tbf}
\int_{|\beta| \leq X^{-1} \log^{B'} X} \left|\int_X^{2X} e(\beta x)\ dx\right|^2 e(\beta h)\ d\beta 
= (1 + O_{\eps,B'}( \log^{-B'} X)) X.
\end{equation}
Since $B' \geq 2B+A$, it thus suffices to show that
$$ \sum_{q \leq \log^B X} \sum_{(a,q)=1} \frac{\mu^2(q)}{\phi^2(q)} e\left(\frac{ah}{q}\right)= {\mathfrak G}(h) + O_{\eps,A,B}( d_2(h)^{O(1)} \log^{-A} X ).$$
Introducing the Ramanujan sum
\begin{equation}\label{cqd}
 c_q(a) \coloneqq \sum_{1 \leq b \leq q: (b,q) = 1} e\left(\frac{ab}{q}\right),
\end{equation}
the left-hand side simplifies to
$$ \sum_{q \leq \log^B X} \frac{\mu^2(q) c_q(h)}{\phi^2(q)}.$$

Recall that, for fixed $h$, $c_q(h)$ is multiplicative in $q$ and that $c_p(h) = -1$ if $p \nmid h$ and $c_p(h) = \varphi(h)$ if $p \mid h$. Hence by Euler products one has
\begin{align*}
\sum_q \frac{\mu^2(q) c_q(h)}{\phi^2(q)} q^{1/2} &\ll \prod_{p \nmid h} \left(1+O\left(\frac{1}{p^{3/2}}\right)\right) \times \prod_{p |h} O(1) \\
&\ll d_2(h)^{O(1)}
\end{align*}
and hence
$$ \sum_{q > \log^B X} \frac{\mu^2(q) c_q(h)}{\phi^2(q)} \ll d_2(h)^{O(1)} \log^{-B/2} X.$$
Since $B \geq 2A$, it thus suffices to establish the identity
$$ \sum_q \frac{\mu^2(q) c_q(h)}{\phi^2(q)} = {\mathfrak G}(h)$$
but this follows from a standard Euler product calculation.

The proof of Theorem \ref{major}(iv) is similar to that of Theorem \ref{major}(i) and is left to the reader.  We now turn to Theorem \ref{major}(ii).  From \eqref{divisor-crude} one has
$$ S_{d_k 1_{(X,2X]}}(\alpha) \ll_k X \log^{k-1} X$$
and similarly
$$ S_{d_l 1_{(X,2X]}}(\alpha) \ll_l X \log^{l-1} X.$$
Write
$$ S_{k,q}(\beta) \coloneqq \int_X^{2X} p_{k,q}(x) e(\beta x)\ dx$$
and similarly for $S_{l,q}(\beta)$.
Then from Proposition \ref{klog} we have (on increasing $A$ as necessary) that
$$ S_{d_k 1_{(X,2X]}}(\alpha) \overline{S_{d_l 1_{(X,2X]}}(\alpha)} = S_{k,q}(\beta) \overline{S_{l,q}(\beta)} + O_{k,l,A',B,B'}( X^2 \log^{-A'} X )
$$
for any $A'>0$.
It thus suffices to show that
\begin{align*}
& \sum_{q \leq \log^B X} \sum_{(a,q)=1} \int_{|\beta| \leq X^{-1} \log^{B'} X} 
S_{k,q}(\beta) \overline{S_{l,q}(\beta)} e\left(\left(\frac{a}{q}+\beta\right)h\right)\ d\beta\\
&=
P_{k,l,h}(\log X) X + O_{\eps,k,l,A,B,B'}( d_2(h)^{O_{k,l}(1)} X \log^{-A} X ).
\end{align*}

Using Euler products one can obtain the crude bounds
\begin{equation}\label{caqg}
 p_{k,q}(x) \ll_k \frac{d_2(q)^{O_k(1)}}{q} \log^{k-1} X
\end{equation}
for $x \asymp X$; indeed, the coefficients of $p_{k,q}$ (viewed as a polynomial in $\log X$) are of size $O_k( \frac{d_2(q)^{O_k(1)}}{q})$.  By repeating the proof of \eqref{tbf}, we can then conclude that
\begin{align*}
&\int_{|\beta| \leq X^{-1} \log^{B'} X} 
S_{k,q}(\beta) \overline{S_{l,q}(\beta)} e(\beta h)\ d\beta\\
&\quad = \int_X^{2X} p_{k,q}(x) \overline{p_{l,q}}(x+h)\ dx + O_{k,l,B'}\left( \frac{d_2(q)^{O_{k,l}(1)}}{q^2} X \log^{k+l-2-B'} X\right).
\end{align*}
Since $\log(x+h) = \log x + O_\eps(X^{-\eps})$ for $|h| \leq X^{1-\eps}$ and $x \asymp X$, we have
$$ \int_X^{2X} p_{k,q}(x) \overline{p_{l,q}}(x+h)\ dx  = \int_X^{2X} p_{k,q}(x) \overline{p_{l,q}}(x)\ dx + O_{k,l,B'}\left( d_2(q)^{O_{k,l}(1)} X \log^{k+l-2-B'} X\right).$$
Using \eqref{divisor-crude} to control the error terms, using \eqref{cqd}, and recalling that $B' \geq 2B+A$, it therefore suffices to establish the bound
$$ \sum_{q \leq \log^B X} c_q(h) \int_X^{2X} p_{k,q}(x) \overline{p_{l,q}}(x)\ dx = 
P_{k,l,h}(\log X) X + O_{\eps,k,l,A,B}\left( d_2(h)^{O_{k,l}(1)} X \log^{k+l-2-A} X \right).$$
Using the bounds \eqref{caqg}, we can argue as before to show that
$$ \sum_{q} q^{1/2} c_q(h) \int_X^{2X} p_{k,q}(x) \overline{p_{l,q}}(x)\ dx \ll_{k,l} d_2(h)^{O_{k,l}(1)} X \log^{k+l-2} X $$
and so for $B \geq 2A$ it suffices to show that
$$ \sum_q c_q(h) \int_X^{2X} p_{k,q}(x) \overline{p_{l,q}}(x)\ dx = X P_{k,l,h}(\log X).$$
But as $p_{k,q}, p_{l,q}$ are polynomials in $\log X$ of degree at most $k-1,l-1$ respectively, this follows from direct calculation (using \eqref{caqg} to justify the convergence of the summation).  An explicit formula for the polynomial $P_{k,l,h}$ may be computed by using the calculations in \cite{conrey}, but we will not do so here.

To prove Theorem \ref{major}(iii), we repeat the arguments used to establish Theorem \ref{major}(ii) (replacing one of the invocations of Proposition \ref{klog} with Proposition \ref{kog}) and eventually reduce to showing that
$$ \sum_q c_q(h) \int_X^{2X} p_{k,q}(x) \frac{\mu(q)}{\varphi(q)}\ dx = X Q_{k,h}(\log X),$$
but this is again clear since $p_{k,q}(x)$ is a polynomial in $\log X$ of degree at most $k-1$. Again, the polynomial $Q_{k,h}$ is explicitly computable, but we will not write down such an explicit formula here.

\section{Reduction to a Dirichlet series mean value estimate}\label{dirichlet-sec}

We begin the proof of Proposition \ref{minor}.  As discussed in the introduction, we will estimate the expressions \eqref{mae}, \eqref{ma0}, which currently involve the additive frequency variable $\alpha$, by expressions involving the multiplicative frequency $t$, by performing a sequence of Fourier-analytic transformations and changes of variable.

The starting point will be the following treatment of the $q=1$ case:

\begin{proposition}[Bounding exponential sums by Dirichlet series mean values]\label{spe}  Let $1 \leq H \leq X/2$, and let $f \colon \N \to \C$ be a function supported on $(X,2X]$.  Let $\beta, \eta$ be real numbers with $|\beta| \ll \eta \ll 1$, and let $I$ denote the region
\begin{equation}\label{I-def}
 I \coloneqq \left\{ t \in \R: \eta |\beta| X \leq |t| \leq \frac{|\beta| X}{\eta} \right\}
\end{equation}
\begin{itemize}
\item[(i)] We have
\begin{equation}\label{spe-2}
\begin{split}
\int_{\beta-1/H}^{\beta+1/H} |S_f(\theta)|^2\ d\theta  &\ll
\frac{1}{|\beta|^2 H^2} \int_I \left( \int_{t-|\beta| H}^{t+|\beta| H} |{\mathcal D}[f](\frac{1}{2}+it')|\ dt'\right)^2\ dt  \\
&\quad + \frac{\left(\eta + \frac{1}{|\beta| H}\right)^2 }{H^2} \int_\R \left(\sum_{x \leq n \leq x+H} |f(n)|\right)^2\ dx.
\end{split}
\end{equation}
\item[(ii)]  If $\beta = 1/H$, then we have the variant
\begin{equation}\label{spe-4}
\begin{split}
\int_{\beta \leq |\theta| \leq 2\beta} |S_f(\theta)|^2\ d\theta  &\ll 
\int_I |{\mathcal D}[f](\frac{1}{2}+it)|^2\ dt  \\
&\quad +  \frac{\left(\eta + \frac{1}{|\beta| X}\right)^2}{H^2} \int_\R \left(\sum_{x \leq n \leq x+H} |f(n)|\right)^2\ dx
\end{split}
\end{equation}
\end{itemize}
\end{proposition}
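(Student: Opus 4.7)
The plan combines Gallagher's $L^2$ inequality for exponential sums with a Mellin/stationary-phase analysis of the resulting oscillatory kernel. First I would apply Gallagher's lemma to the coefficients $a_n = f(n) e(\beta n)$ (with $\theta_0 = \beta$; a dyadic trick extends from length $1/H$ to $2/H$), reducing (i) to bounding
$$\int_\R |B(x)|^2\,dx, \qquad B(x) := \sum_{x<n\leq x+H} f(n) e(\beta n),$$
by $H^2$ times the right-hand side of \eqref{spe-2}.

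Next, I would invoke Mellin--Plancherel applied to the $L^2(\R^+, du/u)$ function $u \mapsto u^{1/2} e(\beta u) \mathbf{1}_{(x,x+H]}(u)$ to obtain, after a harmless sign change of the integration variable, the representation
$$B(x) = \frac{1}{2\pi}\int_\R K_x(t)\,\mathcal D[f](\tfrac12+it)\,dt, \qquad K_x(t) := \int_x^{x+H} e(\beta u)\,u^{-1/2+it}\,du.$$
The phase $\psi(u) = 2\pi\beta u + t\log u$ has $\psi'(u) = 2\pi\beta + t/u$ and stationary point $u_0 = -t/(2\pi\beta)$, which lies in $[x, x+H]$ exactly when $t$ belongs to an interval of length $\asymp |\beta| H$ about $-2\pi\beta x$. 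For $t \in I$, stationary phase gives $|K_x(t)| \ll |\beta|^{-1/2}$ inside this window with adequate tail decay outside; for $t \notin I$, by definition of $I$ either $|\psi'(u)| \asymp |\beta|$ (when $|t| \ll \eta|\beta|X$) or $|\psi'(u)| \asymp |t|/X$ (when $|t| \gg |\beta|X/\eta$), and repeated integration by parts yields rapid decay of $K_x(t)$. Swapping the order of $t$-integration and $n$-summation, the $t \notin I$ contribution to $B(x)$ reassembles as $\sum_n f(n) e(\beta n) W(n,x)$, where the weight $W$ is concentrated on an $O(H)$-neighbourhood of $x$ with $\ell^\infty$ bound $O(\eta + 1/(|\beta| H))$; squaring and integrating in $x$ yields the error term of (i). For the main term, I would apply pointwise Cauchy--Schwarz in $t$ using the bound on $|K_x(t)|$ on the window, then substitute $t_0 = -2\pi\beta x$ (Jacobian $1/(2\pi|\beta|)$) to convert $x$-integration into integration over $t_0 \in I$, which produces exactly the main term of (i).

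Part (ii) follows the same template, after recentring the interval $[\beta, 2\beta]$ at $3\beta/2$ so that Gallagher applies to a centred $1/H$-window. In place of the pointwise Cauchy--Schwarz of the previous step, I would use a $TT^*$/Parseval argument: write $\int|B|^2\,dx = \iint \mathcal D[f](\tfrac12+it)\overline{\mathcal D[f](\tfrac12+it')} M(t,t')\,dt\,dt'$ with $M(t,t') := \int_\R K_x(t)\overline{K_x(t')}\,dx$, show $|M(t,t')| \ll (H/|\beta|)\,\mathbf{1}_{|t-t'| \ll |\beta| H}$ from the kernel analysis, and, since $|\beta| H \asymp 1$ in the regime $\beta = 1/H$, deduce $\int|B|^2 \ll H^2 \int_I |\mathcal D[f](\tfrac12+it)|^2\,dt$ via Young's inequality or Schur's test. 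The sharper error $(\eta + 1/(|\beta|X))^2$ is obtained by redoing the tail analysis under the additional constraint $|\beta| \asymp 1/H$, exploiting that the stationary-phase region now fills almost all of $I$. The main obstacle throughout is the quantitative kernel analysis: packaging the $t \notin I$ tails precisely into the form $(\eta + 1/(|\beta| H))^2$ (resp.\ $(\eta + 1/(|\beta| X))^2$) requires iterated integration by parts, careful bookkeeping across the low- and high-frequency regions $|t| \ll \eta|\beta|X$ and $|t| \gg |\beta|X/\eta$, and judiciously chosen smooth cutoffs before the Mellin inversion, although each individual estimate is routine.
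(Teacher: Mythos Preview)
Your outline follows essentially the same strategy as the paper: Gallagher's lemma, then a Mellin/Fourier conversion to Dirichlet series, then frequency localisation to $I$ via stationary-phase behaviour of the kernel. The paper's execution differs in a few places worth noting. First, the paper smooths Gallagher from the outset (replacing $\mathbf{1}_{(x,x+H]}$ by $\varphi((n-x)/H)$) and then passes to the dual side, introducing a test function $g$; the frequency decomposition into low/medium/high is carried out on the resulting function $G(u)$ by explicit Littlewood-Paley projections $G_1,G_2,G_3$, rather than by cutting the $t$-integral directly. This makes the error analysis for the low- and high-frequency pieces a matter of differentiating or integrating by parts in the $u$-variable, avoiding boundary terms. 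Your sharp kernel $K_x(t)=\int_x^{x+H}$ only decays like $|t-t_0|^{-1}$ after one integration by parts, which is not summable; you correctly flag the need for smooth cutoffs, and once inserted your argument becomes the paper's.

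Second, for the main (medium-frequency) contribution in part (i), the paper does \emph{not} use your pointwise Cauchy--Schwarz in $t$; instead it applies Cauchy--Schwarz in the dual variable $x$ and establishes the $TT^\ast$ kernel bound $\int_\R J_x(t)\overline{J_x(t')}\,dx \ll \frac{H}{|\beta|X}\bigl(1+\tfrac{|t-t'|}{|\beta|H}\bigr)^{-2}$, then finishes by Schur's test. Your pointwise route does work once smooth cutoffs give $|K_x(t)|\ll |\beta|^{-1/2}(1+|t-t_0|/(|\beta|H))^{-N}$ with $N>2$, but you should be explicit that two integrations by parts are needed (one gives only $N=1$). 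For part (ii), the paper proceeds differently again: rather than $TT^\ast$, the medium-frequency main term is dispatched by a direct Plancherel identity (since $\int|G|^2\,dw\ll 1$), and the low/high-frequency errors are controlled via the large sieve inequality on $1/H$-separated points, which is where the sharper $1/(|\beta|X)$ rather than $1/(|\beta|H)$ emerges. Your explanation that ``the stationary-phase region now fills almost all of $I$'' is not quite the mechanism; the gain comes from the fact that the smooth amplitude in part (ii) has $u$-derivative $O(1)$ rather than $O(X/H)$.
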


Observe from the Cauchy-Schwarz inequality that
$$
\frac{1}{|\beta|^2 H^2} \int_I \left(\int_{t-|\beta| H}^{t+|\beta| H} \left|{\mathcal D}[f]\left(\frac{1}{2}+it'\right)\right|\ dt'\right)^2\ dt \ll \int_{-2|\beta| X/\eta}^{2|\beta| X/\eta} \left|{\mathcal D}[f]\left(\frac{1}{2}+it\right)\right|^2\ dt $$
and so from the $L^2$ mean value estimate (Lemma \ref{mvt-lem}) we see that for $f$ a $k$-divisor bounded function \eqref{spe-2} trivially implies the bound
$$ \int_{\beta-1/H}^{\beta+1/H} |S_f(\theta)|^2\ d\theta   \ll_k X \log^{O_k(1)} X.$$
Note that this bound also follows from the ``trivial'' bounds \eqref{planch-1} and \eqref{divisor-crude}.  Thus, ignoring powers of $\log X$, \eqref{spe-2} is efficient in the sense that trivial estimation of the right-hand side recovers the trivial bound on the left-hand side.  In particular, any significant improvement (such as a power savings) over the trivial bound on the right-hand side will lead to a corresponding non-trivial estimate on the left-hand side, of the type needed for Proposition \ref{minor}.  Similarly for \eqref{spe-4} (which roughly corresponds to the endpoint $|\beta| \asymp \frac{1}{H}$ of \eqref{spe-2}).

\begin{proof}  For brevity we adopt the notation
$$ F(t) \coloneqq {\mathcal D}[f](\frac{1}{2}+it).$$
We first prove \eqref{spe-2}.  It will be convenient for Fourier-analytic computations to work with smoothed sums.  Let $\varphi \colon \R \to \R$ be a smooth even function supported on $[-1,1]$, equal to one on $[-1/10,1/10]$, and whose Fourier transform $\hat \varphi(\theta) := \int_\R \varphi(y) e(-\theta y)\ dy$ obeys the bound $|\hat \varphi(\theta)| \gg 1$ for $\theta \in [-1,1]$. Notice that $\varphi$ is a Schwartz function since it is smooth and compactly supported, thus $\widehat{\varphi}$ is also a Schwarz function. Then we have 
\begin{align*}
\int_{\beta-1/H}^{\beta+1/H} |S_f(\theta)|^2\ d\theta &\ll
\int_\R |S_f(\theta)|^2 |\hat \varphi( H(\theta - \beta) )|^2 \ d\theta \\
&= \int_\R \left| \int_\R \sum_n f(n) \varphi(y) e( \beta H y ) e( \theta (n - H y) )\ dy \right|^2\ d\theta \\
&= H^{-2} \int_\R \left| \int_\R \sum_n f(n) \varphi\left(\frac{n-x}{H}\right) e( \beta (n-x) ) e( \theta x )\ dx \right|^2\ d\theta \\
&= H^{-2} \int_\R \left| \sum_n f(n) \varphi\left(\frac{n-x}{H}\right) e( \beta (n-x) ) \right|^2\ dx \\
&= H^{-2} \int_\R \left| \sum_n f(n) \varphi\left(\frac{n-x}{H}\right) e( \beta n ) \right|^2\ dx \\
&= H^{-2} \int_{X/2}^{4X} \left| \sum_n f(n) \varphi\left(\frac{n-x}{H}\right) e( \beta n ) \right|^2\ dx.
\end{align*}
where we have made the change of variables $x = n-Hy$, followed by the Plancherel identity, and then used the support of $f$ and $\varphi$.  (This can be viewed as a smoothed version of a lemma of Gallagher \cite[Lemma 1]{gallagher}).)

By the triangle inequality, we can bound the previous expression by
$$ \ll H^{-2} \int_\R \left(\sum_{x-H \leq n \leq x+H} |f(n)|\right)^2\ dx,$$
which is acceptable if $|\beta| H \ll 1$ or $\eta \geq 1/100$. Thus we may assume henceforth that $|\beta| \gg 1/H$ and $\eta < 1/100$.
 
By duality, it thus suffices to establish the bound
\begin{equation}\label{fad}
\begin{split}
\int_\R \sum_n f(n) \varphi\left(\frac{n-x}{H}\right) e( \beta n ) g(x)\ dx &\ll
\frac{1}{|\beta|} \left( \int_I \left( \int_{t-|\beta| H}^{t+|\beta| H} |F(t')|\ dt'\right)^2\ dt \right)^{1/2}  \\
&\quad + 
\left(\eta + \frac{1}{|\beta| H}\right) \left(\int_\R \left(\sum_{x \leq n \leq x+H} |f(n)|\right)^2\ dx\right)^{1/2}
\end{split}
\end{equation}
whenever $g \colon \R \to \C$ is a measurable function supported on $[X/2,4X]$ with the normalization
\begin{equation}\label{g-norm}
\int_\R |g(x)|^2\ dx = 1.
\end{equation}

Using the change of variables $u = \log n - \log X$ (or equivalently $n = X e^u$), as discussed in the introduction, we can write the left-hand side of \eqref{fad} as
\begin{equation}\label{ga}
 \sum_n \frac{f(n)}{n^{1/2}} G(\log n - \log X) 
\end{equation}
where $G \colon \R \to \R$ is the function
$$ G(u) \coloneqq X^{1/2} e^{u/2} e(\beta X e^u) \int_\R \varphi\left(\frac{Xe^u-x}{H}\right) g(x)\ dx.$$
From the support of $g$ and $\varphi$, we see that $G$ is supported on the interval $[-10,10]$ (say).  

At this stage we could use the Fourier inversion formula
\begin{equation}\label{G-invert}
 G(u) = \frac{1}{2\pi} \int_\R \hat G\left(-\frac{t}{2\pi}\right) e^{-itu}\ dt
\end{equation}
to rewrite \eqref{ga} in terms of the Dirichlet series ${\mathcal D}[f](\frac{1}{2}+it) = \sum_n \frac{f(n)}{n^{\frac{1}{2}+it}}$.  However, the main term in the right-hand side of \eqref{spe-2} only involves ``medium'' values of the frequency variable $t$, in the sense that $|t|$ is constrained to lie between $\eta \beta X$ and $\beta X/\eta$.  Fortunately, the phase $e(\beta X e^u)$ of $G(u)$ oscillates at frequencies comparable to $\beta X$ in the support $[-10,10]$ of $G$, so the contribution of ``high frequencies'' $|t| \gg \beta X/\eta$ and ``low frequencies'' $|t| \ll \eta \beta X$ will both be acceptable, in the sense that they will be controllable using the error term in \eqref{spe-2}.

To make this precise we will use the harmonic analysis technique of Littlewood-Paley decomposition.  Namely, we split the sum \eqref{ga} into three subsums
\begin{equation}\label{gaj}
 \sum_n \frac{f(n)}{n^{1/2}} G_i(\log n - \log X) 
\end{equation}
for $i=1,2,3$, where $G_1,G_2,G_3$ are Littlewood-Paley projections of $G$,
\begin{align*}
G_1(u) &\coloneqq \int_\R G\left( u - \frac{2\pi v}{10 \eta |\beta| X} \right) \hat \varphi(v)\ dv \\
G_2(u) &\coloneqq \int_\R G\left( u - \frac{2\pi \eta v}{|\beta| X} \right) \hat \varphi(v)\ dv - \int_\R G\left( u - \frac{2\pi v}{10 \eta |\beta| X} \right) \hat \varphi(v)\ dv \\
G_3(u) &\coloneqq G(u) - \int_\R G\left( u - \frac{2\pi \eta v}{|\beta| X} \right) \hat \varphi(v)\ dv
\end{align*}
and estimate each subsum separately.  

\begin{remark}\label{fourier}  Expanding out $\hat \varphi$ as a Fourier integral and performing some change of variables, one can compute that
\begin{align*}
G_1(u) &= \frac{1}{2\pi} \int_\R \hat G\left(-\frac{t}{2\pi}\right) \varphi \left( \frac{t}{10 \eta |\beta| X} \right) e^{-itu}\ dt \\
G_2(u) &= \frac{1}{2\pi} \int_\R \hat G\left(-\frac{t}{2\pi}\right) \left( \varphi\left( \frac{\eta t}{|\beta| X} \right) - \varphi\left( \frac{t}{10 \eta |\beta| X} \right)\right) e^{-itu}\ dt \\
G_3(u) &= \frac{1}{2\pi} \int_\R \hat G\left(-\frac{t}{2\pi}\right) \left( 1 - \varphi\left( \frac{\eta t}{|\beta| X} \right)\right) e^{-itu}\ dt.
\end{align*}
Comparing this with \eqref{G-invert}, we see that $G_1,G_2,G_3$ arise from $G$ by smoothly truncating the frequency variable $t$ to ``low frequencies'' $|t| \ll \eta |\beta| X$, ``medium frequencies'' $\eta |\beta| X \ll |t| \ll |\beta| X / \eta$, and ``high frequencies'' $|t| \gg |\beta| X / \eta$ respectively.  It will be the medium frequency term $G_2$ that will be the main term; the low frequency term $G_1$ and high frequency term $G_3$ can be shown to be small by using the oscillation properties of the phase $e(\beta X e^u)$.
\end{remark}

We first consider the contribution of \eqref{gaj} in the ``high frequency'' case $i=3$.  Since $\int_\R \hat \varphi(v)\ dv = \varphi(0) = 1$, we can use the fundamental theorem of calculus to write
\begin{equation}\label{g3-d}
 G_3(u) = \frac{2\pi \eta}{|\beta| X} \int_0^1 \int_\R v G'\left( u - a \frac{2\pi \eta v}{|\beta| X} \right) \hat \varphi(v)\ dv da.
\end{equation}
For $x \in [X/2,4X]$, the function $u \mapsto X^{1/2} e^{u/2} e(\beta X e^u) \varphi\left(\frac{Xe^u-x}{H}\right)$ is only non-zero when $x = X e^u + O(H)$, and has a derivative of $O( |\beta| X^{3/2} )$.  As a consequence, we have the derivative bound
$$ G'(u) \ll \beta X^{3/2} \int_{x = X e^u + O(H)} |g(x)|\ dx$$
for any $u$, and hence by the triangle inequality
$$ G_3(u) \ll \eta X^{1/2} \int_0^1 \int_\R \int_{x = X e^{-a \frac{2\pi \eta v}{|\beta| X}} e^u + O(H)} |g(x)| |v| |\hat \varphi(v)|\ dx dv da.$$
The expression \eqref{gaj} when $i=3$ may thus be bounded by
$$ \ll \eta X^{1/2} \int_0^1 \int_\R \int_\R \sum_{n: x = \lambda n + O(H)} \frac{|f(n)|}{n^{1/2}}
|g(x)| |v| |\hat \varphi(v)|\ dx dv da$$
where we abbreviate $\lambda \coloneqq e^{-a \frac{2\pi \eta v}{|\beta| X}}$.  From the support of $f$ and $g$ we see that the inner integral vanishes unless $\lambda \asymp 1$.  By the rapid decrease of $\hat \varphi$, we may then bound the previous expression by
$$ \ll \eta X^{1/2} \sup_{\lambda \asymp 1} \int_\R \sum_{n: x = \lambda n + O(H)} \frac{|f(n)|}{n^{1/2}}
|g(x)|\ dx.$$
Since $f$ is supported on $[X,2X]$, we see from \eqref{g-norm} and Cauchy-Schwarz that this quantity is bounded by
$$ \ll \eta \sup_{\lambda \asymp 1} \left(\int_\R \left(\sum_{n: x = \lambda n + O(H)} |f(n)|\right)^2\ dx\right)^{1/2}.$$
Rescaling $x$ by $\lambda$ and using the triangle inequality, we can bound this by
$$ \ll \eta \left(\int_\R (\sum_{x \leq n \leq x+H} |f(n)|)^2\ dx\right)^{1/2}$$
which is acceptable.

Now we consider the contribution of \eqref{gaj} in the ``low frequency'' case $i=1$.  We first make the change of variables $w \coloneqq u - \frac{2\pi v}{10 \eta |\beta| X}$ to write
\begin{align*}
G_1(u) &= \frac{-10 \eta |\beta| X}{2\pi} \int_\R G(w) \hat \varphi\left(\frac{10 \eta |\beta| X}{2\pi} (u-w)\right)\ dw \\
&= \frac{-10 \eta |\beta| X^{3/2}}{2\pi} \int_\R \left(\int_\R e( \beta X e^w ) \psi_{x,u}(w)\ dw\right)  g(x)\ dx
\end{align*}
where $\psi_{x,u} \colon \R \to \C$ is the amplitude function
\begin{equation}\label{psixu-def}
\psi_{x,u}(w) \coloneqq e^{w/2} \hat \varphi\left(\frac{10 \eta |\beta| X}{2\pi} (u-w)\right) \varphi\left( \frac{Xe^w - x}{H} \right).
\end{equation}
The function $\psi_{x,u}$ is supported on the region $w = \log \frac{x}{X} + O(\frac{H}{X})$ (in particular, $w = O(1)$); from the rapid decrease of $\hat \varphi$ and the hypothesis $|\beta| \gg 1/H$ we also have the bound
$$ \psi_{x,u}(w) \ll (1 + \eta |\beta| X |w-u|)^{-2}$$
(say).  Differentiating \eqref{psixu-def} in $w$, we conclude the bounds
$$ \psi'_{x,u}(w) \ll \left(\eta |\beta| X + \frac{X}{H}\right) (1 + \eta |\beta| X |w-u|)^{-2}.$$
Meanwhile, the phase $\beta X e^w$ has all $w$-derivatives comparable to $|\beta| X$ in magnitude.  Integrating by parts, we conclude the bound
\begin{align*}
\int_\R e( \beta X e^w ) \psi_{x,u}(w)\ dw &\ll \left( \eta + \frac{1}{|\beta| H} \right)
\int_{w = \log \frac{x}{X} + O(\frac{H}{X})} (1 + \eta |\beta| X |w-u|)^{-2}\ dw
\end{align*}
and hence \eqref{gaj} for $i=1$ may be bounded by
$$ \ll \left( \eta + \frac{1}{|\beta| H} \right) \eta |\beta| X^{3/2}
\int_\R \int_{w = \log \frac{x}{X} + O(\frac{H}{X})} \sum_n \frac{|f(n)|}{n^{1/2}} \frac{|g(x)|}{(1 + \eta |\beta| X \cdot |w-\log n+\log X|)^{2}}\ dw dx.$$
Making the change of variables $z \coloneqq w - \log n + \log X$, this becomes
$$ \ll \left( \eta + \frac{1}{|\beta| H} \right) \eta |\beta| X^{3/2}
\int_\R \int_\R \sum_{n: z = \log \frac{x}{n} + O(\frac{H}{X})}  \frac{|f(n)|}{n^{1/2}} \frac{|g(x)|}{(1 + \eta |\beta| X |z|)^{2}}\ dx dz.$$
The sum vanishes unless $z = O(1)$, in which case the condition $z = \log \frac{x}{n} + O(\frac{H}{X})$ can be rewritten as $n = e^{-z} x + O(H)$.  Since $\int_\R \eta |\beta| X (1 + \eta |\beta| X |z|)^{-2}\ dz \ll 1$, we can thus bound the previous expression by
$$ \ll \left( \eta + \frac{1}{|\beta| H} \right) X^{1/2} \sup_{z = O(1)} \int_\R \sum_{n = e^{-z} x + O(H)} \frac{|f(n)|}{n^{1/2}}  |g(x)|\ dx.$$
Arguing as in the high frequency case $i=3$ (with $e^{-z}$ now playing the role of $\lambda$), we can bound this by
$$ \left( \eta + \frac{1}{|\beta| H} \right) \left(\int_\R \left(\sum_{x \leq n \leq x+H} |f(n)|\right)^2\ dx\right)^{1/2}$$
which is acceptable.

Finally we consider the main term, which is \eqref{gaj} in the ``medium frequency'' case $i=2$.  For any $T>0$, the quantity
$$
\sum_n \frac{f(n)}{n^{1/2}} \int_\R G\left( \log n - \log X - \frac{2\pi v}{T} \right) \hat \varphi(v)\ dv $$
can be expanded by first opening $\widehat{\varphi}(v) = \int_{\mathbb{R}} \varphi(y) e(- v y) dy$ and then using the change of variables $t \coloneqq Ty$, $w \coloneqq \log n - \log X - \frac{2\pi v}{T}$ as
\begin{align*}
&\int_\R \sum_n \frac{f(n)}{n^{1/2}} \int_\R G\left( \log n - \log X - \frac{2\pi v}{T} \right) e(-vy) \varphi(y)\ dv dy \\
&\quad = 
\frac{1}{2\pi} \int_\R \sum_n \frac{f(n)}{n^{1/2}} \int_\R G( w ) n^{-it} e^{itw} X^{it} \varphi\left(\frac{t}{T}\right)\ dw dt \\
&\quad = 
\frac{1}{2\pi} \int_\R F(t) \int_\R G( w ) e^{itw} X^{it} \varphi\left(\frac{t}{T}\right)\ dw dt
\end{align*}
(compare with Remark \ref{fourier}).  Applying identity for $T \coloneqq \frac{|\beta| X}{\eta}$ and $T \coloneqq 10 \eta |\beta| X$ and subtracting, we may thus write \eqref{gaj} for $i=2$ as
\begin{equation}\label{g2-expand}
\int_\R \int_\R \tilde F(t) G(w) e^{itw}\ dw dt
\end{equation}
where $\tilde F$ is the function
$$ \tilde F(t) \coloneqq \frac{1}{2\pi} F(t) X^{it} \left(\varphi\left(\frac{\eta t}{|\beta| X}\right) - \varphi\left( \frac{t}{10 \eta |\beta| X}\right) \right).
$$
For future reference we observe that $\tilde F$ is supported on $I$ and enjoys the pointwise bound $\tilde F(t) = O( |F(t)| )$.  Expanding out $G$, we can write the preceding expression as
$$ X^{1/2} \int_\R \int_\R \tilde F(t) g(x) J_x(t)\ dt dx$$
where $J_x(t)$ is the oscillatory integral
\begin{equation}\label{jxt-def}
 J_x(t) \coloneqq \int_\R e(\phi_t(w)) a_x(w)\ dw
\end{equation}
with the phase function
$$ \phi_t(w) \coloneqq \beta X e^w + \frac{tw}{2\pi}$$
and the amplitude function
$$ a_x(w) \coloneqq  e^{w/2} \varphi\left(\frac{Xe^w-x}{H}\right) \varphi(w/100),$$
noting that $\varphi(w/100)$ will equal $1$ whenever $g(x)\varphi\left(\frac{Xe^w-x}{H}\right)$ is non-zero.  
By \eqref{g-norm} and Cauchy-Schwarz, the above expression may be bounded in magnitude by
$$ X^{1/2} \left( \int_\R \int_\R \tilde F(t) \overline{\tilde F(t')} \int_\R J_x(t) \overline{J_x(t')}\ dx dt dt' \right)^{1/2},$$
so by the triangle inequality and the pointwise bounds on $\tilde F$ it will suffice to establish the bound
\begin{equation}\label{ffjj} \int_I \int_I |F(t)| |F(t')| \left|\int_\R J_x(t) \overline{J_x(t')}\ dx\right| dt dt' \ll \frac{1}{|\beta|^2 X} \int_I \left(\int_{t-|\beta| H}^{t+|\beta|H} |F(t')|\ dt'\right)^2\ dt.
\end{equation}
We shall shortly establish the bound
\begin{equation}\label{jax}
\int_\R J_x(t) \overline{J_x(t')}\ dx \ll \frac{H}{|\beta| X \left(1 + \frac{|t-t'|}{|\beta| H}\right)^2}.
\end{equation}
Assuming this bound, we can bound the left-hand side of \eqref{ffjj} by
$$ \ll \frac{1}{(|\beta| H)^2} \int_0^{2|\beta| X/\eta} \int_I A(t) A(t+h) \frac{H}{|\beta| X \left(1 + \frac{h}{|\beta| H}\right)^2} \ dt dh $$
where $A(t) \coloneqq \int_{t-|\beta| H}^{t+|\beta| H} |F(t')| \ dt'$, and from Schur's test (see \cite[Theorem 5.2]{Halmos}) we conclude that this contribution is acceptable.  

It remains to obtain \eqref{jax}.  We first consider the regime where $|t-t'| = O( |\beta| H )$.  By Cauchy-Schwarz, this bound will follow if we can obtain the bound
\begin{equation}\label{jxxt}
\int_\R |J_x(t)|^2\ dx \ll \frac{H}{|\beta| X}
\end{equation}
for all $t \in I$.  To establish this bound, we divide into the cases $\beta H^2 \geq X$ and $\beta H^2 < X$.  First suppose that $\beta H^2 \geq X$.  Then one has $\phi''_t(w) \asymp |\beta| X$ on the support of $a_x$, and the cutoff $a_x$ has total variation $O(1)$.  Hence by van der Corput estimates (see e.g. \cite[Lemma 8.10]{ik}) we have the bound
$$ J_x(t) \ll (|\beta| X)^{-1/2}.$$
Furthermore, if $|\frac{t}{2\pi} + \beta x| \geq C |\beta| H$ for a large constant $C$, then on the support of $a_x$, then one has $\phi'_t(w) \asymp |\frac{t}{2\pi} + \beta x|$, so that $1/\phi'_t(w)$ is of size $O( \frac{1}{|\frac{t}{2\pi} + \beta x|} )$.  A calculation then shows that the $j^{\mathrm{th}}$ derivative of $1/\phi'_t(w)$ is of size $O( \frac{(X/H)^j}{|\frac{t}{2\pi} + \beta x|} )$ for $j=0,1,2$.  Similarly, the $j^{\mathrm{th}}$ derivative of $a_x$ has an $L^1$ norm of $O( (X/H)^{j-1} )$ for $j=0,1,2$.  Applying two integrations by parts, we then obtain the bound
\begin{equation}\label{jxb}
 J_x(t) \ll \frac{X/H}{|\frac{t}{2\pi} + \beta x|^2}
\end{equation}
in this regime.  Combining these bounds we obtain \eqref{jxxt} in the case $\beta H^2 \geq X$ after some calculation.

Now suppose that $\beta H^2 < X$.  On the one hand, from the triangle inequality we have the bound
$$ J_x(t) \ll \frac{H}{X}.$$
On the other hand, if $|\frac{t}{2\pi} + \beta x| \geq C \frac{X}{H}$ for a large constant $C$, then on the support of $a_x$, one can again calcualate that $j^{\mathrm{th}}$ derivative of $1/\phi'_t(w)$ is of size $O( \frac{(X/H)^j}{|\frac{t}{2\pi} + \beta x|} )$ for $j=0,1,2$, and that $j^{\mathrm{th}}$ derivative of $a_x$ has an $L^1$ norm of $O( (X/H)^{j-1} )$.  This again gives the bound \eqref{jxb} after two integrations by parts.  
Combining these bounds we obtain \eqref{jxxt} in the case $\beta H^2 < X$ after some calculation.

It remains to treat the case when $|t-t'| > C |\beta| H$ for some large constant $C>0$.  Here we write
$$
\int_\R J_x(t) \overline{J_x(t')}\ dx = H \int_\R \int_\R e( \phi_t(w) - \phi_{t'}(w') ) \tilde a( w, w' )\ dw dw'
$$
where
$$ \tilde a(w,w') \coloneqq e^{w/2} e^{w'/2} \varphi_2\left(\frac{Xe^w-Xe^{w'}}{H}\right) \varphi(w/100) \varphi(w'/100)$$
and $\varphi_2$ is the convolution of $\varphi$ with itself, thus
$$ \varphi_2(x) \coloneqq \int_\R \varphi(y) \varphi(x+y)\ dy.$$
We make the change of variables $w' = w+h$ to then write
$$
\int_\R J_x(t) \overline{J_x(t')}\ dx = H \int_\R \int_\R e( \phi_t(w) - \phi_{t'}(w+h) ) \tilde a( w, w+h )\ dw dh.
$$
Observe that $\tilde a(w,w+h)$ vanishes unless $h = O(H/X)$, so we may restrict to this range.  If $|t-t'| > C |\beta| H$ for a sufficiently large $C$, a calculation then reveals that on the support of $\tilde a(w,w+h)$, the $w$-derivative of $\phi_t(w) - \phi_{t'}(w+h)$ has magnitude comparable to $|t'-t|$, and that the $j^{\mathrm{th}}$ $w$-derivative is of size $O( |\beta| H ) = O( |t'-t| )$ for $j=2,3$.  Furthermore, the $j^{\mathrm{th}}$ $w$-derivative of $\tilde a(w,w+h)$ is of size $O(1)$ for $j=0,1,2$.  From two integrations by parts we conclude that
$$ \int_\R e( \phi_t(w) - \phi_{t'}(w+h) ) \tilde a( w, w+h )\ dw  \ll \frac{1}{|t'-t|^2}$$
for all $h = O(H/X)$, and hence
$$
\int_\R J_x(t) \overline{J_x(t')}\ dx \ll \frac{H^2}{X |t-t'|^2}.$$
This gives \eqref{jax} (with some room to spare), since $|\beta| H \gg 1$.  This concludes the proof of \eqref{spe-2}.

Now we prove \eqref{spe-4}.  Again, we use duality.  It suffices to show that
\begin{equation}
\label{eq:spe-4dual}
\int_\R S_f(\theta) g(\theta)\ d\theta \ll
\left(\int_I |F(t)|^2\ dt\right)^{1/2}  + \frac{\eta + \frac{1}{|\beta| X}}{H} \left(\int_\R \left (\sum_{x \leq n \leq x+H} |f(n)|\right )^2\ dx\right)^{1/2}
\end{equation}
whenever $g \colon \R \to \C$ is a measurable function supported on $\{ \theta: \beta \leq |\theta| \leq 2\beta \}$ with the normalization
\begin{equation}\label{gnorm}
 \int_\R |g(\theta)|^2\ d\theta = 1.
\end{equation}
The expression $\int_\R S_f(\theta) g(\theta)\ d\theta$ can be rearranged as
$$ \sum_n \frac{f(n)}{n^{1/2}} G(\log n - \log X)$$
where
\begin{equation}\label{gdef}
 G(u) \coloneqq \varphi( u / 10 ) X^{1/2} e^{u/2} \int_\R g(\theta) e(X e^u \theta)\ d\theta,
\end{equation}
noting that the cutoff $\varphi(u/10)$ will equal $1$ for $n \in [X,2X]$.  We again split this sum as the sum of three subsums \eqref{gaj} with $i=1,2,3$, where $G_1,G_2,G_3$ are defined as before.

We first control the sum \eqref{gaj} in the ``high frequency'' case $i=3$.  By \eqref{g3-d}, the triangle inequality, and the rapid decay of $\hat \varphi$, we may bound this sum by
$$ \ll \frac{\eta}{|\beta| X} \sup_{a,v} \sum_n \frac{|f(n)|}{n^{1/2}} \left|G'\left( \log n - \log X - a \frac{2\pi \eta v}{|\beta| X} \right)\right|.$$
Because of the cutoff $\varphi(u/10)$ in \eqref{gdef}, the sum vanishes unless $a \frac{2\pi \eta v}{|\beta| X} = O(1)$, so we may bound the preceding expression by
$$ \ll \frac{\eta}{|\beta| X} \sum_n \frac{|f(n)|}{n^{1/2}} |G'( \log n - \log X' )|$$
for some $X' \asymp X$.  Computing the derivative of $G$, we may bound this in turn by
$$ \ll \frac{\eta}{|\beta| X} \sum_n |f(n)| \left( \left|\int_\R g(\theta) e\left( \frac{X}{X'} n \theta \right)\ d\theta\right| + 
\left|\int_\R X \theta g(\theta) e\left( \frac{X}{X'} n \theta \right)\ d\theta\right| \right).$$
We shall just treat the second term here, as the first term is estimated analogously (with significantly better bounds).  We write this contribution as
$$ \ll \eta \sum_n |f(n)| \left|\int_\R \frac{\theta}{\beta} g(\theta) e\left( \frac{X}{X'} n \theta \right)\ d\theta\right|.$$
By partitioning the support $[X,2X]$ of $f$ into intervals of length $H$, and selecting on each such interval a number $n$ that maximizes the quantity $|\int_\R \frac{\theta}{\beta} g(\theta) e( \frac{X}{X'} n \theta )\ d\theta|$, we may bound this by
$$ \ll \eta \sum_{j=1}^J \left(\sum_{|n-n_j| \leq H} |f(n)|\right) \left|\int_\R \frac{\theta}{\beta} g(\theta) e\left( \frac{X}{X'} n_j \theta \right)\ d\theta\right|$$
for some $H$-separated subset $n_1,\dots,n_J$ of $[X,2X]$.  By \eqref{gnorm}, the support of $g$ (which in particular makes the factor $\frac{\theta}{\beta}$ bounded), the choice $\beta = 1/H$ and the large sieve inequality (e.g. the dual of \cite[Corollary 3]{monts}), we have
$$ \sum_{j=1}^J \left|\int_\R \frac{\theta}{\beta} g(\theta) e\left( \frac{X}{X'} n_j \theta \right)\ d\theta\right|^2 \ll \beta $$
and so by Cauchy-Schwarz, one can bound the preceding expression by
$$ \ll \eta \beta^{1/2} \left( \sum_{j=1}^J \left(\sum_{|n-n_j| \leq H} |f(n)|\right)^2 \right)^{1/2}.$$
But one has
$$ \left(\sum_{|n-n_j| \leq H} |f(n)|\right)^2 \ll \frac{1}{H} \int_{|x-n_j| \leq 2H} \left(\sum_{x \leq n \leq x+H} |f(n)|\right)^2\ dx $$
for each $j$, and so as $\beta = 1/H$ and the $n_j$ are $H$-separated, the high frequency case $i=3$ of \eqref{gaj} contributes to~\eqref{eq:spe-4dual}
$$ \ll \frac{\eta}{H} \left(\int_\R \left(\sum_{x \leq n \leq x+H} |f(n)|\right)^2\ dx\right)^{1/2}$$
which is acceptable.

Now we control the ``low frequency'' case $i=1$ of \eqref{gaj}.  Using the change of variables $h \coloneqq \frac{2\pi v}{10 \eta |\beta| X}$, we can write this as
$$
\frac{10 \eta |\beta| X}{2\pi} \int_\R \int_\R \sum_n f(n) \varphi\left(\frac{\log n - \log X - h}{10}\right) e^{-h/2} g(\theta) e( n e^{-h} \theta ) \hat \varphi\left(\frac{10 \eta |\beta| X h}{2\pi}\right)\ d\theta dh.$$
The integrand vanishes unless $h = O(1)$.  Writing 
$$ e(n e^{-h} \theta) = \frac{-1}{2\pi i n e^{-h} \theta} \frac{d}{dh} e(n e^{-h} \theta) $$
and then integrating by parts in the $h$ variable, we can write this expression as
$$
\frac{10 \eta |\beta| X}{4\pi^2 i} \int_\R \int_\R \sum_n \frac{f(n) g(\theta)e( n e^{-h} \theta )}{n \theta}  \frac{d}{dh} R_{n,\theta}(h)\ d\theta dh$$
where $R_{n,\theta}(h)$ is the quantity
$$ R_{n,\theta}(h) \coloneqq e^{h/2} \varphi\left(\frac{\log n - \log X - h}{10}\right) \hat \varphi\left(\frac{10 \eta |\beta| X h}{2\pi}\right).$$
By the Leibniz rule, and the fact that $\widehat{\phi}$ is a Schwartz function we see that $\frac{d}{dh} R_{n,\theta}(h)$ is supported on the region $h = O(1)$ with
$$ \int_\R \left|\frac{d}{dh} R_{n,\theta}(h)\right|\ dh \ll 1.$$
Thus we may bound the preceding expression using the triangle inequality and pigeonhole principle by
$$ \ll \eta \sum_n |f(n)| \left| \int_\R \frac{\beta}{\theta} g(\theta) e(n e^h \theta) d\theta \right |$$
for some $h=O(1)$.  But by the same large sieve inequality arguments used to control the high frequency case $i=3$ (with $e^h$ now playing the role of $\frac{X}{X'}$), we see that this contribution is acceptable.  This concludes the treatment of the low frequency case $i=1$.

Finally we consider the main term, which is the ``medium frequency'' case $i=2$ of \eqref{gaj}.  As in the proof of \eqref{spe-2}, we may bound this expression by \eqref{g2-expand}. By Cauchy-Schwarz and the Plancherel identity, one may bound this by
$$
\ll \left(\int_I |F(t)|^2\ dt\right)^{1/2} \left(\int_\R |G(w)|^2\ dw\right)^{1/2}.$$
By \eqref{gdef} and the change of variables $y \coloneqq X e^w$, we have
$$
\int_\R |G(w)|^2\ dw \ll \int_\R \left|\int_\R g(\theta) e(y\theta)\ d\theta\right|^2\ dy,$$
and by \eqref{gnorm} and the Plancherel identity again, the right-hand side is equal to $1$. Hence the $i=2$ case also gives an acceptable contribution to~\eqref{eq:spe-4dual}, and the claim \eqref{spe-4} follows.
\end{proof}

We can now use Lemma \ref{edc} to obtain an estimate for general $q$:

\begin{corollary}[Stationary phase estimate, minor arc case]\label{spem}  Let $1 \leq H \leq X$, and let $f \colon \N \to \C$ be a function supported on $(X,2X]$.  Let $q \geq 1$, let $a$ be coprime to $q$, and let $\beta, \eta$ be real numbers with $|\beta| \ll \eta \ll 1$.  Let $I$ denote the region in \eqref{I-def}.  Then we have
\begin{align*}
\int_{\beta-1/H}^{\beta+1/H} \left|S_f\left(\frac{a}{q}+\theta\right)\right|^2\ d\theta  &\ll\\
\frac{d_2(q)^4}{|\beta|^2 H^2 q} &\sup_{q=q_0q_1} \int_I \left(\sum_{\chi\ (q_1)} \int_{t-|\beta| H}^{t+|\beta| H} \left|{\mathcal D}[f]\left(\frac{1}{2}+it',\chi,q_0\right)\right|\ dt'\right)^2\ dt  \\
&\quad + \frac{\left(\eta + \frac{1}{|\beta| H}\right)^2 }{H^2} \int_\R \left(\sum_{x \leq n \leq x+H} |f(n)|\right)^2\ dx.
\end{align*}
If $\beta = 1/H$, one has the variant
\begin{align*}
\int_{\beta \leq |\theta| \leq 2\beta} \left|S_f\left(\frac{a}{q} + \theta\right)\right|^2\ d\theta  &\ll  \frac{d_2(q)^4}{q} 
\sup_{q=q_0 q_1} \int_I \left(\sum_{\chi\ (q_1)}  \left|{\mathcal D}[f]\left(\frac{1}{2}+it, \chi, q_0\right)\right|\right)^2\ dt  \\
&\quad + \frac{\left(\eta + \frac{1}{|\beta| X}\right)^2 }{H^2} \int_\R \left(\sum_{x \leq n \leq x+H} |f(n)|\right)^2\ dx.
\end{align*}
\end{corollary}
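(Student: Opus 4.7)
The strategy is to reduce to Proposition \ref{spe} (the $q = 1$ case) by considering the twisted function $\tilde f \colon \N \to \C$ defined by $\tilde f(n) \coloneqq f(n) e(an/q)$. Since $S_{\tilde f}(\theta) = S_f(\frac{a}{q} + \theta)$ and $|\tilde f(n)| = |f(n)|$ pointwise, applying Proposition \ref{spe}(i) to $\tilde f$ directly gives
$$ \int_{\beta - 1/H}^{\beta+1/H} \left|S_f\left(\tfrac{a}{q}+\theta\right)\right|^2 d\theta \ll \frac{1}{|\beta|^2 H^2} \int_I \left(\int_{t-|\beta|H}^{t+|\beta|H} \left|{\mathcal D}[\tilde f]\left(\tfrac{1}{2}+it'\right)\right| dt'\right)^2 dt + \text{(error term)},$$
where the error term is exactly the one claimed in the corollary since it depends only on $|f|$. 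The second half of the corollary (the case $\beta = 1/H$) reduces to Proposition \ref{spe}(ii) in the same way.

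The remaining task is to convert the Dirichlet polynomial of $\tilde f$ into Dirichlet polynomials of $f$ twisted by Dirichlet characters. This is exactly what Lemma \ref{edc} accomplishes: on the critical line we have the pointwise bound
$$ \left|{\mathcal D}[\tilde f]\left(\tfrac{1}{2}+it'\right)\right| \leq \frac{d_2(q)}{\sqrt{q}} \sum_{q = q_0 q_1} \sum_{\chi\ (q_1)} \left|{\mathcal D}[f]\left(\tfrac{1}{2}+it',\chi,q_0\right)\right|.$$
Integrating over $t' \in [t-|\beta|H, t+|\beta|H]$ preserves this inequality. To pass to the square, I would apply the Cauchy--Schwarz inequality in the form $\bigl(\sum_{i=1}^N a_i\bigr)^2 \leq N \sum_{i=1}^N a_i^2$, where $N$ is the number of factorizations $q = q_0 q_1$, which is $d_2(q)$. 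This gives
$$ \left(\int_{t-|\beta|H}^{t+|\beta|H}\!\!\left|{\mathcal D}[\tilde f]\right| dt'\right)^{\!2} \leq \frac{d_2(q)^3}{q} \sum_{q=q_0 q_1} \left(\sum_{\chi\ (q_1)} \int_{t-|\beta|H}^{t+|\beta|H}\!\!\left|{\mathcal D}[f]\left(\tfrac{1}{2}+it',\chi,q_0\right)\right| dt'\right)^{\!2}.$$

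Finally, integrating over $t \in I$ and bounding the outer sum $\sum_{q=q_0 q_1}$ by $d_2(q)$ times the supremum over factorizations $q = q_0 q_1$ (which extracts an additional factor of $d_2(q)$), yields the total factor $\frac{d_2(q)^4}{q}$ claimed in the statement. For the variant with $\beta = 1/H$, the argument is identical except that one starts from Proposition \ref{spe}(ii), and the integrals over $t'$ collapse, so no intermediate application of Cauchy--Schwarz in $t'$ is required. No step here is a real obstacle: the corollary is essentially a mechanical combination of Proposition \ref{spe} and Lemma \ref{edc}, with the only mild care needed being the correct bookkeeping of the divisor-function factors $d_2(q)$ coming from the two applications of the inequality $(\sum a_i)^2 \leq N \sum a_i^2$.
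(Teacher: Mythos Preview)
Your proposal is correct and follows essentially the same approach as the paper: apply Proposition \ref{spe} to the twisted function $f e(a\cdot/q)$, invoke Lemma \ref{edc} pointwise, then use Cauchy--Schwarz over the factorizations $q=q_0q_1$ (picking up $d_2(q)^3/q$) and finally bound the sum over factorizations by $d_2(q)$ times the supremum. The bookkeeping of the $d_2(q)$ factors matches the paper's exactly.
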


The factor $d_2(q)^4$ might be improvable, but is already negligible in our analysis, so we do not attempt to optimize it.  The presence of the $q_0$ variable is technical; the most important case is when $q_0=1$ and $q_1=q$, so the reader may wish to restrict to this case for a first reading.  It will be important that there are no $q$ factors in the error terms on the right-hand side; this is possible because we estimate the left-hand side in terms of Dirichlet series at moderate values of $t$ \emph{before} decomposing into Dirichlet characters.

\begin{proof} We just prove the first estimate, as the second is similar.
By applying Proposition \ref{spe} with $f$ replaced by $f e(a\cdot /q)$, we obtain the bound
\begin{align*}
\int_{\beta - 1/H}^{\beta + 1/H} |S_f(a/q + \theta)|^2 d \theta 
&\ll \frac{1}{|\beta|^2 H^2} \int_I \left(\int_{t-|\beta| H}^{t+|\beta| H} \left|{\mathcal D}[f e(a\cdot/q)]\left(\frac{1}{2}+it'\right)\right|\ dt'\right)^2\ dt  \\
&\quad + \frac{\left(\eta + \frac{1}{|\beta| H}\right)^2 }{H^2} \int_\R \left(\sum_{x \leq n \leq x+H} |f(n)|\right)^2\ dx
\end{align*}
From Lemma \ref{edc} we have
$$ \int_{t-|\beta| H}^{t+|\beta| H} \left|{\mathcal D}[fe(a\cdot/q)]\left(\frac{1}{2}+it'\right)\right|\ dt' \leq \frac{d_2(q)}{\sqrt{q}} \sum_{q = q_0 q_1} \sum_{\chi\ (q_1)} \int_{t-|\beta| H}^{t+|\beta| H} \left|{\mathcal D}[f]\left(\frac{1}{2}+it',\chi,q_0\right)\right|$$
and hence by Cauchy-Schwarz
\begin{align*}
& \left(\int_{t-|\beta| H}^{t+|\beta| H} \left|{\mathcal D}[fe(a\cdot/q)]\left(\frac{1}{2}+it'\right)\right|\ dt'\right)^2 \\
&\quad \leq \frac{d_2(q)^3}{q} \sum_{q = q_0 q_1} \left(\sum_{\chi\ (q_1)} \int_{t-|\beta| H}^{t+|\beta| H} \left|{\mathcal D}[f]\left(\frac{1}{2}+it',\chi,q_0\right)\right|\right)^2
\end{align*}
Inserting this bound and bounding the summands in the $q=q_0q_1$ summation by their supremum yields the claim.
\end{proof}

If the function $f$ in the above corollary is $k$-divisor-bounded, then by Cauchy-Schwarz and \eqref{divisor-crude} we have
\begin{align*}
\frac{1}{H^2} \int_\R \left(\sum_{x \leq n \leq x+H} |f(n)|\right)^2\ dx &\ll
\frac{1}{H} \int_\R \sum_{x \leq n \leq x+H} |f(n)|^2\ dx \\
&\ll \sum_n |f(n)|^2\ dx \\
&\ll_k X \log^{O_k(1)} X.
\end{align*}
Applying the above corollary  with $\eta \coloneqq  Q^{-1/2} = \log^{-B/2} X$, Proposition \ref{minor} is now an immediate consequence of the following mean value estimates for Dirichlet series.

\begin{proposition}[Mean value estimate]\label{mve}   Let $\eps > 0$ be a sufficiently small constant, and let $A>0$.  Let $k \geq 2$ be fixed, let $B > 0$ be sufficiently large depending on $k,A$, and let $X \geq 2$.  Set
\begin{equation}\label{H-def}
H \coloneqq X^{\sigma+\eps}
\end{equation}
and
\begin{equation}\label{Q-def}
Q \coloneqq \log^B X.
\end{equation}
Let $1 \leq q \leq Q$, and suppose that $q = q_0 q_1$.  Let $\lambda$ be a positive quantity such that 
\begin{equation}\label{l6}
X^{-1/6-\eps} \leq \lambda \ll \frac{1}{qQ}.  
\end{equation}
Let $f \colon \N \to \C$ be either the function $f \coloneqq \Lambda 1_{(X,2X]}$ or $f \coloneqq d_k 1_{(X,2X]}$.  
Then we have
\begin{equation}\label{mve-est}
 \int_{Q^{-1/2} \lambda X \ll |t| \ll Q^{1/2} \lambda X} \left(\sum_{\chi\ (q_1)} \int_{t-\lambda H}^{t+\lambda H} \left|{\mathcal D}[f]\left(\frac{1}{2}+it',\chi,q_0\right)\right|\ dt'\right)^2\ dt \ll_{k,\eps, A, B} q \lambda^2 H^2 X \log^{-A} X.
\end{equation}
\end{proposition}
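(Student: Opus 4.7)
The plan is to proceed by combinatorial decomposition followed by case analysis. First, apply Lemma~\ref{comb-decomp-2} with $m := 5$ and $H_0 := H$; the hypothesis $H \geq X^{1/m+\eps}$ is satisfied since $\sigma > 1/5$ by \eqref{sigma-range}. This will decompose $f(q_0 \cdot)$ as a linear combination of $O(\log^{O(1)} X)$ components $\tilde f$, each of which is a Type $d_j$ sum for some $j \in \{1,2,3,4\}$, a Type II sum, or a small sum. Noting the identity ${\mathcal D}[f](s, \chi, q_0) = {\mathcal D}[f(q_0 \cdot)](s, \chi)$, applying Cauchy--Schwarz over the $O(\log^{O(1)} X)$ components (absorbing the resulting polylogarithmic loss into a slight increase of $A$) will reduce the proposition to proving the analogous bound for each component $\tilde f$ separately.

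Several of the component sums should be dispatched by the estimates already assembled. For small sums, the bound $\|\tilde f\|_{\ell^2}^2 \ll X^{1 - \eps/8}$ together with Cauchy--Schwarz on the inner $t'$-integration and Lemma~\ref{mvt-lem-ch} will save a full power of $X$. For Type II sums, the Dirichlet polynomial factors as ${\mathcal D}[\alpha]{\mathcal D}[\beta]$ up to truncation errors absorbed via Corollary~\ref{trunc-dir}; since $\alpha$ has good cancellation and $|t'|$ vastly exceeds $\log^{B'} X$ throughout the integration range, the bound \eqref{alb} will give a pointwise estimate of the shape $|{\mathcal D}[\alpha](\tfrac12 + it', \chi)| \ll q_1 N^{1/2} \log^{-A'} X$ for any prescribed $A'$. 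Combining this with the $L^2$ mean value of $|{\mathcal D}[\beta]|$ from Lemma~\ref{mvt-lem-ch} and a Cauchy--Schwarz step on the inner integral will close the Type II case, provided $B$ is chosen large enough that $A'$ can exceed $B$ by any prescribed margin. Type $d_1$ and Type $d_2$ components lack the good-cancellation property on $\alpha$, but since $N \ll X^\eps$ one can peel off $\alpha$ via the trivial pointwise bound $|{\mathcal D}[\alpha]| \ll X^{\eps/2} \log^{O(1)} X$ and then apply the fourth moment estimates in Corollary~\ref{fourth-integral} or Corollary~\ref{jutila-cor} to the remaining ${\mathcal D}[\beta_i]$ factor(s). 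These arguments should already suffice to recover the range $H \geq X^{1/3+\eps}$ handled in \cite{mikawa, bbmz}.

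The main obstacle will be the Type $d_3$ case, which is what forces the specific threshold $\sigma = 8/33$. For a Type $d_3$ component $\tilde f = (\alpha \ast \beta_1 \ast \beta_2 \ast \beta_3) 1_{(X/q_0, 2X/q_0]}$ with $H \ll M_1 \ll M_2 \ll M_3$, first peel off the small $\alpha$ factor pointwise as above. Next, subdivide the outer integration range $\{|t| \asymp \lambda X\}$ into subintervals of length approximately $\sqrt{\lambda X}$ centered at $t_1, \dots, t_r$, and apply H\"older's inequality on the inner integration to split the product $\prod_{i=1}^3 {\mathcal D}[\beta_i]$ into a fourth-power factor and a squared factor. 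The fourth-power factor on each subinterval should be controllable by Corollary~\ref{jutila-cor}, yielding a bound of the shape \eqref{t1}. The squared factor, modeled by \eqref{t2}, should reduce via Fourier duality to an exponential sum of the form \eqref{hx3} with phase $\frac{t_j}{2\pi}\log\frac{m+\ell}{m-\ell}$; Taylor-expanding this phase to its leading monomial $\frac{t_j \ell}{\pi m}$ and absorbing the cubic correction $\frac{t_j \ell^3}{3\pi m^3}$ at a mild loss will put the sum in monomial form, amenable to the Robert--Sargos fourth moment estimate in Lemma~\ref{rs1}(ii). Optimizing the resulting chain of inequalities subject to the constraints $NM_1M_2M_3 \asymp X/q_0$ and $M_i \gg H = X^{\sigma+\eps}$ will pin down the threshold $\sigma = 8/33$; the interplay between Jutila's fourth moment and Robert--Sargos is precisely what determines the arithmetic value \eqref{sigmadef}.

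The Type $d_4$ case will be handled by the same template with cruder tools and should turn out to be easier in our range of $H$. Jutila's estimate will be replaced by the classical $L^2$ mean value theorem (Lemma~\ref{mvt-lem-ch}) applied to the fourth-power factor, and the Robert--Sargos monomial bound will be replaced by the classical van der Corput $B$-process (Lemma~\ref{rs1}(iii)), which yields the exponent pair $(1/14, 2/7)$. The resulting inequalities should close for any $\sigma \geq 7/30$, which lies strictly below $8/33$ by \eqref{sigma-range}, so the Type $d_4$ analysis imposes no constraint on $\sigma$ beyond that arising from Type $d_3$. The entire weight of the bound on $\sigma$ thus rests on the Type $d_3$ argument described in the previous paragraph.
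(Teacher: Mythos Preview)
Your overall architecture matches the paper closely: the combinatorial decomposition via Lemma~\ref{comb-decomp-2}, the case split into Type $d_j$/Type II/small sums, and the identification of Jutila's short-interval fourth moment plus Robert--Sargos for Type $d_3$ and the van der Corput $(1/14,2/7)$ pair for Type $d_4$ are all exactly what the paper does. However, your treatment of the ``easy'' cases contains a genuine gap.

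For Type II you propose to win via the good-cancellation pointwise bound $|{\mathcal D}[\alpha](\tfrac12+it',\chi)| \ll q_1 N^{1/2}\log^{-A'}X$. This cannot close the estimate. The target \eqref{mve-est} is only a \emph{polylogarithmic} improvement over the trivial Cauchy--Schwarz plus mean-value bound, so any step that concedes a power of $X$ is fatal. After your pointwise bound on $\alpha$, the remaining factor $\sum_\chi \int_{|t'|\ll Q^{1/2}\lambda X}|{\mathcal D}[\beta]|^2$ is controlled by Lemma~\ref{mvt-lem-ch} as $\ll (q_1 Q^{1/2}\lambda X + M)\log^{O(1)}X$; when $N$ is near $H$ and $\lambda$ near $1/(qQ)$, the term $N\cdot q_1 Q^{1/2}\lambda X$ contributes a factor $\asymp H/Q^{1/2} = X^{\sigma+\eps}/\log^{B/2}X$ relative to the target, and no choice of $A'$ in $\log^{-A'}X$ can absorb a power of $X$. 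The paper in fact remarks that good cancellation is \emph{not} used for Type II in this regime; instead it applies Lemma~\ref{mvt-lem-ch} to $|{\mathcal D}[\alpha]|^2$ on the \emph{short} inner window $[t-\lambda H,t+\lambda H]$, obtaining the factor $(q_1\lambda H + N)$ rather than $N$ times the window length. It is precisely the $q_1\lambda H$ alternative in this dichotomy that produces the $Q^{-1/2}$ saving when multiplied against $q_1 Q^{1/2}\lambda X$.

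The same issue recurs in your Type $d_1$/$d_2$ sketch: peeling off $\alpha$ via the pointwise bound $|{\mathcal D}[\alpha]|\ll X^{\eps/2}\log^{O(1)}X$ loses $X^{\eps/2}$, while the fourth-moment step on $\beta_1,\beta_2$ (Corollary~\ref{fourth-integral}) only recovers a polylogarithmic factor $q_1 Q^{1/2}\lambda \ll Q^{-1/2}$. Again the fix is to use the short-interval mean value on $\alpha$, giving $(q_1\lambda H + N)$. Relatedly, the paper invokes Lemma~\ref{comb-decomp-2} with $\eps$ replaced by $\eps^2$ and $H_0 = X^{-\eps^2}H$ rather than your choice $H_0=H$; this ensures $N\ll X^{\eps^2}$ in the Type $d_j$ cases, so that the eventual pointwise peeling of $\alpha$ inside the $d_3$/$d_4$ analysis (where the saving is $X^{-c\eps}$, not merely polylogarithmic) costs only $X^{O(\eps^2)}$ and is genuinely absorbed.
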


\begin{proposition}\label{harm-prop}  Let $\eps > 0$ be a sufficiently small constant, and let $A, B > 0$.  Let $k \geq 2$ be fixed, let $X \geq 2$, and suppose that $q_0, q_1$ are natural numbers with $q_0, q_1 \leq \log^B X$.  Let $f \colon \N \to \C$ be either the function $f \coloneqq \Lambda 1_{(X,2X]}$ or $f \coloneqq d_k 1_{(X,2X]}$. Let $B'$ be sufficiently large depending on $k, A, B$.  Then one has
\begin{equation}\label{drip0}
 \int_{\log^{B'} X \ll |t'| \ll X^{5/6 - \eps} } \left( \sum_{\chi\ (q_1)} \left|{\mathcal D}[f]\left(\frac{1}{2}+it',\chi,q_0\right)\right|\right)^2\ dt' \ll_{k,\eps,A,B,B'} q X \log^{-A} X.
\end{equation}
\end{proposition}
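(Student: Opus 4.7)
The plan is to first apply the Cauchy--Schwarz inequality in the character variable $\chi$, bounding the left-hand side of \eqref{drip0} by $q_1$ times the mean square
$$\sum_{\chi\ (q_1)} \int_{\log^{B'} X \ll |t| \ll X^{5/6-\eps}} \left|\mathcal{D}[f]\left(\tfrac12+it,\chi,q_0\right)\right|^2 dt.$$
Since $q = q_0 q_1$, it suffices to show this mean square is $\ll q_0 X \log^{-A} X$. After dyadically decomposing the $t$-range into intervals $[T, 2T]$ with $\log^{B'} X \ll T \ll X^{5/6-\eps}$, the task becomes establishing, for each such $T$, the bound
$$\sum_{\chi\ (q_1)} \int_{T \leq |t| \leq 2T} \left|\mathcal{D}[f](\tfrac12+it,\chi,q_0)\right|^2 dt \ll q_0 X \log^{-A-2} X,$$
with an extra $\log X$ margin to absorb the dyadic sum over $T$.

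Next I would invoke Lemma \ref{comb-decomp-2} with parameter $m$ chosen sufficiently large (depending on $\eps$) and $H_0 := X^{1/m+\eps}$, writing $f(q_0 \cdot)$ as a linear combination of $O_{k,m,\eps}(\log^{O_{k,m,\eps}(1)} X)$ pieces of Type $d_j$ (for $1 \leq j < m$), Type II, or small type, with coefficients of size $d_2(q_0)^{O(1)}$. The small-sum contributions yield the required bound directly through Lemma \ref{mvt-lem-ch} combined with $\|\tilde f\|_{\ell^2}^2 \ll X^{1-\eps/8}$. For a Type II piece $\tilde f = (\alpha * \beta) 1_{(X/q_0, 2X/q_0]}$, Corollary \ref{trunc-dir} allows one to essentially remove the outer truncation, after which $|\mathcal{D}[\tilde f](\tfrac12+it,\chi)|$ is controlled by $|\mathcal{D}[\alpha](\tfrac12+it,\chi)| \cdot |\mathcal{D}[\beta](\tfrac12+it,\chi)|$. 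The good cancellation hypothesis supplied by Lemma \ref{good-cancel} then yields pointwise bounds $\log^{-A'} X$-smaller than trivial (for $|t| \gg \log^{B'} X$) on one of the two factors, which combine with Lemma \ref{mvt-lem-ch} on the other factor to close the estimate.

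The most involved case is that of Type $d_j$ sums $\tilde f = (\alpha * \beta_1 * \cdots * \beta_j) 1_{(X/q_0, 2X/q_0]}$, where each $\beta_i$ is a (possibly log-weighted) interval indicator and $\alpha$ has short support $N \ll X^\eps$. Factoring $\mathcal{D}[\tilde f]$ correspondingly and applying H\"older's inequality in $t$ reduces matters to controlling $L^4$ moments of the individual polynomials $\mathcal{D}[\beta_i]$, which are supplied by Corollary \ref{fourth-integral}, combined with the pointwise bound $|\mathcal{D}[\alpha](\tfrac12+it,\chi)| \ll N^{1/2} \log^{O(1)} X$ and the good-cancellation-enhanced bound for $\alpha$ in the large-$t$ regime. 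A Huxley-type large-values framework (recasting the $L^2$ bound as a supremum over choices of $V$ of $V^2 \cdot \mathrm{meas}\{t : |\mathcal{D}[\tilde f]| \geq V\}$) converts these moment bounds into a sharp $L^2$ estimate on the full product, in the style of Harman's argument for primes in short intervals.

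The main obstacle is that, for the largest value of $j$, the polynomials $\mathcal{D}[\beta_i]$ each have length only barely above $X^{1/m}$, and the trivial estimation has no room to spare. The constraint $|t| \ll X^{5/6-\eps}$ is essentially sharp for this toolkit: within this range the $L^4$-moment input is enough to bound the Lebesgue measure of the set where the relevant polynomials are large, giving the required logarithmic savings after dyadic summation in $V$; beyond this range one must switch to the more refined Jutila and Robert--Sargos arguments of Proposition \ref{mve}. Taking $B'$ sufficiently large relative to $B, k, A$ ensures that both the good-cancellation savings for $\alpha$ and the combinatorial losses from Lemma \ref{comb-decomp-2} survive through the final mean-value bound.
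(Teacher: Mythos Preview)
Your approach diverges from the paper's at the choice of $m$ in Lemma~\ref{comb-decomp-2}: the paper takes $m=3$ (with $H_0 = X^{1/3+\eps/10}$), so that only Type $d_1$, Type $d_2$, Type II, and small sums arise, whereas you take $m$ large. Your simplified Type II argument (pointwise good-cancellation bound on $\mathcal{D}[\alpha]$ combined with Lemma~\ref{mvt-lem-ch} on $\mathcal{D}[\beta]$) is correct, but it relies on having $N \ll X^{1/m+\eps}$ with $m \geq 7$, so that $N \cdot T \ll X^{1/m+5/6-\eps} \ll X$; this is precisely what your large-$m$ choice buys.

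The cost is that you must now treat Type $d_j$ sums for $3 \leq j \leq m-1$, and here there is a genuine gap. Your assertion that H\"older reduces matters to $L^4$ moments of the $\mathcal{D}[\beta_i]$ is only valid for $j \leq 2$; for $j \geq 3$ one would need $L^{2j}$ moments, and the sixth moment of $L(\tfrac12+it,\chi)$ is not known. The appeal to ``good-cancellation-enhanced bounds for $\alpha$'' in the Type $d_j$ case is also misplaced: Lemma~\ref{comb-decomp-2} does not assert good cancellation for $\alpha$ in this case, and in any event $N \ll X^{\eps}$ makes the trivial bound already negligible. Your Huxley large-values sketch is not worked out, and in the critical case $j=3$, $M_i \asymp X^{1/3}$, $T \asymp X^{5/6-\eps}$, the fourth-moment input $|\{t:|\mathcal{D}[\beta_i]| \geq V\}| \ll T/V^4$ does not combine to yield $\int \prod_{i=1}^3 |\mathcal{D}[\beta_i]|^2 \, dt \ll X$: for instance, bounding one factor pointwise by $M_i$ and the remaining product by Cauchy--Schwarz and fourth moments gives only $M_i T \asymp X^{7/6-\eps}$.

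The paper avoids this entirely by taking $m=3$, so no Type $d_j$ with $j \geq 3$ appears. The resulting Type II range $X^{\eps/10} \ll N \ll X^{1/3+\eps/10}$ is then too wide for your simple argument; instead the paper squares $\mathcal{D}[f']$, writes it as $\mathcal{D}[\beta]\,\mathcal{D}[\beta]\,\mathcal{D}[\alpha\ast\alpha]$, and invokes a lemma of Harman (\cite[Lemma~7.3]{harman}, a black box encoding Huxley large values plus mean-value theorems for three-factor products) to conclude. The Type $d_1$ and $d_2$ cases are handled exactly as you describe, via Corollary~\ref{fourth-integral} and Cauchy--Schwarz.
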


Proposition \ref{harm-prop}  is comparable\footnote{See \cite[Lemma 9.3]{harman} for a precise connection between $L^2$ mean value theorems such as \eqref{drip0} and estimates for sums of $f \chi$ on short intervals.} in strength to the prime number theorem (in arithmetic progressions) in almost all intervals of the form $[x, x+x^{1/6+\eps}]$.  A popular approach to proving such theorems is via zero density estimates (see e.g. \cite[\S 10.5]{ik}); this works well in the case $f = \Lambda 1_{(X,2X]}$, but is not as suitable for treating the case $f = d_k 1_{(X,2X]}$.  We will instead adapt a slightly different approach from \cite{harman} using combinatorial decompositions and mean value theorems and large value theorems for Dirichlet polynomials; the details of the argument will be given in Appendix \ref{harman-sec}.  In the case $f = d_3 1_{(X,2X]}$, an estimate closely related to Proposition \ref{harm-prop} was established in \cite[Theorem 1.1]{bbmz}, relying primarily on sixth moment estimates for the Riemann zeta function.

It remains to prove Proposition \ref{mve}.  This will be done in the remaining sections of the paper.  

\section{Combinatorial decompositions}\label{hb-sec}

Let $\eps, k, A, B, H, X, Q, q_0, q_1, q, \lambda, f$ be as in Proposition \ref{mve}. 
 We may assume without loss of generality that $\eps$ is small, say $\eps < 1/100$; we may also assume that $X$ is sufficiently large depending on $k,\eps$.  


We first invoke Lemma \ref{comb-decomp-2} with $m=5$, and with $\eps$ and $H_0$ replaced by $\eps^2$ and $X^{-\eps^2} H$ respectively; this choice of $m$ is available thanks to \eqref{sigma-range}.  We conclude that the function $(t,\chi) \mapsto {\mathcal D}[f](\frac{1}{2}+it, \chi, q_0)$ can be decomposed as a linear combination (with coefficients of size $O_{k,\eps}( d_2(q_0)^{O_{k,\eps}(1)} )$) of $O_{k,\eps}( \log^{O_{k,\eps}(1)} X)$ functions of the form $(t,\chi) \mapsto {\mathcal D}[\tilde f]\left(\frac{1}{2}+it,\chi\right)$, where $\tilde f \colon \N \to \C$ is one of the following forms:

\begin{itemize}
\item[(Type $d_1$, $d_2$, $d_3$, $d_4$ sums)]  A function of the form
\begin{equation}\label{fst-yetagain}
 \tilde f = (\alpha \ast \beta_1 \ast \dots \ast \beta_j) 1_{(X/q_0,2X/q_0]}
\end{equation}
for some arithmetic functions $\alpha,\beta_1,\dots,\beta_j \colon \N \to \C$, where $j=1,2,3,4$, $\alpha$ is $O_{k,\eps}(1)$-divisor-bounded and supported on $[N,2N]$, and each $\beta_i$, $i=1,\dots,j$ is either of the form $\beta_i = 1_{(M_i,2M_i]}$ or $\beta_i = L 1_{(M_i,2M_i]}$ for some $N, M_1,\dots,M_j$ obeying the bounds 
$$ 1 \ll N \ll_{k,\eps} X^{\eps^2},$$
$$ N M_1 \dots M_j \asymp_{k,\eps} X/q_0,$$
and
$$ X^{-\eps^2} H \ll M_1 \ll \dots \ll M_j \ll X/q_0.$$ 
\item[(Type II sum)]  A function of the form
$$ \tilde f = (\alpha \ast \beta) 1_{(X/q_0,2X/q_0]}$$
for some $O_{k,\eps}(1)$-divisor-bounded arithmetic functions $\alpha,\beta \colon \N \to \C$ supported on $[N,2N]$ and $[M,2M]$ respectively, for some $N,M$ obeying the bounds 
$$ X^{\eps^2} \ll N \ll X^{-\eps^2} H$$
and
$$NM \asymp_{k,\eps} X / q_0.$$
\item[(Small sum)]  A function $\tilde f$ supported on $(X/q_0,2X/q_0]$ obeying the bound
\begin{equation}\label{smallsum}
 \|\tilde f \|_{\ell^2}^2 \ll_{k,\eps} X^{1-\eps^2/8}.
\end{equation}
\end{itemize}

We have omitted the conclusion of good cancellation in the Type II case as it is not required in the regime $\lambda \gg X^{-1/6-\eps}$ under consideration.

By the triangle inequality, it thus suffices to show that for $\tilde f$ being a sum of one of the above forms, that we have the bound
\begin{equation}\label{trivb}
\begin{split}
&\int_{Q^{-1/2} \lambda X \ll |t| \ll Q^{1/2} \lambda X} \left(\sum_{\chi\ (q_1)} \int_{t-\lambda H}^{t+\lambda H} \left|{\mathcal D}[\tilde f]\left(\frac{1}{2}+it',\chi\right)\right|\ dt'\right)^2\ dt \\
&\quad \ll_{k,\eps,A,B} d_2(q_1)^{O_k(1)} q_1 \lambda^2 X H^2 \log^{-A} X
\end{split}
\end{equation}
(noting from \eqref{alpha-infty} that the factors of $d_2(q_1)^{O_k(1)}$ can be easily absorbed into the $\log^{-A} X$ factor after increasing $A$ slightly).  

We can easily dispose of the small case.  From Cauchy-Schwarz one has
$$
\left(\sum_{\chi\ (q_1)} \int_{t-\lambda H}^{t+\lambda H} \left|{\mathcal D}[\tilde f]\left(\frac{1}{2}+it',\chi\right)\right|\ dt'\right)^2 \ll q_1 \lambda H
\sum_{\chi\ (q_1)} \int_{t-\lambda H}^{t+\lambda H} \left|{\mathcal D}[\tilde f]\left(\frac{1}{2}+it',\chi\right)\right|^2\ dt'$$
and hence after interchanging the integrals, the left-hand side of \eqref{trivb} can be bounded by
$$ \ll q_1 \lambda^2 H^2 \sum_{\chi\ (q_1)} \int_{|t| \ll Q^{1/2} \lambda X} \left|{\mathcal D}[\tilde f]\left(\frac{1}{2}+it,\chi\right)\right|^2\ dt.$$
Using Lemma \ref{mvt-lem-ch} we can bound this by
$$ \ll_{k,\eps} q_1 \lambda^2 H^2 \frac{X / q_0 + q_1 Q^{1/2} \lambda X}{X/q_0} \|\tilde f\|_{\ell^2}^2 \log^3 X.$$
Crudely bounding $q_0, q_1, Q, \lambda \leq \log^B X$, the claim \eqref{trivb} then follows in this case from \eqref{smallsum}.

It remains to consider $\tilde f$ that are of Type $d_1$, Type $d_2$, Type $d_3$, Type $d_4$, or Type II.  In all cases we can write $\tilde f = f' 1_{(X/q_0, 2X/q_0]}$, where $f'$ is a Dirichlet convolution of the form $\alpha \ast \beta_1 \dots \ast \beta_j$ (in the Type $d_j$ cases) or of the form $\alpha \ast \beta$ (in the Type II case).  It is now convenient to remove the $1_{(X/q_0,2X/q_0]}$ truncation.  Applying Corollary \ref{trunc-dir} with $T \coloneqq  \lambda X^{1-\varepsilon/10}$ and $f$ replaced by $\tilde f \chi$, and using the divisor bound \eqref{alpha-infty} to control the supremum norm, we see that
$$ {\mathcal D}[\tilde f]\left(\frac{1}{2}+it,\chi\right) \ll_{\eps} \int_{|u| \leq \lambda X^{1-\varepsilon/10}} |F(t+u)| \frac{du}{1+|u|} 
+ \frac{X^{-1/2+\eps/5}}{\lambda}$$
where
$$ F(t) \coloneqq {\mathcal D}[\tilde f]\left(\frac{1}{2}+it,\chi\right).$$
We can thus bound the left-hand side of \eqref{trivb} by
\begin{align*}
&\ll_{\eps,k} \int_{Q^{-1/2} \lambda X \ll |t| \ll Q^{1/2} \lambda X} \left(\int_{|u| \leq \lambda X^{1-\varepsilon/10}} \sum_{\chi\ (q_1)} \int_{t+u-\lambda H}^{t+u+\lambda H} |F(t')|\ dt' \frac{du}{1+|u|}\right)^2\ dt \\
&\quad + \left(Q^{1/2} \lambda X \right) (q_1 \lambda H)^2 \left(\frac{X^{-1/2+\eps/5}}{\lambda}\right)^2.
\end{align*}
The second term can be written as
$$ Q^{1/2} \frac{X^{2\eps/5} q_1}{\lambda X} q_1 \lambda^2 X H^2;$$
since $\lambda \geq X^{-1/6-\varepsilon}$ and $q_1 \leq Q \leq (\log X)^B$, we see that this contribution to \eqref{trivb} is acceptable.  

Meanwhile, as $\frac{1}{1+|u|}$ has an integral of $O( \log X )$ on the region $|u| \leq \lambda X^{1-\varepsilon/10}$, we see from the Minkowski integral inequality in $L^2$ and on shifting $t$ by $u$ that
\begin{align*}
&\int_{Q^{-1/2} \lambda X \ll |t| \ll Q^{1/2} \lambda X} \left(\int_{|u| \leq \lambda X^{1-\varepsilon/10}} \sum_{\chi\ (q_1)} \int_{t+u-\lambda H}^{t+u+\lambda H} |F(t')|\ dt' \frac{du}{1+|u|}\right)^2\ dt  \\
&\leq \left( \int_{|u| \leq \lambda X^{1-\varepsilon/10}} \left(
\int_{Q^{-1/2} \lambda X \ll |t| \ll Q^{1/2} \lambda X} \left(\sum_{\chi\ (q_1)} \int_{t+u-\lambda H}^{t+u+\lambda H} |F(t')|\ dt'\right)^2\ dt\right)^{1/2} \frac{du}{1+|u|}\right)^2 \\
&\ll \log^2 X  \int_{Q^{-1/2} \lambda X \ll |t| \ll Q^{1/2} \lambda X} \left(\sum_{\chi\ (q_1)} \int_{t-\lambda H}^{t+\lambda H} |F(t')|\ dt'\right)^2\ dt
\end{align*}
where we allow the implied constants in the region $\{ Q^{-1/2} \lambda X \ll |t| \ll Q^{1/2} \lambda X\}$ to vary from line to line.
Putting all this together, we now see that Proposition \ref{mve} will be a consequence of the following estimates.

\begin{proposition}[Estimates for Type $d_1,d_2,d_3,d_4$,II sums]\label{types}   Let $\eps>0$ be sufficiently small.  Let $k \geq 2$ and $A>0$ be fixed, and let $B>0$ be sufficiently large depending on $A,k$.  Let $X \geq 2$, and set $H \coloneqq X^{\record+\eps}$.  Set $Q \coloneqq  \log^B X$, and let $1 \leq q_1 \leq Q$.  Let $\lambda$ be a quantity such that $X^{-1/6-2\eps} \leq \lambda \ll \frac{1}{q_1 Q}$.  Let $f \colon \N \to \C$ be a function of one of the following forms:
\begin{itemize}
\item[(Type $d_1,d_2,d_3,d_4$ sums)] One has
\begin{equation}\label{fbb}
 f = \alpha \ast \beta_1 \ast \dots \ast \beta_j
\end{equation}
for some $O_{k,\eps}(1)$-divisor-bounded arithmetic functions $\alpha,\beta_1,\dots,\beta_j \colon \N \to \C$, where $j=1,2,3,4$, $\alpha$ is supported on $[N,2N]$, and each $\beta_i$, $i=1,\dots,j$ is supported on $[M_i, 2M_i]$ for some $N, M_1,\dots,M_j$ obeying the bounds 
$$ 1 \ll N \ll_{k,\eps} X^{\eps^2},$$
$$ X/Q \ll N M_1 \dots M_j \ll_{k,\eps} X,$$
and
$$ X^{-\eps^2} H \ll M_1 \ll \dots \ll M_j \ll X.$$ 
Furthermore, each $\beta_i$ is either of the form $\beta_i = 1_{(M_i,2M_i]}$ or $\beta_i = L 1_{(M_i,2M_i]}$.
\item[(Type II sum)]  One has
$$ f = \alpha \ast \beta$$
for some $O_{k,\eps}(1)$-divisor-bounded arithmetic functions $\alpha,\beta \colon \N \to \C$ supported on $[N,2N]$ and $[M,2M]$ respectively, for some $N,M$ obeying the bounds 
\begin{equation}\label{nib}
 X^{\eps^2} \ll N \ll X^{-\eps^2} H
\end{equation}
and
\begin{equation}\label{xq}
 X/Q \ll NM \ll_{k,\eps} X.
\end{equation}
\end{itemize}
Then
\begin{equation}\label{lk}
 \int_{Q^{-1/2} \lambda X \ll |t| \ll Q^{1/2} \lambda X} \left(\sum_{\chi\ (q_1)} \int_{t-\lambda H}^{t+\lambda H} \left|{\mathcal D}[f]\left(\frac{1}{2}+it',\chi\right)\right|\ dt'\right)^2\ dt \ll_{k,\eps,A,B} q_1 \lambda^2 H^2 X \log^{-A} X.
\end{equation}
\end{proposition}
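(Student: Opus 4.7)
The plan is to treat each of the sub-cases (Type II, and Type $d_j$ for $j=1,2,3,4$) in turn, with Type $d_3$ and Type $d_4$ being the main novel contributions. For every case, the initial reduction is the same: apply Cauchy-Schwarz twice, first on the inner integral $\int_{t-\lambda H}^{t+\lambda H}$ (producing a factor $\lambda H$) and then on the character sum (producing a factor $q_1$), and use Fubini to swap the $(t,t')$ order of integration. This reduces the proof to showing a second-moment bound of the form
\[
\sum_{\chi \bmod q_1} \int_{|t'| \ll Q^{1/2} \lambda X + \lambda H} \bigl|\mathcal{D}[f,\chi](\tfrac{1}{2}+it')\bigr|^2\,dt' \ll_{k,\eps,A,B} X \log^{-A}X,
\]
which improves by a power of $\log X$ over the trivial bound $X\log^{O(1)}X$ available from Lemma \ref{mvt-lem-ch}. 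Because $\alpha$ is supported on $O(X^{\eps^2})$ integers, the pointwise estimate $|\mathcal{D}[\alpha,\chi](\tfrac{1}{2}+it)|\ll X^{\eps^2/2}$ lets us remove $\mathcal{D}[\alpha,\chi]$ at a harmless cost $X^{\eps^2}$, so in all cases one only needs to control products of the $\mathcal{D}[\beta_i,\chi]$.

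The Type II case is the simplest. After separating $\mathcal{D}[\alpha,\chi]\mathcal{D}[\beta,\chi]$ via Cauchy-Schwarz in $t'$, one bounds the two resulting $L^2$ means of $\mathcal{D}[\alpha,\chi]$ and $\mathcal{D}[\beta,\chi]$ by Lemma \ref{mvt-lem-ch}; the required logarithmic savings arise from the constraint $\lambda\ll 1/(q_1 Q)$, which forces $q_1 Q^{1/2}\lambda\ll Q^{-1/2}$ in the relevant factor. The Type $d_1$ and Type $d_2$ cases are equally standard: here $\beta_1$ (and $\beta_2$, in the Type $d_2$ case) are indicator or log-indicator functions, so the fourth moment bound of Corollary \ref{fourth-integral} for Dirichlet $L$-functions gives power savings on $L^4$ means of $\mathcal{D}[\beta_i,\chi]$ over typical unit intervals, and a H\"older inequality combining this with an $L^2$ mean bound on the remaining factor suffices for $\record\geq 1/3$, which already covers the full target range since $\record=8/33<1/3$.

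The Type $d_3$ case is where the bulk of the work, and the record exponent $\record = 8/33$, come from. Subdivide the outer range $\{Q^{-1/2}\lambda X\ll |t|\ll Q^{1/2}\lambda X\}$ into medium intervals of length $\sqrt{\lambda X}$ centered at $\sqrt{\lambda X}$-separated points $t_1,\ldots,t_r$ with $r\asymp Q^{1/2}\sqrt{\lambda X}$. On each such medium interval, apply Cauchy-Schwarz in $t'$ in the split form
\[
\int_{t-\lambda H}^{t+\lambda H}\bigl|\mathcal{D}[\beta_1]\mathcal{D}[\beta_2]\mathcal{D}[\beta_3]\bigr|\,dt' \leq \Bigl(\int \bigl|\mathcal{D}[\beta_1]\mathcal{D}[\beta_2]\bigr|^2\,dt'\Bigr)^{1/2}\Bigl(\int \bigl|\mathcal{D}[\beta_3]\bigr|^2\,dt'\Bigr)^{1/2}.
\]
The first factor is controlled by AM-GM, $|\mathcal{D}[\beta_1]\mathcal{D}[\beta_2]|^2\leq \tfrac12(|\mathcal{D}[\beta_1]|^4+|\mathcal{D}[\beta_2]|^4)$, and then Jutila's fourth moment (Corollary \ref{jutila-cor}) is applied to the resulting $L^4$ sum over the $r$ medium intervals. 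For the second factor, Fourier expanding the square turns $\int|\mathcal{D}[\beta_3,\chi]|^2\,dt'$ into an exponential sum of the form \eqref{hx3} whose phase, after Taylor expansion, is a monomial $\tfrac{t_j\ell}{\pi m}$ up to a lower-order term $\tfrac{t_j\ell^3}{3\pi m^3}$; the monomial part is handled by the Robert-Sargos fourth moment bound for monomial-phase sums (Lemma \ref{rs1}(ii)), and the cubic correction is absorbed by accepting a small loss in efficiency. Balancing the final estimate across the ranges of $M_i$ produces the numerical value $\record=8/33$.

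The Type $d_4$ case runs through the same scheme with weaker inputs. With four factors $\mathcal{D}[\beta_1],\ldots,\mathcal{D}[\beta_4]$, Cauchy-Schwarz groups them into two pairs, and in place of Jutila's fourth moment and Robert-Sargos one uses respectively the classical $L^2$ mean value theorem (Lemma \ref{mvt-lem-ch}) and the van der Corput $B$-process of Lemma \ref{rs1}(iii), which via iteration is equivalent to the exponent pair $(1/14,2/7)$. The main obstacle throughout is the parameter bookkeeping in the Type $d_3$ case: the medium scale $\sqrt{\lambda X}$ must simultaneously be large enough for Jutila's fourth moment to give a power-of-$\log$ saving and small enough that the Robert-Sargos bound on the second factor does not lose too much; managing the cubic Taylor correction in the short-interval second moment under the constraints $\lambda\geq X^{-1/6-2\eps}$ and $H=X^{\record+\eps}$ is precisely what forces the record value $\record = 8/33$ and, in the complementary Type $d_4$ case, forces the constraint $\record\geq 7/30$ visible in \eqref{sigma-range}.
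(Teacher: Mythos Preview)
Your proposal has the right ingredients but two structural errors that would prevent the argument from closing.

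First, the ``universal initial reduction'' you describe---Cauchy--Schwarz twice plus Fubini, reducing everything to
\[
\sum_{\chi \bmod q_1} \int_{|t'| \ll Q^{1/2}\lambda X} \bigl|\mathcal{D}[f,\chi](\tfrac12+it')\bigr|^2\,dt' \ll X\log^{-A}X,
\]
does not work for the $d_3$ and $d_4$ sums. Once you collapse the short window $[t-\lambda H,t+\lambda H]$ into a single long $L^2$ integral, the bound you need for $f=\alpha*\beta_1*\beta_2*\beta_3$ is essentially a sixth-moment estimate for $\zeta$-like Dirichlet polynomials on $|t|\asymp\lambda X$, which is not available. The paper does \emph{not} make this reduction: it applies Cauchy--Schwarz to split $f=\beta_1*g$ with $g=\alpha*\beta_2*\cdots*\beta_j$, keeps the short-interval $L^2$ of $\mathcal{D}[\beta_1]$ intact, expands it by Fourier into the exponential sum $\tilde Y(t')$ of \eqref{yat}, and only then integrates the factor $|\mathcal{D}[g]|^2\,\tilde Y(t')$ over the long range. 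The survival of the short window on $\beta_1$ is precisely what makes the Robert--Sargos input usable.

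Second, and more seriously, in your Type $d_3$ split you have the roles of $\beta_1$ and $\beta_3$ reversed. You propose to apply Jutila's fourth moment to $\beta_1,\beta_2$ and the Robert--Sargos exponential-sum analysis to the short $L^2$ of $\beta_3$. The paper does the opposite: the short-interval expansion (and hence the Robert--Sargos machinery of Section~\ref{vdc-sec}) is applied to $\beta_1$, the \emph{smallest} factor, while Jutila (Proposition~\ref{qwe}) is applied to $g=\alpha*\beta_2*\beta_3$. This matters numerically. The entire analysis in Section~\ref{vdc-sec} hinges on the bound $M_1\ll X^{1/3}$ (see \eqref{mi3}): the final estimate there is $\sum_j\tilde Y(t_j)^4\ll X^{O(\eps^2)}(M_1^4+H^{-3/2}\lambda X M_1^{5/2})$, and inserting $M_1\ll X^{1/3}$ is what yields the acceptable $X^{4/3}+H^{-3/2}\lambda X^{11/6}$. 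By contrast $M_3$ can be as large as $X/M_1^2\sim X/H^2\approx X^{25/33}$, so $M_3^4$ would be far too large. Likewise the number of off-diagonal shifts $\ell\ll Q^2 M_i/H$ in the Fourier expansion is smallest when $M_i=M_1$. Your $d_4$ sketch has the analogous misgrouping (``two pairs'' instead of $\beta_1$ versus $g=\alpha*\beta_2*\beta_3*\beta_4$); in the paper the pointwise van der Corput exponent pair $(1/14,2/7)$ of Lemma~\ref{pt} is again applied to the $\beta_1$-sum, with $M_1\ll X^{1/4}$ driving the threshold $\sigma\geq 7/30$.

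(The remark that the $d_1,d_2$ argument ``suffices for $\sigma\geq 1/3$, which already covers the full target range since $\sigma=8/33<1/3$'' has the inequality the wrong way round; the $d_1,d_2$ bounds hold for all $\sigma$, while it is the appearance of $d_3,d_4$ sums for $\sigma<1/3$ that forces the additional work.)
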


It remains to prove Proposition \ref{types}.  We will deal with the Type $d_1$, Type $d_2$, Type $d_4$, and Type II cases in next section; the Type $d_3$ case is trickier and we will prove it in next section only assuming an average result for exponential sums which will in turn be proven in Section \ref{vdc-sec}.

\section{Proof of Proposition~\ref{types}}

We begin with the Type II case.  Since $f = \alpha \ast \beta$, we may factor
$$
{\mathcal D}[f]\left(\frac{1}{2}+it',\chi\right) = {\mathcal D}[\alpha]\left(\frac{1}{2}+it',\chi\right) {\mathcal D}[\beta]\left(\frac{1}{2}+it',\chi\right)
$$
and hence by Cauchy-Schwarz we have
\begin{align*}
&\left(\sum_{\chi\ (q_1)} \int_{t-\lambda H}^{t+\lambda H} \left|{\mathcal D}[f]\left(\frac{1}{2}+it',\chi\right)\right|\ dt'\right)^2 \\
&\ll \left(\sum_{\chi\ (q_1)} \int_{t-\lambda H}^{t+\lambda H}\left |{\mathcal D}[\alpha]\left(\frac{1}{2}+it',\chi\right)\right|^2\ dt'\right) \\
&\quad \times \left(\sum_{\chi\ (q_1)} \int_{t-\lambda H}^{t+\lambda H} \left|{\mathcal D}[\beta]\left(\frac{1}{2}+it',\chi\right)\right|^2\ dt'\right) 
\end{align*}
for any $t$.  From Lemma \ref{mvt-lem-ch} we have
$$
\sum_{\chi\ (q_1)} \int_{t-\lambda H}^{t+\lambda H} \left|{\mathcal D}[\alpha]\left(\frac{1}{2}+it',\chi\right)\right|^2\ dt'
\ll_{k,\eps} (q_1 \lambda H + N) \log^{O_{k,\eps}(1)} X
$$
while from Fubini's theorem and Lemma \ref{mvt-lem-ch} we have
$$
\int_{Q^{-1/2} \lambda X \ll |t| \ll Q^{1/2} \lambda X} \sum_{\chi\ (q_1)} \int_{t-\lambda H}^{t+\lambda H} \left|{\mathcal D}[\beta]\left(\frac{1}{2}+it',\chi\right)\right|^2\ dt'
\ll_{k,\eps} \lambda H (q_1 Q^{1/2} \lambda X + M) \log^{O_{k,\eps}(1)} X
$$
and so we can bound the left-hand side of \eqref{lk} by
$$ \ll_{k,\eps} (q_1 \lambda H + N)\left( q_1 Q^{1/2} \lambda X + M \right) \lambda H \log^{O_{k,\eps}(1)} X.$$
We rewrite this expression using  \eqref{xq} as
$$ \ll_{k,\eps} q_1 \left( Q^{1/2} q_1 \lambda + \frac{Q^{1/2} N}{H} + \frac{1}{N} + \frac{1}{q_1 \lambda H} \right) \lambda^2 H^2 X \log^{O_{k,\eps}(1)} X.$$
Using the hypotheses \eqref{l6}, \eqref{nib}, \eqref{H-def}, \eqref{Q-def}, we obtain \eqref{lk} in the Type II case as required.

Now we handle the Type $d_1$ and $d_2$ cases. Actually we may unify the $d_1$ case into the $d_2$ case by adding a dummy factor $\beta_2$ (i.e $\beta_2(n) = \mathbf{1}_{n = 1}$) so that in both cases we have
$$ f = \alpha \ast \beta_1 \ast \beta_2$$
where $\alpha$ is  supported on $[N,2N]$ and is $O_{k,\eps,B}(1)$-divisor-bounded, and each $\beta_i$ is either $1_{(M_i,2M_i]}$ or $L 1_{(M_i,2M_i]}$, where
\begin{equation}\label{nib2}
1 \ll N \ll X^{\eps^2}
\end{equation}
and
$$ X/Q \ll N M_1 M_2 \ll X.$$

We may factor
$$
{\mathcal D}[f]\left(\frac{1}{2}+it',\chi\right) = {\mathcal D}[\alpha]\left(\frac{1}{2}+it',\chi\right) {\mathcal D}[\beta_1]\left(\frac{1}{2}+it',\chi\right)
{\mathcal D}[\beta_2]\left(\frac{1}{2}+it',\chi\right).$$
By Cauchy-Schwarz we have
\begin{align*}
&\left(\sum_{\chi\ (q_1)} \int_{t-\lambda H}^{t+\lambda H} \left|{\mathcal D}[f]\left(\frac{1}{2}+it',\chi\right)\right|\ dt'\right)^2 \\
&\ll \left(\sum_{\chi\ (q_1)} \int_{t-\lambda H}^{t+\lambda H} \left|{\mathcal D}[\alpha]\left(\frac{1}{2}+it',\chi\right)\right|^2\ dt'\right) \\
&\quad \times \left(\sum_{\chi\ (q_1)} \int_{t-\lambda H}^{t+\lambda H} \left|{\mathcal D}[\beta_1]\left(\frac{1}{2}+it',\chi\right)\right|^2 \left|{\mathcal D}[\beta_2]\left(\frac{1}{2}+it',\chi\right)\right|^2\ dt'\right).
\end{align*}
From Lemma \ref{mvt-lem} we have
$$
\sum_{\chi\ (q_1)} \int_{t-\lambda H}^{t+\lambda H} \left|{\mathcal D}[\alpha]\left(\frac{1}{2}+it',\chi\right)\right|^2\ dt'
\ll_{k,\eps} (q_1 \lambda H + N) \log^{O_{k,\eps}(1)} X
$$
so from Fubini's theorem we can bound the left-hand side of \eqref{lk} by
\begin{align*}
& \ll_{k,\eps} (q_1 \lambda H + N) \lambda H \log^{O_{k,\eps}(1)} X \\
&\quad \times \sum_{\chi\ (q_1)} \int_{Q^{-1/2} \lambda X \ll |t| \ll Q^{1/2} \lambda X} \left|{\mathcal D}[\beta_1]\left(\frac{1}{2}+it,\chi\right)\right|^2 \left|{\mathcal D}[\beta_2]\left(\frac{1}{2}+it,\chi\right)\right|^2\ dt.
\end{align*}

By the pigeonhole principle, we can thus bound the left-hand side of \eqref{lk} by
\begin{align*}
& \ll_{k,\eps} (q_1 \lambda H + N) \lambda H \log^{O_{k,\eps}(1)} X \\
&\qquad \times \sum_{\chi\ (q_1)} \int_{T/2 \leq |t| \leq T} \left|{\mathcal D}[\beta_1]\left(\frac{1}{2}+it,\chi\right)\right|^2 \left|{\mathcal D}[\beta_2]\left(\frac{1}{2}+it,\chi\right)\right|^2\ dt 
\end{align*}
for some  $T$ with
\begin{equation}\label{tb}
Q^{-1/2} \lambda X \ll T \ll Q^{1/2} \lambda X.
\end{equation}

By Corollary \ref{fourth-integral} and the triangle inequality, we have
$$ \sum_{\chi\ (q_1)} \int_{T/2 \leq |t| \leq T} \left|{\mathcal D}[\beta_1]\left(\frac{1}{2}+it,\chi\right)\right|^4\ dt \ll q_1 T \left(1 + \frac{q_1^2}{T^2} + \frac{M_1^2}{T^4} \right) \log^{O(1)} X; 
$$  
since $T \gg \lambda X Q^{-1/2} \geq \max\{M_1^{1/2}, q_1\}$ by our assumptions, we get
$$ \sum_{\chi\ (q_1)} \int_{T/2 \leq |t| \leq T}\left |{\mathcal D}[\beta_1]\left(\frac{1}{2}+it,\chi\right)\right|^4\ dt \ll q_1 T \log^{O(1)} X.$$
Similarly with $\beta_1$ replaced by $\beta_2$.  By Cauchy-Schwarz, we thus have
$$ \sum_{\chi\ (q_1)} \int_{T/2 \leq |t| \leq T} \left|{\mathcal D}[\beta_1]\left(\frac{1}{2}+it,\chi\right)\right|^2 \left|{\mathcal D}[\beta_2]\left(\frac{1}{2}+it,\chi\right)\right|^2\ dt \ll q_1 T \log^{O(1)} X$$
and so we can bound the left-hand side of \eqref{lk} by
$$ \ll_{k,\eps} (q_1 \lambda H + N) \lambda H q_1 T \log^{O_{k,\eps}(1)} X.$$
Using \eqref{tb}, we can bound this by
$$ \ll_{k,\eps} q_1 \left(q_1 Q^{1/2}\lambda + \frac{N Q^{1/2}}{H} \right) \lambda^2 H^2 X \log^{O_{k,\eps}(1)} X.$$
Using \eqref{l6}, \eqref{nib2}, \eqref{H-def}, \eqref{Q-def}, we obtain \eqref{lk} as desired.

\begin{remark} The above arguments recover the results of Mikawa \cite{mikawa} and Baier, Browning, Marasingha, and Zhao \cite{bbmz}, in which $\sigma$ is now set equal to $\frac{1}{3}$ so that we can take $m = 3$, and the Type $d_3$ and Type $d_4$ sums do not appear.
\end{remark}

Now we turn to the Type $d_j$ cases for $j=3,4$.  Here we have
\begin{equation}\label{nam-d4}
 N M_1 \dots M_j \ll_{k,\eps} X
\end{equation}
and
\begin{equation}\label{nam2-d4}
 X^{-\eps^2} H \ll M_1 \ll \dots \ll M_j
\end{equation}
which implies that
\begin{equation}\label{mij}
 M_1 \ll X^{1/j}.
\end{equation}

We now factor $f = \beta_1 * g$ where $g \coloneqq  \alpha \ast \beta_2 \ast \dots \ast \beta_j$, so that
$$ {\mathcal D}[f]\left(\frac{1}{2}+it,\chi\right) = {\mathcal D}[\beta_1]\left(\frac{1}{2}+it,\chi\right) {\mathcal D}[g]\left(\frac{1}{2}+it,\chi\right).$$
The function $g$ is supported in the range $\{ n: n \asymp N M_2 \dots M_j \}$ and is $O_{k,\eps}(1)$-divisor bounded.  By Cauchy-Schwarz, the left-hand side of \eqref{lk} may be bounded by
\begin{align*}
& \int_{Q^{-1/2} \lambda X \ll |t| \ll Q^{1/2} \lambda X} \left(\sum_{\chi\ (q_1)} \int_{t-\lambda H}^{t+\lambda H} \left|{\mathcal D}[g]\left(\frac{1}{2}+it',\chi\right)\right|^2\ dt'\right) \\
&\quad \left(\sum_{\chi\ (q_1)} \int_{t-\lambda H}^{t+\lambda H} \left|{\mathcal D}[\beta_1]\left(\frac{1}{2}+it'',\chi\right)\right|^2\ dt''\right)\ dt.
\end{align*}
Using Fubini's theorem to perform the $t$ integral first, we can estimate this by
\begin{align*}
& \ll \lambda H \sum_{\chi\ (q_1)} \int_{Q^{-1/2} \lambda X \ll |t'| \ll Q^{1/2} \lambda X} \left|{\mathcal D}[g]\left(\frac{1}{2}+it',\chi\right)\right|^2 \\ 
&\quad \left(\sum_{\chi'\ (q_1)} \int_{t'-2\lambda H}^{t'+2\lambda H} \left|{\mathcal D}[\beta_1]\left(\frac{1}{2}+it'',\chi'\right)\right|^2\ dt''\right)\ dt'.
\end{align*}

We fix a smooth non-negative Schwartz function $\eta \colon \R \to \R$, positive on $[-2,2]$ and whose Fourier transform $\hat \eta(u) \coloneqq \int_\R \eta(t) e(-tu)\ du$ is supported on $[-1,1]$ with $\hat \eta(0)=1$.  Since $\lambda \leq \frac{1}{q_1 Q}$, we can bound
\begin{align*}
&\sum_{\chi'\ (q_1)} \int_{t'-2\lambda H}^{t'+2\lambda H} \left|{\mathcal D}[\beta_1]\left(\frac{1}{2}+it'',\chi'\right)\right|^2\ dt'' \\
&\ll
\sum_{\chi'\ (q_1)} \int_\R \left|{\mathcal D}[\beta_1]\left(\frac{1}{2}+it'',\chi'\right)\right|^2 \eta\left(\frac{(t''-t') q_1 Q}{H}\right) \ dt'' \\
&= \int_\R \eta\left(\frac{(t''-t') q_1 Q}{H}\right) \sum_{\chi'\ (q_1)} \sum_{m_1,m_2} \frac{\beta_1(m_1) \overline{\beta_1(m_2)} \chi'(m_1) \overline{\chi'}(m_2)}{m_1^{1/2+it''} m_2^{1/2-it''}}\ dt'' \\
&= \phi(q_1) \int_\R \eta\left(\frac{(t''-t') q_1 Q}{H}\right) \sum_{1 \leq a \leq q_1: (a,q_1)=1} \left|\sum_{m = a\ (q_1)} \frac{\beta_1(m)}{m^{1/2+it''}}\right|^2\ dt'' \\
&\ll q_1 \int_\R \eta\left(\frac{(t''-t') q_1 Q}{H}\right) \sum_{1 \leq a \leq 2q_1} \left|\sum_{m = a\ (2q_1)} \frac{\beta_1(m)}{m^{1/2+it''}}\right|^2\ dt'' \\
&= \frac{H}{Q} \sum_{m_1 = m_2\ (2q_1)} \frac{\beta_1(m_1) \beta_1(m_2)}{m_1^{1/2+it'} m_2^{1/2-it'}} \hat \eta\left( \frac{H}{2\pi q_1 Q} \log \frac{m_1}{m_2} \right) \\
&= \frac{H}{Q} \sum_{m,\ell} \frac{\beta_1(m+q_1 \ell) \beta_1(m-q_1 \ell)}{(m+q_1 \ell)^{1/2+it'} (m-q_1 \ell)^{1/2-it'}} \hat \eta\left( \frac{H}{2\pi q_1 Q} \log \frac{m+q_1 \ell}{m-q_1 \ell} \right) 
\end{align*}
where we have used the balanced change of variables $(m_1,m_2) = (m+q_1\ell, m-q_1\ell)$ to obtain some cancellation in a Taylor expansion that will be performed in the next section.  Observe that the $\ell=0$ contribution to the above expression is $O(\frac{H}{Q}\log X (\log M_1)^2)$; also, the quantity
$\beta_1(m+q_1 \ell) \beta_1(m-q_1 \ell) \hat{\eta}$ is only non-vanishing when $m - q_1 \ell \asymp M_1$ and $\log \frac{m+q_1 \ell}{m-q_1 \ell} \ll \frac{q_1 Q}{H} \ll \frac{Q^2}{H}$, which implies that $q_1 \ell \ll \frac{Q^2 M_1}{H}$ and $m \asymp M_1$.  By symmetry and the triangle inequality (and crudely summing over $q_1 \ell$ instead of over $\ell$) we have
$$
\sum_{\chi\ (q_1)} \int_{t'-2\lambda H}^{t'+2\lambda H} \left|{\mathcal D}[\beta_1]\left(\frac{1}{2}+it'',\chi\right)\right|^2\ dt'' \ll Y(t') + \frac{H}{Q} \log X (\log M_1)^2$$
where $Y(t')$ denotes the quantity
$$
 Y(t') \coloneqq
 \frac{H}{Q} \sum_{1 \leq \ell \ll \frac{Q^2 M_1}{H}} \left|\sum_m \frac{\beta_1(m+\ell) \beta_1(m-\ell)}{(m+\ell)^{1/2+it'} (m-\ell)^{1/2-it'}} \hat \eta\left( \frac{H}{2\pi q_1 Q} \log \frac{m+ \ell}{m- \ell}\right)\right|.
$$
The function $m \mapsto \frac{\beta_1(m+\ell) \beta_1(m-\ell)}{(m+\ell)^{1/2} (m-\ell)^{1/2}} \hat \eta( \frac{H}{2\pi q_1 Q} \log \frac{m+ \ell}{m- \ell})$ is supported on the interval $[M_1+\ell, 2M_1-\ell]$, is of size $O_\eps( X^{O(\eps^2)} / M_1 )$ on this interval, and has derivative of size $O_\eps( X^{O(\eps^2)} / M_1^2 )$.  Thus by Lemma \ref{sbp-2}, one has $Y(t') \ll_\eps X^{O(\eps^2)} \tilde Y(t')$, where $\tilde Y(t')$ denotes the quantity
\begin{equation}\label{yat}
 \tilde Y(t') \coloneqq
\frac{H}{M_1} \sum_{1 \leq \ell \ll \frac{Q^2 M_1}{H}} \left|\sum_{M_1 \leq m \leq 2M_1} e\left( \frac{t'}{2\pi} \log \frac{m+ \ell}{m- \ell}  \right) \right|^*.
\end{equation}

We may thus bound the left-hand side of \eqref{lk} by $O(Z_1+Z_2)$, where
$$ Z_1 \coloneqq \lambda H \sum_{\chi\ (q_1)} \int_{\lambda X/Q^{1/2} \ll |t'| \ll Q^{1/2} \lambda X} \left|{\mathcal D}[g]\left(\frac{1}{2}+it',\chi\right)\right|^2 \tilde Y(t')\ dt'$$
and
$$ Z_2 \coloneqq \frac{\lambda H^2}{Q} \log X (\log M_1)^2 \sum_{\chi\ (q_1)} \int_{\lambda X/Q^{1/2} \ll |t'| \ll Q^{1/2} \lambda X} \left|{\mathcal D}[g]\left(\frac{1}{2}+it',\chi\right)\right|^2\ dt'.$$
From Lemma \ref{mvt-lem-ch} we have
$$ Z_2 \ll_k \frac{\lambda H^2}{Q} (q_1 Q^{1/2} \lambda X + N M_2 \dots M_j) \log^{O_k(1)} X.$$
From \eqref{nam-d4}, \eqref{nam2-d4} we have
$$ N M_2 \dots M_j \ll_{k,\eps} \frac{X}{M_1} \ll X^{\eps^2} \frac{X}{H}$$
and hence by \eqref{Q-def}, \eqref{H-def} and \eqref{l6}
$$ N M_2 \dots M_j \ll q_1 Q^{1/2} \lambda X.$$
We thus have
$$ Z_2 \ll_{k,\eps} Q^{-1/2} q_1 \lambda^2 H^2 X \log^{O_k(1)} X;$$
by \eqref{Q-def}, this contribution is acceptable for $B$ large enough.

Now we turn to $Z_1$.  At this point we will begin conceding factors of $X^{O(\eps^2)}$, in particular we can essentially ignore the role of the parameters $q_1$ and $Q$ thanks to \eqref{Q-def}.

We begin with the easier case $j=4$ of Type $d_4$ sums.  To deal with ${\mathcal D}[g]$ in this case, we simply invoke Lemma \ref{mvt-lem-ch} to obtain the bound
$$ \sum_{\chi\ (q_1)} \int_{Q^{-1/2} \lambda X \ll |t'| \ll Q^{1/2} \lambda X} \left|{\mathcal D}[g]\left(\frac{1}{2}+it',\chi\right)\right|^2\ dt' \ll_\eps X^{O(\eps^2)} \lambda X.$$
To show the contribution of $Z_1$ is acceptable in the $j=4$ case, it thus suffices to show the following lemma.

\begin{lemma}\label{pt}  Let the notation be as above with $j = 4$.  Then for any $Q^{-1/2} \lambda X \ll |t'| \ll Q^{1/2} \lambda X$, one has
$$ \tilde Y(t') \ll_\eps X^{-\eps + O(\eps^2)} H.$$
\end{lemma}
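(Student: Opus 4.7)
The plan is to apply van der Corput's fourth-derivative test (equivalently, the classical exponent pair $(1/14, 11/14)$ obtained by $AAB(0,1)$) to each inner exponential sum
$$
S_\ell \coloneqq \sum_{M_1 \leq m \leq 2M_1} e(\phi_\ell(m)), \qquad \phi_\ell(m) \coloneqq \frac{t'}{2\pi} \log \frac{m+\ell}{m-\ell},
$$
that appears in the definition of $\tilde Y(t')$, and then sum over $\ell \leq L \coloneqq O(Q^2 M_1/H)$. The first task is to verify derivative estimates. Since $\ell/m \ll Q^2/H = X^{-\sigma + O(\eps^2)}$ is tiny in our range, the Taylor expansion $\log\tfrac{m+\ell}{m-\ell} = 2\operatorname{arctanh}(\ell/m) = 2\ell/m + O((\ell/m)^3)$ is completely dominated by its leading term, and combining this with $\partial_m^j(1/m) = (-1)^j j!/m^{j+1}$ gives
$$
|\phi_\ell^{(j)}(m)| \asymp \frac{|t'|\ell}{M_1^{j+1}}, \qquad m \asymp M_1,\ 1 \leq j \leq 4.
$$

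With the size parameter $F \coloneqq |t'|\ell/M_1$ (so $|\phi_\ell^{(j)}| \asymp F/M_1^j$), the fourth-derivative test yields the two-term bound $|S_\ell|^* \ll M_1 \lambda_4^{1/14} + M_1^{3/4} \lambda_4^{-1/14}$ with $\lambda_4 \asymp F/M_1^4$; the first term dominates precisely when $F \gg M_1^{9/4}$, i.e.\ $|t'|\ell \gg M_1^{13/4}$, which is guaranteed here since $|t'| \gg X^{5/6-2\eps}$ while $M_1^{13/4} \leq X^{13/16}$ and $5/6 > 13/16$. Hence
$$
|S_\ell|^* \ll F^{1/14} M_1^{5/7} = |t'|^{1/14} \ell^{1/14} M_1^{9/14}.
$$
Summing over $1 \leq \ell \leq L$ using $\sum_{\ell \leq L} \ell^{1/14} \ll L^{15/14}$ and inserting $L \ll X^{O(\eps^2)} M_1/H$ yields
$$
\tilde Y(t') = \frac{H}{M_1} \sum_{\ell \leq L} |S_\ell|^* \ll X^{O(\eps^2)} \frac{H}{M_1} \cdot |t'|^{1/14} \!\left(\frac{M_1}{H}\right)^{\!15/14}\! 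M_1^{9/14} = X^{O(\eps^2)} \frac{|t'|^{1/14} M_1^{5/7}}{H^{1/14}}.
$$

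To conclude, insert the worst-case bounds $|t'| \ll Q^{1/2}\lambda X \ll X^{1+o(1)}$, $M_1 \leq X^{1/4}$ (from \eqref{mij} with $j=4$), and $H = X^{\sigma+\eps}$ with $\sigma = 8/33$: the exponent of $X$ in $\tilde Y(t')$ becomes
$$
\tfrac{1}{14} + \tfrac{5}{28} - \tfrac{\sigma+\eps}{14} + O(\eps^2) = \tfrac{1}{4} - \tfrac{\sigma+\eps}{14} + O(\eps^2),
$$
which is at most $\sigma + O(\eps^2)$ exactly when $\sigma \geq \tfrac{14}{15}\cdot\tfrac{1}{4} = \tfrac{7}{30}$. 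Since $\tfrac{8}{33} - \tfrac{7}{30} = \tfrac{1}{110} > 0$ is a positive absolute constant, there is genuine slack: one obtains $\tilde Y(t') \ll X^{-\eps + O(\eps^2)} H$ as required. The main technical obstacle I anticipate is the bookkeeping --- verifying the fourth-derivative asymptotic including the higher-order Taylor corrections, checking that the dominant term of the derivative test applies uniformly throughout the parameter range (which reduces to the numerical inequality $5/6 > 13/16$ noted above), and packaging the bound against the maximal sum $|\cdot|^*$ via (an iteration of) Lemma \ref{rs1}(iii) combined with Weyl differencing or Lemma \ref{sbp-2}.
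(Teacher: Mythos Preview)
Your proof is correct and follows essentially the same route as the paper: both apply the classical van der Corput fourth-derivative test (equivalently the exponent pair $(1/14,11/14)$, which the paper writes as $(1/14,2/7)$ in a different convention) to each $S_\ell$, verify the derivative asymptotics $|\phi_\ell^{(j)}(m)| \asymp |t'|\ell/M_1^{j+1}$, and reduce the desired bound to the numerical inequality $\sigma > 7/30$. Your explicit summation of $\ell^{1/14}$ over $\ell$ and your check that the first term of the derivative test dominates (via $5/6>13/16$) are slightly more detailed than the paper's version, but the argument is the same; the only superfluous remark is the final sentence about packaging the maximal sum via Lemma~\ref{rs1}(iii), since exponent-pair bounds apply uniformly over subintervals and hence to $|\cdot|^*$ directly.
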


\begin{proof}  From \eqref{yat} and the triangle inequality it suffices to show that
$$
\left|\sum_{M_1 \leq m \leq 2M_1} e\left( \frac{t'}{2\pi} \log \frac{m+ \ell}{m- \ell} \right )\right|^* \ll_\eps X^{-c\eps + O(\eps^2)} H$$
for all $1 \leq \ell \ll Q^2 M_1 / H$. The phase $m \mapsto \frac{t'}{2\pi} \log \frac{m+ \ell}{m- \ell}$ has $j^{\operatorname{th}}$ derivative $\asymp_j \frac{|t'| \ell}{M_1^{j+1}}$ for all $j \geq 1$.  Using the classical van der Corput exponent pair $(1/14,2/7)$ (see \cite[\S 8.4]{ik}) we have
$$
\left|\sum_{M_1 \leq m \leq 2M_1} e\left( \frac{t'}{2\pi} \log \frac{m+ \ell}{m- \ell}  \right)\right|^* \ll_\eps X^{O(\eps^2)} \left(\frac{|t'| \ell}{M_1^2}\right)^{\frac{1}{14}} M_1^{\frac{2}{7} + \frac{1}{2}}$$
(noting from \eqref{l6}, \eqref{mij} that $|t'| \ell \geq |t'| \geq M_1^2$).  Using \eqref{mij}, \eqref{Q-def}, the right-hand side is
$$ \ll_\eps X^{O(\eps^2)} (X/H)^{\frac{1}{14}} (X^{1/4})^{\frac{2}{7} + \frac{1}{2} - \frac{1}{14}}.$$
From \eqref{sigma-range}, \eqref{H-def} we have $H \geq X^{\frac{7}{30}+\eps}$, and the claim then follows after some arithmetic.
\end{proof}

\begin{remark} If one uses the recent improvements of Robert \cite{robert} to the classical $(1/14,2/7)$ exponent pair, one can establish Lemma \ref{pt} for $\sigma$ as small as $\frac{3}{13} = 0.2307\dots$, improving slightly upon the exponent $\frac{7}{30} = 0.2333\dots$ provided by the classical pair.  Unfortunately, due to the need to also treat the $d_3$ sums, this does not improve the final exponent \eqref{sigmadef} in Theorem \ref{unav-corr}.
\end{remark}

Now we turn to estimating $Z_1$ in the $j=3$ case of Type $d_3$ sums.  To deal with ${\mathcal D}[g]$ in this case, we apply Jutila's estimate (Corollary \ref{jutila-cor}) to conclude

\begin{proposition}\label{qwe}  Let the notation and assumptions be as above.  Cover the region $\{ t': Q^{-1/2} \lambda X \ll |t'| \ll Q^{1/2} \lambda X\}$ by a collection ${\mathcal J}$ of disjoint half-open intervals $J$ of length $X^{\eps^2} \sqrt{\lambda X}$.  Then
$$ 
\sum_{J \in {\mathcal J}} \left( \sum_{\chi\ (q_1)} \int_{J} \left|{\mathcal D}[g]\left(\frac{1}{2}+it',\chi\right)\right|^2\ dt'\right)^{3} \ll_{k,\eps,B} X^{O(\eps^2)} (\lambda X)^2.
$$
\end{proposition}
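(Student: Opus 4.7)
My plan is to factor ${\mathcal D}[g]$ according to $g = \alpha \ast \beta_2 \ast \beta_3$, exploit the smallness of $\alpha$, and then reduce the estimate to a distributional bound on fourth moments controlled by Corollary~\ref{jutila-cor}. First I will write ${\mathcal D}[g] = {\mathcal D}[\alpha]\, {\mathcal D}[\beta_2]\, {\mathcal D}[\beta_3]$ and use the trivial pointwise bound $|{\mathcal D}[\alpha](\tfrac{1}{2}+it',\chi)|^2 \ll X^{O(\eps^2)}$, which holds uniformly because $\alpha$ is $O_{k,\eps}(1)$-divisor-bounded and supported on $[N,2N]$ with $N \ll X^{\eps^2}$. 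Combined with Cauchy--Schwarz in the joint variable $(\chi,t')$, this yields
\begin{equation*}
A_J \coloneqq \sum_{\chi\ (q_1)} \int_J |{\mathcal D}[g]|^2\, dt' \ll X^{O(\eps^2)} \sqrt{B_J^{(2)} B_J^{(3)}},
\end{equation*}
where $B_J^{(i)} \coloneqq \sum_\chi \int_J |{\mathcal D}[\beta_i](\tfrac{1}{2}+it',\chi)|^4\, dt'$. Cubing and applying AM--GM, $A_J^3 \ll X^{O(\eps^2)}((B_J^{(2)})^3 + (B_J^{(3)})^3)$, so the target reduces to showing $\sum_J (B_J^{(i)})^3 \ll X^{O(\eps^2)}(\lambda X)^2$ for $i = 2, 3$.

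For this I would invoke Corollary~\ref{jutila-cor}, applied to each $\beta_i$ after writing it as a difference of indicator functions of the form $1_{[1,\cdot]}$ (possibly weighted by $L$). After dyadically decomposing the range into $O(\log Q)$ sub-ranges $|t'| \asymp T$ with $T \sim \lambda X$, the corollary gives, for any subcollection $\mathcal{S}$ of $r$ of the intervals $J$ in a given sub-range,
\begin{equation*}
\sum_{J \in \mathcal{S}} B_J^{(i)} \ll X^{O(\eps^2)} (r T_0 + (rT)^{2/3}),
\end{equation*}
where $T_0 = X^{\eps^2}\sqrt{\lambda X}$. Taking $\mathcal{S}$ to consist of the $r$ largest values of $B_J^{(i)}$ produces the distributional bound
\begin{equation*}
B_{(r)}^{(i)} \leq \frac{1}{r}\sum_{J\in\mathcal{S}} B_J^{(i)} \ll X^{O(\eps^2)} \left(\sqrt{\lambda X} + \frac{(\lambda X)^{2/3}}{r^{1/3}}\right)
\end{equation*}
on the $r$th largest value $B_{(r)}^{(i)}$ of the $B_J^{(i)}$. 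Cubing and summing: the first summand contributes $|\mathcal{J}|\cdot T_0^3 \asymp X^{O(\eps^2)}(\lambda X)^2$, while the second contributes $(\lambda X)^2 \sum_{r \leq |\mathcal{J}|} r^{-1} \ll X^{O(\eps^2)}(\lambda X)^2$; summing over the $O(\log Q)$ dyadic sub-ranges then yields the desired estimate.

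The hard part will be overcoming a $(\lambda X)^{1/3}$ gap that arises from naive combinations of Jutila's estimate. The pointwise bound $B_J^{(i)} \ll X^{O(\eps^2)}(\lambda X)^{2/3}$ (Corollary~\ref{jutila-cor} with $r=1$, since $T_0 \ll T^{2/3}$ in our regime) and the $L^1$ bound $\sum_J B_J^{(i)} \ll X^{O(\eps^2)}\lambda X$ (Corollary~\ref{jutila-cor} with $r = |\mathcal{J}|$) together yield only $\sum_J (B_J^{(i)})^3 \ll (\max_J B_J^{(i)})^2 \sum_J B_J^{(i)} \ll X^{O(\eps^2)}(\lambda X)^{7/3}$, which falls short of the target $(\lambda X)^2$. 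The distributional refinement above closes this gap by invoking Corollary~\ref{jutila-cor} at every intermediate value of $r$, where the $(rT)^{2/3}$ term dominates $rT_0$ and produces the per-$r$ tail control $B_{(r)}^{(i)} \ll (\lambda X)^{2/3}/r^{1/3}$ that yields exactly the log-divergent sum $(\lambda X)^2 \sum r^{-1}$ meeting the required bound.
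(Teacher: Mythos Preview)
Your proposal is correct and follows essentially the same approach as the paper: factor out the small ${\mathcal D}[\alpha]$ contribution, reduce via Cauchy--Schwarz to fourth moments of ${\mathcal D}[\beta_2]$ and ${\mathcal D}[\beta_3]$, then apply Corollary~\ref{jutila-cor} to subfamilies of intervals to obtain the distributional control needed for the cubic sum. The only cosmetic differences are that the paper organizes the distributional argument via dyadic level sets $\{J : A_J \asymp R\}$ (deriving $\#\mathcal{J}_R \ll \min(\sqrt{\lambda X}, (\lambda X)^2/R^3)$) rather than your order-statistic formulation, and it applies Cauchy--Schwarz between $\beta_2$ and $\beta_3$ after restricting to a level set rather than decoupling them via AM--GM beforehand.
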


\begin{proof}  For each $R > 0$, let ${\mathcal J}_R$ denote the set of those intervals $J \in {\mathcal J}$ such that
$$
R \leq \sum_{\chi\ (q_1)} \int_J \left|{\mathcal D}[g]\left(\frac{1}{2}+it',\chi\right)\right|^2\ dt' \leq 2R.$$
Applying Corollary \ref{jutila-cor} with $Q^{-1/2} \lambda X \ll T \ll Q^{1/2} \lambda X$ and $T_0 \coloneqq X^{\eps^2} \sqrt{\lambda X}$ (and $\eps$ replaced by $\eps^2$), together with the triangle inequality and conjugation symmetry, we have
$$
\sum_{J \in {\mathcal J}_R} \sum_{\chi \ (q_1)} \int_J  |{\mathcal D}[\beta_j](\tfrac 12 + it, \chi)|^4\ dt \ll_{\eps,B} X^{O(\eps^2)} ((\# {\mathcal J}_R) \sqrt{\lambda X} + ((\# {\mathcal J}_R) \lambda X)^{2/3}) $$
for $j=2,3$, where we recall that $\# {\mathcal J}_R$ denotes the cardinality of ${\mathcal J}_R$.  Note that the hypothesis $M_j \ll T^2$ required for Corollary \ref{jutila-cor} will follow from \eqref{nam-d4} and \eqref{l6}.  From Cauchy-Schwarz and the crude estimate
$$ {\mathcal D}[\alpha](\tfrac 12 + it, \chi) \ll_{k,\eps} X^{O(\eps^2)}$$
we thus have
$$
\sum_{J \in {\mathcal J}_R} \sum_{\chi \ (q_1)} \int_{J} |{\mathcal D}[g](\tfrac 12 + it, \chi)|^2\ dt \ll_{k,\eps,B} X^{O(\eps^2)} ((\# {\mathcal J}_R) \sqrt{\lambda X} + ((\# {\mathcal J}_R) \lambda X)^{2/3}).$$
By definition of ${\mathcal J}_R$, we conclude that
$$ R \# {\mathcal J}_R \ll_{k,\eps,B} X^{O(\eps^2)} ((\# {\mathcal J}_R) \sqrt{\lambda X} + ((\# {\mathcal J}_R) \lambda X)^{2/3})$$
and thus either $R \ll_{k,\eps,B} X^{O(\eps^2)} \sqrt{\lambda X}$ or $\# {\mathcal J}_R \ll_{k,\eps,B} X^{O(\eps^2)} (\lambda X)^2 / R^3$.  Using the trivial bound $\# {\mathcal J}_R \ll \sqrt{\lambda X}$ in the former case, we thus have
$$ \# {\mathcal J}_R \ll_{k, \eps, B} X^{O(\eps^2)}  \min\left( \frac{(\lambda X)^2}{R^3}, \sqrt{\lambda X} \right)$$
for all $R>0$.  The claim then follows from dyadic decomposition (noting that ${\mathcal J}_R$ is only non-empty when $R \ll X^{O(1)}$).
\end{proof}

In the next section, we will establish a discrete fourth moment estimate for the $Y(t)$:

\begin{proposition}\label{pq}  Let the notation and assumptions be as above.  Let $t_1 < \dots < t_r$ be elements of $\{ t': Q^{-1/2} \lambda X \ll |t'| \ll Q^{1/2} \lambda X\}$ such that $|t_{j+1}-t_j| \geq \sqrt{\lambda X}$ for all $1 \leq j < r$.  Then
$$ \sum_{j=1}^r \tilde Y(t_j)^4 \ll_{k,\eps,B} X^{-\eps+O(\eps^2)} H^4 \sqrt{\lambda X}.$$
\end{proposition}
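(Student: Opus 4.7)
The plan is to reduce the problem to a fourth-moment bound for monomial-phase exponential sums to which Lemma~\ref{rs1}(ii) applies. The key observation is the Taylor expansion
\begin{equation*}
\frac{t}{2\pi}\log\frac{m+\ell}{m-\ell}
= \frac{t\ell}{\pi m} + \frac{t\ell^3}{3\pi m^3} + O\!\left(\frac{|t|\ell^5}{m^5}\right),
\end{equation*}
in which the leading term is a monomial of exponent $-1$ in $m/M_1$ with coefficient $\tilde t := t\ell/(\pi M_1)$, fitting the framework of Lemma~\ref{rs1}(ii) with $\theta=-1$.

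First I would dyadically decompose $\ell \sim L$ for $1 \leq L \leq L_0 \asymp Q^2 M_1/H$ (giving $O(\log X)$ ranges), and apply H\"older's inequality twice to bring both the dyadic sum over $L$ and the inner sum over $\ell \sim L$ outside the fourth power. Summed over $j$, the task becomes to control $\sum_j |S_\ell(t_j)|^{*4}$ for each $\ell$. For each $\ell$ I would treat $e(\psi_\ell(m,t))$, with $\psi_\ell(m,t) = \frac{t\ell^3}{3\pi m^3}+\cdots$, as a smooth $m$-weight of supremum $O(1)$ and total variation $O(|t|\ell^3/M_1^3)$, and invoke Lemma~\ref{sbp-2} to bound $|S_\ell(t)|^*$ by $(1+|t|\ell^3/M_1^3)$ times the pure monomial maximal sum $\tilde S_\ell(t) := |\sum_m e(t\ell/(\pi m))|^*$.

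For $\tilde S_\ell$ I would select, for each $j$, a witnessing sub-interval $[a_j,b_j]\subset[M_1,2M_1]$ realising the maximum, set $g_{\ell,j}(t) := \sum_{a_j\leq m\leq b_j} e(t\ell/(\pi m))$, and apply a Sobolev-type inequality on $[t_j-\tfrac12\sqrt{\lambda X},\,t_j+\tfrac12\sqrt{\lambda X}]$ together with the pointwise derivative bound $|g_{\ell,j}'(t)|\ll \ell$ and the pointwise inequality $|g_{\ell,j}(t)|\leq \tilde S_\ell(t)$ to get
\begin{equation*}
\sum_j \tilde S_\ell(t_j)^4 \ll \frac{1}{\sqrt{\lambda X}}\int_{|t|\ll Q^{1/2}\lambda X}\!\tilde S_\ell(t)^4\,dt + (\text{admissible error}).
\end{equation*}
Rescaling $\tilde t = t\ell/(\pi M_1)$ and invoking Lemma~\ref{rs1}(ii) then yields $\int \tilde S_\ell^4\,dt \ll_\eps X^{O(\eps^2)}(M_1^5/\ell + M_1^2 Q^{1/2}\lambda X)$. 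Assembling these pieces and optimising over dyadic $L$ is expected to give the bound claimed in Proposition~\ref{pq}.

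The main obstacle is the correction factor $1+|t|\ell^3/M_1^3$: for $\sigma = 8/33 < 1/3$ it is genuinely of size $X^{\Omega(1)}$ even for $\ell=1$, so a crude summation by parts loses too much. The remedy, which constitutes the technical heart of the proof, is to handle the $L$-range where the correction is large by one of two complementary strategies: either (i) a joint Robert--Sargos-type fourth-moment estimate on the two-monomial phase $c_1(t,\ell)(m/M_1)^{-1} + c_3(t,\ell)(m/M_1)^{-3}$ (with $c_1,c_3$ moving along a fixed line as $t$ varies), or (ii) van der Corput's $B$-process (Lemma~\ref{rs1}(iii)) followed by a Robert--Sargos bound on the dual sum, whose Legendre transform is approximately $2\sqrt{t\ell\ell'/\pi}$ --- a monomial in $\ell'$ of exponent $1/2$. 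Balancing the small- and large-correction regimes and optimising over $L$ is what pins down the specific exponent $\sigma=8/33$ appearing in \eqref{sigmadef}.
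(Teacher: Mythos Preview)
Your proposal identifies the right ingredients (Taylor expansion keeping two terms, Robert--Sargos fourth-moment bounds, and the $B$-process for the hard regime), and your option~(ii) is indeed what the paper does. But two essential organisational devices are missing, and without them the argument does not close.

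First, the paper never separates the $\ell/m$ and $\ell^3/m^3$ terms. It packages the sum as $f(\alpha,\beta) := |\sum_{m} e(\alpha M_1/(\pi m) + \beta M_1^3/(3\pi m^3))|^*$, uses the stability $f(\alpha+O(1),\beta+O(1)) \asymp f(\alpha,\beta)$ (in place of your Sobolev step) to pass from $\sum_j$ to $\int_t$, and then converts the remaining sum over $\ell \sim L$ into a two-dimensional integral $\int_\alpha \int_\beta f(\alpha,\beta)^4 \mu(\alpha,\beta)\,d\beta\,d\alpha$ along the curves $\beta = (\ell^2/M_1^2)\alpha$, with a multiplicity function $\mu$ that is bounded by $\min(L,\,1+M_1^2/(L\alpha))$. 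This split into three pieces $W_1,W_2,W_3$ (according to whether $\beta \ll 1$, or $\beta \gg 1$ with $\mu \ll M_1^2/(L\alpha)$, or $\mu \ll 1$) is what makes the bookkeeping tractable; only $W_3$ needs the $B$-process.

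Second, and more critically, before any of this the paper invokes Lemma~\ref{rs1}(i) to \emph{shorten} the $\alpha$-range from $TL/M_1$ down to an auxiliary parameter $S$ with $M_1 \ll S \ll M_1^2$, at the cost of a harmless factor $(TL/M_1)/S$. This is the step your outline lacks. If you apply the $B$-process directly on the full range $\alpha \asymp TL/M_1$, the dual sum has length $\asymp TL/M_1^2$, and the subsequent Robert--Sargos bound produces a term of size $(TL/M_1^2)^2 \cdot (TL/M_1)$ which is too large. Shortening to $\alpha \ll S$ makes the dual length $\asymp A/M_1 \leq S/M_1 \ll M_1$, after which Lemma~\ref{rs1}(ii) on the dual gives $\int_\alpha |g|^4 \ll X^{O(\eps^2)} (A/M_1)^2 A$; summing yields $W_3 \ll X^{O(\eps^2)} L^2 S^2$. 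Balancing this against $W_1+W_2 \ll X^{O(\eps^2)} L M_1^4$ by choosing $S = \min(M_1^2/L^{1/2},\,TL/M_1)$ is precisely what pins down $\sigma = 8/33$.
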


Assume this proposition for the moment.  Cover the set $\{ t': Q^{-1/2} \lambda X \ll |t'| \ll Q^{1/2} \lambda X\}$ by a family ${\mathcal J}$ of
disjoint half-open intervals $J$ of length $X^{\eps^2} \sqrt{\lambda X}$ for $i=1,\dots,r$.  On each such $J$, let $t_J$ be a point in $J$ that maximizes the quantity $\tilde Y(t_J)$.  One can partition the $t_J$ into $O(1)$ subsequences that are $\sqrt{\lambda X}$-separated in the sense of Proposition \ref{pq}.  From the triangle inequality, we thus have
$$ \sum_{J \in {\mathcal J}} \tilde Y(t_J)^4 \ll_{k,\eps,B} X^{-\eps+O(\eps^2)} H^4 \sqrt{\lambda X}$$
and hence by H\"older's inequality and the cardinality bound $|{\mathcal J}| \ll_{\eps,B} X^{O(\eps^2)} \sqrt{\lambda X}$
$$ \sum_{J \in {\mathcal J}} \tilde Y(t_J)^{3/2} \ll_{k,\eps,B} X^{-3\eps/8} X^{O(\eps^2)} H^{3/2} \sqrt{\lambda X}.$$
On the other hand, we can bound 
$$ Z_1 \leq
\lambda H \sum_{J \in {\mathcal J}} \tilde Y(t_J) \sum_{\chi\ (q_1)} \int_{J} \left|{\mathcal D}[g]\left(\frac{1}{2}+it',\chi\right)\right|^2\ dt'$$
and hence by H\"older's inequality and Proposition~\ref{qwe} we have
$$ Z_1 \ll_{k,\eps,B} \lambda H (X^{-3\eps/8+O(\eps^2)} H^{3/2} \sqrt{\lambda X})^{2/3}
((\lambda X)^2)^{1/3}$$
which simplifies to
$$ Z_1 \ll_{k,\eps,B} \lambda^2 H^2 X^{-\eps/4 + O(\eps^2)} X$$
which is an acceptable contribution to \eqref{lk} for $\eps$ small enough.  This completes the proof of \eqref{lk}.

Thus it remains only to establish Proposition \ref{pq}.  This will be the objective of the next section.


\section{Averaged exponential sum estimates}\label{vdc-sec}

We now prove Proposition \ref{pq}.  We will now freely lose factors of $X^{O(\eps^2)}$ in our analysis, for instance we see from the hypothesis $Q \leq \log^B X$ that
\begin{equation}\label{qsmall}
 1\leq q_1 \leq Q \ll_{B,\eps} X^{\eps^2}.
\end{equation}
By partitioning the $t_j$ based on their sign, and applying a conjugation if necessary, we may assume that the $t_j$ are all positive.  By covering the positive portion of $\{ Q^{-1/2} \lambda X \ll |t| \ll \lambda X Q^{1/2}\}$ into dyadic intervals $[T,2T]$ (and giving up an acceptable loss of $O(\log X)$), we may assume that there exists
$$ Q^{-1/2} \lambda X \ll T \ll \lambda X Q^{1/2}$$
such that $t_1,\dots,t_r \in  [T,2T]$; from \eqref{qsmall} we see in particular that
\begin{equation}\label{tbound}
T = X^{O(\eps^2)} \lambda X.
\end{equation}
From \eqref{l6} we also note that
\begin{equation}\label{tbound-2}
X^{5/6-2\eps} \ll T \ll X.
\end{equation}
Since the $t_1,\dots,t_r$ are $\sqrt{\lambda X}$-separated, we have
\begin{equation}\label{qor}
 r \ll_\eps X^{O(\eps^2)} \sqrt{\lambda X}.
\end{equation}
Finally, from \eqref{mij} we have
\begin{equation}\label{mi3}
M_1 \ll X^{1/3}.
\end{equation}

Now we need to control the maximal exponential sums $\tilde Y(t_j)$ defined in \eqref{yat}.  If one uses exponent pairs such as $(1/6,1/6)$ here as in Lemma \ref{pt} to obtain uniform control on the $\tilde Y(t_j)$, one obtains inferior results (indeed, the use of $(1/6,1/6)$ only gives Theorem \ref{unav-corr} for $\sigma = \frac{2}{7} = 0.2857\dots$).   Instead, we will exploit the averaging in $j$.  We first use H\"older's inequality to note that
$$ \tilde Y(t_j)^4 \ll_\eps X^{O(\eps^2)} \frac{H}{M_1}
\sum_{1 \leq \ell \ll \frac{Q^2 M_1}{H}} 
 \left(\left|\sum_{M_1 \leq m \leq 2M_1} e\left( \frac{t_j}{2\pi} \log \frac{m+ \ell}{m- \ell} \right)\right|^*\right)^4.$$
Next, we observe that $\log \frac{m+ \ell}{m- \ell}  = \log \frac{1+\ell/m}{1-\ell/m}$ has the Taylor expansion
\begin{align*}
\log \frac{m+ \ell}{m- \ell} &= \sum_{j=0}^\infty \frac{2}{2j+1} \left(\frac{\ell}{m}\right)^{2j+1} \\
&= 2 \frac{\ell}{m} + \frac{2}{3} \frac{\ell^3}{m^3} + \frac{2}{5} \frac{\ell^5}{m^5} + \dots.
\end{align*}
Note that there are no terms in the Taylor expansion with even powers of $\frac{\ell}{m}$.
Thus we can write
$$ e\left(\frac{t_j}{2\pi} \log \frac{m+ \ell}{m- \ell}\right) = e\left(\frac{1}{\pi} \frac{t_j \ell}{m} + \frac{1}{3\pi} \frac{t_j \ell^3}{m^3}\right) e(R_{j,\ell}(m))$$
where for $m \in [M_1,2M_1]$, the remainder $R_{j,\ell}(m)$ is of size
$$ R_{j,\ell}(m) \ll T \left(\frac{Q^2 M_1/H}{M_1}\right)^5 \ll_\eps X^{-\frac{7}{33}+O(\eps^2)} $$
and has derivative estimates
$$ R_{j,\ell}'(m) \ll \frac{1}{M_1} T \left(\frac{Q^2 M_1/H}{M_1}\right)^5 \ll_\eps \frac{1}{M_1} X^{-\frac{7}{33}+O(\eps^2)}.$$
Thus by Lemma \ref{sbp-2} again, we have
$$ \tilde Y(t_j)^4 \ll_\eps X^{O(\eps^2)} \frac{H}{M_1}
\sum_{1 \leq \ell \ll \frac{Q^2 M_1}{H}} 
 \left(\left|\sum_{M_1 \leq m \leq 2M_1} e\left( \frac{1}{\pi} \frac{t_j \ell}{m} + \frac{1}{3\pi} \frac{t_j \ell^3}{m^3} \right)\right|^*\right)^4.
$$
We write this bound as
$$ \tilde Y(t_j)^4 \ll_\eps X^{O(\eps^2)} \frac{H}{M_1}
\sum_{1 \leq \ell \ll \frac{Q^2 M_1}{H}} f\left( \frac{t_j \ell}{M_1}, \frac{t_j \ell^3}{M_1^3} \right)^4 $$
$f(\alpha,\beta)$ denotes the maximal exponential sum
\begin{equation}\label{fab}
 f(\alpha,\beta) \coloneqq \left|\sum_{M_1 \leq m \leq 2M_1} e\left( \frac{\alpha}{\pi} \frac{M_1}{m} + \frac{\beta}{3\pi} \frac{M_1^3}{m^3} \right)\right|^*.
\end{equation}
By a further application of Lemma \ref{sbp-2}, we see that
\begin{equation}\label{f-stable}
f(\alpha+u,\beta+v) \asymp f(\alpha,\beta)
\end{equation}
whenever $\alpha,\beta,u,v$ are real numbers with $u,v = O(1)$.  Thus
$$ \tilde Y(t_j)^4 \ll_\eps X^{O(\eps^2)} \frac{H}{M_1}
\sum_{1 \leq \ell \ll \frac{Q^2 M_1}{H}} \int_{t_j \ell/M_1}^{t_j \ell/M_1 + 1} f\left( t, \frac{\ell^2}{M_1^2} t \right)^4\ dt.$$
As the $t_j$ are $\sqrt{\lambda X}$-separated and lie in $[T,2T]$, and by \eqref{l6}, \eqref{mi3} we have
$$ (\lambda X)^{1/2} \gg X^{5/12 - \eps/2} \gg X^{1/3} \gg M_1,$$
we see that for fixed $\ell$, the intervals $[t_j \ell/M_1, t_j \ell/M_1 + 1]$ are disjoint and lie in the region $\{ t: t \asymp T \ell / M_1 \}$.  Thus we have
$$ \sum_{j=1}^r \tilde Y(t_j)^4 \ll_\eps X^{O(\eps^2)} \frac{H}{M_1}
\sum_{1 \leq \ell \ll \frac{Q^2 M_1}{H}} \int_{|t| \asymp T \ell / M_1} f\left( t, \frac{\ell^2}{M_1^2} t\right )^4\ dt.$$
By the pigeonhole principle, we thus have
$$ \sum_{j=1}^r \tilde{Y}(t_j)^4 \ll_\eps X^{O(\eps^2)} \frac{H}{M_1}
\sum_{L \leq \ell < 2L} \int_{|t| \asymp T L / M_1} f\left( t, \frac{\ell^2}{M_1^2} t\right )^4\ dt$$
for some
\begin{equation}\label{lime}
 1 \leq L \ll \frac{Q^2 M_1}{H} \ll X^{O(\eps^2)} \frac{M_1}{H}.
\end{equation}

To obtain the best bounds, it becomes convenient to reduce the range of integration of $t$.  Let $S$ be a parameter in the range
\begin{equation}\label{srange}
 M_1 \ll S \ll \min \Big (M_1^2, \frac{TL}{M_1} \Big )
\end{equation}
to be chosen later.  By Lemma \ref{rs1}(i), we then have
$$ \sum_{j=1}^r \tilde{Y}(t_j)^4 \ll_\eps X^{O(\eps^2)} \frac{H TL}{S M_1^2}
\sum_{L \leq \ell < 2L} \int_{0 < \alpha \ll S} f\left( \alpha, \frac{\ell^2}{M_1^2} \alpha\right )^4\ d\alpha$$
Applying \eqref{f-stable}, we then have
$$ \sum_{j=1}^r \tilde{Y}(t_j)^4 \ll_\eps X^{O(\eps^2)} \frac{H TL}{S M_1^2}
\int_{0 < \alpha \ll S}\int_{0 < \beta \ll 1 + \frac{L^2 \alpha}{M_1^2}} f(\alpha,\beta)^4 \mu(\alpha,\beta)\ d\beta d\alpha $$
where the multiplicity $\mu(\alpha,\beta)$ is defined as the number of integers $\ell \in [L, 2L)$ such that
$$ \left|\beta - \frac{\ell^2}{M_1^2} \alpha\right| \leq 1.$$
Clearly we have the trivial bound $\mu(\alpha,\beta) \ll L$.  On the other hand, for fixed $\alpha$, the numbers $\frac{\ell^2}{M_1^2} \alpha$ are $\asymp \frac{L \alpha}{M_1^2}$-separated, so for $\beta \gg 1$ we also have the bound
$$ \mu(\alpha,\beta) \ll 1 + \frac{M_1^2}{L \alpha}.$$
We thus have
$$ \sum_{j=1}^r \tilde{Y}(t_j)^4 \ll_\eps X^{O(\eps^2)} \frac{H TL}{S M_1^2} (W_1 + W_2 + W_3)$$
where
\begin{align*}
 W_1 &\coloneqq L \int_{0 < \alpha \ll S} \int_{0 < \beta \ll 1} f(\alpha,\beta)^4\ d\beta d\alpha \\
 W_2 &\coloneqq \int_{0 < \alpha \ll S} \frac{M_1^2}{L \alpha} \int_{1 \ll \beta \ll \frac{L^2 \alpha}{M_1^2}} f(\alpha,\beta)^4\ d\beta d\alpha \\
 W_3 &\coloneqq \int_{0 < \alpha \ll S} \int_{1 \ll \beta \ll \frac{L^2 \alpha}{M_1^2}} f(\alpha,\beta)^4\ d\beta d\alpha.
\end{align*}
From \eqref{f-stable} and Lemma \ref{rs1}(ii) (with $\theta \coloneqq -1$ and $a_m \coloneqq e \Big ( \frac{\beta}{3 \pi} \Big ( \frac{M_1}{m} \Big )^3 \Big )$) we have
$$ W_1 \ll_\eps X^{O(\eps^2)} L (M_1^4 + M_1^2 S) \ll X^{O(\eps^2)} L M_1^4$$
thanks to \eqref{srange}.  Now we treat $W_2$.  We may assume that $\alpha \gg M_1^2/L^2$ since the inner integral vanishes otherwise.  By the pigeonhole principle, we thus have
$$ W_2 \ll \frac{M_1^2 \log X}{LA} \int_{\alpha \asymp A} \int_{\beta \ll \frac{L^2 A}{M_1^2}}  f(\alpha,\beta)^4\ d\beta d\alpha$$
for some $\frac{M_1^2}{L^2} \ll A \ll S$.  Applying Lemma \ref{rs1}(ii) again (now treating the $e(\frac{\beta}{3\pi} \frac{M_1^3}{m^3})$ term in \eqref{fab} as a bounded coefficient $a_m$) we have
$$ \int_{\alpha \asymp A} f(\alpha,\beta)^4\ d\alpha \ll_\eps X^{O(\eps^2)} (M_1^4 + M_1^2 A) \ll X^{O(\eps^2)} M_1^4$$
and hence
$$ W_2 \ll X^{O(\eps^2)} L M_1^4.$$
Finally we turn to $W_3$.  The contribution of the region $\alpha \ll \frac{M_1^2}{L}$ is $O(W_2)$.  Thus by the pigeonhole principle we have
\begin{equation}\label{trad}
 W_3 \ll W_2 + \log X \int_{\alpha \asymp A} \int_{\beta \ll \frac{L^2 A}{M_1^2}} f(\alpha,\beta)^4\ d\beta d\alpha
\end{equation}
for some $\frac{M_1^2}{L} \ll A \ll S$.  In particular (from \eqref{lime}, \eqref{srange}) one has
\begin{equation}\label{mam}
M_1 \ll A \ll M_1^2.
\end{equation}
One could estimate the integral here using Lemma \ref{rs1}(ii) once again, but this turns out to lead to an inferior estimate if used immediately, given that the length $M_1$ of the exponential sum and the dominant frequency scale $A$ lie in the range \eqref{mam}; indeed, this only lets one establish Theorem \ref{unav-corr} for $\sigma = 1/4$.  Instead, we will first apply the van der Corput $B$-process (Lemma \ref{rs1}(iii)), which morally speaking will shorten the length from $M_1$ to $A/M_1$, at the cost of applying a Legendre transform to the phase in the exponential sum.  

We turn to the details.  For a fixed $\alpha,\beta$ with 
\begin{equation}\label{abs}
\alpha \asymp A;\quad \beta \ll \frac{L^2 A}{M_1^2}
\end{equation}
(so in particular $\beta$ is much smaller in magnitude than $\alpha$, thanks to \eqref{lime}), let 
$$ \phi(x) \coloneqq \frac{\alpha}{\pi} \frac{M_1}{x} + \frac{\beta}{3\pi} \frac{M_1^3}{x^3}$$
denote the phase appearing in \eqref{fab}. The first derivative is given by
$$ \phi'(x) = - \frac{\alpha}{\pi} \frac{M_1}{x^2} - \frac{\beta}{\pi} \frac{M_1^3}{x^4}.$$
This maps the region $\{ x: x \asymp M_1\}$ diffeomorphically to a region of the form $\{ t: -t \asymp \frac{A}{M_1} \}$.  Denoting the inverse map by $u$, we thus have
$$ t = - \frac{\alpha}{\pi} \frac{M_1}{u(t)^2} - \frac{\beta}{\pi} \frac{M_1^3}{u(t)^4} $$
for $-t \asymp \frac{A}{M_1}$.  

One can solve explicitly for $u(t)$ using the quadratic formula as
\begin{align*} u(t)^2 &= \frac{1}{2} \left( \frac{\alpha M_1}{\pi |t|} + \sqrt{\left(\frac{\alpha M_1}{\pi |t|}\right)^2 + 4 \frac{\beta M_1^3}{\pi |t|}} \right).\\
&= \left(\frac{\alpha M_1}{\pi |t|}\right) \frac{1}{2} \left( 1 + \left(1 + \frac{4\beta}{\alpha} \frac{\pi |t| M_1}{\alpha}\right)^{1/2} \right).
\end{align*}
A routine Taylor expansion then gives the asymptotic
$$ u(t) = M_1 \left(\frac{\alpha}{\pi |t| M_1}\right)^{1/2} + \frac{\beta}{2\alpha} M_1 \left(\frac{\alpha}{\pi |t| M_1}\right)^{-1/2} + R_{\alpha,\beta}(t)$$
where the remainder term $R_{\alpha,\beta}(t)$ obeys the estimates
$$ R_{\alpha,\beta}(t) \ll \frac{\beta^2}{\alpha^2} M_1; \quad R'_{\alpha,\beta}(t) \ll \frac{\beta^2}{\alpha^2} \frac{M_1^2}{A}$$
for $-t \asymp \frac{A}{M}$.  The (negative) Legendre transform $\phi^*(t) \coloneqq \phi(u(t)) - t u(t)$ can then be similarly expanded as
$$ \phi^*(t) = \frac{2\alpha}{\pi} \left(\frac{\alpha}{\pi |t| M_1}\right)^{-1/2} + \frac{\beta}{3\pi} \left(\frac{\alpha}{\pi |t| M_1}\right)^{-3/2} + E_{\alpha,\beta}(t)$$
where the error term $E_{\alpha,\beta}$ obeys the estimates
$$ E_{\alpha,\beta}(t) \ll \frac{\beta^2}{\alpha^2} A; \quad E'_{\alpha,\beta}(t) \ll \frac{\beta^2}{\alpha^2} M_1.$$
From \eqref{abs}, \eqref{lime}, \eqref{mam} we have
$$ \frac{\beta^2}{\alpha^2} A \ll \frac{L^4}{M_1^4} A \ll \frac{X^{O(\varepsilon^2)}}{H^4} M_1^2 \ll 1 $$
where the last bound follows from \eqref{sigma-range} since $H = X^{\sigma + \eps}$ and $M_1 \ll X^{1/3}$.  Applying Lemma \ref{rs1}(iii) followed by Lemma \ref{sbp-2}, we conclude that
$$ f(\alpha,\beta) \ll \frac{M_1}{A^{1/2}} g(A^{1/2} \alpha^{1/2}, A^{3/2} \alpha^{-3/2} \beta) + M_1^{1/2}$$
where $g$ is the maximal exponential sum
$$ g(\alpha',\beta') \coloneqq \left| \sum_{-\ell \asymp A/M_1} e\left( \frac{2\alpha'}{\pi^{1/2}} \left( \frac{|\ell|}{A/M_1} \right)^{1/2} + \frac{\beta' \pi^{1/2}}{3} \left( \frac{|\ell|}{A/M_1} \right)^{3/2} \right)\right|^*.$$
Inserting this back into \eqref{trad} and performing a change of variables, we conclude that
$$
 W_3 \ll W_2 + \log X \left( L^2 A^2 + \frac{M_1^4}{A^2} \int_{\alpha \asymp A} \int_{\beta \ll \frac{L^2 A}{M_1^2}} g(\alpha,\beta)^4\ d\beta d\alpha \right).$$
On the other hand, by applying Lemma \ref{rs1}(ii) as before we have
$$
\int_{\alpha \asymp A} g(\alpha,\beta)^4\ d\alpha \ll_\eps X^{O(\eps^2)} ( (A/M_1)^4 + (A/M_1)^2 A ) \ll X^{O(\eps^2)} (A/M_1)^2 A $$
for any $\beta$, where the last inequality follows from \eqref{mam}.  We thus arrive at the bound
$$ W_3 \ll_\eps W_2 + X^{O(\eps^2)} L^2 A^2.$$
Since $A \ll S$, we thus have
$$ W_3 \ll_\eps W_2 + X^{O(\eps^2)} L^2 S^2.$$
Combining all the above bounds for $W_1,W_2,W_3$, we have
\begin{equation}\label{yt4}
 \sum_{j=1}^r \tilde{Y}(t_j)^4 \ll_\eps X^{O(\eps^2)} \frac{H TL}{S M_1^2} (L M_1^4 + L^2 S^2).
\end{equation}
To optimize this bound we select 
$$S \coloneqq \min\left( \frac{M_1^2}{L^{1/2}}, \frac{TL}{M_1} \right).$$
It is easy to see (using \eqref{l6}, \eqref{lime}, and \eqref{mi3}) that $S$ obeys the bounds \eqref{srange}.  From \eqref{yt4} we have
\begin{align*}
\sum_{j=1}^r \tilde{Y}(t_j)^4 &\ll_\eps X^{O(\eps^2)} \left( \frac{H TL^2 M_1^2}{S} + \frac{HTL^3 S}{M_1^2} \right) \\
&\ll_\eps X^{O(\eps^2)} ( H L M_1^3 + HTL^{5/2} ).
\end{align*}
Applying \eqref{lime}, \eqref{tbound} we thus have
$$ \sum_{j=1}^r \tilde{Y}(t_j)^4 \ll_\eps X^{O(\eps^2)} ( M_1^4 + H^{-3/2} \lambda X M_1^{5/2} ) $$
and hence by \eqref{mi3}
$$ \sum_{j=1}^r \tilde{Y}(t_j)^4 \ll_\eps X^{O(\eps^2)} ( X^{4/3} + H^{-3/2} \lambda X^{11/6} ) $$
From \eqref{H-def}, \eqref{sigma-range} we have $H \geq X^{\frac{11}{48}+\eps}$, which implies after some arithmetic and \eqref{l6} that the $X^{4/3}$ term here gives an acceptable contribution to Proposition \ref{pq}.  The $H^{-3/2} \lambda X^{11/6}$ term is similarly acceptable thanks to \eqref{l6}, \eqref{H-def}, and \eqref{sigmadef}.


\appendix

\section{Mean value estimate}\label{harman-sec}

In this section we prove Proposition \ref{harm-prop}.  This estimate is fairly standard, for instance following from the methods in \cite[Chapter 9]{harman}; for the convenience of the reader we sketch a full proof here.

Let $\eps, A, B, X, q_0, q_1, f, B'$ be as in Proposition \ref{harm-prop}.  
We first invoke Lemma \ref{comb-decomp-2} with $m=3$, and with $\eps$ and $H$ replaced by $\eps/10$ and $X^{1/3+\eps/10}$ respectively.  We conclude that the function $(\chi,t) \mapsto {\mathcal D}[f](\frac{1}{2}+it, \chi, q_0)$ can be decomposed as a linear combination (with coefficients of size $O_{k,\eps}( d_2(q_0)^{O_{k,\eps}(1)} )$) of $O_{k,\eps}( \log^{O_{k,\eps}(1)} X)$ functions of the form $(\chi,t) \mapsto {\mathcal D}[\tilde f](\frac{1}{2}+it,\chi)$, where $\tilde f \colon \N \to \C$ is one of the following forms:

\begin{itemize}
\item[(Type $d_1$, $d_2$ sum)]  A function of the form
\begin{equation}\label{fst-again	}
 \tilde f = (\alpha \ast \beta_1 \ast \dots \ast \beta_j) 1_{(X/q_0,2X/q_0]}
\end{equation}
for some arithmetic functions $\alpha,\beta_1,\dots,\beta_j \colon \N \to \C$, where $j=1,2$, $\alpha$ is $O_{k,\eps}(1)$-divisor-bounded and supported on $[N,2N]$, and each $\beta_i$, $i=1,\dots,j$ is either of the form $\beta_i = 1_{(M_i,2M_i]}$ or $\beta_i = L 1_{(M_i,2M_i]}$ for some $N, M_1,\dots,M_j$ obeying the bounds 
$$ 1 \ll N \ll_{k,\eps} X^{\eps/10},$$
$$ N M_1 \dots M_j \asymp_{k,\eps} X/q_0,$$
and
$$ X^{1/3+\eps/10} \ll M_1 \ll \dots \ll M_j \ll X/q_0.$$ 
\item[(Type II sum)]  A function of the form
$$ \tilde f = (\alpha \ast \beta) 1_{(X/q_0,2X/q_0]}$$
for some $O_{k,\eps}(1)$-divisor-bounded arithmetic functions $\alpha,\beta \colon \N \to \C$ with good cancellation supported on $[N,2N]$ and $[M,2M]$ respectively, for some $N,M$ obeying the bounds 
$$ X^{\eps/10} \ll N \ll X^{1/3+\eps/10}$$
and
$$NM \asymp_{k,\eps} X / q_0.$$
The good cancellation bounds \eqref{alb} are permitted to depend on the parameter $B$ appearing in the bound $q_0 \leq \log^B X$.
\item[(Small sum)]  A function $\tilde f$ supported on $(X/q_0,2X/q_0]$ obeying the bound
\begin{equation}\label{smallsum-0}
 \|\tilde f \|_{\ell^2}^2 \ll_{k,\eps} X^{1-\eps/80}.
\end{equation}
\end{itemize}

By the $L^2$ triangle inequality (and enlarging $A$ as necessary), it thus suffices to establish the bound 
\begin{equation}\label{drip}
\int_{\log^{B'} X \leq |t| \leq X^{5/6 - \eps} } \left|{\mathcal D}[\tilde f]\left(\frac{1}{2}+it',\chi,q_0\right)\right|^2\ dt' \ll_{k,\eps,A,B,B'} X \log^{-A} X.
\end{equation}
for each individual character $\chi$ and $\tilde f$ one of the above forms.  

We first dispose of the small sum case.  From Lemma \ref{mvt-lem} and \eqref{smallsum-0} we have
$$ 
\int_{|t| \leq X^{5/6 - \eps}} \left|{\mathcal D}[\tilde f] \left (\frac{1}{2}+it,\chi \right )\right|^2\ dt \ll_k X^{1-\eps/80} \log^{O_k(1)} X.$$
which gives \eqref{drip} (with a power savings).

In the remaining Type $d_1$, Type $d_2$, and Type II cases, $\tilde f$ is of the form $\tilde f = f' 1_{(X/q_0,2X/q_0]}$, where $f'$ is of the form $\alpha \ast \beta_1$, $\alpha \ast \beta_1 \ast \beta_2$, or $\alpha \ast \beta$ in the Type $d_1$, Type $d_2$, and Type II cases respectively.  From Corollary \ref{trunc-dir} one has
$$ |{\mathcal D}[\tilde f](\frac{1}{2}+it,\chi)| \ll \int_{|u| \leq X^{5/6-\eps}} \left |{\mathcal D}[f'] \left (\frac{1}{2}+it+iu,\chi \right )\right| \frac{du}{1+|u|} + 1.$$
Meanwhile, from Lemma \ref{mvt-lem} we have
$$ \int_{|t'| \leq \frac{1}{2} \log^{B'} X} \left|{\mathcal D}[\tilde f]\left (\frac{1}{2}+it',\chi \right )\right|^2\ dt' \ll_{k,\eps} X \log^{O_{k,\eps}(1)} X $$
and hence by Cauchy-Schwarz
$$ \int_{|t'| \leq \frac{1}{2} \log^{B'} X} \left|{\mathcal D}[\tilde f]\left (\frac{1}{2}+it',\chi \right )\right|\ dt' \ll_{k,\eps} X^{1/2} \log^{O_{k,\eps}(1) + B'/2} X.$$
We conclude that
$$ {\mathcal D}[\tilde f]\left (\frac{1}{2}+it,\chi \right ) \ll_{k,\eps} \int_{\frac{1}{2} \log^{B'} X \leq |t'| \leq 2X^{5/6-\eps}} \left|{\mathcal D}[f'] \left (\frac{1}{2}+it',\chi \right )\right| \frac{dt'}{1+|t-t'|} + 1 + \frac{X^{1/2} \log^{O_{k,\eps}(1) + B'/2} X}{|t|} $$
for $\log^{B'} X \leq |t| \leq X^{5/6-\eps}$; by Cauchy-Schwarz, one thus has
$$ \left|{\mathcal D}[\tilde f]\left (\frac{1}{2}+it,\chi \right )\right|^2 \ll_{k,\eps} \int_{\frac{1}{2} \log^{B'} X \leq |t'| \leq 2X^{5/6-\eps}} \left |{\mathcal D}[f']\left (\frac{1}{2}+it',\chi\right )\right|^2 \frac{dt'}{1+|t-t'|} \log X + 1 + \frac{X \log^{O_{k,\eps}(1) + B'} X}{|t|^2}.$$
Integrating in $t$, we can bound the left-hand side of \eqref{drip} for $\tilde f$ by
$$ 
\ll_{k,\eps}
\int_{\frac{1}{2} \log^{B'} X \leq |t| \leq 2 X^{5/6 - \eps}} \left|{\mathcal D}[f']\left (\frac{1}{2}+it,\chi\right )\right|^2\ dt  \log^2 X
+ X \log^{O_{k,\eps}(1) - B'} X,$$
so (by taking $B'$ large enough) it suffices to establish the bounds
\begin{equation}\label{das}
\int_{\frac{1}{2} \log^{B'} X \leq |t| \leq 2X^{5/6 - \eps}} \left|{\mathcal D}[f'] \left (\frac{1}{2}+it,\chi \right )\right|^2\ dt \ll_{k,\eps, A, B, B'} X \log^{-A} X
\end{equation}
in the Type $d_1$, Type $d_2$, and Type II cases.

We first treat the Type $d_2$ case.  
By dyadic decomposition it suffices to show that
$$
\int_{T/2 \leq |t| \leq T} \left|{\mathcal D}[f']\left (\frac{1}{2}+it,\chi\right)\right|^2\ dt \ll_{k,\eps,A,B,B'} d_2(q_1)^{O_k(1)} X \log^{O_{k,\eps,B}(1) - 2A} X $$
for all $\log^{B'} X \leq T \ll X^{5/6-\eps}$.  From Corollary \ref{fourth-integral} we have
$$ \int_{T/2 \leq |t| \leq 2T} \left|{\mathcal D}[\beta_j]\left (\frac{1}{2}+it,\chi\right)\right|^4\ dt \ll_B
T \left( 1 + \frac{M_j^2}{T^4} \right) \log^{O_B(1)} X$$
for $j=1,2$; also from \eqref{dirt} we have the crude bound
$$ {\mathcal D}[\alpha]\left (\frac{1}{2}+it,\chi\right ) \ll_{k,\eps} N^{1/2} \log^{O_{k,\eps}(1)} X.$$
Thus by Cauchy-Schwarz we may bound
$$
\int_{T/2 \leq |t| \leq T} \left|{\mathcal D}[f']\left (\frac{1}{2}+it\right )\right|^2\ dt 
\ll_{k,\eps} N T \left(1 + \frac{M_1}{T^2} \right) \left(1 + \frac{M_2}{T^2} \right)
\log^{O_{k,\eps,B}(1)} X.$$
We can bound
$$ \left(1 + \frac{M_1}{T^2} \right) \left(1 + \frac{M_2}{T^2} \right) \ll 1 + \frac{M_1 M_2}{T^2}$$
and use $N M_1 M_2 \ll X$ to conclude
$$
\int_{T/2 \leq |t| \leq T} \left |{\mathcal D}[f']\left (\frac{1}{2}+it,\chi\right) \right |^2\ dt 
\ll_{k,\eps,B} \left(NT + \frac{X}{T} \right) \log^{O_{k,\eps,B}(1)} X$$
which is acceptable since $\log^{B'} X \leq T \ll X^{5/6-\eps}$ and $N \ll X^{\eps/10}$.

The Type $d_1$ case can be treated similarly to the Type $d_2$ case (with the role of $\beta_2(n)$ now played by the Kronecker delta function $\delta_{n=1}$).  It thus remains to handle the Type II case.  Here we factor
$$ {\mathcal D}[f']\left(\frac{1}{2}+it,\chi\right) = {\mathcal D}[\alpha]\left(\frac{1}{2}+it,\chi\right) {\mathcal D}[\beta]\left(\frac{1}{2}+it,\chi\right).$$
and hence
$$ {\mathcal D}[f']\left(\frac{1}{2}+it,\chi\right)^2 = {\mathcal D}[\beta]\left(\frac{1}{2}+it,\chi\right) {\mathcal D}[\beta]\left(\frac{1}{2}+it,\chi\right) {\mathcal D}[\alpha \ast \alpha]\left(\frac{1}{2}+it,\chi\right).$$
At this point it is convenient to invoke an estimate of Harman (which in turn is largely a consequence of Huxley's large values estimate and standard mean value theorems for Dirichlet polynomials), translated into the notation of this paper:

\begin{lemma}  Let $X \geq 2$ and $\eps>0$, and let $M,N,R \geq 1$ be such that $M = X^{2\alpha_1}$, $N = X^{2\alpha_2}$, and $MNR \asymp X$ for some $\alpha_1, \alpha_2 > 0$ obeying the bounds
$$ |\alpha_1 - \alpha_2| < \frac{1}{6}  + \eps$$
and
$$ \alpha_1 + \alpha_2 > \frac{2}{3} - \eps.$$
Let $a,b,c \colon \N \to \C$ be $O_{k,\eps}(1)$-divisor-bounded arithmetic functions supported on $[M/2,2M]$, $[N/2,2N]$, $[R/2,2R]$ respectively obeying the bounds
$$ a(n), b(n), c(n) \ll_{k,\eps} d_2(n)^{O_{k,\eps}(1)} \log^{O_{k,\eps}(1)} X$$
for all $n$.  Suppose also that $c$ has good cancellation.  Then we have
$$ \int_{\log^B X \leq |t| \leq X^{\frac{5}{6}-\eps}} \left|{\mathcal D}[a]\left(\frac{1}{2}+it\right) {\mathcal D}[b]\left(\frac{1}{2}+it\right) {\mathcal D}[c]\left(\frac{1}{2}+it\right)\right|\ dt \ll_{k,\eps,A,B} X \log^{-A} X$$
whenever $A>0$ and $B$ is sufficiently large depending on $A$.
\end{lemma}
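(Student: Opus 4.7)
The plan is to follow the standard Huxley-large-values-plus-mean-value strategy used by Harman in \cite[Chapter 9]{harman}. After a dyadic decomposition of the $t$-range (absorbing the $O(\log X)$ loss into $A$), it suffices, for each $\log^B X \leq T \ll X^{5/6-\eps}$, to establish
\[
\int_{T/2 \leq |t| \leq T} \left|{\mathcal D}[a]{\mathcal D}[b]{\mathcal D}[c]\right|\left(\tfrac{1}{2}+it\right)\, dt \ll_{k,\eps,A,B} X \log^{-A'} X
\]
for any chosen $A'$ (at the cost of enlarging $A$). Partition $[T/2,T]$ by dyadic sizes $V$ of $|{\mathcal D}[c](\tfrac{1}{2}+it)|$, setting ${\mathcal T}_V \coloneqq \{t \in [T/2,T] : |{\mathcal D}[c](\tfrac{1}{2}+it)| \asymp V\}$.

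On the ``typical'' set where $V \leq R^{1/2} \log^{-A''} X$, apply Cauchy--Schwarz in $t$ to split the triple product as ${\mathcal D}[a]{\mathcal D}[b] \cdot {\mathcal D}[c]$, and invoke the mean value theorem (Lemma \ref{mvt-lem}) applied to $a \ast b$, which by Lemma \ref{good-cancel} is $O_{k,\eps}(1)$-divisor-bounded and supported on $[MN/4,4MN]$. This bounds the contribution by
\[
\ll V\cdot T^{1/2}(T+MN)^{1/2}\log^{O_{k,\eps}(1)}X,
\]
which, after using $MNR\asymp X$, $T\ll X^{5/6-\eps}$, and the hypothesis $\alpha_1+\alpha_2 > 2/3-\eps$ (which bounds $R$), is $\ll X\log^{-A'}X$ once $A''$ is chosen sufficiently large in terms of $A$.

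For each larger dyadic value of $V$, invoke Huxley's large values estimate (see \cite[Chapter 9]{ik}) to obtain
\[
|{\mathcal T}_V| \ll \bigl(R V^{-2} + R^3 T V^{-6}\bigr) X^{\eps^2}.
\]
Bound $|{\mathcal D}[a]{\mathcal D}[b]|$ on ${\mathcal T}_V$ by $|{\mathcal T}_V|^{1/2}\bigl(\int_{T/2}^{T}|{\mathcal D}[a]{\mathcal D}[b]|^2\,dt\bigr)^{1/2}$, and estimate the inner integral via Cauchy--Schwarz together with Lemma \ref{mvt-lem} applied to the convolutions $a\ast a$ and $b\ast b$; here the constraint $|\alpha_1-\alpha_2|<1/6+\eps$ balances the lengths $M$ and $N$ so that both fourth-moment bounds deliver savings. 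Summing over the $O(\log X)$ dyadic scales in $V$ yields the claim.

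The main obstacle is verifying that for every $T \ll X^{5/6-\eps}$ and every admissible $V$, the product of Huxley's measure bound with the fourth-moment estimates for the short Dirichlet polynomials stays below $X\log^{-A}X$ once multiplied by $V$. This is precisely the role of the joint hypotheses $|\alpha_1-\alpha_2|<1/6+\eps$ and $\alpha_1+\alpha_2>2/3-\eps$: the former balances the two shorter polynomials so that the large-value regime closes, and the latter controls the length $R$ of the polynomial with good cancellation so that the typical-value regime closes. The delicate case analysis of matching these two bounds across the full range of $T$ and $V$ is the technical heart of the argument.
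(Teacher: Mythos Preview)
The paper's own proof is a one-line citation: it invokes Harman \cite[Lemma 7.3]{harman} with the substitution $x \coloneqq X^2$, $\theta \coloneqq \tfrac{7}{12}+\tfrac{\eps}{2}$, and remarks that the lower threshold on $|t|$ can be weakened from $\exp(\log^{1/3}X)$ to $\log^B X$. Your sketch is therefore aimed at reconstructing Harman's argument, which is reasonable in spirit, but the execution has a genuine gap.

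First, note that in the intended setting one has $MNR \asymp X^2$ (this is what the substitution $x=X^2$ means, and it is what makes the application to $a=b=\beta\chi$, $c=(\alpha*\alpha)\chi$ consistent; the displayed ``$MNR\asymp X$'' is a slip). With that normalization your ``typical'' regime does not close: pulling out $|{\mathcal D}[c]|\le V$, applying Cauchy--Schwarz in $t$, and then the mean value theorem for $a*b$ gives at best
\[
V\,T^{1/2}(T+MN)^{1/2}\log^{O_{k,\eps}(1)}X \;\ll\; R^{1/2}(MN)^{1/2}T^{1/2}\log^{-A''+O_{k,\eps}(1)}X \;\asymp\; X\,T^{1/2}\log^{-A''+O_{k,\eps}(1)}X,
\]
since $MN\gg X^{4/3-2\eps}\gg T$. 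The logarithmic saving from $V\le R^{1/2}\log^{-A''}X$ cannot absorb the power loss $T^{1/2}$, which can be as large as $X^{5/12-\eps/2}$.

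Second, and relatedly, you never invoke the good cancellation hypothesis on $c$, and you apply Huxley to the wrong factor. Good cancellation gives the \emph{pointwise} bound $|{\mathcal D}[c](\tfrac12+it)|\ll R^{1/2}\log^{-A''}X$ throughout $\log^B X\le |t|\le X^{5/6-\eps}$ (once $B$ is large enough), so your ``large $V$'' set is empty and Huxley on $c$ is vacuous. Harman's actual argument proceeds by using good cancellation to dispose of the $c$ factor pointwise, and then partitioning by the sizes of $|{\mathcal D}[a]|$ and $|{\mathcal D}[b]|$; Huxley's large-values estimate and the $L^2$ mean value theorem are applied to \emph{those} two polynomials. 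It is in that case analysis that the hypotheses $|\alpha_1-\alpha_2|<\tfrac16+\eps$ and $\alpha_1+\alpha_2>\tfrac23-\eps$ do their work, balancing $M$ against $N$ and bounding $R$ so that the Huxley/mean-value combination closes across the full range of $T$.
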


\begin{proof}  Apply \cite[Lemma 7.3]{harman} with $x \coloneqq  X^2$ and $\theta \coloneqq  \frac{7}{12} + \frac{\eps}{2}$ (so that the quantity $\gamma(\theta)$ defined in \cite[Lemma 7.3]{harman} is at least as large as $\frac{1}{3} + 2\eps$).  Strictly speaking, the hypotheses in \cite[Lemma 7.3]{harman} restricted $|t|$ to be at least $\exp(\log^{1/3} X)$ rather than $\log^B X$, but one can check that the argument is easily modified to adapt to this new lower bound on $|t|$.
\end{proof}

If we apply this lemma with $a \coloneqq  \beta \chi$, $b \coloneqq  \beta \chi$, $c \coloneqq  (\alpha \ast \alpha)\chi$ (with $\alpha_1 = \alpha_2 \geq \frac{1}{3} - \frac{\eps}{20} + o(1)$) using Lemma \ref{good-cancel} to preserve the good cancellation property, we conclude that
$$ \int_{\log^{B'} X \leq |t| \leq X^{\frac{5}{6}-\eps}} \left|{\mathcal D}[f']\left(\frac{1}{2}+it\right)\right|^2\ dt \ll_{k,\eps,A,B} X \log^{-A} X$$
giving \eqref{das} in the Type II case.

\end{document}